\newtheorem{thm}{Theorem}[section] 
\newtheorem{obs}[thm]{Observation}
\newtheorem{prop}[thm]{Proposition}
\newtheorem{lem}[thm]{Lemma}
\newtheorem{cor}[thm]{Corollary}
\theoremstyle{definition}
\newtheorem{defn}[thm]{Definition}
\newtheorem{rmk}[thm]{Remark}
\newtheorem{claim}{Claim}
\newtheorem*{claim*}{Claim}
\newtheorem*{ack}{Acknowledgments}
\newtheorem{notation}[thm]{Notation}
\newcommand{\mc}[1]{\mathcal{#1}}
\newcommand{\mb}[1]{\mathbb{#1}}
\newcommand{\mf}[1]{\mathfrak{#1}}
\newcommand{\Zb}{\mathbb{Z}}
\newcommand{\Nb}{\mathbb{N}}
\newcommand{\Rb}{\mathbb{R}}
\newcommand{\Qb}{\mathbb{Q}}
\newcommand{\Norm}{\mathcal{N}}
\newcommand{\tdlc}{t.d.l.c.\@\xspace}
\newcommand{\defbold}{\textbf}
\newcommand{\acts}{\curvearrowright}
\newcommand{\normal}{\trianglelefteq}
\newcommand{\inv}{^{-1}}
\newcommand{\triv}{\{1\}}
\newcommand{\CC}{\mathrm{C}}
\newcommand{\N}{\mathrm{N}}
\newcommand{\Z}{\mathrm{Z}}
\newcommand{\LL}{\mathrm L}
\newcommand{\Tw}{\mc{T}_{\aleph_0}}
\newcommand{\pwr}{\mathcal{P}}
\newcommand{\rest}{\upharpoonright}
\newcommand{\injects}{\hookrightarrow}
\newcommand{\GH}{G\rtimes_{\psi}H}
\newcommand{\la}{\left\langle}
 \newcommand{\ra}{\right\rangle}
\newcommand{\cgrp}[1]{\overline{\langle #1 \rangle}}
\newcommand{\ngrp}[1]{\overline{\langle\langle #1\rangle \rangle}}
\newcommand{\ngrpd}[1]{\langle\langle #1\rangle \rangle}
\newcommand{\grp}[1]{\langle #1 \rangle}
\newcommand{\ol}[1]{\overline{#1}}
\newcommand{\Sym}{\mathop{\rm Sym}\nolimits}
\newcommand{\FSym}{\mathop{\rm FSym}\nolimits}
\newcommand{\Aut}{\mathop{\rm Aut}\nolimits}
\newcommand{\Inn}{\mathop{\rm Inn}\nolimits}
\newcommand{\wt}[1]{\widetilde{#1}}
\newcommand{\wh}[1]{\widehat{#1}}
 \newcommand{\abs}[1]{\left\lvert #1\right\rvert}
 \newcommand{\norm}[1]{\left\lVert #1\right\rVert}
\let\@wraptoccontribs\wraptoccontribs
\begin{document}

\title[Chief factors in Polish groups]{Chief factors in Polish groups}
\author{Colin D. Reid}
\address{ University of Newcastle,
   School of Mathematical and Physical Sciences,
   University Drive,
   Callaghan NSW 2308, Australia}
\email{colin@reidit.net}
\thanks{The first named author was an ARC DECRA fellow.  Research supported in part by ARC Discovery Project DP120100996.}

\author{Phillip R. Wesolek}
\address{ }
\email{prwesolek@gmail.com}
\thanks{The second named author was supported by ERC grant \#278469.}

\dedicatory{\MakeUppercase{\textnormal{With an appendix by Fran\c{c}ois le Ma\^{i}tre}}}
\address{ Institut de Math\'{e}matiques de Jussieu-PRG,
  Universit\'{e} Paris Diderot, 
  Sorbonne Paris Cit\'{e}, 
  75205 Paris cedex 13, France}
 \email{francois.le-maitre@imj-prg.fr }

\begin{abstract}
In finite group theory, chief factors play an important and well-understood role in the structure theory. We here develop a theory of chief factors for Polish groups. In the development of this theory, we prove a version of the Schreier refinement theorem. We also prove a trichotomy for the structure of topologically characteristically simple Polish groups.

The development of the theory of chief factors requires two independently interesting lines of study. First we consider injective, continuous homomorphisms with dense normal image. We show such maps admit a canonical factorization via a semidirect product, and as a consequence, these maps preserve topological simplicity up to abelian error. We then define two generalizations of direct products and use these to isolate a notion of semisimplicity for Polish groups.  
\end{abstract}

\maketitle

MSC2010: 22A05 (primary), 54H05 (secondary)

\tableofcontents

\addtocontents{toc}{\protect\setcounter{tocdepth}{1}}

\section{Introduction}
A Polish group is a topological group so that the topology is separable and admits a complete, compatible metric. A chief factor of a Polish group $G$ is a factor $K/L$ where $L<K$ are distinct closed normal subgroups of $G$ that admit no closed $M\normal G$ with $L<M<K$. Such factors play an important and well-understood role in the setting of finite groups. They also arise naturally and are important in the study of compactly generated totally disconnected locally compact groups via work of P-E. Caprace and N. Monod \cite{CM11}; however, they are not well-understood in this setting. 

The present article arose out of a project to develop a theory of chief factors for locally compact Polish groups. It quickly became apparent that the basic components of the theory do not require local compactness but only that the groups are Polish. Moreover, this theory appears to be applicable to and useful for the study of many Polish groups. The work at hand therefore develops the theory of chief factors and the requisite tools in the setting of Polish groups. 

The theory of chief factors developed in this article relies on two independently interesting directions of study. The first is a study of normal compressions: these are continuous group homomorphisms $\psi:G\rightarrow H$ which are injective with dense normal image.  Such maps appear naturally in the study of closed normal subgroups of a Polish group because of the deficiencies of the second isomorphism theorem of group theory in the setting of topological groups, including Polish groups.  Specifically, if $N$ and $M$ are closed normal subgroups of the Polish group $G$, then $NM$ is not necessarily closed, and so the obvious homomorphism $\phi: N/N\cap M\rightarrow \ol{NM}/M$ is not necessarily surjective; however, $\phi$ is still a normal compression.  Normal compressions are also related to the study of Polishable subgroups, cf. \cite{FS06}: if $K\normal H$ is a Polishable dense normal subgroup, then there is a normal compression $\psi: G \rightarrow H$ of Polish groups with $\psi(G) = K$.


We additionally investigate two generalizations of direct products: generalized central products and quasi-products. The concept of a quasi-product (with finitely many factors) is introduced by Caprace and Monod in \cite{CM11}, where it is shown that quasi-products play a critical and unavoidable role in the structure theory of compactly generated totally disconnected locally compact (\tdlc) groups.  Specifically, the following situation arises, which is by no means limited to locally compact groups: Let $G$ be a topological group and suppose that the collection of minimal closed normal subgroups $\mc{M}$ is non-empty and consists of non-abelian groups. One often wishes to study the topological socle $S := \cgrp{\mc{M}}$. The elements of $\mc{M}$ centralize each other and have trivial intersection, so as an abstract group, $S$ contains the direct sum $\bigoplus_{M\in\mc{M}}M$.  The induced topology of $\grp{\mc{M}}$ as a subgroup of $G$ however may be quite different from the product topology, even when $|\mc{M}|$ is finite. Our notion of a generalized central product will include such groups $S$ along with many others.

\begin{rmk}
This article is the first in a series; the subsequent articles, which specialize to the case of locally compact Polish groups, are \cite{RW_LC} and \cite{RW_EC}.  In locally compact Polish groups, the compactly generated open subgroups play a special role in the structure theory; this is used to deduce that, whenever a locally compact Polish group $G$ has sufficient complexity as a topological group (in a sense that can be made precise using a rank function), that complexity must be manifested in chief factors.  The authors are not aware of any structure in general Polish groups that could play a role analogous to compactly generated open subgroups of locally compact groups; it seems likely that there are Polish groups with a very complex normal subgroup structure, but no chief factors.  However, in the case of non-Archimedean Polish groups, which occur as automorphism groups of countable structures, potentially there are properties of the countable structure that imply the existence of chief factors, and then the present article gives tools to study these chief factors.  Subsection~\ref{ex:stacking} below gives examples of chief factors arising from an action on a countable structure.  Chief blocks could also be useful in studying groups that are known to have many quotients for other reasons, for instance Polish groups that are surjectively universal for some class of Polish groups (or for all Polish groups; see \cite{D12}).

At the other extreme, the theory of chief factors and blocks is trivial when the group of interest is topologically simple, as is the case for several well-studied examples of Polish groups.  Even in this context though, many interesting examples are not simple as abstract groups, that is, they have proper dense normal subgroups.  In such a situation, it is natural and useful to consider normal compressions.  See the appendix for examples.
\end{rmk}

\subsection{Normal compressions}

\begin{defn}
Let $G$ and $H$ be topological groups. A continuous homomorphism $\psi:G\rightarrow H$ is a \defbold{normal compression}\index{normal compression} if it is injective with a dense, normal image.  We will refer to $H$ as a normal compression of $G$ in cases where the choice of $\psi$ is not important.
\end{defn}

Let $\psi: G \rightarrow H$ be a normal compression between topological groups $G$ and $H$ and let $H$ act on itself by conjugation.  As abstract groups, there is a unique action of $H$ on $G$ by automorphisms, which we call the \defbold{$\psi$-equivariant action}\index{$\psi$-equivariant action}, that makes $\psi$ an $H$-equivariant map, that is, such that $\psi(h.g) = h\psi(g)h\inv$ for all $g \in G$ and $h \in H$.  If $G$ and $H$ are Polish, this action is well-behaved, and we have good control over the structure of the associated semidirect product.

\begin{thm}[See Proposition~\ref{prop:compression_action_cont} and Theorem~\ref{thm:psi-compression_factor_rel}]\label{thmintro:psi-compression_factor} Let $G$ and $H$ be Polish groups, $\psi: G \rightarrow H$ be a normal compression, and $G \rtimes_{\psi} H$ be the associated semidirect product equipped with the product topology.
\begin{enumerate}[(1) ]
\item $G \rtimes_{\psi} H$ is a Polish topological group.
\item $\pi:G\rtimes_{\psi}H\rightarrow H$ via $(g,h)\mapsto \psi(g)h$ is a continuous surjective homomorphism, $\ker(\pi)=\{(g\inv,\psi(g))\mid g\in G\}$, and $\ker(\pi)\simeq G$ as topological groups.
\item $\psi = \pi \circ \iota$ where $\iota: G \rightarrow G\rtimes_{\psi}H$ is the usual inclusion.
\item $G\rtimes_{\psi}H =\ol{\iota(G)\ker (\pi)}$, and the subgroups $\iota(G)$ and $\ker(\pi)$ are closed normal subgroups of $G\rtimes_{\psi}H$ with trivial intersection.
\end{enumerate}
\end{thm}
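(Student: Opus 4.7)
The plan is to take as given the continuity of the $\psi$-equivariant $H$-action on $G$, which is the content of the cited Proposition~\ref{prop:compression_action_cont}; the remainder of the theorem is then a sequence of direct verifications. Granting that proposition, $G \rtimes_\psi H$ is a topological group in the product topology, and since a product of two Polish spaces is Polish, (1) follows at once.

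For (2) and (3), I would proceed as follows. Item (3) is immediate: $\pi(\iota(g)) = \pi(g,1) = \psi(g)$. In (2), continuity and surjectivity of $\pi$ are clear ($\psi$ is continuous and $\pi(1,h)=h$). That $\pi$ is a homomorphism reduces to the identity $\psi(h.g) = h\psi(g)h\inv$, which is exactly the $H$-equivariance of $\psi$. Setting $\pi(g,h)=1$ forces $h = \psi(g\inv)$, yielding the stated description of $\ker(\pi)$. To identify $\ker(\pi)$ with $G$ topologically, I would use the map $\sigma: g \mapsto (g\inv, \psi(g))$: it is a continuous bijection onto $\ker(\pi)$ whose inverse is the restriction of the first-coordinate projection, and a short calculation using $H$-equivariance shows it is a homomorphism.

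For (4), $\iota(G) = G\times \{1\}$ is closed (preimage of $1$ under the second-coordinate projection) and normal (standard for a semidirect product); $\ker(\pi)$ is closed and normal as the kernel of a continuous homomorphism. Their intersection is trivial, because $(g,1)\in\ker(\pi)$ forces $\psi(g)=1$ and hence $g=1$ by injectivity of $\psi$. Finally, the set product $\iota(G)\ker(\pi)$ equals $\{(g, \psi(g')) \mid g,g'\in G\} = G \times \psi(G)$, whose closure in $G\times H$ is all of $G\rtimes_\psi H$ precisely because $\psi(G)$ is dense in $H$ by hypothesis.

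The main obstacle is concentrated in the prerequisite Proposition~\ref{prop:compression_action_cont} rather than in the theorem itself: one must promote the abstractly defined $H$-action on $G$ (conjugation transported through the bijection onto $\psi(G)$) to a jointly continuous action on the Polish topology of $G$, in the absence of any presumption that $\psi$ is a homeomorphism onto its image. The arguments above use this continuity only to guarantee that $G\rtimes_\psi H$ is a topological group; once that is in hand, everything else is essentially formal.
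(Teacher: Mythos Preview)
Your proposal is correct and follows essentially the same route as the paper: take the continuity of the $\psi$-equivariant action from Proposition~\ref{prop:compression_action_cont} to get (1), then verify (2)--(4) directly, with your computation $\iota(G)\ker(\pi)=G\times\psi(G)$ being a slightly more explicit version of the paper's density argument. One small slip: the inverse of $\sigma:g\mapsto(g\inv,\psi(g))$ is the first-coordinate projection \emph{followed by inversion}, not the projection alone, but this is of course still continuous.
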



This factorization imposes a considerable structural restriction on normal compressions in the category of Polish groups, which allows us to relate the normal subgroup structure and topological structure of $G$ to that of $H$ and vice versa.  The next theorem is a special case of a result of this kind.

\begin{thm}[See Theorem~\ref{thm:compression_simple}]\label{thmintro:compression_simple}
Suppose that $G$ and $H$ are centerless Polish groups, each with dense commutator subgroup, and that $H$ is a normal compression of $G$.  Then $G$ is topologically simple if and only if $H$ is topologically simple.
\end{thm}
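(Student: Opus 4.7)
\medskip

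The plan is to prove each implication by assuming a putative proper closed normal subgroup and deriving a contradiction, using the $\psi$-equivariant action from Theorem~\ref{thmintro:psi-compression_factor} (with continuity supplied by Proposition~\ref{prop:compression_action_cont}) to transfer normal subgroups between $G$ and $H$. Let $\psi \colon G \to H$ be the normal compression.

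For the forward direction, suppose $G$ is topologically simple and let $N \normal H$ be closed. Then $\psi^{-1}(N)$ is closed normal in $G$, so equal to $\triv$ or $G$. In the first case, $\psi(G) \cap N = \triv$ by injectivity; since both $\psi(G)$ and $N$ are normal in $H$, we get $[\psi(G), N] = \triv$, so $N$ centralizes the dense subgroup $\psi(G)$, and since each centralizer $C_H(h)$ is closed, $N$ centralizes all of $H$, forcing $N \subseteq Z(H) = \triv$. In the second case, $\psi(G) \subseteq N$, and closedness of $N$ gives $N \supseteq \ol{\psi(G)} = H$. This direction uses only that $H$ is centerless.

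For the backward direction, suppose $H$ is topologically simple and let $N \normal G$ be closed. First observe that $N$ is $H$-invariant under the $\psi$-equivariant action: on $\psi(G)$ this action agrees with the inner action of $G$ via $\psi$ (so preserves $N$), and the setwise stabilizer of $N$ in $H$ is closed by continuity of the action, hence contains $\ol{\psi(G)} = H$. Thus $\psi(N)$ is normal in $H$, so $\ol{\psi(N)} \in \{\triv, H\}$ by simplicity. The case $\ol{\psi(N)} = \triv$ yields $N = \triv$ by injectivity. In the remaining case $\ol{\psi(N)} = H$, consider the induced continuous action of $H$ on the Polish group $G/N$. Each element $\psi(n)$ with $n \in N$ acts by conjugation by $n$, hence trivially on $G/N$; the kernel of the action is an intersection of point stabilizers, hence closed, and contains $\psi(N)$, so it contains $\ol{\psi(N)} = H$. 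In particular $\psi(G)$ acts trivially on $G/N$, meaning $g g' g^{-1} \in g' N$ for all $g, g' \in G$, i.e.\@ $[G,G] \subseteq N$. Since $G$ is topologically perfect and $N$ is closed, $N \supseteq \ol{[G,G]} = G$, contradicting $N \neq G$.

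The main obstacle is the case $\ol{\psi(N)} = H$ in the backward direction: a proper closed $N \normal G$ can a priori have $\psi(N)$ dense in $H$, since normal compressions are notoriously asymmetric with respect to closedness. Converting ``$\psi(N)$ is dense normal in $H$'' into the algebraic statement $[G,G] \subseteq N$ is the crux, and it is here that one needs both the continuity of the $\psi$-equivariant action (to propagate the triviality of the induced action from $\psi(N)$ to its closure $H$) and the topological perfectness of $G$ (to upgrade $[G,G] \subseteq N$ to $N = G$).
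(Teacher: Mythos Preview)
Your proof is correct and follows essentially the same route as the paper: the forward direction is identical, and in the backward direction your argument that ``$\psi(N)$ dense implies $[G,G]\le N$'' via the $\psi$-equivariant action on $G/N$ is exactly the content of Proposition~\ref{prop:normal_compression}(2), which the paper proves through the semidirect product $G\rtimes_\psi H$ rather than the action directly (these are two phrasings of the same computation). The only cosmetic difference is that the paper packages the normality of $\psi(N)$ in $H$ and the implication $\ol{\psi(N)}=H\Rightarrow \ol{[G,G]}\le N$ as the standalone Proposition~\ref{prop:normal_compression}, then invokes it, whereas you reprove both facts inline.
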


\subsection{Generalizations of direct products and groups of semisimple type}
Suppose that $G$ is a topological group and that $J \subseteq \Norm(G)$, where $\Norm(G)$ is the collection of closed normal subgroups of $G$.  Set $G_J :=  \cgrp{N \mid N \in J}$.

\begin{defn}
We say that $\mc{S}\subseteq \Norm(G)$ is a \defbold{generalized central factorization}\index{generalized central factorization} of $G$, if $\triv \not\in \mc{S}$, $G_{\mc{S}}=G$ and $[N,M] = \triv$ for any $N\neq M$ in $\mc{S}$. In such a case, $G$ is said to be a \defbold{generalized central product}\index{generalized central product}.

We say $\mc{S}$ is a \defbold{quasi-direct factorization}\index{quasi-direct factorization} of $G$ if $\triv \not\in \mc{S}$, $G_{\mc{S}}=G$, and $\mc{S}$ has the following topological independence property:
\[
\forall X\subseteq \mathcal{P}(\mc{S}):\; \bigcap X=\emptyset \Rightarrow \bigcap_{A\in X}G_A=\{1\}. 
\]
In such a case, $G$ is said to be a \textbf{quasi-product}\index{quasi-product}. A subgroup $H$ of $G$ is a \defbold{quasi-factor}\index{quasi-factor} of $G$ if it is an element of some quasi-direct factorization. The factorizations $\mc{S}$ are \textbf{non-trivial}\index{non-trivial factorization} if $|\mc{S}|\ge 2$.
\end{defn}
\begin{rmk}This notion of quasi-product generalizes the one given in \cite{CM11}, which considered quasi-products with finitely many factors and trivial center.
\end{rmk}
Direct products are obviously quasi-products. Furthermore, we already see a general circumstance in which quasi-products that are not direct products occur: in Theorem~\ref{thmintro:psi-compression_factor}, the semidirect product $G \rtimes_{\psi} H$ is a quasi-product of the set of closed normal subgroups $\{\iota(G),\ker(\pi)\}$, but it is not a direct product of these quasi-factors unless $\psi$ is surjective.

The notions of generalized central factorization and quasi-direct factorization are equivalent for centerless groups (Proposition~\ref{prop:quasiproduct:centralizers}), but in general a generalized central product is a weaker kind of structure than a quasi-product.  Both kinds of generalized product are related to the direct product by the following theorem, which also serves to motivate the independence property in the definition of quasi-products.

\begin{thm}[See \S\ref{sec:quasiproduct1}]\label{thmintro:directproduct:quasiproduct}
Let $G$ be a topological group.
\begin{enumerate}[(1) ]
\item If $\mc{S}$ is a non-trivial generalized central factorization of $G$, then the diagonal map 
\[
d: G \rightarrow \prod_{N\in \mc{S}}G/G_{\mc{S}\setminus\{N\}}
\]
is a continuous homomorphism such that $d(G) \cap G/G_{\mc{S}\setminus \{N\}}$ is dense in $G/G_{\mc{S}\setminus \{N\}}$ for every $N\in \mc{S}$ and $\ker(d)$ is central in $G$. Furthermore, $\mc{S}$ is a quasi-direct factorization of $G$ if and only if $d$ is injective.

\item Let $\mc{K}$ be a set of topological groups and suppose that $G$ admits an injective, continuous homomorphism $\delta: G \rightarrow \prod_{K\in \mc{K}}K$ such that $\delta(G) \cap K$ is dense in each $K\in \mc{K}$.  For $\mc{S}: = \{\delta\inv(K) \mid K\in \mc{K}\}$, the group $\cgrp{\mc{S}}$ is a quasi-product of $\mc{S}$.
\end{enumerate}
\end{thm}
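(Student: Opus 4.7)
The plan is to prove (1) and (2) by direct computation, with the key leverage being the commutativity $[M,N]=\triv$ for distinct $M,N\in\mc{S}$ (which, in part (2), will come for free from injectivity of $\delta$ together with the commutativity of distinct coordinate subgroups of a direct product).

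For part (1), continuity of $d$ is immediate since each coordinate is a quotient map. For the density claim, fix $N\in \mc{S}$. Every $n\in N$ lies in $G_{\mc{S}\setminus\{M\}}$ whenever $M\neq N$, since $N$ itself is one of the generators of that closed subgroup; hence $d(N)$ is contained in the ``$N$-th coordinate'' copy $G/G_{\mc{S}\setminus\{N\}}\hookrightarrow \prod_M G/G_{\mc{S}\setminus\{M\}}$ (embedded by placing $1$'s in the other coordinates). To see $d(N)$ is dense in that coordinate, note that $G_{\mc{S}\setminus\{N\}}\subseteq C_G(N)$ (because $C_G(N)$ is closed and contains every generator $M\neq N$), so $N G_{\mc{S}\setminus\{N\}}$ is a subgroup whose closure contains $\cgrp{N\cup G_{\mc{S}\setminus\{N\}}}=\cgrp{\mc{S}}=G$. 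For $\ker(d)$: if $d(g)=1$, then $g\in G_{\mc{S}\setminus\{N\}}\subseteq C_G(N)$ for every $N\in\mc{S}$, so $g$ centralizes $\cgrp{\mc{S}}=G$, giving $\ker(d)\subseteq Z(G)$.

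For the quasi-direct $\Leftrightarrow$ injective equivalence: applying quasi-directness to $X=\{\mc{S}\setminus\{N\}\mid N\in\mc{S}\}$, which has empty intersection, yields $\ker(d)=\bigcap_{N}G_{\mc{S}\setminus\{N\}}=\triv$. Conversely, if $d$ is injective and $X\subseteq\mc{P}(\mc{S})$ has $\bigcap X=\emptyset$, then for each $N\in\mc{S}$ some $A\in X$ omits $N$, so $A\subseteq\mc{S}\setminus\{N\}$ and thus $G_A\subseteq G_{\mc{S}\setminus\{N\}}$. Intersecting over $A\in X$ and then over $N$ gives $\bigcap_{A\in X}G_A\subseteq\bigcap_N G_{\mc{S}\setminus\{N\}}=\ker(d)=\triv$.

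For part (2), set $G_K:=\delta\inv(K)$, $\mc{S}=\{G_K\mid K\in\mc{K}\}$, and $H:=\cgrp{\mc{S}}$. Each $G_K$ is closed by continuity and normal in $G$ because each coordinate $K$ is normal in $\prod_{K'}K'$; hence $\mc{S}\subseteq \Norm(H)$. For $K\neq K'$, $\delta([G_K,G_{K'}])\subseteq[K,K']=\triv$, so injectivity of $\delta$ forces $[G_K,G_{K'}]=\triv$; thus $\mc{S}$ is a generalized central factorization of $H$. For the topological independence, define, for each $A\subseteq \mc{S}$, the closed subproduct $P_A:=\{(x_{K'})\in\prod_{K'\in\mc{K}}K'\mid x_{K'}=1\text{ whenever }G_{K'}\notin A\}$. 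Then $\delta(H_A)\subseteq P_A$. If $\bigcap X=\emptyset$, then for each $K\in\mc{K}$ some $A\in X$ omits $G_K$, whence $\bigcap_{A\in X}P_A=\{1\}$; combined with injectivity of $\delta$ this gives $\bigcap_{A\in X}H_A=\triv$, as required.

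No substantive obstacle is anticipated: both parts are bookkeeping in the presence of coordinate-wise commutativity. The subtlest point is the identification of $G/G_{\mc{S}\setminus\{N\}}$ with its image in the product and the verification that the elementary subgroup $N$ already fills it densely; this is what ultimately couples the density statement to the hypothesis $G_{\mc{S}}=G$.
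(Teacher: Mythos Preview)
Your proof is correct and follows essentially the same approach as the paper. The only cosmetic differences are that in part (2) you phrase the independence argument via closed subproducts $P_A$ rather than coordinate projections $\pi_L$ (these are dual formulations of the same containment), and you additionally verify that $\mc{S}$ is a generalized central factorization of $H$, which is true but not needed for the conclusion.
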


We next introduce the class of topological groups of semisimple type. Our approach here is motivated by similar notions in the theories of finite groups and of Lie groups.  Given a group $G$ and a subset $K$ of $G$, we write $\ngrpd{K}{G}$ for the normal subgroup $\grp{gkg\inv \mid k \in K, g \in G}$.  Then the topological closure $\ngrp{K}{G}$ of $\ngrpd{K}{G}$ is the smallest closed normal subgroup of $G$ that contains $K$.

\begin{defn}
Let $G$ be a topological group.  A \defbold{component}\index{component} of $G$ is a closed subgroup $M$ of $G$ such that the following conditions hold:
\begin{enumerate}[(a) ]
\item $M$ is normal in $\ngrp{M}{G}$.
\item $M/\Z(M)$ is non-abelian.
\item Whenever $K$ is a proper closed normal subgroup of $M$, then $K$ is central in $\ngrp{M}{G}$. 
\end{enumerate}
The \defbold{layer}\index{layer} $E(G)$ of $G$ is the closed subgroup generated by the components of $G$. 
\end{defn}

Any Polish group has at most countably many components (Lemma~\ref{lem:semisimple:countable}).  Each component $M$ of $G$ is also a component of any closed subgroup containing $M$, so the study of components naturally reduces to the case when $G =E(G)$.

\begin{defn}
A topological group $G$ is of \defbold{semisimple type}\index{semisimple type} if $G = E(G)$. We say $G$ is of \defbold{strict semisimple type}\index{strict semisimple type} if in addition $\Z(G) = \triv$.
\end{defn}

We have good control over closed normal subgroups and quotients of groups of semisimple type.  The case of quotients is one reason to consider groups of semisimple type, rather than only groups of strict semisimple type.

\begin{thm}[See Theorem~\ref{thm:norm_sgrps}]\label{thmintro:norm_sgrps}
Suppose that $G$ is a topological group of semisimple type and $K$ is a closed normal subgroup of $G$.  Then $G/K$ is of semisimple type, and $G/\Z(G)$ is of strict semisimple type.
\end{thm}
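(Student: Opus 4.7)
The plan is to treat the two conclusions separately. Let $q \colon G \to G/K$ be the quotient map.

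\textbf{Step 1 ($G/K$ is of semisimple type).} The key claim is that for each component $M$ of $G$, either $M \leq K$ (and $q(M) = \triv$), or $\wt M := \ol{q(M)}$ is a component of $G/K$. Granted this, surjectivity of $q$ together with $G = \cgrp{\text{components of }G}$ yields $G/K \leq \cgrp{\ol{q(M)} : M \text{ a component of }G} \leq E(G/K)$, so $G/K = E(G/K)$. To establish the claim, fix a component $M$ with $M \not\leq K$. Since $M \cap K$ is then a proper closed normal subgroup of $M$, condition (c) from the definition of component gives $M \cap K \leq \Z(\ngrp{M})$, hence $M \cap K \leq \Z(M)$. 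Applying (c) also to $\Z(M)$---which is proper in $M$ by condition (b)---shows $\Z(M)$ is central in $\ngrp{M}$ and that $M/\Z(M)$ is topologically simple, non-abelian, centerless and topologically perfect. Setting $\wt N := \ol{q(\ngrp{M})}$, I verify the three component conditions for $\wt M$ inside $G/K$: for (a), $M \normal \ngrp{M}$ and continuity of conjugation yield $\wt M \normal \wt N \supseteq \ngrp{\wt M}$; for (b), the induced map $M/(M \cap K) \to \wt M$ is a normal compression with dense normal image $q(M)$, whose reduction modulo centers is a normal compression from the centerless, perfect, topologically simple group $M/\Z(M)$, so (by Theorem~\ref{thmintro:compression_simple} and a compatible analysis of the target's center) $\wt M/\Z(\wt M)$ is topologically simple and non-abelian; for (c), the previous step shows proper closed normal subgroups of $\wt M$ are contained in $\Z(\wt M)$, and the centrality of $\Z(M)$ in $\ngrp{M}$ gives $\ol{q(\Z(M))} \leq \Z(\ngrp{\wt M})$, so it suffices to establish $\Z(\wt M) = \ol{q(\Z(M))}$ by invoking the semidirect product factorization of Theorem~\ref{thmintro:psi-compression_factor}.

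\textbf{Step 2 ($\Z(G/\Z(G)) = \triv$).} Suppose $g \in G$ represents a central element of $G/\Z(G)$, i.e., $[g, G] \leq \Z(G)$. For each component $M$, the map $\phi_g\colon M \to \Z(G)$ defined by $m \mapsto [g, m]$ is a continuous homomorphism into the abelian group $\Z(G)$. Its kernel $\CC_M(g)$ is a closed normal subgroup of $M$. If proper, condition (c) forces $\CC_M(g) \leq \Z(M)$, so $M/\CC_M(g)$ has the non-abelian $M/\Z(M)$ as a quotient; this contradicts $M/\CC_M(g) \hookrightarrow \Z(G)$ (abelian). Hence $g$ centralizes every component, and by continuity of the centralizer together with $G = E(G)$, we conclude $g \in \Z(G)$.

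\textbf{Main obstacle.} The crux is condition (c) in Step 1: identifying $\Z(\wt M)$ with $\ol{q(\Z(M))}$, and showing $\wt M/\Z(\wt M)$ is actually topologically simple. The closure $\wt M = \ol{q(M)}$ may in principle have a larger center than the closure of the image of $\Z(M)$, and controlling this apparent slack will require the semidirect product factorization for normal compressions (Theorem~\ref{thmintro:psi-compression_factor}) together with the tight link it provides between closed normal subgroups of the source and those of the target.
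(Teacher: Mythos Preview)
Your Step~2 is correct and is essentially the paper's Lemma~\ref{lem:abelian_normal}: any element whose commutators with $G$ lie in $\Z(G)$ must centralize every component (since components are topologically perfect and cannot map nontrivially into an abelian group), hence lies in $\Z(G)$.

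Step~1, however, has a genuine gap, and the route you have chosen makes it hard to close. Two issues. First, the theorem is stated for arbitrary topological groups, while the normal-compression machinery you invoke (Theorems~\ref{thmintro:psi-compression_factor} and~\ref{thmintro:compression_simple}) is proved only for Polish groups; you cannot appeal to it here. Second, even in the Polish case, your ``main obstacle'' is real: there is no reason a priori why $\Z(\wt M)$ should equal $\ol{q(\Z(M))}$, and the semidirect-product factorization does not give this directly.

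The paper avoids all of this with a single elementary lemma you have not used (Proposition~\ref{prop:components}(1)): if $M$ is a component and $K$ is any closed subgroup normalized by $M$, then either $M \le K$ or $[M,K]=\triv$. This follows from the Three Subgroups Lemma and the fact that $M$ is topologically perfect. With this dichotomy in hand, the components of $G$ split into those contained in $K$ and those centralizing $K$; for $M$ in the latter set, one checks directly that $\ol{MK}/K$ is a component of $G/K$. Condition~(c) becomes immediate: a proper closed normal subgroup $S/K$ of $\ol{MK}/K$ has $M \nleq S$, so $[S,M]=\triv$ by the same dichotomy, and since every other component of $G$ either lies in $K$ or commutes with $\ol{MK}$, one gets $[S,G]\le K$, i.e.\ $S/K$ is central in all of $G/K$. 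No compression arguments, no center identifications, and no Polish hypothesis are needed.
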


One naturally anticipates that a group of semisimple type is almost a product of topologically simple groups.  We make this precise using generalized central products and quasi-products.

\begin{thm}[See \S\ref{sec:semisimple_structure}]\label{thmintro:min_normal:simple}
Let $G$ be a non-trivial topological group and let $\mc{M}$ be the collection of components of $G$. 
\begin{enumerate}[(1) ]
\item If $G$ is of semisimple type, then $\mc{M}$ is a generalized central factorization of $G$ and each $M\in \mc{M}$ is non-abelian and central-by-topologically simple.
\item If $G$ is of strict semisimple type, then $\mc{M}$ is a quasi-direct factorization of $G$ and each $M\in \mc{M}$ is non-abelian and topologically simple. 
\end{enumerate}
\end{thm}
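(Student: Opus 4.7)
The plan is to analyze a component in isolation, show distinct components commute, and then assemble the factorizations via Theorem~\ref{thmintro:directproduct:quasiproduct}.

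\emph{Single-component properties.} Fix $M \in \mc{M}$. Condition (b) gives that $M$ is non-abelian. If $\overline{[M,M]} \lneq M$, then (c) places $\overline{[M,M]} \leq \Z(\ngrp{M}) \cap M \leq \Z(M)$, making $M/\Z(M)$ abelian and contradicting (b); so $M$ is topologically perfect. For any closed normal subgroup $K$ of $M$ with $\Z(M) \leq K \lneq M$, (c) gives $K \leq \Z(\ngrp{M}) \cap M = \Z(M)$, so $M/\Z(M)$ is topologically simple. Hence each component is non-abelian and central-by-topologically-simple.

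\emph{Distinct components commute (the main obstacle).} I plan to prove $[L,M] = \triv$ for distinct $L, M \in \mc{M}$ via a two-step reduction: first $[L,[L,M]] = \triv$, then $[L,M] = \triv$ by the three-subgroup lemma together with topological perfection of $L$ and continuity of the commutator (to pass from abstract commutator identities to their closures). For the first step, split on whether $L \leq \ngrp{M}$. If $L \not\leq \ngrp{M}$, then (c) for $L$ forces $L \cap \ngrp{M} \leq \Z(\ngrp{L})$; since $[L,M] \leq \ngrp{L} \cap \ngrp{M}$ and $\ngrp{L}$ normalizes $L$, one computes $[L,[L,M]] \leq L \cap \ngrp{M} \leq \Z(L)$, so $[L,[L,[L,M]]] = \triv$ and a three-subgroup lemma step yields $[L,[L,M]] = \triv$. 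If $L \leq \ngrp{M}$, then $L$ normalizes $M$ and $[L,M] \leq M$; applying (c) to the closed $M$-normal subgroup of $M$ generated by $[L,M]$ gives two cases: either it lies in $\Z(\ngrp{M})$, so $[L,M]$ commutes with $L$ and $[L,[L,M]] = \triv$; or it equals $M$, which forces $M \leq \ngrp{L}$ since $[L,M] \leq \ngrp{L}$ and $M$ normalizes $\ngrp{L}$. In the latter case $L$ and $M$ normalize each other, so $[L,M] \leq L \cap M$, which is a proper closed normal subgroup of $L$ (as $L \leq M$ would force $L = M$ via (c) for $M$ and (b) for $L$); (c) for $L$ then places $[L,M] \leq \Z(L)$.

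\emph{Assembly.} Since $G = E(G) = \cgrp{\mc{M}}$, pairwise commutation shows $\mc{M}$ is a generalized central factorization, proving (1). For (2), suppose $\Z(G) = \triv$. Each $\Z(M)$ then commutes with $M$ itself and, by pairwise commutation, with every other component, hence with $\cgrp{\mc{M}} = G$; so $\Z(M) \leq \Z(G) = \triv$ and each component is topologically simple. Theorem~\ref{thmintro:directproduct:quasiproduct}(1) applied to $\mc{M}$ yields a diagonal map $d \colon G \to \prod_{M \in \mc{M}} G/G_{\mc{M} \setminus \{M\}}$ with central kernel; triviality of $\Z(G)$ makes $d$ injective, so $\mc{M}$ is a quasi-direct factorization.
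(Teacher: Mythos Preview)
Your argument is correct and follows the same overall arc as the paper: establish that each component is topologically perfect and central-by-topologically-simple, show that distinct components commute, and then assemble the factorizations (using centerlessness to upgrade the generalized central factorization to a quasi-direct one in part~(2)).

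The only substantive difference is in the commuting step. The paper's route is more modular: it first proves (Proposition~\ref{prop:components}(1)) that any closed subgroup $K$ normalized by a component $M$ either contains $M$ or centralizes $M$, using along the way (Lemma~\ref{lem:components:equiv}) that distinct $G$-conjugates of $M$ commute. The commuting of distinct components $L,M$ then follows by applying this once to $K=\ngrp{L}$ (if $M\nleq\ngrp{L}$) or to $K=L$ (if $M\leq\ngrp{L}$, so $M$ normalizes $L$); in each case one three-subgroup step suffices. Your direct case analysis on $L\leq\ngrp{M}$ versus $L\nleq\ngrp{M}$, with its nested applications of the three-subgroup lemma (first reducing to $[L,[L,M]]=\triv$ and then to $[L,M]=\triv$), reaches the same conclusion but is longer and does not isolate the reusable dichotomy of Proposition~\ref{prop:components}(1). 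For part~(2), your appeal to Theorem~\ref{thmintro:directproduct:quasiproduct}(1) is equivalent to the paper's route through Theorem~\ref{thm:norm_sgrps}(3), both ultimately resting on the fact that a generalized central factorization of a centerless group is automatically quasi-direct (Proposition~\ref{prop:quasiproduct:centralizers}).
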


\subsection{Chief factors in Polish groups}

\begin{defn}
For a topological group $G$, a \defbold{normal factor}\index{normal factor} of $G$ is a quotient $K/L$ such that $K$ and $L$ are distinct closed normal subgroups of $G$ with $L < K$. We say $K/L$ is a (topological) \defbold{chief factor}\index{chief factor} if there are no closed normal subgroups of $G$ lying strictly between $L$ and $K$.
\end{defn}

The key to analyzing a group via its chief factors is the association relation.

\begin{defn}
For a topological group $G$, we say the normal factors $K_1/L_1$ and $K_2/L_2$ are \defbold{associated}\index{association relation} to one another if the following equations hold:
\[
\ol{K_1L_2} = \ol{K_2L_1}; \; K_1 \cap \ol{L_1L_2} = L_1; \; K_2 \cap \ol{L_1L_2} = L_2.
\]
\end{defn}

One motivation for this definition is to overcome the breakdown of the second isomorphism theorem in topological groups. Given closed normal subgroups $N,M$ in a group $G$, we wish to say the factor $\ol{NM}/N$ is ``the same" as the factor $M/N\cap M$. The relation of association allows us to say precisely this (Lemma~\ref{lem:associated:secondiso}).  The association relation is furthermore an equivalence relation when restricted to non-abelian chief factors (Proposition~\ref{associated:centralizers:chief}).

Considering non-abelian chief factors up to association, a version of the Schreier refinement theorem for chief series holds.

\begin{thm}[See Theorem~\ref{thm:unique_associate}]\label{thmintro:Schreier_refinement}
Let $G$ be a Polish group, $K/L$ be a non-abelian chief factor of $G$, and
\[
\triv = G_0 \le G_1 \le \dots \le G_n = G
\]
be a series of closed normal subgroups in $G$.  Then there is exactly one $i \in \{0,\dots,n-1\}$ such that there exist closed normal subgroups $G_i \le B \le A \le G_{i+1}$ of $G$ for which $A/B$ is a non-abelian chief factor associated to $K/L$.  
\end{thm}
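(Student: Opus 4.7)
My plan is to establish uniqueness first via a short transitivity argument and then to tackle existence via explicit construction.

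\textbf{Uniqueness.} Suppose $A_1/B_1$ and $A_2/B_2$ are both non-abelian chief factors of $G$ associated to $K/L$, with $G_{i_1}\le B_1\le A_1\le G_{i_1+1}$ and $G_{i_2}\le B_2\le A_2\le G_{i_2+1}$ and $i_1<i_2$. Since association is an equivalence relation on non-abelian chief factors (Proposition~\ref{associated:centralizers:chief}), $A_1/B_1$ is associated to $A_2/B_2$. From $B_1\le A_1\le G_{i_1+1}\le G_{i_2}\le B_2$ we obtain $B_1\le B_2$, hence $\ol{B_1B_2}=B_2$, and the association condition $A_1\cap\ol{B_1B_2}=B_1$ simplifies to $A_1\cap B_2=B_1$; since $A_1\le B_2$ this forces $A_1=B_1$, contradicting $B_1<A_1$.

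\textbf{Existence.} I would let $j$ be the smallest index with $K\le\ol{LG_j}$. Such a $j$ exists since $\ol{LG_n}=G$, and $j\ge 1$ because $\ol{LG_0}=L\not\supseteq K$. By the minimality of $j$, the closed normal subgroup $K\cap\ol{LG_{j-1}}$ of $G$ is properly contained in $K$, so by the chief factor property it must equal $L$.

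I would then take
\[
A:=\ol{KG_{j-1}}\cap G_j\quad\text{and}\quad B:=\ol{LG_{j-1}}\cap G_j,
\]
both closed normal subgroups of $G$ satisfying $G_{j-1}\le B\le A\le G_j$. Direct computation gives $\ol{KB}=\ol{KG_{j-1}}$ and $\ol{LB}=\ol{LG_{j-1}}$, so $K\cap\ol{LB}=L$ and $A\cap\ol{LB}=\ol{LG_{j-1}}\cap G_j=B$. The remaining association condition, $\ol{AL}=\ol{KG_{j-1}}$, reduces to showing $K\le\ol{AL}$. To confirm that $A/B$ is itself a chief factor, I would argue that any closed normal $C$ of $G$ with $B\le C\le A$ has $K\cap C\in\{L,K\}$ by the chief factor property; if $K\cap C=K$ then $C\supseteq\ol{KG_{j-1}}\cap G_j=A$, while if $K\cap C=L$ then rerunning the association computation with $C$ in place of $B$ yields $\ol{LC}=\ol{LG_{j-1}}$ and $C=A\cap\ol{LG_{j-1}}=B$. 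Non-abelianness of $A/B$ is inherited via the association with $K/L$.

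\textbf{Main obstacle.} The critical step is the verification of $K\le\ol{AL}$, which is delicate precisely when $K$ is topologically absorbed into $\ol{LG_j}$ without being literally contained in $G_j$. Here one exploits $L\cap G_j\le A$ (because $L\cap G_j\le L\le K\le\ol{KG_{j-1}}$ and $L\cap G_j\le G_j$) and the commutator containment $[L,G_j]\le L\cap G_j\le A$ to manipulate the approximations $l_ng_n\to k$ with $l_n\in L$ and $g_n\in G_j$ provided by $K\le\ol{LG_j}$, showing that $k$ can be realized as a limit in $\ol{AL}$.
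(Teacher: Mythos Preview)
Your uniqueness argument is fine and is essentially the content of Lemma~\ref{chief:associated:ordered}. The existence argument, however, has a genuine gap at precisely the point you flag. The proposed resolution via $[L,G_j]\le A$ and ``manipulating the approximations $l_ng_n\to k$'' does not work: knowing $l_ng_n\to k$ with $g_n\in G_j$ gives no control placing $g_n$ (or any rearrangement) inside $\ol{AL}$, and the commutator $[L,G_j]$ is the wrong one. What actually closes this step is the non-abelian hypothesis. Since $K/L$ is centerless and $K\le\ol{LG_j}$, one cannot have $G_j\le\CC_G(K/L)$ (otherwise $\ol{LG_j}\le\CC_G(K/L)$ forces $K/L$ central). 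Hence $[K,G_j]\not\le L$, so $\ol{[K,G_j]L}=K$ by the chief-factor property; and $[K,G_j]\le K\cap G_j\le \ol{KG_{j-1}}\cap G_j=A$, giving $K\le\ol{AL}$.

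Even with that fix, your argument that $A/B$ is itself a chief factor does not close. The dichotomy ``$K\cap C\in\{L,K\}$'' is not available since $L$ need not lie in $C$; replacing it by $\ol{(K\cap C)L}\in\{L,K\}$, the case $\ol{(K\cap C)L}=K$ yields only $A\le\ol{CL}\cap G_j$, and concluding $\ol{CL}\cap G_j=C$ is exactly the topological modular law that fails. The paper avoids this entirely by a different route: it locates the slot via $\CC_G(K/L)$ (the least $i$ with $G_{i+1}\not\le\CC_G(K/L)$, which coincides with your $j-1$ by the argument above), builds an associated factor using the conjugation map into $\Aut(K/L)$, and then---crucially---passes through the $G$-equivariant normal compressions to $M/C$ with $C=\CC_G(K/L)$ and invokes Theorem~\ref{thm:compression_simple} to extract a genuine chief factor (replacing $A$ by $\ol{[A,A]B}$). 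Your Zassenhaus-style construction can be salvaged, but only by appealing to the same compression machinery for the chief-factor step; the bare lattice manipulations are not enough.
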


The above theorem suggests that the \textit{association classes} are fundamental building blocks of Polish groups. This leads to the following definition:

\begin{defn}
A \defbold{chief block}\index{chief block} of a topological group $G$ is an association class of non-abelian chief factors of $G$. For a non-abelian chief factor $A/B$, we write $[A/B]$ for the chief block. We write $\mf{B}_G$ for the set of chief blocks of $G$. 
\end{defn}

There is additionally a partial order on $\mf{B}_G$, denoted by $\leq$, which keeps track of the order in which representatives of association classes can appear in normal series.

Given a normal subgroup $N$ of a Polish group $G$ and a chief block $\mf{a}$, we say $N$ \defbold{covers}\index{covers} $\mf{a}$ if there exist $B \le A \le N$ such that $A/B \in \mf{a}$. The set of closed normal subgroups that cover a chief block $\mf{a}$ form a filter in the lattice of closed normal subgroups of $G$ (Lemma~\ref{block:covering:intersection}).  We say that $\mf{a}$ is \defbold{minimally covered}\index{minimally covered} if this filter is principal\textemdash{}\textit{i.e.}\ if there is a unique smallest closed normal subgroup that covers $\mf{a}$. We write $\mf{B}_G^{\min}$ for the set of minimally covered chief blocks.  

\begin{rmk}
At the present level of generality, it is not clear which, if any, chief blocks will be minimally covered; in the context of locally compact Polish groups this is remedied in \cite{RW_LC}, where it is shown that ``large'' chief blocks are necessarily minimally covered.
\end{rmk}

Considering the structure of $\mf{B}_{G}^{\min}$ when $G$ is a topologically characteristically simple Polish group (for example, when $G$ is a chief factor of some ambient Polish group) leads to an interesting trichotomy.
\begin{defn}
Let $G$ be a topologically characteristically simple topological group. 
\begin{enumerate}[(1) ]
\item The group $G$ is of \defbold{weak type}\index{weak type} if $\mf{B}_G^{\min}=\emptyset$. 
\item The group $G$ is of \defbold{stacking type}\index{stacking type} if $\mf{B}_G^{\min}\neq \emptyset$ and for all $\mf{a}, \mf{b} \in \mf{B}_G^{\min}$, there exists $\psi \in \Aut(G)$ such that $\psi.\mf{a} < \mf{b}$.
\end{enumerate}
\end{defn}

\begin{thm}[See Theorem~\ref{thm:chief:block_structure}]
If $G$ is a topologically characteristically simple Polish group, then $G$ is of either weak type, semisimple type, or stacking type.  Moreover, the three types are mutually exclusive.
\end{thm}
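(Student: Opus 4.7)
The proof splits into mutual exclusivity of the three types and exhaustivity.

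For \textbf{exclusivity}, weak and stacking types are disjoint by inspection of whether $\mf{B}_G^{\min}$ is empty. To exclude semisimple type from each, note that $\Z(G)$ is characteristic, so topological characteristic simplicity forces $\Z(G) \in \{\{1\}, G\}$. A component is non-abelian modulo its center, so $\Z(G) = \{1\}$ and $G$ is of strict semisimple type. Theorem~\ref{thmintro:min_normal:simple}(2) then writes $G$ as a quasi-direct factorization $\mc{M}$ of non-abelian topologically simple components. For any $M \in \mc{M}$, setting $N := G_{\mc{M} \setminus \{M\}}$, the factor $\ol{MN}/N$ is a non-abelian chief factor of $G$ whose block is minimally covered by $\ol{MN}$; so $\mf{B}_G^{\min} \ne \emptyset$, excluding weak type. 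Further, since $M$ is topologically simple, there are no non-abelian chief blocks of $G$ strictly below these blocks, so there is no $\psi \in \Aut(G)$ with $\psi.\mf{a} < \mf{a}$, excluding stacking type.

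For \textbf{exhaustivity}, assume $G$ is not of weak type and not of semisimple type. Since $E(G)$ is characteristic in $G$ and $E(G) \ne G$, we have $E(G) = \{1\}$: $G$ has no components. I show $G$ is of stacking type. Fix $\mf{a}, \mf{b} \in \mf{B}_G^{\min}$ and let $M := G_\mf{b}$, with $L$ the maximal closed normal subgroup of $G$ properly contained in $M$. Then $M$ satisfies component conditions (a) (since $M \normal G$) and (b) (since $\Z(M)$ is closed normal in $G$ and chief-factor minimality of $M/L$, combined with non-abelianness of $M/L$, force $\Z(M) \le L$, whence $M/\Z(M)$ surjects onto the non-abelian $M/L$). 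As $M$ is not a component of $G$, condition (c) must fail: there is a proper closed $K \normal M$ with $K \not\le \Z(M)$. From $K$, one locates a non-abelian chief factor $A/B$ of $G$ with $A \le L$ by applying Theorem~\ref{thmintro:Schreier_refinement} to a suitable normal series through the $G$-normal closure of $K$; the resulting chief block $\mf{c}$ satisfies $\mf{c} < \mf{b}$. Finally, the $\Aut(G)$-orbit of $G_\mf{a}$ densely generates $G$ by characteristic simplicity, and $\le$ is $\Aut(G)$-equivariant on $\mf{B}_G^{\min}$, so some $\psi \in \Aut(G)$ moves $\mf{a}$ to a block $\le \mf{c} < \mf{b}$, establishing stacking.

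The \textbf{main obstacle} is the final transport step: from the existence of a chief block $\mf{c} < \mf{b}$, I must produce $\psi \in \Aut(G)$ with $\psi.\mf{a} < \mf{b}$, yet nothing a priori guarantees $\mf{c}$ lies in the $\Aut(G)$-orbit of $\mf{a}$. The natural route is to prove, under the standing hypotheses, that all minimally covered chief blocks of $G$ lie in a single $\Aut(G)$-orbit---equivalently, that characteristic simplicity together with $E(G) = \{1\}$ forces enough homogeneity on $\mf{B}_G^{\min}$. Executing this requires a careful interplay of the uniqueness of minimum covers, the $\Aut(G)$-equivariance of the block machinery, and the absence of components.
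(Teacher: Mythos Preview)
Your exclusivity argument is essentially sound, though the paper handles it more directly: in semisimple type $\mf{B}_G^{\min}=\mf{B}_G$ is an antichain by Proposition~\ref{prop:semisimple_type:blocks}, which immediately rules out both weak type and the strict inequality demanded by stacking type.

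The exhaustivity argument has two genuine gaps, and the paper's route is quite different from yours.

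\textbf{Gap 1: producing $\mf{c}<\mf{b}$.} From the failure of condition~(c) you obtain a proper closed $K\normal M$ with $K\not\le\Z(M)$, but $K$ is only normal in $M$. The $G$-normal closure $\ngrp{K}_G$ sits inside $M$, yet nothing prevents $\ngrp{K}_G=M$; even when $\ngrp{K}_G\le L$, there is no reason it is non-abelian or covers any non-abelian chief block of $G$. Theorem~\ref{thmintro:Schreier_refinement} locates a \emph{given} chief block within a series---it does not manufacture chief blocks out of an arbitrary normal subgroup. The cleaner observation here is that $G$ has no minimal closed normal subgroup (Proposition~\ref{prop:semisimple:charsimple}), so $M$ strictly contains some closed $M_1\normal G$, hence $M_1\le L$; but again $M_1$ may be abelian, and iterating gives no termination.

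\textbf{Gap 2: the transport.} Your sketch ``the $\Aut(G)$-orbit of $G_{\mf{a}}$ densely generates $G$, so some $\psi$ moves $\mf{a}$ below $\mf{c}$'' runs in the wrong direction: if $\psi(G_{\mf{a}})=G_{\psi.\mf{a}}$ covers $\mf{c}$, Corollary~\ref{cor:ordering:min_cover} gives $\mf{c}\le\psi.\mf{a}$, not $\psi.\mf{a}\le\mf{c}$. Your proposed fix---that $\mf{B}_G^{\min}$ is a single $\Aut(G)$-orbit---is stronger than needed, not established, and in any case the stacking condition $\psi.\mf{a}<\mf{b}$ is a strict inequality, so being in one orbit is orthogonal to the goal.

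\textbf{How the paper proceeds.} The paper works in $G\rtimes A$ with $A=\Aut(G)$ given the discrete topology. Since $G$ is $A$-simple, $G/\triv$ is the lowermost representative of $\mf{a}^{G\rtimes A}$ for \emph{every} $\mf{a}\in\mf{B}_G^{\min}$ (Proposition~\ref{prop:induced_block}), so all such blocks have the same extension and hence form a single $(G\rtimes A)$-stacking class (Theorem~\ref{thm:induced_chief}). The purely combinatorial Lemma~\ref{lem:stackingclass_types} then forces this class to be either an antichain orbit or a proper stacking class. Antichain orbit gives a minimal closed normal subgroup of $G$ (Proposition~\ref{prop:induced_chief:minimal}), hence semisimple type by Proposition~\ref{prop:semisimple:charsimple}; proper stacking class is exactly stacking type. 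The transport you were missing is buried in the proof of Theorem~\ref{thm:induced_chief} (via Lemma~\ref{lem:chief:covering}): characteristic simplicity gives $\bigcap_{\psi\in\Aut(G)}\CC_G(\psi.\mf{a})=\triv$, so any non-trivial closed normal $N$ satisfies $N\not\le\CC_G(\psi.\mf{a})$ for some $\psi$, i.e.\ $N$ covers $\psi.\mf{a}$. Taking $N=G_{\mf{b}}$ yields $\psi.\mf{a}\le\mf{b}$---only a non-strict inequality, but that is all the dichotomy lemma needs.
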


We go on to define a space associated to a Polish group that captures the chief factor data. Using this space, we show that it makes sense to refer to a \textit{chief block} as being of weak, semisimple or stacking type.

\begin{thm}[See Proposition~\ref{prop:type_invariant}]\label{thm:prop:type_invariant}
Let $G$ be a Polish group and $\mf{a}\in \mf{B}_G$.  Every representative $A/B\in\mf{a}$ is of the same type: weak, semisimple, or stacking.
\end{thm}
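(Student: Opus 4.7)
My plan is to reduce the proposition to the case where the two representatives $A/B$ and $K/L$ of the block $\mf{a}$ share a common normal compression into a single Polish group, then argue that the type of a topologically characteristically simple Polish group is an invariant of this shared compression. Concretely, set $C := \overline{AL} = \overline{KB}$ and $D := \overline{BL}$. The association axioms $A \cap D = B$ and $K \cap D = L$, together with the density of $AD$ and $KD$ in $C$, give $G$-equivariant normal compressions
\[
\alpha : A/B \longrightarrow C/D, \qquad \beta : K/L \longrightarrow C/D,
\]
with dense normal images. The goal is to conclude the types of $A/B$ and $K/L$ agree by showing that each coincides with a ``type'' extracted from $C/D$ in a manner independent of the choice between $\alpha$ and $\beta$.

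The main technical engine I would need is a lemma asserting that a normal compression $\psi : H \to P$ of Polish groups with $H$ topologically characteristically simple induces a canonical correspondence, on the level of association classes, between non-abelian chief factors of $H$ and certain non-abelian chief factors of $P$, by sending the class of $M/N$ to the class of $\overline{\psi(M)}/\overline{\psi(N)}$. I would verify that this correspondence is well-defined on blocks, monotone for the block order $\leq$, carries minimally covered blocks to minimally covered blocks, and is compatible with the $G$-equivariant action available when $\psi$ arises in the present setting. The proof should build on Theorem~\ref{thmintro:psi-compression_factor}, which packages both $H$ and a dense copy of $P$ into the single Polish group $H \rtimes_\psi P$ and so lets us compare their normal subgroup structure inside a common ambient group, and on Theorem~\ref{thmintro:compression_simple}, which ensures that non-abelian simplicity is not created or destroyed by a compression.

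Once the lemma is in hand, I apply it to both $\alpha$ and $\beta$ to obtain bijections $\mf{B}_{A/B}^{\min} \to \mathcal{X} \leftarrow \mf{B}_{K/L}^{\min}$ for a common set $\mathcal{X}$ derived from $C/D$, each compatible with the block order and the relevant automorphism action. Since each of the three possible types is characterised by properties of $(\mf{B}^{\min}, \leq)$ together with the automorphism action (weak and stacking directly by definition, semisimple via Theorem~\ref{thmintro:min_normal:simple} and the trichotomy in Theorem~\ref{thm:chief:block_structure}), invariance of these properties under the bijection yields the required agreement.

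The main obstacle is the intermediate lemma. Establishing a correspondence between chief factors of $H$ and of $P$ is delicate: in a normal compression the topology of $H$ is strictly finer than the subspace topology from $P$, so distinct closed normal subgroups of $H$ may have the same closure in $P$, and closed normal subgroups of $P$ need not arise from any closed normal subgroup of $H$. The assumptions that $H$ is topologically characteristically simple and that we restrict to \emph{non-abelian} factors considered up to association should together eliminate these pathologies, but making that precise is the heart of the work. As a fallback strategy, should a unified lemma prove elusive, I would verify the three type-properties individually: the semisimple case via the component description of Theorem~\ref{thmintro:min_normal:simple} (checking that the images under $\alpha, \beta$ of components lift back from $C/D$ to each representative), the weak case by contraposition against the existence of a minimally covered block in either representative, and the stacking case by exclusion using the trichotomy.
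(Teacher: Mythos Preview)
Your proposal is correct and follows essentially the same route as the paper's own proof. A few points of comparison are worth noting.

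First, both you and the paper pass to a common compression target. You use $C/D = \ol{AL}/\ol{BL}$ as supplied directly by the association axioms (this is Lemma~\ref{lem:associated:common_compression}), whereas the paper uses the uppermost representative $M/\CC_G(\mf{a})$ of Proposition~\ref{prop:upper-rep}. Either choice works; the paper's has the minor advantage that $M/\CC_G(\mf{a})$ is itself a chief factor, but this is not essential here.

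Second, the ``technical engine'' you describe --- a block correspondence induced by a normal compression, monotone in the block order and preserving minimal covering --- is precisely what the paper establishes as Theorem~\ref{thm:compression:chief} together with Proposition~\ref{prop:compression_minimally_covered}. You correctly identify this as the heart of the argument.

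Third, you can drop the requirement of compatibility with the automorphism action. The paper observes that the three types are distinguished purely by the order structure of $\mf{B}^{\min}$: weak type is $\mf{B}^{\min}=\emptyset$; semisimple type forces $\mf{B}^{\min}$ to be a non-empty antichain (Proposition~\ref{prop:semisimple_type:blocks}); and stacking type forces $\mf{B}^{\min}$ to contain a comparable pair, since applying the defining condition with $\mf{a}=\mf{b}$ yields $\psi.\mf{a}<\mf{a}$. Hence an order isomorphism $\mf{B}^{\min}_{K_1/L_1}\to\mf{B}^{\min}_{K_2/L_2}$ already suffices, and the trichotomy of Theorem~\ref{thm:chief:block_structure} finishes the argument without ever tracking how automorphisms transport across the compression.
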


This work concludes with examples. Notably, F. Le Ma\^{i}tre contributes a number of interesting examples of normal compressions in the non-locally compact non-abelian Polish setting; these may be found in the appendix.

\section{Preliminaries}
All groups are Hausdorff topological groups and are written multiplicatively. Topological group isomorphism is denoted by $\simeq$. For a topological group $G$, the connected component of the identity is denoted by $G^\circ$, and the center is denoted by $\Z(G)$.  The group of automorphisms of $G$ as a topological group is denoted by $\Aut(G)$.  For a subset $K\subseteq G$, $\CC_G(K)$ is the collection of elements of $G$ that centralize every element of $K$. We denote the collection of elements of $G$ that normalize $K$ by $\N_G(K)$. We write $\ngrpd{K}{G}$ for the group $\grp{gkg\inv \mid k \in K, g \in G}$. The topological closure of $K$ in $G$ is denoted by $\ol{K}$. For $a,b,c\in G$, we set $[a,b]:=aba\inv b\inv $ and $[a,b,c]:=[[a,b],c]$. For $A,B\subseteq G$, we put 
\[
\left[A,B\right]:=\grp{[a,b]\;|\;a\in A\text{ and }b\in B};
\]
thus $[G,G]$ denotes the (abstract) \defbold{commutator subgroup}\index{commutator subgroup} of $G$.

In this article, minimal and maximal subgroups of a group $G$ are always taken to exclude $\triv$ and $G$ respectively; however, we will sometimes note for emphasis that the relevant groups are non-trivial, respectively proper.

Given a group $G$ acting on a set $X$, a map $\psi: X \rightarrow Y$ is said to be \defbold{$G$-invariant}\index{$G$-equivariant} if $\psi(g.x) = \psi(x)$ for all $g \in G$ and $x \in X$; if $G$ also acts on $Y$, then $\psi$ is \defbold{$G$-equivariant}\index{$G$-equivariant} if $\psi(g.x) = g.\psi(x)$ for all $g \in G$ and $x \in X$.  A \defbold{$G$-invariant}\index{$G$-invariant} subset $Z$ of $X$ is one for which the indicator function is $G$-invariant, in other words, for all $g \in G$ and $x \in X$ we have $x \in Z \Leftrightarrow g.x \in Z$.  Unless otherwise specified, when $X$ is a group and $G$ is a subgroup, the action of $G$ is assumed to be by conjugation, that is, $g.x = gxg\inv$.

\subsection{Polish spaces and groups}

\begin{defn}A \textbf{Polish space}\index{Polish space} is a separable topological space that admits a complete, compatible metric. A \textbf{Polish group}\index{Polish group} is a topological group so that the topology is Polish.
\end{defn}
We recall a few classical facts concerning Polish spaces.

\begin{lem}[{\cite[\S6.1 Proposition 1]{B_top2_89}}]\label{lem:increasing_Polish}
\begin{enumerate}[(1) ]
\item Every closed subspace of a Polish space is Polish.
\item Given a countable family of Polish spaces, their product and disjoint union are also Polish spaces.
\end{enumerate}
\end{lem}

\begin{thm}[Lusin--Souslin, cf. {\cite[(15.1)]{K95}}]\label{thm:lusin-souslin} Let $X$ and $Y$ be Polish spaces and $f:X\rightarrow Y $  be continuous. If $A\subseteq X$ is Borel and $f\rest_A$ is injective, then $f(A)$ is Borel.
\end{thm}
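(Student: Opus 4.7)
The plan is to prove the Lusin--Souslin theorem via the classical Souslin scheme construction, using Lusin's separation theorem as the main input.

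First I would reduce to the case $A=X$. Since $A$ is Borel in the Polish space $X$, a standard topology-refinement argument produces a finer Polish topology on $X$ in which $A$ is clopen; equivalently, $A$ carries a Polish topology (refining the subspace topology) with respect to which the inclusion $A\hookrightarrow X$ is continuous. Replacing $X$ by $A$ with this topology, $f$ remains continuous and injective, so it suffices to show that a continuous injective image of a Polish space inside a Polish space is Borel.

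Next I would build a Souslin scheme on $X$. Fix a complete compatible metric $d$ on $X$. Recursively construct a family $(X_s)_{s\in\wbaire}$ of Borel subsets with $X_\emptyset=X$, $X_s=\bigsqcup_{n\in\Nb}X_{s\conc n}$, and $\mathrm{diam}(X_s)\le 2^{-|s|}$. Each $f(X_s)$ is analytic as the continuous image of a Borel subset of a Polish space, and by injectivity of $f$ the sets $\{f(X_{s\conc n})\}_n$ are pairwise disjoint. The countable version of Lusin's separation theorem then yields pairwise disjoint Borel sets $B_{s\conc n}\subseteq Y$ with $f(X_{s\conc n})\subseteq B_{s\conc n}\subseteq\ol{f(X_{s\conc n})}$; intersecting with $B_s$ at each step arranges $B_{s\conc n}\subseteq B_s$.

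Finally I would show $f(A)=\bigcap_{k\ge 0}\bigsqcup_{|s|=k}B_s$, which is Borel. The inclusion $\subseteq$ is immediate. Conversely, given $y$ in the intersection, disjointness at each level produces a unique branch $\alpha\in\baire$ with $y\in B_{\alpha\rest k}$ for every $k$. Since $B_{\alpha\rest k}\subseteq\ol{f(X_{\alpha\rest k})}$, we may pick $x_k\in X_{\alpha\rest k}$ with $d(f(x_k),y)\le 2^{-k}$; the nesting and shrinking diameters of the $X_{\alpha\rest k}$ together with completeness of $d$ force $(x_k)$ to be Cauchy, hence convergent to some $x\in X$, and continuity of $f$ yields $f(x)=y$. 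The hard part is the countable separation step: the two-set version of Lusin's theorem is proved by exhibiting the class of sets that can be separated from a fixed analytic set by a Borel set as a $\sigma$-algebra containing the open sets, and the countable generalization then follows by a diagonal argument. Everything else in the proof is a bookkeeping exercise with trees.
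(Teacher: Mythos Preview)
The paper does not give its own proof of this statement: it is quoted in the Preliminaries as a classical fact with a citation to Kechris, so there is nothing to compare your argument against within the paper itself. Your outline is essentially the standard proof that appears in the cited reference, and it is correct in substance.

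Two small points worth tightening. First, you fix $d$ as a complete metric on $X$ but then write $d(f(x_k),y)\le 2^{-k}$; that distance is in $Y$, so you should fix a compatible metric on $Y$ separately. Second, your closing sketch of how to prove the two-set Lusin separation theorem (``exhibit the class of sets separable from a fixed analytic set as a $\sigma$-algebra containing the open sets'') is not the usual argument and is not obviously correct as stated; the standard proof proceeds via the tree presentation of analytic sets and a pigeonhole on Souslin schemes. Since you only invoke the separation theorem as a black box, this does not affect the validity of your main argument, but if you intend to include a proof of the lemma you should revisit that sketch.
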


\begin{thm}[{\cite[(8.38)]{K95}}]\label{thm:dense_cont} Suppose that $X$ and $Y$ are Polish spaces. If $f:X\rightarrow Y$ is Borel measurable, then there is a dense $G_{\delta}$ set $L\subseteq X$ so that $f\rest_L$ is continuous.
\end{thm}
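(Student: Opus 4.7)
The plan is to exploit the fact that every Borel subset of a Polish space has the Baire property. Fix a countable basis $\{V_n\}_{n\in\mb{N}}$ for the topology of $Y$, which exists because $Y$ is Polish and hence second countable. Since $f$ is Borel measurable, each preimage $f^{-1}(V_n)$ is a Borel subset of $X$.

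First I would invoke the standard descriptive set-theoretic fact that the $\sigma$-algebra of sets with the Baire property contains every Borel set: it contains all open sets trivially, is closed under complementation (since $A$ has the Baire property iff $X\setminus A$ does), and is closed under countable unions (a routine computation using that countable unions of meager sets are meager). Applied to each $f^{-1}(V_n)$, this yields an open set $U_n\subseteq X$ such that $D_n := f^{-1}(V_n)\mathbin{\triangle} U_n$ is meager. For each $n$, choose a meager $F_\sigma$ set $F_n\supseteq D_n$ (for instance, a countable union of closures of nowhere dense sets whose union already contains $D_n$). Then $F := \bigcup_{n\in\mb{N}} F_n$ is again a meager $F_\sigma$, and its complement $L := X\setminus F$ is a $G_\delta$ subset of $X$; density of $L$ in $X$ follows from the Baire category theorem, which applies because $X$ is Polish.

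It remains to check that $f\rest_L$ is continuous. For any $x\in L$ and any $n$, the point $x$ lies outside $D_n$, and therefore $x\in f^{-1}(V_n)$ if and only if $x\in U_n$. Consequently $(f\rest_L)^{-1}(V_n) = U_n\cap L$, which is open in the subspace topology on $L$. Since $\{V_n\}_{n\in\mb{N}}$ is a basis for the topology of $Y$, this establishes continuity of $f\rest_L$.

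The only non-routine ingredient is the Baire property of Borel sets; given that, the argument is bookkeeping, translating ``meager symmetric difference'' into ``equality with an open set on the complement of an $F_\sigma$ hull''. I anticipate no genuine obstacle beyond the mild care needed to thicken each meager $D_n$ to a meager $F_\sigma$, so that the exceptional set $F$ is itself $F_\sigma$ and $L$ is a dense $G_\delta$ rather than merely a comeager set.
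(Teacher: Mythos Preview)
The paper does not give its own proof of this statement; it simply cites it as \cite[(8.38)]{K95}. Your argument is correct and is in fact the standard proof (essentially the one in Kechris): use that Borel sets have the Baire property, pull back a countable basis of $Y$, thicken the meager symmetric differences to a single meager $F_\sigma$, and take its complement.
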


Specializing to Polish groups, we note two closure properties of the class.
\begin{prop}[{\cite[3.1 Proposition 4]{B_top2_89}}]\label{prop:quotient}
If $G$ is a Polish group and $H\normal G$ is a closed normal subgroup, then $G/H$ is a Polish group.
\end{prop}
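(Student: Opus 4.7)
The plan is to verify that $G/H$, equipped with the quotient topology, satisfies the three defining properties of a Polish group: it is a Hausdorff topological group, it is separable, and it is completely metrizable.

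The topological group structure and Hausdorffness are standard. The quotient topology makes $G/H$ into a topological group for which the projection $\pi \colon G \to G/H$ is a continuous, open, surjective homomorphism, and the hypothesis that $H$ is closed ensures cosets of $H$ are closed, from which Hausdorffness of $G/H$ follows by a standard group-theoretic argument. Separability (in fact, second countability) of $G/H$ is inherited from $G$: if $\mathcal{B}$ is a countable base for $G$, then $\{\pi(U) : U \in \mathcal{B}\}$ is a countable base for $G/H$, since $\pi$ is open and surjective.

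The heart of the proof is producing a compatible complete metric on $G/H$. By the Birkhoff--Kakutani theorem, any Hausdorff, first-countable topological group is metrizable with a compatible left-invariant metric; applied to $G$, this yields such a metric $d$, and the normality of $H$ together with left-invariance of $d$ makes
\[
\tilde d(xH, yH) = \inf_{h \in H} d(x, yh)
\]
well-defined on cosets and a compatible metric on $G/H$ inducing the quotient topology.

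The delicate point, and the main obstacle, is completeness. The metric $d$ need not itself be complete, since a Polish group need not admit a compatible complete left-invariant metric, so one cannot directly lift Cauchy sequences in $(G/H, \tilde d)$ to convergent sequences in $(G, d)$. To overcome this, I would invoke a separately chosen compatible complete metric $\rho$ on $G$, which exists because $G$ is Polish, and interleave the two metrics. Given a $\tilde d$-Cauchy sequence $(x_n H)$, pass to a subsequence whose $\tilde d$-gaps shrink sufficiently rapidly, and then inductively select representatives $y_n \in x_n H$ such that $(y_n)$ is simultaneously $d$-Cauchy (ensuring its $\pi$-image tracks the original sequence in $G/H$) and $\rho$-Cauchy (using that $\rho$ and $d$ induce the same topology on $G$ to constrain each $y_{n+1}$ to lie in both a small $d$-neighborhood and a small $\rho$-neighborhood of $y_n$). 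Completeness of $\rho$ then forces $y_n \to y$ in $G$ for some $y$, and continuity of $\pi$ gives $x_n H \to yH$ in $G/H$. This establishes completeness of $\tilde d$ (or of an equivalent compatible metric), finishing the proof that $G/H$ is Polish.
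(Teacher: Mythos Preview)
The paper does not prove this proposition; it is stated in the preliminaries with a citation to Bourbaki and used as a black box. There is therefore no argument in the paper to compare yours against.

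Your outline is the standard direct proof and is essentially correct. One point deserves tightening: you describe first passing to a subsequence with rapidly shrinking $\tilde d$-gaps and \emph{then} inductively choosing representatives, but the $d$-radius required to land inside a prescribed $\rho$-ball about $y_n$ depends on $y_n$, which is not known until the induction reaches stage $n$. So the two steps cannot be done in separate passes. The fix is to interleave them in a single induction: having chosen $y_n\in x_{k_n}H$, pick $\delta_n>0$ with $B_d(y_n,\delta_n)\subseteq B_\rho(y_n,2^{-n})$, then use the $\tilde d$-Cauchy hypothesis to choose $k_{n+1}>k_n$ with $\tilde d(x_{k_n}H,x_{k_{n+1}}H)<\delta_n$, and finally select $y_{n+1}\in x_{k_{n+1}}H$ with $d(y_n,y_{n+1})<\delta_n$. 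With that adjustment the argument goes through.
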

\begin{prop}[{\cite[2.10 Proposition 28]{B_top1_89}}]\label{prop:semidirect}
Suppose that $G$ and $H$ are Polish groups. If $H$ acts continuously by automorphisms on $G$ via $\alpha$, then the semidirect product $G\rtimes_{\alpha} H$ is a Polish group under the product topology. 
\end{prop}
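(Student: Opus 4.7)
The plan is to reduce the claim to two routine verifications: that the underlying topological space is Polish, and that the group operations on $G \rtimes_\alpha H$ are continuous.

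First I would note that the underlying set of $G\rtimes_\alpha H$ is $G\times H$ and it is equipped with the product topology by hypothesis. Since a finite product of Polish spaces is Polish (separability passes to products of countably many factors, and the sum of complete compatible metrics, appropriately rescaled, gives a complete compatible metric on the product), $G\times H$ is a Polish space. So only the topological group structure requires argument.

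Next I would write out the group operations explicitly in terms of $\alpha$. Multiplication is given by
\[
(g_1,h_1)\cdot(g_2,h_2) \;=\; \bigl(g_1\,\alpha(h_1)(g_2),\; h_1 h_2\bigr),
\]
and inversion by
\[
(g,h)^{-1} \;=\; \bigl(\alpha(h^{-1})(g^{-1}),\; h^{-1}\bigr).
\]
The second coordinate of each formula is continuous because $H$ is a topological group. For the first coordinates, continuity is built up from the following facts, all of which hold by hypothesis: multiplication and inversion on $G$ are continuous; multiplication and inversion on $H$ are continuous; and the action map $\alpha: H\times G\to G$ is jointly continuous. Composing these in the obvious way (e.g., for inversion, $(g,h)\mapsto (g^{-1},h^{-1})\mapsto \alpha(h^{-1})(g^{-1})$) yields continuity of each coordinate function, and hence continuity of the multiplication and inversion maps on $G\rtimes_\alpha H$.

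There is essentially no obstacle here; the result is well-known and, as the paper indicates, appears in Bourbaki. The only conceptual point that could cause confusion is the meaning of ``continuous action by automorphisms,'' which must be interpreted as joint continuity of $\alpha:H\times G\to G$ (rather than merely separate continuity) in order for the multiplication map on $G\rtimes_\alpha H$ to be continuous. Once that is fixed, the verification is formal. Combined with Proposition~\ref{prop:quotient}, this closure property is precisely what will be needed repeatedly when passing between a normal compression $\psi:G\to H$ and its associated semidirect product $G\rtimes_\psi H$ in Theorem~\ref{thmintro:psi-compression_factor}.
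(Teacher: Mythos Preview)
Your argument is correct. The paper does not actually supply a proof of this proposition; it simply cites Bourbaki, so there is nothing to compare at the level of strategy. Your verification---Polishness of the product space plus joint continuity of multiplication and inversion via the explicit formulas---is exactly the standard argument, and your remark that ``continuous action'' must mean joint continuity of $\alpha:H\times G\to G$ is the one point that needs to be made explicit.
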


Various automatic continuity properties additionally hold for Polish groups. 
\begin{thm}[{\cite[(9.10)]{K95}}]\label{thm:borel_measurable}
Let $G$ and $H$ be Polish groups. If $\psi:G\rightarrow H$ is a Borel measurable homomorphism, then $\psi$ is continuous. In particular, any Borel measurable automorphism of a Polish group is a homeomorphism. 	
\end{thm}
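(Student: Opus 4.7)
The plan is to combine Theorem~\ref{thm:dense_cont} with Pettis's lemma, which in a Polish group asserts that any non-meager set $A$ with the Baire property has $AA^{-1}$ containing an open neighborhood of the identity. Since $\psi$ is a homomorphism, it suffices to establish continuity at $1_G$: for any $g \in G$ and any net $x_\alpha \to g$, we have $g^{-1}x_\alpha \to 1_G$, whence $\psi(x_\alpha) = \psi(g)\psi(g^{-1}x_\alpha) \to \psi(g)$ once continuity at $1_G$ is known.

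To prove continuity at $1_G$, I would fix an open neighborhood $U$ of $1_H$ and pick a symmetric open neighborhood $V$ of $1_H$ with $V \cdot V \subseteq U$. Applying Theorem~\ref{thm:dense_cont} to the Borel map $\psi$ yields a dense $G_\delta$ subset $L \subseteq G$ on which $\psi\rest_L$ is continuous. Fix any $g_0 \in L$; continuity of $\psi\rest_L$ at $g_0$ produces an open neighborhood $W$ of $g_0$ in $G$ such that $\psi(W \cap L) \subseteq \psi(g_0)V$. Setting $A := g_0^{-1}(W \cap L)$, the homomorphism property of $\psi$ gives $\psi(A) \subseteq V$. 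Moreover, $A$ is contained in the open neighborhood $g_0^{-1}W$ of $1_G$, is comeager there (being a translate of a comeager set intersected with an open set), and has the Baire property (as the intersection of an open set with a Borel set).

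Now I would invoke Pettis: since $A$ is non-meager and has the Baire property, $AA^{-1}$ contains an open neighborhood $N$ of $1_G$, and since $\psi(AA^{-1}) \subseteq V V^{-1} = V \cdot V \subseteq U$, we conclude $N \subseteq \psi^{-1}(U)$, which is the desired continuity at $1_G$. For the final sentence of the theorem, if $\psi$ is a bijective Borel measurable homomorphism, then the first part gives continuity of $\psi$; Theorem~\ref{thm:lusin-souslin} then ensures $\psi$ sends Borel sets to Borel sets, so $\psi^{-1}$ is Borel measurable, and a second application of the first part shows $\psi^{-1}$ is also continuous, making $\psi$ a homeomorphism.

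The main obstacle I anticipate is the invocation of Pettis's lemma itself: there is no direct route from ``continuous on a dense $G_\delta$'' to ``continuous on all of $G$'' without some additional ingredient linking Baire category to the group topology, and Pettis (or an equivalent open mapping theorem in the Polish setting) is that ingredient. The remaining arguments amount to careful bookkeeping with symmetric neighborhoods, translations by elements of $L$, and the homomorphism property of $\psi$.
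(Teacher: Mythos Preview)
Your proof is correct. The paper does not supply its own proof of this statement; it simply cites Kechris \cite[(9.10)]{K95}. Your argument is essentially the standard one found there: reduce to continuity at the identity, produce a non-meager set with the Baire property mapping into a small symmetric neighborhood, and apply Pettis's lemma. Your route to that non-meager set via Theorem~\ref{thm:dense_cont} is a mild variant of the textbook approach (which instead covers $H$ by countably many translates of $V$ and pulls back to find a non-meager preimage directly), but both arrive at the same place and both are valid. The treatment of the ``in particular'' clause via Theorem~\ref{thm:lusin-souslin} is also correct.
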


\begin{thm}[{\cite[(9.14)]{K95}}]\label{thm:sep_cont}
Let $G$ be a Polish group and $X$ a Polish space on which $G$ acts by homeomorphisms. If the action of $G$ on $X$ is separately continuous, then the action is continuous.
\end{thm}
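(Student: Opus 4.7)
The plan is a Baire-category argument that leverages homogeneity from the group structure. Write $a \colon G \times X \to X$, $a(g,x) = g.x$, for the action map. Because $G$ and $X$ are Polish (in particular second countable and metric) and $a$ is separately continuous, a classical result shows $a$ is of Baire class 1, and in particular Borel measurable. Applying Theorem~\ref{thm:dense_cont} to $a$ then yields a dense $G_\delta$ set $C \subseteq G \times X$ at every point of which $a$ is continuous.

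Next I would show $C$ has a product structure inherited from the group action. For each $k \in G$, the translation $\tau_k \colon (h,y) \mapsto (kh,y)$ is a self-homeomorphism of $G \times X$ satisfying $a \circ \tau_k = L_k \circ a$, where $L_k \colon x \mapsto k.x$ is a self-homeomorphism of $X$; hence $\tau_k$ permutes the continuity set $C$. Similarly, for each $g \in G$ the diagonal twist $\sigma_g \colon (h,y) \mapsto (hg\inv, g.y)$ is a self-homeomorphism of $G \times X$ (using separate continuity of the action) satisfying $a \circ \sigma_g = a$, so $\sigma_g$ also preserves $C$. Together these imply $C = G \times Y_0$ for a dense $G_\delta$, $G$-invariant subset $Y_0 \subseteq X$.

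Finally, I would reduce joint continuity to verifying it at points $(e,x)$: given $(h_n, y_n) \to (h,y)$, factor $h_n.y_n = (h_n h\inv).(h.y_n)$ and use continuity of the homeomorphism $x \mapsto h.x$ to reduce matters to showing $g_n . x_n \to x$ whenever $g_n \to e$ in $G$ and $x_n \to x$ in $X$. For $x \in Y_0$ this is immediate from continuity of $a$ on $C$. The main obstacle is then to propagate continuity at $(e,\cdot)$ from $Y_0$ to all of $X$; this is the delicate step, since separate continuity provides no uniformity along a varying sequence $g_n$. One way to proceed is to fix a compatible complete metric on $X$ together with a Birkhoff--Kakutani left-invariant metric on $G$, approximate $x \in X \setminus Y_0$ by points of $Y_0$, and exploit completeness on both sides to obtain uniform estimates, effectively invoking an Ellis-type theorem that separately continuous actions of Polish groups on Polish spaces are automatically jointly continuous.
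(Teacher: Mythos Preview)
The paper does not prove this theorem: it is quoted from Kechris in the preliminaries with no argument, so there is no proof in the paper to compare against.

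As for your proposal, it has a genuine gap at the last step, and you essentially concede this. Once you have reduced the continuity set to $C = G \times Y_0$ with $Y_0 \subseteq X$ a dense $G_\delta$, $G$-invariant set, nothing in your setup lets you pass from $Y_0$ to all of $X$. Left translations in $G$ and the diagonal twists $\sigma_g$ both act only through the $G$-coordinate (the latter merely makes $Y_0$ invariant); there is no homogeneity available in the $X$-direction. Approximating $x \notin Y_0$ by points of $Y_0$ requires equicontinuity of the maps $y \mapsto g_n.y$ along a varying sequence $g_n$, which separate continuity does not provide. Your closing appeal to ``an Ellis-type theorem that separately continuous actions of Polish groups on Polish spaces are automatically jointly continuous'' is precisely the statement being proved, so the argument is circular at that point.

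A smaller issue: Theorem~\ref{thm:dense_cont} gives a dense $G_\delta$ set on which the \emph{restriction} of $a$ is continuous, which is weaker than continuity of $a$ at each point of that set. The statement you actually use --- that the set of continuity points of a Baire-class-one map on a Polish space is a dense $G_\delta$ --- is true, but it is a different fact.

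The way out is to run the Baire argument on $G$ rather than on $G \times X$: fix a sequence $x_n \to x$, exhaust a neighbourhood of $e$ by closed sets indexed by how far along the sequence the estimate $d(h.x_n,x) \le \epsilon$ holds, extract one with interior, and then translate in $G$ to reach $e$. The paper's proof of Proposition~\ref{prop:compression_action_cont} carries out exactly this kind of manoeuvre for orbit maps $h \mapsto h.g$, and illustrates why having the \emph{group} as the domain of the Baire argument is what makes the translation step succeed.
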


\section{Normal compressions in Polish groups}\label{sec:compressions}
We here make a general study of normal compressions between Polish groups. In particular, Theorem~\ref{thmintro:psi-compression_factor} will be established. In Section~\ref{sec:associated}, we shall see that normal compressions play a fundamental role in the study of chief factors in Polish groups.

\begin{defn}
Let $G$ and $H$ be topological groups. A continuous homomorphism $\psi:G\rightarrow H$ is a \defbold{normal compression}\index{normal compression} if it is injective with a dense, normal image.
\end{defn}

\subsection{The equivariant action}\label{sec:equivariant}

For Polish groups $G$ and $H$ such that $H$ is a normal compression of $G$, there is a canonical continuous action of $H$ on $G$ which allows us to form the external semidirect product $G\rtimes H$, which is itself a Polish group with respect to the product topology. This external semidirect product gives a natural factorization of the normal compression.

\begin{lem}\label{lem:Polish:continuous_pullback}
Let $G$ and $H$ be Polish groups with $\psi: G \rightarrow H$ a continuous, injective homomorphism and let $\chi$ be a topological group automorphism of $H$ such that $\chi(\psi(G)) = \psi(G)$.  Then the map $\phi_{\chi}:G\rightarrow G$ defined by $\phi_{\chi}:=\psi\inv\circ\chi\circ\psi$ is a topological group automorphism of $G$.
\end{lem}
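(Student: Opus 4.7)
The plan is to show $\phi_{\chi}$ is an abstract group automorphism of $G$ that is Borel measurable, and then invoke the automatic continuity result (Theorem~\ref{thm:borel_measurable}) to upgrade it to a topological group automorphism.

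First I would verify that $\phi_{\chi}$ makes sense and is an abstract automorphism. Well-definedness is immediate from the hypothesis $\chi(\psi(G)) = \psi(G)$: for any $g \in G$, the element $\chi(\psi(g))$ lies in $\psi(G)$, so $\psi^{-1}$ can be applied. It is a homomorphism as a composition of homomorphisms. Injectivity follows from injectivity of $\psi$ and $\chi$; surjectivity follows by reversing the argument, using that $\chi$ is surjective onto $\psi(G)$ when restricted to $\psi(G)$.

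The substantive point is to show $\phi_{\chi}$ is Borel. Here I would use Lusin--Souslin (Theorem~\ref{thm:lusin-souslin}): since $\psi$ is a continuous injection of Polish spaces, $\psi(G)$ is a Borel subset of $H$, and for any Borel set $B \subseteq G$ the image $\psi(B)$ is Borel in $H$. Thus the inverse map $\psi^{-1}\colon \psi(G) \rightarrow G$ is Borel measurable, since the preimage of a Borel set $B$ under $\psi^{-1}$ is exactly $\psi(B)$. Now $\chi \circ \psi\colon G \rightarrow \psi(G)$ is continuous (viewed into $\psi(G)$ with the subspace topology), and composing with the Borel map $\psi^{-1}$ yields that $\phi_{\chi} = \psi^{-1} \circ \chi \circ \psi$ is Borel measurable.

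Finally, since $G$ is Polish and $\phi_{\chi}$ is a Borel measurable automorphism of $G$ as an abstract group, Theorem~\ref{thm:borel_measurable} gives that $\phi_{\chi}$ is a homeomorphism, and hence a topological group automorphism of $G$. No step is really an obstacle; the only subtlety is the appeal to Lusin--Souslin to see $\psi^{-1}$ is Borel on its image, which is precisely the kind of difficulty avoided by staying in the category of Polish groups and exploiting automatic continuity.
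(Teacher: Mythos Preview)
Your proposal is correct and follows essentially the same approach as the paper: both verify the abstract automorphism property, invoke Lusin--Souslin to handle the Borel measurability of $\psi^{-1}$ on its image, and then apply the automatic continuity theorem (Theorem~\ref{thm:borel_measurable}). The only cosmetic difference is that the paper computes $\phi_{\chi}^{-1}(O) = \psi^{-1}\circ\chi^{-1}\circ\psi(O)$ directly for open $O$ and checks this set is Borel, whereas you package the same computation as ``$\psi^{-1}$ is Borel measurable, hence so is the composition''; the content is identical.
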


\begin{proof}
It is clear that $\phi_{\chi}$ is an abstract group automorphism of $G$. In view of Theorem~\ref{thm:borel_measurable}, it suffices to show that $\phi_{\chi}$ is Borel measurable to conclude the lemma. 

Fixing $O\subseteq G$ an open set, we see
\[
\phi_{\chi}^{-1}(O)=\{g\in G\mid \psi\inv\circ\chi\circ\psi(g)\in O\}=\{g\in G\mid g\in \psi^{-1}\circ\chi^{-1}\circ \psi(O)\}.
\]
Since $\psi$ is injective and continuous, $\psi(O)$ is a Borel set by Theorem~\ref{thm:lusin-souslin}, and thus, $\psi^{-1}\chi^{-1} \psi(O)$ is a Borel set. We conclude that $\phi_{\chi}^{-1}(O)$ is a Borel set, and it now follows the map $\phi_{\chi}$ is Borel measurable, verifying the lemma.
\end{proof}

We only use the following special case of Lemma~\ref{lem:Polish:continuous_pullback}.

\begin{cor}\label{cor:Polish:compression_pullback}
Let $G$ and $H$ be Polish groups and let $\psi: G \rightarrow H$ be a continuous, injective homomorphism such that $\psi(G)$ is normal in $H$.  Then for each $h\in H$, the map $\phi_h:G\rightarrow G$ defined by $\phi_h(g): = \psi^{-1}(h\psi(g)h\inv)$ is a topological group automorphism of $G$.
\end{cor}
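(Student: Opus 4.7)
The plan is to apply Lemma~\ref{lem:Polish:continuous_pullback} directly, taking $\chi$ to be the inner automorphism of $H$ given by conjugation by $h$, namely $\chi(x) := hxh\inv$. The map $\chi$ is a topological group automorphism of $H$ because conjugation in a topological group is continuous (its inverse being conjugation by $h\inv$), and it is an abstract group automorphism since $H$ is a group. This handles the first hypothesis of the lemma.

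Next I would verify $\chi(\psi(G)) = \psi(G)$. Since $\psi(G)$ is a normal subgroup of $H$ by assumption, conjugation by any element of $H$ sends $\psi(G)$ into itself; applying the same to $h\inv$ gives the reverse inclusion, so $\chi(\psi(G)) = \psi(G)$. This verifies the second hypothesis of Lemma~\ref{lem:Polish:continuous_pullback}.

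With both hypotheses satisfied, Lemma~\ref{lem:Polish:continuous_pullback} yields that $\psi\inv\circ\chi\circ\psi$ is a topological group automorphism of $G$. Unwinding the definitions, $\psi\inv\circ\chi\circ\psi(g) = \psi\inv(h\psi(g)h\inv) = \phi_h(g)$, which is exactly the map in the statement. There is no real obstacle here: once the specialization $\chi = \mathrm{Ad}(h)$ is chosen, the work has already been done in the preceding lemma, where the substantive content (Borel measurability of $\phi_\chi$ via the Lusin--Souslin theorem and automatic continuity for Borel-measurable homomorphisms of Polish groups) was handled.
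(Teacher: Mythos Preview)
Your proof is correct and is exactly the intended specialization: the paper explicitly presents this corollary as a special case of Lemma~\ref{lem:Polish:continuous_pullback} without writing out details, and your choice $\chi(x) = hxh\inv$ together with the normality of $\psi(G)$ is precisely how the reduction goes.
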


Corollary~\ref{cor:Polish:compression_pullback} gives a canonical continuous action in the setting of normal compressions.  Given a normal compression $\psi:G\rightarrow H$, this action of $H$ on $G$ is the unique action that makes $\psi$ $H$-equivariant.
\begin{defn}
Suppose that $G$ and $H$ are Polish groups and $\psi:G\rightarrow H$ is a normal compression. 
We call the action of $H$ on $G$ given by $(h,g)\mapsto \phi_h(g)$ the \textbf{$\psi$-equivariant action}\index{$\psi$-equivariant action}; when clear from context, we suppress ``$\psi$". 
\end{defn}

Our next proposition is a special case of \cite[(9.16)]{K95}; we include a proof for completeness.
\begin{prop}\label{prop:compression_action_cont}
If $G$ and $H$ are Polish groups and $\psi:G\rightarrow H$ is a normal compression, then the $\psi$-equivariant action is continuous.
\end{prop}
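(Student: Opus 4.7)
The plan is to invoke Theorem~\ref{thm:sep_cont} to reduce the problem to separate continuity of the action map $\alpha:H\times G\rightarrow G$ given by $(h,g)\mapsto \phi_h(g)=\psi\inv(h\psi(g)h\inv)$. Separate continuity in the $G$-coordinate is immediate: for each fixed $h\in H$, Corollary~\ref{cor:Polish:compression_pullback} tells us that $\phi_h$ is a topological group automorphism of $G$, hence continuous. The real content is therefore to verify that for each fixed $g\in G$, the orbit map $f_g:H\rightarrow G$ defined by $f_g(h):=\phi_h(g)$ is continuous.

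For Borel measurability of $f_g$, I would first write $f_g=\psi\inv\circ\beta_g$, where $\beta_g:H\rightarrow H$ is the continuous map $h\mapsto h\psi(g)h\inv$. Since $\psi(G)$ is normal in $H$, $\beta_g$ takes values in $\psi(G)$. As $\psi$ is a continuous injection between Polish spaces, Theorem~\ref{thm:lusin-souslin} (Lusin--Souslin) implies that $\psi(G)$ is Borel in $H$ and that $\psi\inv:\psi(G)\rightarrow G$ is a Borel isomorphism onto $G$. Consequently $f_g$ is Borel measurable, and Theorem~\ref{thm:dense_cont} supplies a dense $G_\delta$ set $L\subseteq H$ on which $f_g$ is continuous.

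The main (still routine) obstacle is to upgrade continuity on $L$ to continuity on all of $H$; this is where the group action structure is essential, since $f_g$ is not itself a homomorphism. The cocycle identity $\phi_{h_1 h_2}=\phi_{h_1}\circ\phi_{h_2}$ follows directly from the definition and gives, for any $h_0\in H$ and $l_0\in L$, the relation
\[
f_g(h)=\phi_{l_0 h_0\inv}\inv\!\bigl(f_g(l_0 h_0\inv h)\bigr).
\]
If $h_n\rightarrow h_0$, then $l_0 h_0\inv h_n\rightarrow l_0$, so by continuity of $f_g$ at $l_0$ combined with the continuity of the automorphism $\phi_{l_0 h_0\inv}\inv$ (Corollary~\ref{cor:Polish:compression_pullback}), we obtain $f_g(h_n)\rightarrow f_g(h_0)$. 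Hence $f_g$ is continuous at every $h_0\in H$, completing the verification of separate continuity and, by Theorem~\ref{thm:sep_cont}, the continuity of the $\psi$-equivariant action.
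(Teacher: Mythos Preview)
Your proof follows the paper's overall strategy but has a genuine gap in the final step. Theorem~\ref{thm:dense_cont} only asserts that the \emph{restriction} $f_g|_L: L \to G$ is continuous; it does not say that $f_g: H \to G$ is continuous at each point of $L$ for sequences approaching from outside $L$. (For instance, $\chi_{\Qb}: \Rb \to \{0,1\}$ is continuous when restricted to the irrationals yet nowhere continuous on $\Rb$.) In your upgrade argument, the translated sequence $l_0 h_0\inv h_n$ converges to $l_0 \in L$, but its terms need not lie in $L$, so the phrase ``by continuity of $f_g$ at $l_0$'' is unjustified as written.

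The paper fills this gap with a Baire category step: given $h \in L$ and a sequence $h_i \to h$ in $H$, the set $Lh\inv \cap \bigcap_i Lh_i\inv$ is a countable intersection of dense $G_\delta$ sets in the Baire space $H$, hence nonempty; picking $l$ in it forces $lh, lh_i \in L$ for all $i$, so continuity of $f_g|_L$ applies to the sequence $lh_i \to lh$, and the cocycle identity converts this into $f_g(h_i) \to f_g(h)$. This establishes continuity of $f_g$ at every point of $L$ as a map on all of $H$; only then does your left-translation trick extend continuity to an arbitrary $h_0 \in H$. With this intermediate step inserted, your argument is correct and essentially identical to the paper's.
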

\begin{proof}
In view of Theorem~\ref{thm:sep_cont}, it suffices to show the equivariant action is separately continuous. Fixing $h\in H$, Corollary~\ref{cor:Polish:compression_pullback} ensures that $\phi_h$ is a continuous automorphism of $G$. Hence, if $g_i\rightarrow g$, then $\phi_h(g_i)\rightarrow \phi_h(g)$; that is to say, the equivariant action is continuous in $G$ for fixed $h$. 

It remains to show that the equivariant action is continuous in $H$ for fixed $g$. Fix $g\in G$ and consider the map $\alpha:h\rightarrow \phi_h(g)=:h.g$. For $O\subseteq G$ open, 
\[
\alpha^{-1}(O)=\{h\in H\mid \psi^{-1}(h\psi(g)h^{-1})\in O\}=\{h\in H\mid h\psi(g)h^{-1}\in \psi(O)\}.
\] 
The map $c:H\rightarrow H$ via $k\mapsto k\psi(g)k^{-1}$ is plainly continuous. Additionally, since $\psi$ is injective and continuous, Theorem~\ref{thm:lusin-souslin} implies that $\psi(O)$ is a Borel set of $H$, hence the following set is Borel:
\[
c^{-1}(\psi(O))= \{h\in H\mid h\psi(g)h^{-1}\in \psi(O)\}=\alpha^{-1}(O).
\]
The map $\alpha$ is therefore a Borel measurable map.

Theorem~\ref{thm:dense_cont} now ensures $\alpha$ is continuous when restricted to a dense $G_{\delta}$ set $L\subseteq H$. We argue that $\alpha$ is indeed continuous at every point of $L$.  Fix $h\in L$ and suppose that $h_i\rightarrow h$ in $H$. Appealing to the Baire category theorem, $\wt{L}:=Lh^{-1}\cap \bigcap_{i\in \Nb}Lh_i^{-1}$ is dense and \textit{a fortiori} non-empty. Fixing $l\in \wt{L}$, we see that $\alpha(h_i)=l^{-1}.(lh_i.g)$. Moreover, the choice of $l$ ensures that $lh_i\in L$ for all $i$ and that $ lh\in L$. The continuity of $\alpha$ when restricted to $L$ thus implies $lh_i.g\rightarrow lh.g$. Since the action is continuous in $G$ for fixed $l$, it follows that $\alpha(h_i)\rightarrow \alpha(h)$, hence $\alpha$ is continuous at every point of $L$. 

Now take $h\in L$, fix some arbitrary $k\in H$, and let $k_i\rightarrow k$. We see that 
\[
\alpha(k_i)=(kh^{-1}).(hk^{-1}k_i.g),
\]
and moreover, $hk^{-1}k_i\rightarrow h$. That $\alpha$ is continuous at $h$ implies
\[
hk^{-1}k_i.g\rightarrow h.g.
\]
On the other hand, the action is continuous in $G$ for $kh^{-1}$, so 
\[
(kh^{-1}).(hk^{-1}k_i.g)\rightarrow (kh^{-1}).(h.g)=k.g.
\]
The map $\alpha$ is thus continuous at $k$, and the proposition follows.
\end{proof}

Theorem~\ref{thmintro:psi-compression_factor}(1) now follows immediately from Propositions~\ref{prop:compression_action_cont} and~\ref{prop:semidirect}. We now derive the remainder of Theorem~\ref{thmintro:psi-compression_factor}. In fact, any normal compression $\psi: G \rightarrow H$ of Polish groups factors through $G \rtimes_{\psi} O$ where $O$ is any open subgroup of $H$.  The significance of the additional generality will become apparent in the totally disconnected locally compact Polish setting, where $O$ can be chosen to be a \emph{compact} open subgroup.

\begin{thm}\label{thm:psi-compression_factor_rel}
Let $G$ and $H$ be Polish groups, let $\psi: G \rightarrow H$ be a normal compression, and let $O\leq H$ be an open subgroup. Then the following hold:
\begin{enumerate}[(1) ]
\item $\pi:G\rtimes_{\psi}O\rightarrow H$ via $(g,o)\mapsto \psi(g)o$ is a continuous surjective homomorphism with $\ker(\pi)=\{(g\inv,\psi(g))\mid g\in \psi^{-1}(O)\}$, and if $O = H$, then $\ker(\pi)\simeq G$ as topological groups.
\item $\psi = \pi \circ \iota$ where $\iota: G \rightarrow G\rtimes_{\psi}O$ is the usual inclusion.
\item $G\rtimes_{\psi}O =\ol{\iota(G)\ker (\pi)}$, and the subgroups $\iota(G)$ and $\ker(\pi)$ are closed normal subgroups of $G\rtimes_{\psi}O$ with trivial intersection.
\end{enumerate}
\end{thm}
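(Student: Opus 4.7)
The plan is to verify each of (1)--(3) by direct computation, using the $\psi$-equivariant action of $O \le H$ on $G$ (which is continuous by Proposition~\ref{prop:compression_action_cont}), so that $G\rtimes_{\psi}O$ is a Polish group by Proposition~\ref{prop:semidirect}.

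For (1), I first check that $\pi$ is a homomorphism. Writing $o.g$ for the equivariant action, the defining relation $\psi(o.g) = o\psi(g)o\inv$ yields
\[
\pi\bigl((g_1,o_1)(g_2,o_2)\bigr) = \psi(g_1(o_1.g_2))o_1o_2 = \psi(g_1)\bigl(o_1\psi(g_2)o_1\inv\bigr)o_1o_2 = \pi(g_1,o_1)\pi(g_2,o_2),
\]
and continuity is immediate from continuity of $\psi$ and multiplication in $H$. For surjectivity, given $h\in H$ the set $hO$ is open, so by density of $\psi(G)$ there is $g\in G$ with $\psi(g) \in hO$; then $h = \psi(g)o$ for some $o \in O$. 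This is exactly where openness of $O$ is used. For the kernel, $\pi(g,o)=1$ forces $o=\psi(g)\inv = \psi(g\inv)$, so in particular $g\inv\in\psi\inv(O)$, giving the stated description. When $O=H$ the map $\sigma\colon G\to\ker(\pi)$ defined by $g\mapsto(g\inv,\psi(g))$ is a continuous bijection; a direct computation using $\phi_{\psi(g_1)}(g_2\inv) = g_1g_2\inv g_1\inv$ shows it is a homomorphism, and its inverse is the continuous map induced by projecting to the $G$-factor and inverting, so it is a topological group isomorphism.

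Item (2) is immediate: $\pi(\iota(g)) = \pi(g,1) = \psi(g)$.

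For (3), $\iota(G) = G\times\{1\}$ is closed as the preimage of $1$ under the projection to $O$, and $\ker(\pi)$ is closed as the kernel of the continuous $\pi$. Both are normal: $\ker(\pi)$ trivially, and a quick conjugation calculation $(g,o)(g',1)(g,o)\inv = (g(o.g')g\inv,1)$ shows $\iota(G)\normal G\rtimes_{\psi}O$. Trivial intersection follows from injectivity of $\psi$: if $(g,1)\in\ker(\pi)$ then $\psi(g)=1$, so $g=1$. Finally, a direct computation gives
\[
\iota(G)\ker(\pi) = \{(g,1)(h\inv,\psi(h)) \mid g\in G,\, h\in\psi\inv(O)\} = G\times(\psi(G)\cap O),
\]
and $\psi(G)\cap O$ is dense in $O$ because $\psi(G)$ is dense in $H$ and $O$ is open. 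Hence $\iota(G)\ker(\pi)$ is dense in $G\times O = G\rtimes_{\psi}O$ as a topological space.

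No step looks genuinely hard; the only place where the hypothesis that $O$ is open (as opposed to merely a subgroup) enters the argument is the surjectivity of $\pi$ and, via the same mechanism, the density of $\psi(G)\cap O$ in $O$ needed in~(3). If one had merely $O$ closed or arbitrary, both of those would fail. So the ``main point'' to highlight is that \emph{openness of $O$} is precisely what makes the density of $\psi(G)$ in $H$ transfer to density of $\psi(G)\cap O$ in $O$.
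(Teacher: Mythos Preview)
Your proof is correct and follows essentially the same approach as the paper's, which dismisses most steps as ``a calculation'' or ``immediate''; you simply fill in those calculations explicitly. Your computation of $\iota(G)\ker(\pi) = G\times(\psi(G)\cap O)$ is a slightly cleaner way to see the density in~(3) than the paper's observation that this product contains $\{(1,h)\mid h\in\psi(G)\cap O\}\cup\iota(G)$, and your closing remark isolating where openness of $O$ is actually used is a useful addition.
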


\begin{proof}
The map $\pi$ is clearly continuous and surjective, and a calculation shows $\pi$ is also a homomorphism. It is also immediate that $\ker(\pi)=\{(g\inv,\psi(g))\mid g\in \psi^{-1}(O)\}$. It thus remains to show $G\simeq \ker(\pi)$ in the case that $O = H$. Define $G\rightarrow \ker(\pi)$ by $g\mapsto (g\inv,\psi(g))$. Plainly, this is a bijective continuous map. Checking this map is a homomorphism is again a calculation. We thus deduce $(1)$.

Claim $(2)$ is immediate.

By $(1)$, $\iota(G)=\{(g,1)\mid g\in G\}$ intersects $\ker(\pi)$ trivially, and both $\iota(G)$ and $\ker(\pi)$ are closed normal subgroups. The product $\iota(G)\ker(\pi)$ is dense, since it is a subgroup containing the set $\{(1,h)\mid h \in \psi(G) \cap O\}\cup \iota(G)$. We have thus verified $(3)$.
\end{proof}

\begin{cor}\label{cor:normal_invar_equi_action} 
Suppose that $G$ and $H$ are Polish groups with $\psi:G\rightarrow H$ a normal compression.  Then every closed normal subgroup of $G$ is invariant under the $\psi$-equivariant action of $H$ on $G$. 
\end{cor}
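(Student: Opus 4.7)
The plan is to use density of $\psi(G)$ in $H$, continuity of the $\psi$-equivariant action (Proposition~\ref{prop:compression_action_cont}), and closedness of $N$ to reduce the problem to the easy case of conjugation by elements of $\psi(G)$.

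Let $N$ be a closed normal subgroup of $G$ and fix $n \in N$. Consider the continuous map $\alpha_n : H \to G$ defined by $\alpha_n(h) = \phi_h(n) = \psi^{-1}(h\psi(n)h^{-1})$; continuity is Proposition~\ref{prop:compression_action_cont}. For $h \in \psi(G)$, write $h = \psi(g)$; then
\[
\alpha_n(h) = \psi^{-1}\bigl(\psi(g)\psi(n)\psi(g)^{-1}\bigr) = \psi^{-1}\bigl(\psi(gng^{-1})\bigr) = gng^{-1} \in N,
\]
since $N$ is normal in $G$ and $\psi$ is injective. Hence $\alpha_n(\psi(G)) \subseteq N$. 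Since $\psi(G)$ is dense in $H$, $\alpha_n$ is continuous, and $N$ is closed in $G$, we obtain $\alpha_n(H) \subseteq N$. As $n \in N$ was arbitrary, $\phi_h(N) \subseteq N$ for every $h \in H$.

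Finally, applying the same argument to $h^{-1}$ yields $\phi_{h^{-1}}(N) \subseteq N$, and since $\phi_h$ is a group automorphism of $G$ with inverse $\phi_{h^{-1}}$ (Corollary~\ref{cor:Polish:compression_pullback}), we conclude $N \subseteq \phi_h(N)$, hence $\phi_h(N) = N$. There is essentially no obstacle here; the only subtlety is making sure continuity of $h \mapsto \phi_h(n)$ is available, which is exactly what Proposition~\ref{prop:compression_action_cont} was proved to supply.
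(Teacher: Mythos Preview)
Your proof is correct. It takes a genuinely different route from the paper's argument: the paper forms the semidirect product $G\rtimes_{\psi}H$ and observes that $\iota(G)$ and $\ker(\pi)$ are closed normal subgroups with trivial intersection, hence commute; since $\iota(G)\ker(\pi)$ is dense in $G\rtimes_{\psi}H$ and normalizers of closed subgroups are closed, $\iota(K)$ is normal in $G\rtimes_{\psi}H$, which unwinds to $H$-invariance of $K$. Your argument instead works directly with the orbit map $h\mapsto \phi_h(n)$, using Proposition~\ref{prop:compression_action_cont} for continuity, normality of $N$ in $G$ to handle the dense subgroup $\psi(G)$, and closedness of $N$ to pass to all of $H$. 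Your approach is more elementary and self-contained for this statement; the paper's approach has the virtue of exhibiting the corollary as an immediate structural consequence of the factorization in Theorem~\ref{thm:psi-compression_factor_rel}, which is the main tool going forward.
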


\begin{proof}
Let $K$ be a closed normal subgroup of $G$ and let $\pi$ be as in Theorem~\ref{thm:psi-compression_factor_rel} with $H = O$.  Then $\ker(\pi)\cap \iota(G)=\{1\}$, so $\ker\pi$ centralizes $\iota(G)$; in particular, $\ker\pi$ normalizes $\iota(K)$.  The normalizer $\N_{G\rtimes_{\psi}H}(K)$ therefore contains the dense subgroup $\iota(G)\ker(\pi)$; moreover, $\iota(K)$ is a closed subgroup of $G \rtimes_{\psi}H$, since $\iota$ is a closed embedding.  Since normalizers of closed subgroups are closed, we conclude that $\iota(K)\normal G\rtimes_{\psi}H$. Thus $K$ is invariant under the action of $H$, verifying the corollary. 
\end{proof}

\subsection{Properties invariant under normal compressions}\label{sec:normalcompression_invariants}
We now explore which properties pass between Polish groups $G$ and $H$ for which there is a normal compression $\psi:G\rightarrow H$. In view of the goals for this work and subsequent works, the properties we explore here are related to normal subgroup structure and connectedness. There are likely other interesting properties that pass between such $G$ and $H$.

\begin{prop}\label{prop:normal_compression}
Let $G$ and $H$ be Polish groups, let $\psi: G \rightarrow H$ be a normal compression, and let $K$ be a closed normal subgroup of $G$.
\begin{enumerate}[(1) ]
\item The image $\psi(K)$ is a normal subgroup of $H$.
\item If $\psi(K)$ is also dense in $H$, then $\ol{[G,G]} \le K$, and every closed normal subgroup of $K$ is normal in $G$.
\end{enumerate}
\end{prop}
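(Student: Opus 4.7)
For part (1), the plan is to apply Corollary~\ref{cor:normal_invar_equi_action} directly: since $K$ is a closed normal subgroup of $G$, it is invariant under the $\psi$-equivariant action, meaning $\phi_h(K) = K$ for every $h \in H$. Unwinding the definition $\phi_h(g) = \psi^{-1}(h\psi(g)h^{-1})$ and using that $\psi$ is injective yields $h\psi(K)h^{-1} = \psi(K)$, so $\psi(K)$ is normal in $H$.

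For the first claim of part (2), I will exploit the induced action of $H$ on $G/K$. Because $K$ is invariant under the equivariant action (part (1) of Corollary~\ref{cor:normal_invar_equi_action}) and the action of $H$ on $G$ is continuous by Proposition~\ref{prop:compression_action_cont}, we get a continuous action of $H$ on the Polish group $G/K$ (using Proposition~\ref{prop:quotient}). For $k \in K$, a direct computation shows $\phi_{\psi(k)}(g) = kgk^{-1}$, which is congruent to $g$ modulo $K$ since $K$ is normal. Hence the dense subgroup $\psi(K) \le H$ acts trivially on $G/K$, and by continuity the whole of $H$ acts trivially on $G/K$. Specializing $h = \psi(g')$ for $g' \in G$ yields $g'gg'^{-1} \equiv g \pmod K$, so $[G,G] \subseteq K$ and thus $\ol{[G,G]} \le K$.

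For the second claim of part (2), let $L$ be a closed normal subgroup of $K$. I will show the setwise stabilizer
\[
S := \{h \in H : \phi_h(L) = L\}
\]
is all of $H$. Since for each fixed $x \in L$ the map $h \mapsto \phi_h(x)$ is continuous (Proposition~\ref{prop:compression_action_cont}) and $L$ is closed, the set $\{h \in H : \phi_h(x) \in L\}$ is closed. Intersecting over $x \in L$ and intersecting similarly with the analogous condition for $h^{-1}$ shows that $S$ is closed in $H$. Since $L \normal K$, the earlier computation $\phi_{\psi(k)}(x) = kxk^{-1}$ shows $\psi(K) \subseteq S$, and density of $\psi(K)$ in $H$ forces $S = H$. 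Finally, applying this to $h = \psi(g)$ for $g \in G$, the identity $\phi_{\psi(g)}(x) = gxg^{-1}$ gives $gLg^{-1} = L$, so $L$ is normal in $G$.

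The only mildly delicate step is verifying that $S$ is closed; the main conceptual work is already packaged into Corollary~\ref{cor:normal_invar_equi_action} and Proposition~\ref{prop:compression_action_cont}, so the rest of the proof is a clean deployment of density-plus-continuity together with the explicit formula $\phi_{\psi(g)}(x) = gxg^{-1}$ that makes the equivariant action compatible with conjugation in $G$.
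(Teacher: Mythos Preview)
Your proof is correct. For part~(1) you and the paper both invoke Corollary~\ref{cor:normal_invar_equi_action}. For part~(2), however, your route diverges from the paper's.

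For the first claim of~(2), the paper works inside the semidirect product $G\rtimes_{\psi}H$: since $\pi(\iota(K))$ is dense in $H$, the product $\iota(K)\ker(\pi)$ is dense in $G\rtimes_{\psi}H$, and because $\ker(\pi)$ commutes with $\iota(G)$ (Theorem~\ref{thm:psi-compression_factor_rel}), the quotient $\iota(G)/\iota(K)$ has dense centralizer and is therefore central, hence abelian. You instead argue directly with the induced continuous $H$-action on $G/K$: the dense subgroup $\psi(K)$ acts trivially, so all of $H$ does, and specializing to $h=\psi(g')$ forces $G/K$ abelian. Your argument is more elementary in that it avoids the semidirect product machinery, relying only on Proposition~\ref{prop:compression_action_cont}; the paper's argument illustrates how the factorization of Theorem~\ref{thm:psi-compression_factor_rel} packages such density-plus-commutation reasoning once and for all.

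For the second claim of~(2), the paper's argument is notably shorter than yours: it simply observes that $\psi|_K: K \to H$ is itself a normal compression (since $\psi(K)$ is dense and, by part~(1), normal in $H$), applies part~(1) to this compression to get $\psi(M)\normal H$, and then pulls back via injectivity of $\psi$ to conclude $M\normal G$. Your closed-stabilizer argument is correct but is in effect reproving Corollary~\ref{cor:normal_invar_equi_action} for the compression $\psi|_K$; recognizing the recursive structure would streamline it.
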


\begin{proof}
Form the semidirect product $G\rtimes_{\psi} H$, let $\iota:G\rightarrow G\rtimes_{\psi}H$ be the usual inclusion, and let $\pi:\GH\rightarrow H$ be the map given in Theorem~\ref{thm:psi-compression_factor_rel}.

Claim $(1)$ is immediate from Corollary~\ref{cor:normal_invar_equi_action}.

For $(2)$, since $\pi$ is a quotient map and $\pi(\iota(K))$ is dense in $H$, it follows that $\iota(K)\ker(\pi)$ is dense in $\GH$.  Corollary~\ref{cor:normal_invar_equi_action} implies that $\iota(K)$ is a closed normal subgroup of $\GH$. The image of $\ker(\pi)$ is thus dense under the usual projection $\chi:\GH\rightarrow \GH/\iota(K)$. On the other hand,  Theorem~\ref{thm:psi-compression_factor_rel} ensures $\iota(G)$ and $\ker(\pi)$ commute, hence $\iota(G)/\iota(K)$ has dense centralizer in $\GH/\iota(K)$. The group $\iota(G)/\iota(K)$ is then central in $\GH/\iota(K)$, so in particular, $\iota(G)/\iota(K)$ is abelian.  It now follows that $G/K$ is abelian, so $\ol{[G,G]}\leq K $.

Finally, let $M$ be a closed normal subgroup of $K$.  Applying part $(1)$ to the compression map from $K$ to $H$, we see that $\psi(M)$ is normal in $H$, so in particular $\psi(M)$ is normal in $\psi(G)$.  Since $\psi$ is injective, we conclude that $M$ is normal in $G$.
\end{proof}

We now prove various simplicity notions pass between normal compressions. For $G$ a topological group and $A$ a group acting on $G$ by automorphisms, say that $G$ is \defbold{$A$-simple}\index{$A$-simple} if $G \neq \triv$ and $A$ leaves no proper non-trivial closed normal subgroup of $G$ invariant.  For example, $G$ is $\triv$-simple if and only if $G$ is topologically simple.

\begin{thm}\label{thm:compression_simple}Let $G$ and $H$ be non-abelian Polish groups and let $\psi: G \rightarrow H$ be a normal compression.  Suppose also that $G$ and $H$ admit actions by topological automorphisms of a (possibly trivial) group $A$ and that $\psi$ is $A$-equivariant.
\begin{enumerate}[(1) ] 
\item If $G$ is $A$-simple, then so is $H/\Z(H)$, and $\Z(H)$ is the unique largest proper closed $A$-invariant normal subgroup of $H$.
\item If $H$ is $A$-simple, then so is $\ol{[G,G]}$, and $\ol{[G,G]}$ is the unique smallest non-trivial closed $A$-invariant normal subgroup of $G$.
\end{enumerate}
\end{thm}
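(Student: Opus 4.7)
The plan is to prove both parts by producing, from a putative $A$-invariant closed normal subgroup violating the conclusion, an $A$-invariant closed normal subgroup of the other group contradicting its $A$-simplicity. Throughout, we exploit three tools: (i) $\psi$ is an injective continuous homomorphism with dense normal image, so preimages of closed normal subgroups of $H$ are closed and normal in $G$; (ii) Proposition~\ref{prop:normal_compression} controls images and preimages under $\psi$; (iii) characteristic subgroups such as $\Z(H)$ and $\ol{[G,G]}$ are automatically $A$-invariant.

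For (1), suppose $G$ is $A$-simple. Since $H$ is non-abelian, $\Z(H)$ is a proper closed $A$-invariant normal subgroup of $H$, so we only need to show every proper closed $A$-invariant normal subgroup $N\normal H$ is contained in $\Z(H)$. The subgroup $\psi\inv(N)$ is closed in $G$, is $A$-invariant by equivariance of $\psi$, and is normal in $G$ because $\psi$ is injective and $N\cap\psi(G)\normal \psi(G)$. By $A$-simplicity of $G$, either $\psi\inv(N)=G$ (whence $\psi(G)\subseteq N$ forces $N=H$ by density and closedness, contradicting properness) or $\psi\inv(N)=\triv$, so $N\cap \psi(G)=\triv$. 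In that case $[N,\psi(G)]\subseteq N\cap\psi(G)=\triv$ because both are normal in $H$, so $\psi(G)$ centralizes $N$, and density of $\psi(G)$ forces $N\le \Z(H)$. The $A$-simplicity of $H/\Z(H)$ then follows by lifting: any closed $A$-invariant normal subgroup of $H/\Z(H)$ has preimage a closed $A$-invariant normal subgroup of $H$ containing $\Z(H)$, which by the preceding must be $\Z(H)$ or $H$.

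For (2), suppose $H$ is $A$-simple. Since $\psi(G)$ is dense in the non-abelian group $H$, $G$ is non-abelian, so $\ol{[G,G]}$ is non-trivial; it is characteristic and hence $A$-invariant. To show it is the minimal non-trivial closed $A$-invariant normal subgroup, let $N$ be any such. By Proposition~\ref{prop:normal_compression}(1), $\psi(N)$ is normal in $H$, so $\ol{\psi(N)}$ is a non-trivial closed $A$-invariant normal subgroup of $H$; $A$-simplicity of $H$ forces $\ol{\psi(N)}=H$. Proposition~\ref{prop:normal_compression}(2), applied with $K=N$, now yields $\ol{[G,G]}\le N$. Finally, for $A$-simplicity of $\ol{[G,G]}$: applying the previous observation with $N=\ol{[G,G]}$ shows $\psi(\ol{[G,G]})$ is dense in $H$, so by Proposition~\ref{prop:normal_compression}(2) every closed normal subgroup of $\ol{[G,G]}$ is in fact normal in $G$; an $A$-invariant such subgroup is then either trivial or contains $\ol{[G,G]}$ by minimality.

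The only place where care is genuinely needed, rather than routine verification, is tracking which objects are closed, normal in the correct ambient group, and $A$-invariant at the same time, especially in the final step of (2) where normality in $\ol{[G,G]}$ is upgraded to normality in $G$; this upgrade is exactly what Proposition~\ref{prop:normal_compression}(2) is designed to provide.
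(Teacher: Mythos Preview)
Your proof is correct and follows essentially the same approach as the paper's own argument: in (1) you pull back a proper closed $A$-invariant normal subgroup to $G$ and use $A$-simplicity to force it to have trivial intersection with $\psi(G)$, hence to be central; in (2) you push forward a non-trivial closed $A$-invariant normal subgroup to $H$, use $A$-simplicity to force density, and then invoke Proposition~\ref{prop:normal_compression}(2) both for the containment $\ol{[G,G]}\le N$ and for the upgrade of normality from $\ol{[G,G]}$ to $G$. The paper's proof is organized slightly differently (it establishes the normality upgrade for subgroups of $\ol{[G,G]}$ first, then minimality, leaving $A$-simplicity of $\ol{[G,G]}$ implicit), but the ingredients and logic are the same.
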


\begin{proof}
For $(1)$, suppose that $G$ is $A$-simple and let $L$ be a proper closed normal $A$-invariant subgroup of $H$.  Then $\psi(G) \not\le L$, so $\psi\inv(L)$ is a proper, hence trivial, closed normal $A$-invariant subgroup of $G$.  The subgroups $\psi(G)$ and $L$ are thus normal subgroups of $H$ with trivial intersection, so $\psi(G)$ and $L$ commute.  Since $\psi(G)$ is dense in $H$, it follows that $L \le \Z(H)$.  In particular, $H/\Z(H)$ does not have any proper non-trivial closed normal $A$-invariant subgroup, and $(1)$ follows.

\

For $(2)$, suppose that $H$ is $A$-simple.  The subgroup $L: = \ol{[G,G]}$ is topologically characteristic in $G$ and hence is normal and $A$-invariant; moreover $L \neq \triv$, since $G$ is not abelian. The image $\psi(L)$ is therefore a non-trivial, hence dense, $A$-invariant subgroup of $H$. Proposition~\ref{prop:normal_compression} now implies any closed $A$-invariant normal subgroup of $L$ is normal in $G$.

Letting $K$ be an arbitrary non-trivial closed $A$-invariant normal subgroup of $G$, Proposition~\ref{prop:normal_compression} ensures the group $\psi(K)$ is normal in $H$. Since $\psi$ is $A$-equivariant, $\psi(K)$ is indeed an $A$-invariant subgroup of $H$, so by the hypotheses on $H$, the subgroup $\psi(K)$ is dense in $H$. Applying Proposition~\ref{prop:normal_compression} again, we conclude that $K \ge L=\ol{[G,G]}$. The subgroup $L$ is thus the unique smallest non-trivial closed $A$-invariant normal subgroup of $G$, and $(2)$ follows.
\end{proof}

Similar to $A$-simplicity, connectedness is well-behaved up to abelian quotients. Demonstrating this requires a couple of lemmas. 

\begin{lem}[see for instance {\cite[Chapter III \S 9.2 Propositions 1 and 2]{Bo98}}]\label{lem:connected_commutator}
Let $G$ and $H$ be subgroups of a topological group $K$ and consider the commutator group $[G,H]$.
\begin{enumerate}[(1) ]
\item We have $[\ol{G},\ol{H}] \le \ol{[G,H]}$.
\item If $G$ is connected, then $[G,H]$ is connected.
\end{enumerate}
\end{lem}

\begin{lem}\label{lem:disconnected_to_connected}
Let $G$ and $H$ be Polish groups and let $\psi: G \rightarrow H$ be a normal compression.  Set $R := \psi\inv(H^\circ)$ and $N := \ol{\psi(R)}$.  Then the following inclusions hold:
\[
[H^\circ,\psi(G)] \le \psi(R); \;[H^\circ,H] \le N; [H^{\circ},\psi(G)]\leq \psi(G^{\circ}); \; [H^\circ, N] \le \ol{\psi(G^\circ)}.
\]
\end{lem}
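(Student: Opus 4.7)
My plan is to establish the four inclusions by first proving the single strongest bound $[H^\circ, \psi(G)] \le \psi(G^\circ)$ directly, and then deriving the remaining three inclusions either as immediate consequences or via a continuity-and-density argument exploiting that $\psi(G)$ is dense in $H$. The key ingredient is the continuity of the $\psi$-equivariant action $(h,g)\mapsto h.g := \psi\inv(h\psi(g)h\inv)$, established in Proposition~\ref{prop:compression_action_cont}.

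For $[H^\circ, \psi(G)] \le \psi(G^\circ)$, I would fix $g\in G$ and consider the map $\alpha_g: H \to G$ defined by $\alpha_g(h) := (h.g)g\inv$. This is continuous by Proposition~\ref{prop:compression_action_cont}, and $\alpha_g(1)=1$. Restricting to $H^\circ$ gives a continuous map from a connected space into $G$ whose image contains $1$; hence $\alpha_g(H^\circ) \subseteq G^\circ$. Since $\psi(\alpha_g(h)) = h\psi(g)h\inv\psi(g)\inv = [h,\psi(g)]$, we conclude $[h,\psi(g)] \in \psi(G^\circ)$ for every $h\in H^\circ$ and $g\in G$, and taking subgroups gives the inclusion. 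The bound $[H^\circ,\psi(G)] \le \psi(R)$ is then immediate, because $\psi(G^\circ)$ is a connected subset of $H$ containing the identity and so lies in $H^\circ$, whence $G^\circ \subseteq R$ and $\psi(G^\circ) \le \psi(R)$.

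To upgrade from $\psi(G)$ to $H$, I would fix $h\in H^\circ$ and use the continuity of the map $k\mapsto [h,k]$. Since this map sends the dense subgroup $\psi(G)$ into the closed set $N$ (by the first inclusion, as $\psi(R) \subseteq N$), it must send all of $H = \ol{\psi(G)}$ into $N$, giving $[H^\circ, H] \le N$. An identical argument, replacing $N$ by $\ol{\psi(G^\circ)}$ and invoking the refined bound $[H^\circ,\psi(G)] \le \psi(G^\circ)$, yields $[H^\circ, H] \le \ol{\psi(G^\circ)}$, which is stronger than the claimed $[H^\circ, N] \le \ol{\psi(G^\circ)}$.

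The main obstacle is the refinement from $\psi(R)$ to $\psi(G^\circ)$: without the continuity of the equivariant action, one could only observe that each commutator $[h,\psi(g)]$ lies in both $H^\circ$ and $\psi(G)$, hence in $H^\circ \cap \psi(G) = \psi(R)$, with no obvious way to transport the connectedness of $H^\circ$ through $\psi\inv$ into $G$. Proposition~\ref{prop:compression_action_cont} is precisely the tool that makes this transport possible; everything else is book-keeping with continuity and density.
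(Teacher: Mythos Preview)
Your proof is correct and somewhat more direct than the paper's. Both arguments ultimately rest on the continuity of the $\psi$-equivariant action (Proposition~\ref{prop:compression_action_cont}), but they package the connectedness step differently. The paper forms the semidirect product $K = G \rtimes_\psi H$, invokes the auxiliary Lemma~\ref{lem:connected_commutator} to see that $[H^\circ,G]$ is a connected subgroup of $K$ contained in $G$, hence in $G^\circ$, and then pushes forward along $\pi$. You bypass both the semidirect product and Lemma~\ref{lem:connected_commutator} by working pointwise: for each fixed $g$ the map $h\mapsto (h.g)g\inv$ is a continuous map $H\to G$ sending $1$ to $1$, so its restriction to $H^\circ$ lands in $G^\circ$. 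This is more elementary, and as you observe, the same density argument then gives the stronger conclusion $[H^\circ,H]\le \ol{\psi(G^\circ)}$ rather than merely $[H^\circ,N]\le \ol{\psi(G^\circ)}$. The paper's route has the minor advantage of treating the whole commutator subgroup at once via a general lemma, but your element-by-element argument loses nothing here since $\psi(G^\circ)$ is already a subgroup.
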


\begin{proof}
Since $\psi(G)$ and $H^\circ$ are both normal subgroups of $H$, we have $[H^\circ,\psi(G)] \le \psi(G) \cap H^\circ$. From the definition of $R$, $\psi(G) \cap H^\circ = \psi(R)$, hence
\[
[H^\circ,\psi(G)] \le \psi(R).
\]
verifying the first inclusion.

By the first inclusion, $H^\circ/N$ is centralized by the dense subgroup $\psi(G)N/N$ of $G/N$.  The subgroup $H^\circ/N$ is therefore central in $H/N$ by Lemma~\ref{lem:connected_commutator}(1), hence
\[
[H^\circ,H] \le N
\]
establishing the second inclusion.

For the third and fourth inclusions, let $K$ be the semidirect product $G \rtimes_{\psi} H$. Consider the commutator group $[H^\circ,G]$ where $G$ and $H$ are regarded as subgroups of $K$.  The subgroup $[H^{\circ},G]$ is then connected by Lemma~\ref{lem:connected_commutator}(2) and is a subgroup of $G$, hence $[H^{\circ},G] \le G^\circ$. Taking the image under the map $\pi$ as given by Theorem~\ref{thm:psi-compression_factor_rel}, we see that 
\[
\pi([H^{\circ},G])=[H^{\circ},\psi(G)]\leq \pi(G^{\circ})=\psi(G^{\circ})
\]
giving the third inclusion. It now easily follows that
\[
[H^\circ, N] \le \ol{\psi(G^\circ)}.
\]
\end{proof}

\begin{thm}\label{thm:connected}
Let $G$ and $H$ be Polish groups and let $\psi: G \rightarrow H$ be a normal compression. Then the following hold:
\begin{enumerate}[(1) ]
\item The group $H^\circ/\ol{\psi(G^\circ)}$ is nilpotent of class at most two.
\item If $H$ is connected, then both $G/G^\circ$ and $H/\ol{\psi(G^\circ)}$ are abelian.
\item If $G$ is totally disconnected, then $H^\circ$ is central in $H$.
\end{enumerate}
\end{thm}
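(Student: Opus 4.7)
The plan is to deduce all three parts directly from the four inclusions of Lemma~\ref{lem:disconnected_to_connected}, using elementary manipulations together with the fact that centers of topological groups are closed.

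For part (1), I would argue that $H^{\circ}/\ol{\psi(G^{\circ})}$ has lower central series of length at most two. Starting from $[H^{\circ},H] \le N$, it follows that $[H^{\circ},H^{\circ}] \le N$, and then one more bracket gives $[H^{\circ},[H^{\circ},H^{\circ}]] \le [H^{\circ},N] \le \ol{\psi(G^{\circ})}$. Passing to the quotient $H^{\circ}/\ol{\psi(G^{\circ})}$ yields nilpotency of class at most two.

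For part (2), when $H$ is connected we have $H^{\circ}=H$, so the third inclusion of the lemma reads $[H,\psi(G)] \le \psi(G^{\circ})$. Restricting to commutators within $\psi(G)$ gives $[\psi(G),\psi(G)] \le \psi(G^{\circ})$; applying $\psi^{-1}$ (which is well-defined on the image since $\psi$ is injective) shows $G/G^{\circ}$ is abelian. For the second abelianness, the same inclusion shows that the image of $\psi(G)$ in $H/\ol{\psi(G^{\circ})}$ is central; but this image is dense since $\psi(G)$ is dense in $H$. Because the center of a Hausdorff topological group is closed, it must equal the whole of $H/\ol{\psi(G^{\circ})}$, so that quotient is abelian.

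For part (3), assuming $G$ is totally disconnected gives $G^{\circ}=\triv$, so the third inclusion becomes $[H^{\circ},\psi(G)] = \triv$, i.e.\ $H^{\circ}$ centralizes $\psi(G)$. Again using that centralizers in a Hausdorff topological group are closed and that $\psi(G)$ is dense in $H$, we conclude $H^{\circ} \le \Z(H)$.

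I do not anticipate any real obstacles here: the entire theorem is a bookkeeping exercise on the four inclusions in Lemma~\ref{lem:disconnected_to_connected}. The only subtlety worth checking carefully is that one may freely pass from a dense subgroup being central (or centralizing a set) to the whole closure having the same property, which relies only on the closedness of centralizers.
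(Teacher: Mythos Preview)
Your proposal is correct and follows essentially the same approach as the paper's own proof: both arguments derive each part directly from the inclusions of Lemma~\ref{lem:disconnected_to_connected} in exactly the same way, with the only difference being that you spell out a bit more explicitly why density plus closedness of centralizers suffices in parts (2) and (3).
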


\begin{proof}
Let $R$ and $N$ be as in Lemma~\ref{lem:disconnected_to_connected}. The second inclusion relation of Lemma~\ref{lem:disconnected_to_connected} implies $[H^{\circ},H^{\circ}]\leq N$. Applying the fourth inclusion, we conclude 
\[
[H^{\circ},[H^{\circ},H^{\circ}]]\leq \ol{\psi(G^{\circ})}.
\]
Hence, $H^{\circ}/\ol{\psi(G^{\circ})}$ is nilpotent of class at most two.

For claim $(2)$, first suppose $H$ is connected.  Then the third inclusion of Lemma~\ref{lem:disconnected_to_connected} gives $[H,\psi(G)]\leq \psi(G^{\circ})$. Therefore, $[\psi(G),\psi(G)]\leq \psi(G^{\circ})$, and since $\psi$ is injective, we conclude $G/G^{\circ}$ is abelian. For the next claim of $(2)$, observe that $\psi(G)\ol{\psi(G^{\circ})}/\ol{\psi(G^{\circ})}$ is dense and abelian. Therefore, $H/\ol{\psi(G^{\circ})}$ is abelian. 

For the third claim, the fact that $G^\circ = \triv$ implies $[H^\circ,\psi(G)]=\triv$, by the third inclusion given by Lemma~\ref{lem:disconnected_to_connected}. Since $\psi(G)$ is dense in $H$, we deduce that $H^\circ$ is central in $H$.
\end{proof}

The following is an easy consequence of Theorem~\ref{thm:connected}; note particularly the case that $G$ and $H$ are non-abelian and topologically characteristically simple. Recall a group is \textbf{totally disconnected}\index{totally disconnected} if the only connected sets are singletons; \textit{i.e.} the connected component of the identity is trivial. This property is occasionally called \textbf{hereditarily disconnected}\index{hereditarily disconnected} in the topological group literature.

\begin{cor}\label{cor:con_association}Suppose that $G$ and $H$ are centerless Polish groups with dense commutator subgroups and that $H$ is a normal compression of $G$.  Then $G$ is connected if and only if $H$ is connected, and $G$ is totally disconnected if and only if $H$ is totally disconnected.
\end{cor}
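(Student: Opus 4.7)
The plan is to deduce the corollary directly from Theorem~\ref{thm:connected}, using the hypothesis of topological perfectness and centerlessness to upgrade its conclusions (which hold up to abelian or central error) into precise connectedness statements. I would handle the totally disconnected case first, since it is the more transparent, and then the connected case.

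For the totally disconnected equivalence, suppose first that $G$ is totally disconnected, i.e.\ $G^\circ = \triv$. Theorem~\ref{thm:connected}(3) gives $H^\circ \le \Z(H)$, and since $H$ is centerless we get $H^\circ = \triv$, so $H$ is totally disconnected. Conversely, if $H^\circ = \triv$, then $\psi(G^\circ)$ is a continuous image of a connected space, hence connected and contained in $H^\circ = \triv$; injectivity of $\psi$ then forces $G^\circ = \triv$. (Note that the centerlessness and perfectness of $G$ are not needed for the reverse direction.)

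For the connected equivalence, suppose first that $G$ is connected, so $G = G^\circ$. Then $\psi(G^\circ) = \psi(G)$ is dense in $H$, so $\ol{\psi(G^\circ)} = H$. On the other hand, $\ol{\psi(G^\circ)}$ is connected (as the closure of the connected set $\psi(G^\circ)$), so $\ol{\psi(G^\circ)} \le H^\circ$. Hence $H = H^\circ$. Conversely, suppose $H$ is connected. By Theorem~\ref{thm:connected}(2), $G/G^\circ$ is abelian, so $\ol{[G,G]} \le G^\circ$; since $G$ is topologically perfect, this forces $G = G^\circ$.

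There is no real obstacle here: the corollary is essentially a bookkeeping exercise once Theorem~\ref{thm:connected} is in hand. The only mildly delicate point is to notice that the direction ``$H$ connected $\Rightarrow$ $G$ connected'' uses topological perfectness of $G$, while ``$G$ totally disconnected $\Rightarrow$ $H$ totally disconnected'' uses centerlessness of $H$; the other two implications require neither extra hypothesis.
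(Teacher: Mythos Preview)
Your proof is correct and is exactly the argument the paper has in mind: the corollary is stated without proof as ``an easy consequence of Theorem~\ref{thm:connected}'', and your derivation simply unpacks that, using parts (2) and (3) of the theorem together with the centerless and topologically perfect hypotheses in precisely the places you identify. Your observation about which hypotheses are actually used in which direction is a nice bonus.
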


\begin{rmk}
A hypothesis to rule out the abelian case is necessary; \textit{e.g.} $\Qb$ and $\Rb$ are topologically characteristically simple Polish groups, and $\Rb$ is a normal compression of $\Qb$.\end{rmk}

\section{Generalized direct products}\label{sec:quasiproduct}

We now define and study quasi-products along with generalized central products. The conditions for a generalized central product have the advantage of being stable under quotients, and they are easier to define and verify.  The distinction is similar to the classical one between direct products and central products.

\subsection{Definitions and first properties}\label{sec:quasiproduct1}
Suppose that $G$ is a topological group and that $J \subseteq \Norm(G)$, where $\Norm(G)$ is the collection of closed normal subgroups of $G$.  Set $G_J :=  \cgrp{N \mid N \in J}$.

\begin{defn}
We say that $\mc{S}\subseteq \Norm(G)$ is a \defbold{generalized central factorization}\index{generalized central factorization} of $G$, if $\triv \not\in \mc{S}$, $G_{\mc{S}}=G$ and $[N,M] = \triv$ for any $N\neq M$ in $\mc{S}$. In such a case, $G$ is said to be a \defbold{generalized central product}\index{generalized central product}.

We say $\mc{S}$ is a \defbold{quasi-direct factorization}\index{quasi-direct factorization} of $G$ if $\triv \not\in \mc{S}$, $G_{\mc{S}}=G$, and $\mc{S}$ has the following topological independence property:
\[
\forall X\subseteq \mathcal{P}(\mc{S}):\; \bigcap X=\emptyset \Rightarrow \bigcap_{A\in X}G_A=\{1\}. 
\]
In such a case, $G$ is said to be a \textbf{quasi-product}\index{quasi-product}. A subgroup $H$ of $G$ is a \defbold{quasi-factor}\index{quasi-factor} of $G$ if it is an element of some quasi-direct factorization. A factorization $\mc{S}$ is \textbf{non-trivial}\index{non-trivial factorization} if $|\mc{S}|\ge 2$.
\end{defn}

The largest possible subgroup of the form $\bigcap_{A\in X}G_A$ where $X\subseteq \mathcal{P}(\mc{S})$ and $\bigcap X=\emptyset$ occurs when $X = \{ \mc{S} \setminus \{A\} \mid A \in \mc{S}\}$.  We thus observe an equivalent form of the independence property.

\begin{obs}\label{obs:quasi-product_char}
	Let $G$ be a topological group and suppose that $\mc{S}$ is a set of closed normal subgroups of $G$ such that $|\mc{S}|\ge 2$. Then $\mc{S}$ is a quasi-direct factorization if and only if $\bigcap_{N\in \mc{S}}G_{\mc{S}\setminus\{N\}}=\{1\}$.
\end{obs}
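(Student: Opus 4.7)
The plan is to exploit the monotonicity property $A\subseteq B \subseteq \mc{S} \Rightarrow G_A \le G_B$, which is immediate from the definition of $G_A$ as the closed subgroup generated by $\bigcup A$. The observation amounts to saying that, among families $X\subseteq \mathcal{P}(\mc{S})$ with $\bigcap X = \emptyset$, the family $X_0 := \{\mc{S}\setminus\{N\} \mid N \in \mc{S}\}$ consists of the largest possible such subsets of $\mc{S}$ (the co-singletons), and therefore by monotonicity produces the largest possible intersection $\bigcap_{A\in X_0} G_A$. Once this is noted, the equivalence is a short set-theoretic argument.

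For the forward implication, assuming $\mc{S}$ is a quasi-direct factorization, I would apply the topological independence property directly to the specific family $X_0$. Using $|\mc{S}|\ge 2$, for any $M \in \mc{S}$ one has $M \notin \mc{S}\setminus\{M\} \in X_0$, so $\bigcap X_0 = \emptyset$, and independence immediately yields $\bigcap_{N\in\mc{S}} G_{\mc{S}\setminus\{N\}} = \{1\}$. For the converse, suppose this single intersection is trivial, and let $X\subseteq\mathcal{P}(\mc{S})$ be arbitrary with $\bigcap X = \emptyset$. For each $N \in \mc{S}$, the empty-intersection hypothesis furnishes some $A_N \in X$ with $N \notin A_N$, i.e.\ $A_N \subseteq \mc{S}\setminus\{N\}$; by monotonicity $G_{A_N} \le G_{\mc{S}\setminus\{N\}}$. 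Hence
\[
\bigcap_{A\in X} G_A \;\le\; G_{A_N} \;\le\; G_{\mc{S}\setminus\{N\}}
\]
for every $N\in\mc{S}$, and intersecting over $N$ gives $\bigcap_{A\in X} G_A \le \{1\}$.

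There is no real obstacle here: the argument is purely set-theoretic and requires nothing beyond monotonicity of $A \mapsto G_A$ and the hypothesis $|\mc{S}|\ge 2$ (without which $\bigcap X_0$ might fail to be empty). The only point worth flagging is that ``quasi-direct factorization'' in the definition also requires $G_{\mc{S}} = G$; since the stated observation does not repeat this on the right-hand side, it is best read as a characterization of the topological independence property alone, with $G_{\mc{S}} = G$ tacitly present on both sides.
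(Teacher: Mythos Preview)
Your proof is correct and follows exactly the approach the paper indicates: the paper simply remarks, just before stating the observation, that the largest possible intersection $\bigcap_{A\in X}G_A$ with $\bigcap X=\emptyset$ occurs for $X=\{\mc{S}\setminus\{N\}\mid N\in\mc{S}\}$, and leaves the details implicit. You have supplied precisely those details via the monotonicity of $A\mapsto G_A$, and your remark about $G_{\mc{S}}=G$ being tacit on both sides is well taken.
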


The independence property also easily implies a non-redundancy property in generating the group topologically.

\begin{lem}\label{lem:quasiproduct:nonredundant}
Let $G$ be a topological group and suppose that $\mc{S}$ is a quasi-direct factorization of $G$.  If $\mc{S}_1$ and $\mc{S}_2$ are subsets of $\mc{S}$, then $G_{\mc{S}_1} = G_{\mc{S}_2}$ if and only if $\mc{S}_1 = \mc{S}_2$.
\end{lem}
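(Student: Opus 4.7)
The backward implication is immediate from the definition of $G_J$, so the content lies in showing that $G_{\mc{S}_1}=G_{\mc{S}_2}$ forces $\mc{S}_1=\mc{S}_2$. The plan is a short argument by contradiction: assume some $N_0$ lies in $\mc{S}_1\setminus\mc{S}_2$ (or symmetrically in $\mc{S}_2\setminus\mc{S}_1$) and use the topological independence property to show $N_0=\triv$, contradicting the standing convention that members of a factorization are non-trivial.

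The key step is to observe an isolation property built into the independence axiom: for any $N\in\mc{S}$, applying the axiom to the family $X=\{\{N\},\,\mc{S}\setminus\{N\}\}\subseteq\mc{P}(\mc{S})$ gives $\bigcap X=\emptyset$ (as members of $\mc{P}(\mc{S})$), whence
\[
N\cap G_{\mc{S}\setminus\{N\}} \;=\; G_{\{N\}}\cap G_{\mc{S}\setminus\{N\}} \;=\; \triv.
\]
Applied to $N=N_0$, this says that $N_0$ has trivial intersection with the closed subgroup generated by the remaining factors.

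Now under the contradictory assumption, $N_0\subseteq G_{\mc{S}_1}=G_{\mc{S}_2}$, and since $\mc{S}_2\subseteq\mc{S}\setminus\{N_0\}$ we also have $G_{\mc{S}_2}\subseteq G_{\mc{S}\setminus\{N_0\}}$. Combining these inclusions with the isolation identity above yields $N_0\subseteq N_0\cap G_{\mc{S}\setminus\{N_0\}}=\triv$, contradicting $N_0\ne\triv$. I do not anticipate a real obstacle here: once one notices that the independence property was written to be applied to arbitrary subsets of $\mc{P}(\mc{S})$ (and not only the maximal family $\{\mc{S}\setminus\{N\}\mid N\in\mc{S}\}$), choosing $X$ so as to separate a single factor from the rest is essentially forced, and the rest is a one-line containment argument.
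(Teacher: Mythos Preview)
Your argument is correct and follows essentially the same route as the paper's proof. The only difference is cosmetic: the paper applies the independence property directly to $X=\{\{N\},\mc{S}_2\}$ (using $\{N\}\cap\mc{S}_2=\emptyset$) to get $N\cap G_{\mc{S}_2}=\triv$ immediately, whereas you detour through $X=\{\{N_0\},\mc{S}\setminus\{N_0\}\}$ and then use the containment $G_{\mc{S}_2}\le G_{\mc{S}\setminus\{N_0\}}$.
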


\begin{proof}
If $\mc{S}_1 = \mc{S}_2$, then clearly $G_{\mc{S}_1} = G_{\mc{S}_2}$.  Let us now suppose that $\mc{S}_1 \neq \mc{S}_2$; without loss of generality, there exists $N \in \mc{S}_1 \setminus \mc{S}_2$.  We now deduce that $N \le G_{\mc{S}_1}$, but $N \cap G_{\mc{S}_2} = \triv$ by the independence property, since $\{N\} \cap \mc{S}_2 = \emptyset$.  The groups $G_{\mc{S}_1}$ and $G_{\mc{S}_2}$ are thus distinct.
\end{proof}

The difficulties associated with the independence property all arise from central subgroups. 

\begin{prop}\label{prop:quasiproduct:centralizers}
Let $G$ be a centerless topological group and let $\mc{S}\subseteq \Norm(G)$ have at least two members.  Then $\mc{S}$ is a quasi-direct factorization of $G$ if and only if $\mc{S}$ is a generalized central factorization of $G$.
\end{prop}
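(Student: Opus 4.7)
The plan is to use the Observation preceding the proposition to reduce the independence property in the definition of a quasi-direct factorization to the single identity $\bigcap_{N \in \mc{S}} G_{\mc{S} \setminus \{N\}} = \triv$; both conditions in the proposition already require $G_{\mc{S}} = G$, so only the commuting condition versus the independence condition needs to be compared.

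For the direction from quasi-direct to generalized central (which actually does not require centerlessness), I would fix distinct $N, M \in \mc{S}$ and apply the independence property to $X := \{\{N\}, \mc{S} \setminus \{N\}\}$, whose intersection is empty, to conclude $N \cap G_{\mc{S} \setminus \{N\}} = \triv$. Since $N$ and $G_{\mc{S}\setminus\{N\}}$ are both closed normal subgroups of $G$ with trivial intersection, the standard commutator argument ($[A,B] \le A \cap B$ whenever $A,B \normal G$) forces $[N, G_{\mc{S}\setminus\{N\}}] = \triv$; in particular $[N,M] = \triv$ for every $M \in \mc{S} \setminus \{N\}$.

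For the reverse direction, assume $\mc{S}$ is a generalized central factorization and $\Z(G) = \triv$. I would first note that for each $M \in \mc{S}$, every element of $\mc{S} \setminus \{M\}$ centralizes $M$ by hypothesis, so the abstract subgroup they generate lies in $\CC_G(M)$; since the centralizer $\CC_G(M)$ is closed, we get $G_{\mc{S}\setminus\{M\}} \le \CC_G(M)$. Then any $z \in \bigcap_{N \in \mc{S}} G_{\mc{S} \setminus \{N\}}$ satisfies $z \in \CC_G(M)$ for every $M \in \mc{S}$, hence centralizes the abstract group generated by $\bigcup \mc{S}$, and by closedness of $\CC_G(z)$, centralizes its closure $G_{\mc{S}} = G$. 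Thus $z \in \Z(G) = \triv$, and by the Observation the independence property holds.

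The argument is largely formal once the Observation is in hand; the only mild subtlety is the repeated use of the fact that centralizers of subsets are closed in a Hausdorff topological group, which is what lets one pass from the commuting hypothesis on elements of $\mc{S}$ to commutation with the topologically generated subgroups $G_{\mc{S}\setminus\{M\}}$ and eventually with $G$ itself. I expect no serious obstacle beyond being careful about this closure step and about invoking the Observation in the correct direction.
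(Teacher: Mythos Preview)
Your proof is correct and follows essentially the same approach as the paper. The only cosmetic difference is that for the converse direction the paper verifies the independence property directly for an arbitrary $X \subseteq \pwr(\mc{S})$ with $\bigcap X = \emptyset$, rather than first reducing via the Observation to the single intersection $\bigcap_{N\in\mc{S}} G_{\mc{S}\setminus\{N\}}$; the underlying centralizer argument is identical.
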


\begin{proof}
The forward implication is immediate. Conversely, let $X\subseteq \pwr(\mc{S})$ be a set of subsets with empty intersection and take $x \in \bigcap_{A\in X}G_{A}$. For each $N\in \mc{S}$, there exists $A\in X$ such that $N \not\in A$, so $G_{A}$ centralizes $N$.  The element $x$ therefore centralizes $N$ for all $N\in \mc{S}$.  We deduce that $\CC_G(x)$ is dense in $G$, so in fact $x$ is central in $G$. Therefore, $x = 1$, proving that $\mc{S}$ is a quasi-direct factorization.
\end{proof}

\begin{rmk}
To see the role of the independence property in the case of Polish groups with non-trivial center, consider the additive group of the Hilbert space $H = \ell^2(\Rb)$ equipped with the norm topology.  One can decompose $H$ into closed subspaces that are linearly independent in the sense of abstract vector spaces, but fail to be independent in a topological sense.  Any decomposition of $H$ as an orthogonal sum of closed subspaces however is a quasi-direct factorization. 
\end{rmk}

We now prove Theorem~\ref{thmintro:directproduct:quasiproduct}, which elucidates the relationship between quasi-products and direct products.

\begin{proof}[Proof of Theorem~\ref{thmintro:directproduct:quasiproduct}.]
$(1)$
Set $H_N:=G/G_{\mc{S}\setminus \{N\}}$ and define $d:G\rightarrow \prod_{N\in \mc{S}}H_N$ to be the diagonal map. That is to say, $g\mapsto (gG_{\mc{S}\setminus \{N\}})_{N\in \mc{S}}$. The map $d$ is  a continuous group homomorphism, and for a fixed $N\in \mc{S}$, the image $d(N)$ lies in $H_N$. Since $NG_{\mc{S}\setminus\{N\}}$ is dense in $G$, the image of $N$ is dense in $H_N$. We conclude that $d(G)\cap H_N$ is dense in $H_N$ for all $N\in \mc{S}$.

For each $N\in \mc{S}$, the subgroup $G_{\mc{S}\setminus \{N\}}$ centralizes $N$. On the other hand,
\[
\ker(d) = \bigcap_{N \in \mc{S}}G_{\mc{S}\setminus \{N\}},
\]
so $\ker(d)$ centralizes every $N\in \mc{S}$. Since $\mc{S}$ generates a dense subgroup of $G$, we deduce that $\ker(d)$ is central in $G$.  We also conclude via Observation~\ref{obs:quasi-product_char} that $\mc{S}$ is a quasi-direct factorization if and only if $d$ is injective.

\medskip

$(2)$
Let $K \in \mc{K}$.  Since $K$ is closed and normal in $\prod_{K \in \mc{K}}K$, it is clear that $\delta\inv(K)$ is closed and normal in $G$.  To show that ${\mc{S}:=\{\delta\inv(K)\mid K\in \mc{K}\}}$ is a quasi-direct factorization of $H:=\cgrp{\mc{S}}$, it remains to show that $\mc{S}$ satisfies the topological independence property.

For $L\in \mc{K}$, let $\pi_L: \prod_{K\in \mc{K}}K \rightarrow L$ be the projection of the direct product onto $L$ and set $\tilde{L}:=\delta^{-1}(L)$. We now consider $X\subseteq\pwr(\mc{S})$ with empty intersection. For each $\tilde{L}\in \mc{S}$, there exists $A\in X$ such that $\tilde{L}\notin A$.  For each $\tilde{K}\in A$, it now follows that $\tilde{K} \le \ker(\pi_L\circ\delta)$. Therefore, $H_A \le \ker(\pi_L\circ \delta)$, and we conclude
\[
\bigcap_{A\in X}H_{A} \le \bigcap_{L \in \mc{K}}\ker(\pi_L\circ \delta) = \ker(\delta) = \triv.
\]
This completes the proof that $\mc{S}$ is a quasi-direct factorization of $H$.
\end{proof}

\begin{rmk}
In the class of Polish groups, normal compressions play a role in the structure of quasi-products and vice versa.  In one direction, the quotient map $G \rightarrow G/G_{\mc{S}\setminus\{N\}}$ restricts to a normal compression of the quasi-factor $N$ of $G$.  In the other direction, given any normal compression $\psi: L \rightarrow H$, Theorem~\ref{thmintro:psi-compression_factor} ensures that the semidirect product $L\rtimes_{\psi} H$ is a quasi-product of two copies of $L$: the first is embedded in the obvious way, and the second occurs as the closed normal subgroup $\{(l\inv,\psi(l)) \mid l \in L\}$.
\end{rmk}

\subsection{Further observations}
We here note a general method for producing quasi-products from a quotient of a generalized central product.

\begin{prop}\label{prop:quasiproduct:quotient}
	Let $G$ be a topological group with $\mc{S}$ a non-trivial generalized central factorization of $G$ and let $N$ be a proper closed normal subgroup of $G$.  Set
	\[
	M := \bigcap_{S \in \mc{S}}\ol{G_{\mc{S} \setminus \{S\}}N}.
	\]
	Then $M/N$ is central in $G/N$, and if $M$ is a proper subgroup,
	\[
	\mc{S}/M := \{\ol{SM}/M \mid S\in\mc{S}\text{ and } S \not\le M\}
	\]
	is a quasi-direct factorization of $G/M$.
\end{prop}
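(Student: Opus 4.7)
My plan is to prove the proposition in three stages: first, that $M$ is a closed normal subgroup of $G$ containing $N$ and $M/N \le \Z(G/N)$; second (assuming $M$ is proper), the easy conditions for $\mc{S}/M$ to be a quasi-direct factorization of $G/M$; and third, the topological independence property, which is the main work.

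For the first stage, normality and closedness of $M$ and the containment $N \le M$ are immediate from the normality, closedness, and containment of $N$ in each $\ol{G_{\mc{S}\setminus\{S\}}N}$. For centrality of $M/N$, I will fix $S_0 \in \mc{S}$, $g \in S_0$, and $x \in M$, and choose a net $x_\alpha = z_\alpha n_\alpha \to x$ with $z_\alpha \in G_{\mc{S}\setminus\{S_0\}}$ and $n_\alpha \in N$. The pairwise commutation $[S_0, S]=\triv$ for $S \ne S_0$ forces $g z_\alpha = z_\alpha g$, so
\[
[g, x_\alpha] = z_\alpha [g, n_\alpha] z_\alpha^{-1} \in N
\]
by normality of $N$. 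Passing to the limit yields $[g, x] \in N$, and since $\{g \in G \mid [g, x]\in N\}$ is a closed subgroup of $G$ containing each $S_0 \in \mc{S}$, it equals $\cgrp{\mc{S}} = G$, giving $M/N \le \Z(G/N)$.

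For the second stage, assume $M$ is proper and let $\pi : G \to G/M$ be the quotient. The preimage $\pi^{-1}(\cgrp{\mc{S}/M})$ is a closed subgroup of $G$ containing $M$ and each $S \in \mc{S}$, hence equal to $\cgrp{\mc{S}} = G$, so $\cgrp{\mc{S}/M} = G/M$. For pairwise commutation, distinct $T_1, T_2 \in \mc{S}/M$ must arise as $T_i = \ol{S_i M}/M$ with $S_1, S_2 \in \mc{S}$ distinct, so $[S_1, S_2] = \triv$ in $G$ forces $\pi(S_1)$ and $\pi(S_2)$ to commute in $G/M$, and hence $T_1, T_2$ to commute as closures.

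The third stage requires more care. I will set $\mc{S}' := \{S \in \mc{S} \mid S \not\le M\}$ and, for $S_0 \in \mc{S}'$, $E(S_0) := \{S' \in \mc{S} \mid \ol{S'M} = \ol{S_0 M}\}$; the sets $E(S_0)$ are needed because several members of $\mc{S}$ may collapse to the same element of $\mc{S}/M$. It suffices to show that $\bigcap_{T \in \mc{S}/M}(G/M)_{\mc{S}/M \setminus \{T\}} = \{1\}$. Given $gM$ in this intersection, taking preimages under $\pi$ (and using the bijection between closed subgroups of $G/M$ and closed subgroups of $G$ containing $M$) identifies the condition as $g \in \ol{MG_{\mc{S}' \setminus E(S_0)}}$ for every $S_0 \in \mc{S}'$. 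I will then show $g \in \ol{G_{\mc{S}\setminus\{S_0\}}N}$ for every $S_0 \in \mc{S}$, forcing $g \in M$. For $S_0 \in \mc{S}'$, since $S_0 \in E(S_0)$ we have $\mc{S}' \setminus E(S_0) \subseteq \mc{S} \setminus \{S_0\}$, hence $G_{\mc{S}' \setminus E(S_0)} \le G_{\mc{S}\setminus\{S_0\}} \le \ol{G_{\mc{S}\setminus\{S_0\}}N}$; together with $M \subseteq \ol{G_{\mc{S}\setminus\{S_0\}}N}$, this gives $\ol{MG_{\mc{S}'\setminus E(S_0)}} \le \ol{G_{\mc{S}\setminus\{S_0\}}N}$. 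For $S_0 \notin \mc{S}'$ (so $S_0 \le M$), the closed subgroup $\ol{G_{\mc{S}\setminus\{S_0\}}N}$ contains both $G_{\mc{S}\setminus\{S_0\}}$ and $M \supseteq S_0$, so it contains $\cgrp{\mc{S}} = G$ and the inclusion holds trivially. The principal obstacle throughout is the collapse issue, which forces the introduction of $E(S_0)$ and careful tracking of preimages.
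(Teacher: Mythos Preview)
Your proof is correct and follows essentially the same approach as the paper's: both establish centrality via $[S,\ol{G_{\mc{S}\setminus\{S\}}N}]\le N$ and deduce the independence property by showing that any element of the relevant intersection lies in each $\ol{G_{\mc{S}\setminus\{S\}}N}$. The paper streamlines the last step by first proving the absorption identity $M=\bigcap_{S\in\mc{S}}\ol{G_{\mc{S}\setminus\{S\}}M}$ and then asserting $(G/M)_{\mc{S}/M\setminus\{\ol{SM}/M\}}=\ol{G_{\mc{S}\setminus\{S\}}M}/M$; your explicit treatment of the collapse issue via the sets $E(S_0)$ is in fact more careful, since that asserted equality can fail (only the inclusion $\subseteq$ is guaranteed) when distinct members of $\mc{S}$ yield the same element of $\mc{S}/M$, though the inclusion is all that is needed. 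As a minor point, your verification of pairwise commutation in stage two is unnecessary: the definition of a quasi-direct factorization requires only generation and the independence property.
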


\begin{proof}
For each $S \in \mc{S}$, the group $G_{\mc{S} \setminus \{S\}}$ centralizes $S$, and hence $[S,\ol{G_{\mc{S} \setminus \{S\}}N}] \le N$.  Therefore, $M/N$ commutes with $SN/N$ for all $S \in \mc{S}$.  It now follows that $M/N$ is central in $G/N$.
	
For the second claim, we observe first that 
\[
M = \bigcap_{S \in \mc{S}}\ol{G_{\mc{S}\setminus \{S\}}M}.
\]
 Indeed, fix $r \in \bigcap_{S \in \mc{S}}\ol{G_{\mc{S}\setminus \{S\}}M}$. For each $S\in\mc{S}$, we have
\[
r \in \ol{G_{\mc{S} \setminus \{S\}}M} \le \ol{G_{\mc{S} \setminus \{S\}}(G_{\mc{S} \setminus \{S\}}N)} = \ol{G_{\mc{S} \setminus \{S\}}N}.
\]
Thus, $r \in \ol{G_{\mc{S} \setminus \{S\}}N}$ for all $S \in \mc{S}$, so  $r \in M$. The reverse inclusion is obvious.

It is easy to verify 
\[
(G/M)_{\mc{S}/M\setminus \{\ol{SM}/M\}}=\ol{G_{\mc{S}\setminus \{S\}}M}/M.
\]
In view of the previous paragraph, we deduce
\[
\bigcap_{L \in \mc{S}/M}(G/M)_{\mc{S}/M\setminus \{L\}} = \bigcap_{S \in \mc{S}}\ol{G_{\mc{S}\setminus \{S\}}M}/M = 1.
\]
Appealing to Observation~\ref{obs:quasi-product_char}, the set $\mc{S}/M$ is a quasi-direct factorization of $G/M$, verifying the second claim.
\end{proof}

If $G$ is a Polish group, there is an unsurprising restriction on the number of quasi-factors.

\begin{prop}\label{prop:quasiproduct:countable:Polish}
If $G$ is a second countable topological group and $\mc{S}$ is a quasi-direct factorization of $G$, then $|\mc{S}|$ is countable.
\end{prop}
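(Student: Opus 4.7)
The plan is to argue by contradiction: assume $\mc{S}$ is uncountable, and combine the topological independence property with a countable basis for $G$ to produce an injection from $\mc{S}$ into $\Nb$. The idea is that each $N \in \mc{S}$ admits a distinguishing basic open set that detects $N$ and separates it from the rest of the factorization.

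First, I would verify that every $N \in \mc{S}$ is non-trivial. Indeed, if $N = \triv$, then $G_{\mc{S}} = G_{\mc{S} \setminus \{N\}}$, contradicting Lemma~\ref{lem:quasiproduct:nonredundant}. This lets me choose, for each $N \in \mc{S}$, some element $g_N \in N \setminus \{1\}$. Next, applying the independence property to the family $X = \{\{N\}, \mc{S}\setminus\{N\}\} \subseteq \pwr(\mc{S})$ (whose intersection is empty) yields
\[
N \cap G_{\mc{S}\setminus\{N\}} \;=\; G_{\{N\}} \cap G_{\mc{S}\setminus\{N\}} \;=\; \triv,
\]
so in particular $g_N \notin G_{\mc{S}\setminus\{N\}}$. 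Since $G_{\mc{S}\setminus\{N\}}$ is closed, I may fix a countable basis $\{B_n \mid n \in \Nb\}$ for the topology of $G$ and choose some $n_N \in \Nb$ with $g_N \in B_{n_N}$ and $B_{n_N} \cap G_{\mc{S}\setminus\{N\}} = \emptyset$.

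The key step is to check that the assignment $N \mapsto n_N$ is injective. Suppose $N, M \in \mc{S}$ are distinct. Then $M \in \mc{S} \setminus \{N\}$, so $M \le G_{\mc{S}\setminus\{N\}}$, and hence $g_M \in G_{\mc{S}\setminus\{N\}}$. If one had $n_N = n_M$, then $g_M \in B_{n_M} = B_{n_N}$, contradicting $B_{n_N} \cap G_{\mc{S}\setminus\{N\}} = \emptyset$. Therefore $n_N \neq n_M$, and $N \mapsto n_N$ embeds $\mc{S}$ into $\Nb$, contradicting uncountability of $\mc{S}$.

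I do not anticipate any serious obstacle; the argument uses only Lemma~\ref{lem:quasiproduct:nonredundant}, the simplest instance of the independence property, and the definition of a countable basis. If anything, the subtle point to get right is the first step, where the non-triviality of each quasi-factor must be extracted from non-redundancy before the choice of $g_N$ and the accompanying basic open witness $B_{n_N}$ is even meaningful.
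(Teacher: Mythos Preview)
Your proof is correct and takes a genuinely different route from the paper's. The paper argues via the Lindel\"of property: from a countable neighborhood basis $(U_i)$ at $1$ and the density of $\langle N \mid N \in \mc{S}\rangle$, it extracts a countable subset $X$ of this abstract subgroup with $G = XU_i$ for all $i$; since each element of $X$ is a finite word in finitely many quasi-factors, one obtains a countable subfamily $\mc{S}' \subseteq \mc{S}$ with $G_{\mc{S}'} = G$, and Lemma~\ref{lem:quasiproduct:nonredundant} then forces $\mc{S}' = \mc{S}$. Your argument instead builds an explicit injection $\mc{S} \to \Nb$ by assigning to each $N$ the index of a basic open set separating a witness $g_N \in N$ from the closed subgroup $G_{\mc{S}\setminus\{N\}}$. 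Both proofs invoke Lemma~\ref{lem:quasiproduct:nonredundant}, but at opposite ends: the paper uses it at the finish to collapse $\mc{S}'$ onto $\mc{S}$, while you use it at the start only to ensure each quasi-factor is non-trivial. Your approach is somewhat more elementary---no detour through Lindel\"of covers or dense countable sets---and isolates precisely the instance of independence needed, namely $N \cap G_{\mc{S}\setminus\{N\}} = \triv$; the paper's approach has the mild conceptual benefit of exhibiting a countable generating subfamily before concluding it must already be everything. (Incidentally, your ``by contradiction'' framing is inessential: the injection $N \mapsto n_N$ directly shows $|\mc{S}| \le \aleph_0$.)
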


\begin{proof}
Let $(U_i)_{i\in \Nb}$ be a basis at $1$ for $G$. Since $G$ is second countable, every open cover of $G$ has a countable subcover, so $G$ is covered by countably many left $G$-translates of $U_i$ for each $i$.  The subgroup $S = \langle N \mid N\in \mc{S} \rangle$ is dense, so $G = SU_i$ for any $i$.  It follows there is a countable subset $X$ of $S$ such that $G = XU_i$ for any $i$.  We may thus choose $\mc{S}'\subseteq \mc{S}$ countable so that $\grp{N\mid N\in \mc{S}'}$ is dense.  We conclude that $\mc{S}' = \mc{S}$ by Lemma~\ref{lem:quasiproduct:nonredundant}.
\end{proof} 

\subsection{Local direct products and powers}

We conclude this section by defining a natural group-forming operation, the \defbold{local direct product} (also known as the \defbold{restricted product}), that gives topological groups that are often quasi-products but not direct products or sums. (The idea is well-known; see for example \cite[Chapter 7 \S 1]{La13}.)  For a set $I$, we denote the collection of finite subsets of $I$ by $P_f(I)$. 

\begin{defn}
Suppose that $(G_i)_{i\in I}$ is a family of topological groups and that there is a distinguished open subgroup $O_i\leq G_i$ for each $i\in I$. For each $F\in P_f(I)$, define
\[
S_{F}:=\prod_{i\in F}G_i\times \prod_{i\in I\setminus F}O_i
\]
with the product topology; products over the empty set are taken to be the trivial group.

The \textbf{local direct product of $(G_i)_{i\in I}$ over $(O_i)_{i\in I}$}\index{local direct product} is defined to be 
\[
\bigoplus_{i\in I}\left(G_i,O_i\right):=\bigcup_{F\in P_f(I)}S_F
\]
with the inductive limit topology\textemdash{}that is $A\subseteq \bigoplus_{i\in I}\left(G_i,O_i\right)$ is open if and only if $A\cap S_F$ is open in $S_F$ for all $F\in P_f(I)$.
\end{defn}

The group $\bigoplus_{i\in I}\left(G_i,O_i\right)$ is again a topological group. When $I$ is countable and each $G_i$ is Polish, the local direct product is again a Polish group by Lemma~\ref{lem:increasing_Polish}, since it can be decomposed as a countable disjoint union of clopen subspaces, each of which is a countable product of Polish spaces:
\[
\bigoplus_{i\in I}\left(G_i,O_i\right) = \bigsqcup_{F\in P_f(I)} \left( \prod_{i \in F} (G_i \setminus O_i) \times \prod_{i \in I \setminus F}O_i \right).
\]
Seeing $G_i$ as a subgroup of the local direct product in the obvious way, it is a closed normal subgroup, and $\{G_i\mid i\in I\}$ is a quasi-direct factorization of $\bigoplus_{i\in I}\left(G_i,O_i\right)$. In the case that $O_i$ is a proper non-trivial subgroup of $G_i$ for infinitely many $i$, the local direct product  $\bigoplus_{i\in I}\left(G_i,O_i\right)$ is neither a direct product nor a direct sum.

There is an alternative definition of the local direct product which is easier to work with in practice.

\begin{defn} Suppose that $(G_i)_{i\in I}$ is a family of topological groups and that there is a distinguished open subgroup $O_i\leq G_i$ for each $i\in I$. The \textbf{local direct product}\index{local direct product} of $(G_i)_{i\in I}$ over $(O_i)_{i\in I}$ is defined to be 
\[
\bigoplus_{i\in I}\left(G_i,O_i\right):=\left\{ (x_i)_{i \in I} \in \prod_{i \in I}G_i \mid x_i \in O_i \text{  for all but finitely many }i\in I\right\}
\]
with the group topology such that the natural embedding of $\prod_{i\in I}O_i$ with the product topology is continuous and open.
\end{defn}

A special case of a local direct product is when the factors are all isomorphic to some given topological group.  We write $A^{I}$ for the direct product of copies of $A$ indexed by $I$ and $A^{<I}$ for the direct sum of copies of $A$ indexed by $I$.

\begin{defn}
Suppose that $G$ is a topological group and that $(U_i)_{i \in I}$ is a family of open subgroups of $G$.  The \defbold{local direct power}\index{local direct power} of $G$ with respect to $(U_i)_{i\in I}$ is the local direct product
\[
G^{(U_i)}: = \bigoplus_{i \in I}(G,U_i).
\]
If $I$ is countable, we say $G^{(U_i)}$ is a \textbf{countable local direct power}.
\end{defn}

If the index set $I$ is finite, the local direct power $G^{(U_i)}$ is the direct product $G^{I}$.  However, as soon as $I$ is infinite, there will in general be many possible local direct powers of a given group, depending on the choice of $U_i$.  For instance, up to isomorphism the cyclic group $C_p$ of order $p$ has three countable local direct powers, according to whether $\triv$, $C_p$ itself, or both occur infinitely often in the tuple $(U_i)_{i\in I}$.  These three local direct powers are $C^{<\Nb}_p$, $C^{\Nb}_p$, and ${C^{\Nb}_p \times C^{<\Nb}_p}$.

Local direct powers are a general method of constructing new topologically characteristically simple groups starting from a given topologically characteristically simple group. Proving this shall require an easy observation; we leave the proof to the reader. For a set $I$, we denote the finitely supported permutations by $\FSym(I)$.

\begin{obs}
Let $G$ be a topological group and $(U_i)_{i\in I}$ be a family of open subgroups. Then, the local direct power $G^{(U_i)}$ admits a faithful action of $\FSym(I)$, acting by permuting the factors $G_i$, as well as a faithful action by the direct sum $\Aut(G)^{<I}$, each coordinate acting on the individual factors.
\end{obs}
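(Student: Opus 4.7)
The plan is to define each action explicitly in coordinates using the second (function) description of $G^{(U_i)}$, and then to verify in turn that each (i) sends $G^{(U_i)}$ into itself, (ii) acts by group automorphisms, (iii) is continuous with respect to the local direct product topology, and (iv) is faithful.

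For the $\FSym(I)$-action, define $(\sigma\cdot f)(i):=f(\sigma^{-1}(i))$ for $\sigma\in\FSym(I)$ and $f\in G^{(U_i)}$. If $F:=\mathrm{supp}(\sigma)$ and $F'$ is the finite set of coordinates where $f(i)\notin U_i$, then outside $F\cup F'$ one has $(\sigma\cdot f)(i)=f(i)\in U_i$, so the image lies in $G^{(U_i)}$. The coordinatewise definition makes this an abstract group automorphism. Continuity is verified using the inductive limit structure: for every finite $F\supseteq \mathrm{supp}(\sigma)$, the permutation sends the open subgroup $S_\emptyset=\prod_{i\in I}U_i$ to $\{g:g(i)\in U_{\sigma^{-1}(i)}\ \forall i\}$, which is an open subgroup of $S_F$ in the product topology, and the restriction to each $S_{F'}$ is a homeomorphism onto its image in $S_{F\cup F'}$. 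Faithfulness is immediate: if $\sigma\neq \mathrm{id}$ and $\sigma(i_0)=j_0\neq i_0$, then for $g\neq 1$ in $G$ the function supported by $g$ at $i_0$ is moved by $\sigma$.

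For the $\Aut(G)^{<I}$-action, define $((\alpha_i)\cdot f)(i):=\alpha_i(f(i))$, where $(\alpha_i)\in \Aut(G)^{<I}$ has finite support $F$. If $F'$ is the finite set of coordinates where $f$ leaves $U_i$, then for $i\notin F\cup F'$ we have $\alpha_i(f(i))=f(i)\in U_i$, so the result stays in $G^{(U_i)}$. Since each $\alpha_i$ is a group homomorphism and the operation is coordinatewise, this is a group automorphism of $G^{(U_i)}$. Continuity again reduces to the product topology on each $S_{F''}$ with $F''\supseteq F$, where the action is a finite product of continuous maps $\alpha_i$ (on the coordinates in $F$) combined with the identity on the remaining coordinates. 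Faithfulness follows by testing on functions supported at a single index $i_0$: any $g\in G$ fixed by $\alpha_{i_0}$ shows $\alpha_{i_0}=\mathrm{id}$.

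The only subtle point is the continuity for the $\FSym(I)$-action, since permuting indices genuinely shifts the distinguished open subgroups $U_i$ (whereas for the $\Aut(G)^{<I}$-action, the indices are preserved and only finitely many are touched). The inductive limit description of the topology resolves this cleanly, because every element of $G^{(U_i)}$ lies in some $S_{F''}$, and on each such $S_{F''}$ the permutation acts as an obvious homeomorphism of products.
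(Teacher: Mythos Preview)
The paper states this as an Observation and explicitly leaves the proof to the reader, so there is no argument in the paper to compare against. Your proof is correct and fills in exactly the details one would expect: the coordinatewise definitions, the check that finite support is preserved, continuity via the inductive-limit description of the topology on the $S_F$, and faithfulness by probing with functions supported at a single index.

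Two very minor remarks. First, your phrasing of faithfulness for the $\Aut(G)^{<I}$-action is slightly garbled; what you mean is that if $(\alpha_i)$ acts trivially then testing on the function supported at $i_0$ with arbitrary value $g$ forces $\alpha_{i_0}(g)=g$ for every $g$, whence $\alpha_{i_0}=\mathrm{id}$. Second, faithfulness of the $\FSym(I)$-action tacitly needs $G\neq\{1\}$; this is harmless since the observation is only applied (in the subsequent proposition) to non-abelian $G$, but strictly speaking the edge case fails.
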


\begin{prop}\label{prop:localdirect_charsimple} Let $G$ be a non-abelian topological group and $(U_i)_{i\in I}$ be a family of open subgroups.
\begin{enumerate}[(1) ]
\item If $G$ is topologically characteristically simple, then $G^{(U_i)}$ is topologically characteristically simple.
\item If $I$ is countable and $G$ is Polish and $A$-simple where $A$ is a countable group of automorphisms, then $G^{(U_i)}$ is a $B$-simple Polish group where $B$ is a countable group of automorphisms.
\end{enumerate}
\end{prop}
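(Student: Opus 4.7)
The plan is to prove both parts with essentially the same argument, using the observation immediately preceding the proposition. Write $L := G^{(U_i)}$ and let $\iota_j : G \hookrightarrow L$ denote the natural embedding of $G$ as the $j$-th coordinate factor $G_j$. For part (2), I would take $B := \grp{\FSym(I) \cup \Aut(G)^{<I} \cap A^{<I}}$, more precisely $B := \grp{\FSym(I) \cup A^{<I}} \leq \Aut(L)$; this is countable since $I$ and $A$ are. To see $L$ is Polish in part (2): because $G$ is separable and $U_i$ is open, the quotient $G/U_i$ is discrete and countable, so the open subgroup $S_\emptyset := \prod_{i \in I} U_i$ has only countably many cosets in $L$; and since $S_\emptyset$ is a countable product of Polish groups and hence Polish, $L$ is Polish as a countable topological disjoint union of Polish cosets.

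Now fix a non-trivial closed normal subgroup $N \normal L$ which is additionally characteristic (for part (1)) or $B$-invariant (for part (2)); the goal is to show $N = L$. For each $j \in I$, put $K_j := \iota_j\inv(N \cap \iota_j(G))$, a closed subgroup of $G$. The transposition $(j,k) \in \FSym(I)$ sends $\iota_j(g)$ to $\iota_k(g)$, giving $K_j = K_k$; call the common subgroup $K$. Evaluating the coordinatewise $\Aut(G)^{<I}$-action (respectively $A^{<I}$-action) on $\iota_j(G)$ shows $\alpha(K) = K$ for every $\alpha \in \Aut(G)$ (respectively every $\alpha \in A$). Conjugating $\iota_j(k) \in N$ by $\iota_j(g) \in L$ yields $\iota_j(gkg\inv) \in N$, so $K$ is normal in $G$. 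By the simplicity hypothesis on $G$, $K$ is $\triv$ or $G$.

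Ruling out $K = \triv$ is where non-abelianness enters. Because $G$ is non-abelian, $\Z(G)$ is a proper closed characteristic (respectively $A$-invariant) normal subgroup of $G$, hence $\Z(G) = \triv$. Pick $n \in N \setminus \triv$ with $n_{i_0} \neq 1$ and $g \in G$ with $[n_{i_0}, g] \neq 1$. Then $[n, \iota_{i_0}(g)] \in N$ has support $\{i_0\}$ with value $[n_{i_0}, g] \neq 1$ there, producing a non-trivial element of $\iota_{i_0}(K)$ and contradicting $K = \triv$. Hence $K = G$, so $G^{<I} \leq N$; since $G^{<I}$ is dense in $L$ (approximate any element of $L$ by truncating coordinates outside a suitable finite set to the identity) and $N$ is closed, $N = L$. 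The main point demanding care is the bookkeeping around the inductive limit topology on $L$, in particular verifying density of $G^{<I}$ and continuity of the $\FSym(I)$- and $\Aut(G)^{<I}$-actions when the $U_i$ are permitted to vary with $i$.
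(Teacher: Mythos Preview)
Your proof is correct and follows essentially the same approach as the paper: both define $B = \grp{\FSym(I), A^{<I}}$, use $\Z(G) = \triv$ and a commutator to produce a non-trivial element of $N \cap G_j$ for some $j$, invoke $A$-simplicity to get $G_j \le N$, and then use the $\FSym(I)$-action to spread this to all coordinates and conclude by density of $G^{<I}$. Your organization (defining $K_j$ first, showing all $K_j$ coincide, then ruling out $K = \triv$) is a minor rearrangement of the paper's argument, and your explicit verification that $L$ is Polish and that $G^{<I}$ is dense fills in details the paper leaves as ``obvious.''
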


\begin{proof}
As the proofs are similar, we prove $(2)$. Set $P:=G^{(U_i)}$ and let $G_i\normal P$ be the copy of $G$ supported at $i\in I$. Define additionally $B: = \langle \FSym(I), A^{<I} \rangle\leq \Aut(P)$. Since $A$ is countable, the group $B$ is also countable.

We see that $P$ has a Polish open subgroup of countable index, namely $\prod_{i \in I}U_i$; thus $P$ is a Polish group.  We now argue that $P$ is $B$-simple. Suppose that $K$ is a non-trivial normal $B$-invariant subgroup of $P$; the subgroup $K$ is not central since $G$ has trivial center. There exists $i \in I$ such that $K$ does not centralize $G_i$, and thus, since $G_i$ and $K$ normalize each other and $G_i$ is non-abelian, $K$ must intersect $G_i$ non-trivially. That $G_i$ is $A$-simple ensures $K$ contains a dense subgroup of $G_i$.  Since $B$ acts transitively on the factors $G_i$, we deduce that $K$ contains a dense subgroup of $G_i$ for all $i \in I$, hence $K$ is dense in $P$.  This demonstrates that $P$ has no proper non-trivial closed normal $B$-invariant subgroup.
\end{proof}

\section{Groups of semisimple type}\label{sec:semisimple}
Topological groups that resemble products of topologically simple groups appear frequently in the study of Polish groups. The most well-known examples are the semisimple connected Lie groups. The local direct products mentioned above give additional examples. Here we adapt some definitions from finite group theory to the topological group setting, in order to isolate a class of groups that appears to capture the groups that are ``close to being products of simple groups." In Subsection~\ref{ssec:chief_block_structure}, we shall see that this class of groups plays a fundamental role in the structure of chief factors.

\begin{defn}
For a topological group $G$, a \defbold{component}\index{component} of $G$ is a closed subgroup $M$ of $G$ such that the following conditions hold:
\begin{enumerate}[(a) ]
\item $M$ is normal in $\ngrp{M}{G}$.
\item $M/\Z(M)$ is non-abelian.
\item Whenever $K$ is a proper closed normal subgroup of $M$, then $K$ is central in $\ngrp{M}{G}$. 
\end{enumerate}
The \defbold{layer}\index{layer} $E(G)$ of $G$ is the closed subgroup generated by the components of $G$. 
\end{defn}

Note that the conditions defining a component $M$ of $G$ are unchanged if we replace $M$ with $\theta(M)$ for $\theta$ a topological group automorphism of $G$, so the set of components is preserved by conjugation in $G$.  In particular, the layer is a closed normal (indeed, topologically characteristic) subgroup of $G$. 

\begin{defn}
A topological group $G$ is of \defbold{semisimple type}\index{semisimple type} if $G = E(G)$. We say $G$ is of \defbold{strict semisimple type}\index{strict semisimple type} if in addition $\Z(G) = \triv$.

A topological group $G$ is \defbold{topologically perfect}\index{topologically perfect} if $[G,G]$ is dense in $G$; in other words, whenever $\phi: G \rightarrow H$ is a continuous homomorphism to an abelian topological group $H$, then $\phi(G) = \triv$.
\end{defn}

Let us make some easy observations on these definitions.

\begin{lem}\label{lem:component}
Let $G$ be a topological group with $M$ a component of $G$. Then the following hold:
\begin{enumerate}[(1) ]
\item $\Z(M)$ is the unique largest proper closed normal subgroup of $M$.
\item $M/\Z(M)$ is topologically simple.
\item Both $M$ and $E(G)$ are topologically perfect.
\item $M$ is not contained in any other component of $G$.
\item Given a closed subgroup $H$ of $G$ such that $M \le H$, then $M$ is a component of $H$.  In particular, $E(G)$ is of semisimple type.
\item If $G$ is a centerless quasi-product of topologically simple groups, then $G$ is of strict semisimple type.
\end{enumerate}
\end{lem}

\begin{proof}
Condition (b) ensures that $\Z(M)$ is a proper normal subgroup of $M$; Lemma~\ref{lem:connected_commutator}(1) shows that $[M,\ol{\Z(M)}]$ is trivial, so $\ol{\Z(M)}$ is central in $M$, in other words, $\Z(M)$ is closed in $M$.  Let $N$ be a proper closed normal subgroup of $M$.  Then $N$ is central in $\ngrp{M}{G}$, and hence $N \le \Z(M)$.  Thus $\Z(M)$ is the unique largest proper closed normal subgroup of $M$, proving $(1)$.  Part $(2)$ follows immediately.

Since $M/\Z(M)$ is non-abelian, we have $\ol{[M,M]} \nleq \Z(M)$ and hence by $(1)$, $\ol{[M,M]} = M$, that is, $M$ is topologically perfect.  In particular, we have $M \le \ol{[E(G),E(G)]}$.  Since this inequality holds for every component and $E(G)$ is the closed subgroup generated by the components, it follows that $E(G) \le \ol{[E(G),E(G)]}$, that is, $E(G)$ is topologically perfect, proving $(3)$.

Suppose $M$ is contained in some component $M'$.  Since $M/\Z(M)$, and hence $M$, is non-abelian, we have $M \nleq \Z(M')$; hence by $(1)$ we have $M' = \ngrp{M}{M'}$, so $M'$ is a subgroup of $\ngrp{M}{G}$.  But then $M$ is normal in $M'$, so by $(1)$ again $M = M'$, proving $(4)$.

In the situation of $(5)$, consider $M$ as a subgroup of $H$ in comparison to $M$ as a subgroup of $G$.  We see that condition (b) is unchanged, and conditions (a) and (c) are also inherited because $\ngrp{M}{H} \le \ngrp{M}{G}$.  Thus $M$ is a component of $H$ and $(5)$ follows.

For $(6)$, suppose that $G$ is a centerless quasi-product of a set $\{M_i \mid i \in I\}$ of topologically simple groups, and consider $M_i$ for $i \in I$.  Then $M_i$ is closed and normal in $G$.  Since a quasi-product is in particular a generalized central product, we see that $\Z(M_i) \le \Z(G)$, so in fact $\Z(M_i) = \triv$, showing that $M_i$ is non-abelian.  Any proper closed normal subgroup of $M_i$ is trivial and hence central in $M_i$.  Thus $M_i$ is a component of $G$.  Since $G = \cgrp{M_i \mid i \in I}$ it follows that $G$ is of semisimple type; since $\Z(G) = \triv$, in fact $G$ is of strict semisimple type.  Thus $(6)$ holds.
\end{proof}

We will later see that the converse to Lemma~\ref{lem:component}(6) also holds, as stated in Theorem~\ref{thmintro:min_normal:simple}.  We will also show that the class of topological groups of semisimple type is closed under taking closed normal subgroups, Hausdorff quotients, and normal compressions.

\subsection{Properties of components}
We begin with a number of basic properties of groups of semisimple type.

\begin{lem}\label{lem:perfect_commutators}
Let $A$ and $B$ be closed subgroups of a topological group $G$.  If $[[A,B],B] = \triv$, then $[A,[B,B]] = \triv$. If $B$ is also topologically perfect, then $[A,B]=\triv$.
\end{lem}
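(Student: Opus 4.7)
My plan is to combine the three subgroup lemma (Hall--Witt identity) with a routine closure argument in topological groups. The three subgroup lemma is purely algebraic, so it applies to closed subgroups of a topological group just as it does to arbitrary subgroups: if $H, K, L$ are subgroups of any group $G$ and $N \normal G$ contains any two of $[H,K,L]$, $[K,L,H]$, $[L,H,K]$, then $N$ contains the third.

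For the first assertion, I will apply the three subgroup lemma with $N = \triv$, $H = B$, $K = A$, $L = B$. Since the paper defines $[A,B]$ as the abstract subgroup generated by commutators $[a,b]$, and since $[b,a] = [a,b]^{-1}$, we have $[B,A] = [A,B]$ as subgroups. Thus
\[
[H,K,L] = [[B,A],B] = [[A,B],B] = \triv \quad \text{and} \quad [K,L,H] = [[A,B],B] = \triv,
\]
so the three subgroup lemma yields $[L,H,K] = [[B,B],A] = \triv$, which is the claim $[A,[B,B]] = \triv$.

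For the second assertion, suppose additionally that $B$ is topologically perfect, so $B = \ol{[B,B]}$. The statement $[A,[B,B]] = \triv$ means precisely that $[B,B] \subseteq \CC_G(A)$. Now for each fixed $a \in A$, the set $\{g \in G \mid [a,g] = 1\}$ is the preimage of $\{1\}$ under the continuous map $g \mapsto [a,g]$, hence closed; intersecting over $a \in A$, the centralizer $\CC_G(A)$ is closed in $G$. Taking closures in the inclusion $[B,B] \subseteq \CC_G(A)$ gives $B = \ol{[B,B]} \subseteq \CC_G(A)$, i.e.\ $[A,B] = \triv$.

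No step here looks delicate: the main point to get right is the precise form of the three subgroup lemma and to notice that the hypothesis $[[A,B],B]=\triv$ supplies both of the two vanishing commutators needed, using the symmetry $[A,B] = [B,A]$. The topological closure argument is then immediate from the fact that centralizers of subsets are closed.
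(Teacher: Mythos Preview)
Your proof is correct and follows essentially the same approach as the paper: both invoke the Three Subgroups Lemma after noting $[A,B]=[B,A]$, and both pass to the closure using that centralizers are closed. Your version is a bit more explicit about why $\CC_G(A)$ is closed, but the substance is identical.
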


\begin{proof}
The equation $[A,B] = [B,A]$ holds since $[a,b]\inv = [b,a]$ for any $a,b \in G$, so
\[
[[A,B],B] = [[B,A],B] = \triv.
\]
Appealing to the Three Subgroups Lemma (see for instance \cite[Lemma~8.27]{Is09}), it follows that $[[B,B],A] = \triv$, hence $[B,B]$ commutes with $A$, verifying the first claim. 

If $B$ is also topologically perfect, $\ol{[B,B]}=B$ commutes with $A$, and thus, $[A,B] = \triv$.
\end{proof}

\begin{lem}\label{lem:components:equiv}
Given a topological group $G$ and a component $M$ of $G$, then $[gMg\inv,M]=\triv$ for all $g \in G$ such that $M \neq gMg\inv$.
\end{lem}

\begin{proof}
Let $g \in G$ such that $M \neq gMg\inv$; we claim that $[gMg\inv,M] = \triv$.  By symmetry we may assume $M \nleq gMg\inv$.  Then $M$ and $gMg\inv$ are closed normal subgroups of $L$, so $gMg\inv \cap M$ is a closed normal subgroup of $L$ that contains $[gMg\inv,M]$ and is properly contained in $M$.  Thus
\[
[gMg\inv,M] \le gMg\inv \cap M \le \Z(M).
\]
We deduce that $[[gMg\inv,M],M] = \triv$, hence $[gMg\inv,M] = \triv$ by Lemma~\ref{lem:component}(3) and Lemma~\ref{lem:perfect_commutators}.
\end{proof}

\begin{prop}\label{prop:components}
Let $G$ be a topological group with $M$ a component of $G$.
\begin{enumerate}[(1) ]
\item Given a closed subgroup $K$ of $G$ that is normalized by $M$, either $M \le K$ or $K$ centralizes $M$. 
\item $M$ commutes with the group $R$ generated by all components of $G$ other than $M$ itself.  In particular, $M$ is normal in $E(G)$.
\item If $\mc{R}$ is a set of components of $G$ that generates $E(G)$ topologically, then $\mc{R}$ is the set of all components of $G$ and forms a generalized central factorization of $E(G)$. 
\end{enumerate}
\end{prop}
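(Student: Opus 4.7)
The plan is to apply the three-subgroups lemma (Lemma~\ref{lem:perfect_commutators}) together with property~(c) of components. Throughout, write $L := \ngrp{M}$; by property~(a) of being a component, $M \normal L$, and by definition $L \normal G$.

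For (1), suppose $M \not\le K$ and aim to show $[K,M] = \triv$. The idea is to push iterated commutators into a subgroup known to be central in $M$, then peel off the outer commutators via Lemma~\ref{lem:perfect_commutators} using that $M$ is topologically perfect (Observation~\ref{obs:component}(2)). The key step is a double containment for $[K,M]$: on the one hand $[K,M] \le K$ since $M$ normalizes $K$, and on the other $[k,m] = (kmk^{-1})m^{-1}$ lies in $L$ because $kmk^{-1} \in kMk^{-1} \subseteq L$ (using $L \normal G$) and $m^{-1} \in M \subseteq L$. Since $M \normal L$, it follows that $[[K,M],M] \le [L,M] \le M$, which combined with $[[K,M],M] \le K$ gives $[[K,M],M] \le K \cap M$. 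But $M \not\le K$ forces $K \cap M$ to be a proper closed normal subgroup of $M$, hence central in $L$ by property~(c). Therefore $[[[K,M],M],M] = \triv$, and two applications of Lemma~\ref{lem:perfect_commutators} (first with $A = [K,M]$, $B = M$ to deduce $[[K,M],M] = \triv$, then with $A = K$, $B = M$) yield $[K,M] = \triv$.

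For (2), let $N$ be a component of $G$ with $N \neq M$. Apply (1) to the subgroup $\ngrp{N}$, which is normal in $G$ and hence normalized by $M$: either $\ngrp{N}$ centralizes $M$ (whence so does $N$, and we are done) or $M \le \ngrp{N}$. Arguing symmetrically with the roles of $M$ and $N$ exchanged, either we are done or $N \le \ngrp{M}$. In the remaining case, $\ngrp{M} = \ngrp{N} =: L'$, and both $M$ and $N$ are normal in $L'$, so $[M,N] \le M \cap N$. By Observation~\ref{obs:component}(3), $M \not\le N$, so $M \cap N$ is a proper closed normal subgroup of $M$ and therefore central in $L'$ by property~(c). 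Hence $[[M,N],M] = \triv$, and Lemma~\ref{lem:perfect_commutators} delivers $[M,N] = \triv$. Thus $R$ centralizes $M$; since $E(G) = \ol{MR}$ and $M \normal MR$, the closedness of normalizers in $G$ yields $M \normal E(G)$.

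For (3), the generalized central factorization claim is immediate: $\mc{R}$ topologically generates $E(G)$ by hypothesis and distinct elements of $\mc{R}$ commute by (2). If $N$ were a component of $G$ outside $\mc{R}$, then (2) would force $N$ to centralize every element of $\mc{R}$, hence all of $E(G)$; but $N \le E(G)$, so $N$ would centralize itself, contradicting non-abelianness of $N/\Z(N)$. The main obstacle is the cross-containment in (1) — namely that $[K,M]$ sits inside $L = \ngrp{M}$ — which is not obvious and depends crucially on the $G$-invariance of $L$. Once that is in hand, (1) reduces to mechanical three-subgroups manipulation, and (2)--(3) follow from (1) by a symmetric case split and a short contradiction argument, respectively.
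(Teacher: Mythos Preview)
Your proof is correct and takes a genuinely different route in part~(1). The paper splits into cases according to whether $K$ normalizes $M$: if not, it invokes Lemma~\ref{lem:components:equiv} (some conjugate $kMk^{-1}$ centralizes $M$) together with the identity $[m,n]=[[m,k],n]$ to force $[M,M]\le K$, a contradiction; if so, $[K,M]$ is already a proper normal subgroup of $M$, hence central in $\ngrp{M}$, and a single application of Lemma~\ref{lem:perfect_commutators} finishes. You avoid this case split entirely by exploiting $L=\ngrp{M}\normal G$ to get $[K,M]\le L$ and hence $[[K,M],M]\le K\cap M$ regardless of whether $K$ normalizes $M$; the price is one extra layer of commutators and a second application of Lemma~\ref{lem:perfect_commutators}. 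Your approach is more uniform and does not depend on Lemma~\ref{lem:components:equiv}; the paper's is marginally shorter once the two cases are separated. In~(2), once you reach $M\le\ngrp{N}$ you could already apply~(1) directly with $K=N$ (since $N\normal\ngrp{N}$ and $M\not\le N$), as the paper does, rather than passing to the symmetric case and handling $\ngrp{M}=\ngrp{N}$ by hand---but your argument there is also valid. One minor technicality: your first invocation of Lemma~\ref{lem:perfect_commutators} takes $A=[K,M]$, which need not be closed as that lemma's hypotheses require; this is harmless, since one may replace $A$ by its closure (noting $\ol{[[K,M],M]}\le K\cap M$), or observe that the proof of the lemma does not actually use closedness of~$A$.
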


\begin{proof} Set $L:=\ngrp{M}{G}$ for the duration of this proof and note that $M \normal L$.

For $(1)$, let $K$ be a closed subgroup of $G$ normalized by $M$ such that $M \nleq K$.  By Lemma~\ref{lem:component}(3) we have $M = \ol{[M,M]}$, so there exist $m,n \in M$ such that $[m,n] \not\in K$.  Suppose that there is $k \in K$ such that $kMk\inv \neq M$.  The group $kMk\inv$ then commutes with $M$ by Lemma~\ref{lem:components:equiv}, so $[m,n] = [[m,k],n]$.  On the other hand, $[[m,k],n] \in K$ since $K$ is normalized by $M$, so $[m,n] \in K$, a contradiction.  So in fact $K$ normalizes $M$. The group $[K,M]$ is thus a proper normal subgroup of $M$, so $[K,M] \le \Z(L)$. We conclude $[[K,M],M]=\triv$, and applying Lemma~\ref{lem:perfect_commutators}, $[K,M]=\triv$. Claim $(1)$ is now verified.

For $(2)$, let $N$ be a component of $G$ different from $M$ and set $S: = \ngrp{N}{G}$. If $M \nleq S$, then $S$ centralizes $M$ by part $(1)$, so $N$ centralizes $M$.  If instead $M \leq S$, then $M$ normalizes $N$ but is not contained in $N$, so again part $(1)$ implies that $N$ centralizes $M$.  The component $M$ therefore commutes with every component other than itself.

Part $(3)$ follows from $(2)$: Distinct elements of $\mc{R}$ centralize each other and hence are normal in $E(G)$. Any component not in $\mc{R}$ commutes with $E(G)$, which contradicts the fact that components are non-abelian.
\end{proof}

Immediately from the previous proposition, we deduce the following:

\begin{cor}\label{cor:components_normal}
If $G$ is a topological group of semisimple type and $M$ is a component of $G$, then $G = \ol{M\CC_G(M)}$; in particular $M$ is normal in $G$.
\end{cor}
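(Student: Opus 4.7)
The plan is to deduce this directly from Proposition~\ref{prop:components}, using the assumption that $G = E(G)$. Let $\mc{R}$ denote the set of all components of $G$. Since $G$ is of semisimple type, $\mc{R}$ topologically generates $G$, i.e.\ $G = \cgrp{N \mid N \in \mc{R}}$.

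First I would show the containment $G \le \ol{M\CC_G(M)}$. By Proposition~\ref{prop:components}(2), every component $N \in \mc{R}\setminus\{M\}$ commutes with $M$ and therefore lies in $\CC_G(M)$. Since the centralizer $\CC_G(M)$ is a subgroup that centralizes $M$, the set $M\CC_G(M)$ is itself a subgroup of $G$, containing $M$ as well as every other component. Thus the abstract subgroup generated by $\mc{R}$ is contained in $M\CC_G(M)$, and taking closures yields $G = \cgrp{\mc{R}} \le \ol{M\CC_G(M)}$. The reverse containment is trivial, giving the first claim $G = \ol{M\CC_G(M)}$.

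For the second claim, observe that $M \CC_G(M)$ is contained in $\N_G(M)$, since $M$ normalizes itself and $\CC_G(M)$ centralizes (hence normalizes) $M$. Normalizers of closed subgroups are closed (as noted in the proof of Corollary~\ref{cor:normal_invar_equi_action}), so $\N_G(M)$ is a closed subgroup of $G$ containing the dense set $M\CC_G(M)$. Therefore $\N_G(M) = G$, i.e.\ $M$ is normal in $G$.

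There is no real obstacle here: the work has already been done in Proposition~\ref{prop:components}, which supplies the crucial fact that distinct components commute. The corollary is essentially a bookkeeping consequence of combining this with the defining property $G = E(G)$.
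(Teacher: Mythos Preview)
Your proof is correct and follows the same route the paper intends: the paper simply says the corollary is immediate from Proposition~\ref{prop:components}, and you have spelled out exactly that deduction. One small simplification: Proposition~\ref{prop:components}(2) already concludes that $M$ is normal in $E(G)$, so once $G = E(G)$ the normality claim is immediate without the separate normalizer argument.
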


It is useful to note a couple of subquotients arising from components which are topologically simple.

\begin{prop}\label{prop:top_simple}
Suppose that $G$ is a topological group with $M$ a component of $G$ and set $R := \ol{M\CC_G(M)}$.  Then $\ol{M\Z(R)}/\Z(R)$ and $R/\CC_G(M)$ are non-abelian and topologically simple.
\end{prop}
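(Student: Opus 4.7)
The strategy is to realize both quotients as targets of normal compressions from the topologically simple group $M/\Z(M)$ (Observation~\ref{obs:component}(1)), and then combine that simplicity with Lemma~\ref{lem:perfect_commutators} to exclude nontrivial proper closed normal subgroups in each target.

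First I would record a few background facts about $R$. Since $M\normal \ngrp{M}$ and normalizers of closed subgroups are closed, $R = \ol{M\CC_G(M)} \subseteq \N_G(M)$, so both $M$ and $\CC_G(M)$ are closed normal subgroups of $R$. Moreover, $M\cap \Z(R) = \Z(M)$: the inclusion $\subseteq$ is immediate, while for the converse $\Z(M)$ is centralized by $M$ (by definition) and by $\CC_G(M)$, hence by $R = \ol{M\CC_G(M)}$, giving $\Z(M) \leq \Z(R)$. It follows that the inclusion $M\hookrightarrow R$ descends to injective continuous homomorphisms with dense image
\[
\phi_1\colon M/\Z(M) \to \ol{M\Z(R)}/\Z(R), \qquad \phi_2\colon M/\Z(M) \to R/\CC_G(M),
\]
whose respective images $M\Z(R)/\Z(R)$ and $M\CC_G(M)/\CC_G(M)$ are normal in the targets because $M\Z(R)\normal R$ and $M\CC_G(M)\normal R$. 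Hence both $\phi_1$ and $\phi_2$ are normal compressions, and since their images are abstractly isomorphic to the non-abelian group $M/\Z(M)$, both targets are non-abelian.

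For topological simplicity, fix $\phi \in \{\phi_1,\phi_2\}$ with target $H$, and let $N$ be a proper closed normal subgroup of $H$, with preimage $\tilde N$ in $R$ (or in $\ol{M\Z(R)}$ in the case of $\phi_1$). Since $M/\Z(M)$ is topologically simple, $\phi^{-1}(N)$ is either everything, which forces $N$ to contain the dense image of $\phi$ and contradicts properness, or trivial, i.e., $\tilde N\cap M = \Z(M)$. Because $\tilde N$ and $M$ are both normal in the ambient group, $[\tilde N, M] \leq \tilde N \cap M \leq \Z(M)$, so $[[\tilde N, M], M] = \triv$. Using the topological perfection of $M$ (Observation~\ref{obs:component}(2)), Lemma~\ref{lem:perfect_commutators} yields $[\tilde N, M] = \triv$, i.e., $\tilde N \subseteq \CC_G(M)$. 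For $\phi_2$, this immediately forces $\tilde N = \CC_G(M)$ and hence $N = \triv$.

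The remaining and most delicate step, which arises only for $\phi_1$, is to upgrade $\tilde N \subseteq \CC_G(M)$ to $\tilde N \subseteq \Z(R)$. I would exploit the fact that $\tilde N \subseteq \ol{M\Z(R)}$: any element of $M\Z(R)$ has the form $mz$ with $m\in M$ and $z \in \Z(R)$, and for $c \in \CC_G(M) \subseteq R$ one has $[m,c] = 1$ (by definition of $\CC_G(M)$) and $[z,c] = 1$ (since $z\in \Z(R)$), whence $[mz, c] = 1$. Continuity of the commutator then shows that $\ol{M\Z(R)}$, and in particular $\tilde N$, centralizes $\CC_G(M)$. Combined with $\tilde N$ centralizing $M$, we obtain $\tilde N \subseteq \CC_G(M\CC_G(M)) = \CC_G(R)$; since $\tilde N \subseteq R$, this gives $\tilde N \leq R \cap \CC_G(R) = \Z(R)$, hence $N = \triv$.
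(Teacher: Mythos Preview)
Your proof is correct. The argument via normal compressions from $M/\Z(M)$, followed by the commutator computation with Lemma~\ref{lem:perfect_commutators}, goes through cleanly; the final step showing $\ol{M\Z(R)}$ centralizes $\CC_G(M)$ is exactly what is needed to handle $\phi_1$.

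The paper takes a shorter route. It reduces to $G = R$, then for each quotient directly invokes Proposition~\ref{prop:components}(1): given a closed subgroup $K$ normalized by $M$, either $M \le K$ or $K$ centralizes $M$. This dichotomy immediately disposes of any candidate proper closed normal subgroup (above $\CC_G(M)$, respectively above $\Z(R)$), and the argument that $\ol{M\Z(R)}$ centralizes $\CC_G(M)$ appears in the paper as well to finish the $\ol{M\Z(R)}/\Z(R)$ case. In effect, you have unpacked the content of Proposition~\ref{prop:components}(1) inline using the simplicity of $M/\Z(M)$ together with Lemma~\ref{lem:perfect_commutators}, and framed the setup in the language of normal compressions. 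Your approach is more self-contained and highlights why the compression machinery is relevant; the paper's is terser because it has already packaged the key dichotomy as a separate proposition.
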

\begin{proof}
We may assume without loss of generality that $G = R$. Set $C:=\CC_G(M)$ and $N:=\ol{M\Z(G)}$.

For a closed normal subgroup $L$ of $G$ such that $L > C$, the group $L$ does not centralize $M$, so $L \ge M$ by Proposition~\ref{prop:components}(1). Therefore, $L \ge \ol{MC} = G$, verifying that $G/C$ is topologically simple.

For $S$ a proper closed normal subgroup of $N$ such that $S \ge \Z(G)$, we have that $M \nleq S$, so $S$ centralizes $M$ by Proposition~\ref{prop:components}(1).  On the other hand, $N \le \CC_G(C)$, since $C$ commutes with both $M$ and $\Z(G)$.  The group $S$ thus centralizes both $M$ and $C$, so $S$ centralizes $\ol{MC} = G$. We deduce that $S = \Z(G)$, and thus, $N/\Z(G)$ is topologically simple.

Finally, all the quotients under consideration contain a non-trivial image of $M$ and are thus non-abelian.
\end{proof}

A duality between components and simple quotients of groups of semisimple type now follows.
\begin{cor}\label{cor:components_simple}
Suppose that $G$ is a topological group of semisimple type, $\mc{M}$ is the set of components of $G$, and $\mc{N}$ lists all closed $N\normal G$ so that $G/N$ is topologically simple. Then
\[
\mc{N} = \{\CC_G(M) \mid M \in \mc{M}\}. 
\]
\end{cor}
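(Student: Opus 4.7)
The plan is to prove both inclusions separately, relying on the structural results already established for components and groups of semisimple type.

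For the inclusion $\{C_G(M)\mid M\in \mathcal{M}\}\subseteq \mathcal{N}$, I would fix $M\in\mathcal{M}$ and argue that $G/C_G(M)$ is non-abelian and topologically simple. Since $G$ is of semisimple type, Corollary~\ref{cor:components_normal} gives $G=\overline{M\CC_G(M)}$. Applying Proposition~\ref{prop:top_simple} with $R=G$ then yields that $R/\CC_G(M) = G/\CC_G(M)$ is non-abelian and topologically simple, placing $\CC_G(M)$ in $\mathcal{N}$.

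For the reverse inclusion $\mathcal{N}\subseteq \{C_G(M)\mid M\in \mathcal{M}\}$, I would fix $N\in\mathcal{N}$; then $N$ is a proper closed normal subgroup that is maximal as such, by topological simplicity of $G/N$. Because $G=E(G)=\cgrp{\mathcal{M}}$ and $N\neq G$, there must exist some component $M\in\mathcal{M}$ with $M\not\leq N$. Corollary~\ref{cor:components_normal} tells us $M$ is normal in $G$, so $N$ normalizes $M$, and Proposition~\ref{prop:components}(1) then forces $N$ to centralize $M$, i.e.\ $N\leq \CC_G(M)$. Since components are non-abelian (their quotient by the center is non-abelian), $M\not\leq \CC_G(M)$, so $\CC_G(M)$ is a proper closed normal subgroup of $G$. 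The maximality of $N$ then gives $N=\CC_G(M)$, completing the argument.

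The only subtle point — and the one that could be mistaken for an obstacle — is ensuring properness of $\CC_G(M)$ in the second direction (so that maximality of $N$ can upgrade the containment to equality); this is immediate from non-abelianness of $M$. Otherwise, both directions are essentially a direct unpacking of Proposition~\ref{prop:top_simple}, Proposition~\ref{prop:components}(1), and Corollary~\ref{cor:components_normal}.
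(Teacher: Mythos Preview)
Your proof is correct and matches the paper's approach: Corollary~\ref{cor:components_normal} plus Proposition~\ref{prop:top_simple} for one inclusion, and Proposition~\ref{prop:components}(1) together with maximality of $N$ for the other. One minor wording point: to apply Proposition~\ref{prop:components}(1) with $K=N$ you need $N$ normalized \emph{by} $M$, which holds simply because $N\normal G$ --- your stated justification ($M\normal G$, hence $N$ normalizes $M$) is the reverse direction, though the conclusion remains valid.
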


\begin{proof}
By Corollary~\ref{cor:components_normal} and Proposition~\ref{prop:top_simple}, every $M \in \mc{M}$ is such that the quotient $G/\CC_G(M)$ is non-abelian and topologically simple, hence $\CC_G(M) \in \mc{N}$. On the other hand, suppose that $N\in \mc{N}$. The components of $G$ generate $G$ topologically, so there is a component $M\in \mc{M}$ for which $M\nleq N$. Appealing to Proposition~\ref{prop:components}(1), $N\leq \CC_G(M)$. The quotient $G/N$ is topologically simple, so either $\CC_G(M)=N$ or $\CC_G(M)=G$. The latter case is impossible since $M$ is non-abelian. We thus deduce that $N=\CC_G(M)$.
\end{proof}

With groups of strict semisimple type, we obtain better control over the components.

\begin{lem}\label{lem:abelian_normal}
If $G$ is a topological group of semisimple type, then $G/\Z(G)$ has no non-trivial abelian normal subgroups.  In particular, every abelian normal subgroup of $G$ is central.
\end{lem}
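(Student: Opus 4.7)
The plan is to deduce the ``in particular'' clause from the main statement, and then prove the main statement by testing the candidate abelian normal subgroup against each component.

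For the ``in particular'' clause: given an abelian normal subgroup $A$ of $G$, the subgroup $A\Z(G)/\Z(G)$ is an abelian normal subgroup of $G/\Z(G)$, which the first assertion forces to be trivial; thus $A\leq \Z(G)$.

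For the main assertion, let $\ol{A}/\Z(G)$ be a closed abelian normal subgroup of $G/\Z(G)$ where $A\normal G$ is the preimage in $G$ of some abelian normal subgroup. Replacing $A$ by $\ol A$ is harmless: since $\Z(G)$ is closed in a Hausdorff group and commutators are continuous, $[\ol A,\ol A]\le \ol{[A,A]}\le \Z(G)$, so $\ol A/\Z(G)$ is still abelian. So we may assume $A$ itself is closed, normal in $G$, and satisfies $[A,A]\le \Z(G)$; it suffices to prove $A\le \Z(G)$.

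Now I invoke the component structure. Let $M$ be any component of $G$. By Corollary~\ref{cor:components_normal} the component $M$ is normal in $G$, so $M$ normalizes $A$. Since $A$ is a closed subgroup of $G$ normalized by $M$, Proposition~\ref{prop:components}(1) gives the dichotomy: either $M\le A$, or $A\le \CC_G(M)$. The first alternative is impossible, because then $[M,M]\le [A,A]\le \Z(G)\cap M\le \Z(M)$, forcing $M/\Z(M)$ to be abelian and contradicting condition (b) of the definition of a component. Hence $A\le \CC_G(M)$.

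This holds for every component $M$, so $A\le \bigcap_{M}\CC_G(M)$, an intersection of closed subgroups and hence closed. Because $G=E(G)$ is topologically generated by its components and each element of this intersection commutes with every component, it commutes with a dense subgroup of $G$ and therefore with all of $G$. Thus $A\le \Z(G)$, as required. The main obstacle, such as it is, is the passage to the closure of $A$, which is why the continuity of the commutator and the closedness of $\Z(G)$ are the key tools used before Proposition~\ref{prop:components}(1) can be applied.
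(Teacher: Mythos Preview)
Your proof is correct and follows essentially the same route as the paper's: reduce to a closed normal subgroup $A$ with $[A,A]\le \Z(G)$, apply Proposition~\ref{prop:components}(1) to each component to conclude $A$ centralizes every component, and hence $A\le \Z(G)$. The only cosmetic difference is that the paper rules out the alternative $M\le A$ by observing that $A$ is solvable (so cannot contain a topologically perfect component), whereas you argue directly that $[M,M]\le \Z(M)$; these are the same point. One minor remark: invoking Corollary~\ref{cor:components_normal} to say ``$M$ normalizes $A$'' is unnecessary, since $A\normal G$ already gives this.
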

\begin{proof}
By Corollary~\ref{cor:components_normal} and Proposition~\ref{prop:top_simple} the factor $\ol{M\Z(G)}/\Z(G)$ is non-abelian for every component $M$ of $G$.  In particular, if $A$ is a closed normal subgroup of $G$ such that $\Z(G)\leq A$ and $A/\Z(G)$ is abelian, then $A$ cannot contain any component of $G$.  It now follows by Proposition~\ref{prop:components}(1) that $A$ centralizes every component of $G$.  Since $G$ is topologically generated by components, we deduce that $A \le \Z(G)$, and thus, $A/\Z(G)$ is trivial. 

The second claim follows from the first since $\ol{B\Z(G)}/\Z(G)$ is abelian and normal in $G/\Z(G)$ for any abelian normal subgroup $B$ of $G$.
\end{proof}

\begin{prop}\label{prop:strict_normal}
Let $G$ be a topological group of strict semisimple type with $M$ a closed subgroup of $G$.  Then $M$ is a component of $G$ if and only if $M$ is a minimal closed normal subgroup of $G$.  Additionally, if $M$ is a component, then $M$ is topologically simple.
\end{prop}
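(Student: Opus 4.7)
The plan is to exploit two features of the strict semisimple type hypothesis. First, Corollary~\ref{cor:components_normal} ensures that every component of a group of semisimple type is normal in the ambient group. Second, Lemma~\ref{lem:abelian_normal} shows that any abelian normal subgroup of such a group is central, hence trivial once $\Z(G)=\triv$. Together these collapse the technical clause (c) in the definition of a component into plain topological simplicity, and they make the normality and minimality conditions easy to compare.

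For the forward direction, suppose $M$ is a component of $G$. By Corollary~\ref{cor:components_normal} we have $M\normal G$. Then $\Z(M)$ is characteristic in $M$, hence normal in $G$, and it is abelian, so Lemma~\ref{lem:abelian_normal} forces $\Z(M)\le\Z(G)=\triv$. Condition (c) of the component definition now says that every proper closed normal subgroup of $M$ is contained in $\Z(M)$, hence trivial, so $M$ is topologically simple. If $N\normal G$ with $\triv<N\le M$, then $N$ is in particular a non-trivial closed normal subgroup of $M$, so $N=M$ by simplicity; hence $M$ is a minimal closed normal subgroup of $G$.

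For the converse, let $M$ be a minimal closed normal subgroup of $G$. Since $G=E(G)$ is topologically generated by its components and $\Z(G)=\triv$, the non-trivial subgroup $M$ cannot commute with every component of $G$: if it did, it would centralize all of $G$ and hence lie in $\Z(G)=\triv$. Fix a component $N$ of $G$ with $[N,M]\neq\triv$. Since $M\normal G$, the subgroup $N$ normalizes $M$, so Proposition~\ref{prop:components}(1), applied to the component $N$ and the closed subgroup $M$, gives either $N\le M$ or $M$ centralizes $N$; the latter is excluded by the choice of $N$, so $N\le M$. But $N$ is itself a non-trivial closed normal subgroup of $G$ by Corollary~\ref{cor:components_normal}, so minimality of $M$ forces $N=M$, and $M$ is a component. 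The one step that genuinely uses all the hypotheses is the production of a non-centralizing component $N$, where $\Z(G)=\triv$ is essential; the remainder of the argument is a direct application of results already in hand.
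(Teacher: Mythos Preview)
Your proof is correct and follows essentially the same approach as the paper: both directions use Corollary~\ref{cor:components_normal} and Lemma~\ref{lem:abelian_normal} in the forward direction to force $\Z(M)=\triv$ and hence topological simplicity, and for the converse both apply Proposition~\ref{prop:components}(1) to locate a component inside $M$ and then invoke minimality. Your argument is slightly more explicit about why $M$ fails to centralize some component (you spell out that otherwise $M\le\Z(G)=\triv$), whereas the paper packages this into the observation that $M$ is non-abelian, but the substance is the same.
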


\begin{proof}
If $M$ is a component of $G$, then Corollary~\ref{cor:components_normal} ensures $M$ is normal in $G$, so the center of $M$ is normal in $G$. Applying Lemma~\ref{lem:abelian_normal}, the center is indeed trivial.  The component $M$ is thus a non-abelian topologically simple group, and \textit{a fortiori} it is a minimal closed normal subgroup of $G$.

Conversely, if $M$ is a minimal closed normal subgroup of $G$, then $M$ is non-abelian by Lemma~\ref{lem:abelian_normal}, so by Proposition~\ref{prop:components}(1), $M$ must contain at least one component of $G$.  Since components of $G$ are normal, minimality of $M$ ensures that in fact $M$ is a component of $G$, completing the proof.
\end{proof}

\subsection{Normal subgroups of groups of semisimple type}\label{sec:semisimple_structure}

We have good control over the normal subgroups of a group of semisimple type.  In particular, semisimplicity is stable under quotients, and there is a natural connection between components of $G$ and components of a quotient of $G$.

\begin{thm}\label{thm:norm_sgrps}
Suppose that $G$ is a topological group of semisimple type, let $\mc{M}$ be the set of components of $G$, and let $K$ be a closed normal subgroup of $G$.
\begin{enumerate}[(1) ]
\item The subgroup $\ol{[K,G]}$ is topologically generated by components of $G$.  In particular, it is of semisimple type.
\item The quotient $G/K$ is of semisimple type.  Indeed, the set 
\[
\{\ol{MK}/K \mid M \in \mc{M}\text{ and } [M,K] = \triv\}
\]
is exactly the set of components of $G/K$ and generates $G/K$ topologically.
\item The quotient $G/\Z(G)$ is of strict semisimple type and admits the quasi-direct factorization $\mc{M}':= \{\ol{M\Z(G)}/\Z(G) \mid M \in \mc{M}\}$.
\end{enumerate}
\end{thm}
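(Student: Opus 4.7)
The plan is to leverage the dichotomy from Proposition~\ref{prop:components}(1): since $K$ is normal in $G$, every component $M$ of $G$ normalizes $K$, so either $M \leq K$ or $M \leq \CC_G(K)$. Partition $\mc{M} = \mc{M}_1 \sqcup \mc{M}_2$ accordingly (disjoint since components are non-abelian), and set $L_i := \cgrp{\mc{M}_i}$; by Corollary~\ref{cor:components_normal}, each $L_i$ is a closed normal subgroup of $G$, and clearly $G = \ol{L_1 L_2}$ with $L_2 \leq \CC_G(K)$.

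For part (1), I will show $\ol{[K, G]} = L_1$. The inclusion $L_1 \leq \ol{[K, G]}$ is immediate: each $M \in \mc{M}_1$ is topologically perfect (Observation~\ref{obs:component}(2)) and lies in $K$, so $M = \ol{[M, M]} \leq \ol{[K, G]}$. For the reverse, the commutator identity gives $[k, lc] = [k, l]$ for $k \in K$, $l \in L_1$, $c \in L_2$, since $[K, L_2] = \triv$; hence $[K, L_1 L_2] \leq [K, L_1] \leq L_1$ using normality of $L_1$ in $G$. Taking closures and using continuity of the commutator then gives $[K, G] \leq L_1$. That $\ol{[K, G]}$ is of semisimple type now follows from Observation~\ref{obs:component}(4), since each $M \in \mc{M}_1$ remains a component of the closed subgroup $L_1$.

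For part (2), let $\pi: G \to G/K$ be the quotient. Since $\pi(\mc{M}_1) = \{\triv\}$, the quotient is topologically generated by $\{\bar M := \ol{MK}/K : M \in \mc{M}_2\}$. The main work is to verify that each $\bar M$ is a component of $G/K$ by checking the hypotheses of Lemma~\ref{lem:components:equiv}. Normality of $\bar M$ in $G/K$ is immediate from the normality of $M$ in $G$. For the condition on proper closed normal subgroups, suppose $L \normal G$ with $K \leq L$ and $L/K < \bar M$; then $M \cap L$ is a closed normal subgroup of $M$, and it must be proper (else $M \leq L$ gives $\bar M \leq L/K$, a contradiction), hence central in $M$. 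Thus $[M, L] \leq \Z(M)$, and Lemma~\ref{lem:perfect_commutators} together with the topological perfectness of $M$ forces $[M, L] = \triv$, so $L/K$ centralizes the dense subgroup $\pi(M)$ of $\bar M$ and therefore lies in $\Z(\bar M)$. For the non-abelianness of $\bar M/\Z(\bar M)$, I proceed by contradiction: if $[\bar M, \bar M] \leq \Z(\bar M)$ then $[\pi(M), [\pi(M), \pi(M)]] = \triv$, so $\ol{[M, [M, M]]} \leq K$ is a closed normal subgroup of $M$; it cannot equal $M$ (since $M \not\leq K$ for $M \in \mc{M}_2$ non-abelian), so it is proper, hence central in $M$, and a second application of Lemma~\ref{lem:perfect_commutators} (with $A = [M,M]$, $B = M$) forces $[M, M] \leq \Z(M)$, contradicting $M$ being a component. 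Finally, Proposition~\ref{prop:components}(3) applied inside $G/K$ upgrades the conclusion: since $\{\bar M : M \in \mc{M}_2\}$ is a set of components generating $G/K$, it is the complete set of components, and so $G/K$ is of semisimple type.

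For part (3), apply (2) with $K = \Z(G)$. No component is central (components are non-abelian), so $\mc{M}_2 = \mc{M}$ and $\mc{M}'$ is the full set of components of $G/\Z(G)$. Lemma~\ref{lem:abelian_normal} ensures $G/\Z(G)$ has no non-trivial abelian normal subgroup, whence $\Z(G/\Z(G)) = \triv$, so $G/\Z(G)$ is of strict semisimple type. Propositions~\ref{prop:components}(2),(3) then give that $\mc{M}'$ is a generalized central factorization, and since $G/\Z(G)$ is centerless, Proposition~\ref{prop:quasiproduct:centralizers} promotes it to a quasi-direct factorization. I expect the main obstacle to be the non-abelianness condition for $\bar M/\Z(\bar M)$ in part (2), which requires the right iterated commutator setup to apply Lemma~\ref{lem:perfect_commutators}; everything else is essentially bookkeeping once the $\mc{M}_1$/$\mc{M}_2$ dichotomy is in hand.
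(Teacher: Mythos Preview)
Your proof is correct and follows essentially the same approach as the paper: partition the components via Proposition~\ref{prop:components}(1), identify $\ol{[K,G]}$ with the closure of the components contained in $K$, and verify directly that images of the remaining components are components of $G/K$. One small slip: in part (2) when invoking Lemma~\ref{lem:components:equiv}, the hypothesis you need is that $L/K$ is a proper closed normal subgroup of $\ngrp{\bar M}_{G/K}=\bar M$ (equivalently $L\normal \ol{MK}$), not $L\normal G$; fortunately your argument only uses that $M$ normalizes $L$ and $L$ normalizes $M$, both of which hold under the correct weaker hypothesis, so nothing breaks.
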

\begin{proof}
Let $\mc{M}$ be the collection of components of $G$ and define 
\[
\mc{L}:=\{M\in \mc{M}\mid M\leq K\}\text{ and }\mc{C}:=\{M\in \mc{M}\mid [M,K]=\triv\}.
\]
The set $\mc{M}$ is union of $\mc{L}$ and $\mc{C}$ by Proposition~\ref{prop:components}(1).

Letting $R:= \cgrp{\mc{L}}$, we see that $R \le K$ and that $R$ is topologically generated by components of $G$. In the quotient $G/R$, the group $K/R$ centralizes $\ol{MR}/R$ for all $M \in \mc{M}$ since $\mc{M}=\mc{L}\cup \mc{C}$. Observing that $G/R$ is topologically generated by the groups $\ol{MR}/R$ as $M$ ranges over $\mc{M}$, we deduce that $K/R$ is central in $G/R$, and thus, $\ol{[K,G]}\le R$.

On the other hand, since it is topologically generated by components, $R$ is itself of semisimple type.  In particular, $R$ is topologically perfect, ensuring that $R = \ol{[K,G]}$.  This completes the proof of $(1)$.

We claim that the set $\mc{C}/K := \{\ol{MK}/K \mid M \in \mc{C}\}$, which clearly generates a dense subgroup of $G/K$, consists of components of $G/K$. Taking $M \in \mc{C}$, Corollary~\ref{cor:components_normal} ensures $\ol{MK}/K$ is a normal subgroup of $G/K$.  The component $M$ is topologically perfect, so $\ol{MK}/K$ is also topologically perfect.  Since $K$ centralizes $M$, the quotient $\ol{MK}/K$ is non-trivial, and $\ol{MK}/K$ modulo its center is non-abelian. Finally, given a proper closed normal subgroup $S/K$ of $\ol{MK}/K$, the group $S$ does not contain $M$, so $S$ centralizes $M$ by Proposition~\ref{prop:components}(1). Therefore, $S/K$ is central in $G/K$.  We conclude $\ol{MK}/K$ is a component of $G/K$, and thus, $G/K$ is of semisimple type.  The set $\mc{C}/K$ accounts for all components of $G/K$ by Proposition~\ref{prop:components}, completing the proof of $(2)$.

By part $(2)$, the group $G/\Z(G)$ is of semisimple type, and $\mc{M}'$ is a generalized central factorization of $G/\Z(G)$. Appealing to Lemma~\ref{lem:abelian_normal}, we conclude $G/\Z(G)$ is centerless. Hence $G/\Z(G)$ is of strict semisimple type, and Proposition~\ref{prop:quasiproduct:centralizers} implies $\mc{M}'$ is a quasi-direct factorization of $G$, completing the proof of $(3)$.
\end{proof}

We can now derive our description of groups of (strict) semisimple type given in the introduction.

\begin{proof}[Proof of Theorem~\ref{thmintro:min_normal:simple}.]
Suppose that $G$ is of semisimple type. Proposition~\ref{prop:components} ensures that $\mc{M}$ is a generalized central factorization of $G$. It is clear that each $M\in \mc{M}$ is central-by-topologically simple.  If $G$ is of strict semisimple type, then $\mc{M}$ is a quasi-direct factorization of $G$ by Theorem~\ref{thm:norm_sgrps}(3), and Proposition~\ref{prop:top_simple} ensures that each $M \in \mc{M}$ is topologically simple.
\end{proof}

Let us note a characterization of characteristically simple groups of semisimple type that will be used later.

\begin{prop}\label{prop:semisimple:charsimple}
If $G$ is a non-abelian topologically characteristically simple group, then the following are equivalent:
\begin{enumerate}[(1) ]
\item $G$ is of semisimple type;
\item $G$ has a component;
\item $G$ has a minimal closed normal subgroup.
\end{enumerate}
\end{prop}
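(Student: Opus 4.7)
The plan is to prove a cycle (1) $\Rightarrow$ (2) $\Rightarrow$ (1) and then establish (1) $\Rightarrow$ (3) $\Rightarrow$ (2), so that all three conditions are equivalent. The key observation driving everything is that, because $G$ is topologically characteristically simple and non-abelian, any closed topologically characteristic subgroup is either trivial or all of $G$, and any proper closed topologically characteristic subgroup (such as $\Z(G)$) must vanish.

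The implications (1) $\Rightarrow$ (2) and (2) $\Rightarrow$ (1) are essentially immediate. If $G = E(G)$ and $G$ is non-trivial then $G$ must contain at least one component, giving (2). Conversely, if $M$ is any component of $G$, then since $\Aut(G)$ permutes components, the layer $E(G)$ is a closed topologically characteristic subgroup containing $M$; it is therefore non-trivial and so equals $G$. For (1) $\Rightarrow$ (3), first note that $\Z(G)$ is a proper closed topologically characteristic subgroup of $G$ (proper since $G$ is non-abelian), hence $\Z(G) = \triv$, so $G$ is of strict semisimple type. Proposition~\ref{prop:strict_normal} then guarantees that every component is a minimal closed normal subgroup, and (2) already supplies one.

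The main work is (3) $\Rightarrow$ (2). Let $M$ be a minimal closed normal subgroup of $G$ and let $\mc{N} := \{\phi(M) \mid \phi \in \Aut(G)\}$; each element of $\mc{N}$ is again a minimal closed normal subgroup, and the closed subgroup generated by $\mc{N}$ is topologically characteristic, non-trivial, and therefore equal to $G$. Any two distinct members $N_1 \ne N_2$ of $\mc{N}$ satisfy $N_1 \cap N_2 = \triv$ by minimality, so $[N_1,N_2] \le N_1 \cap N_2 = \triv$; thus the members of $\mc{N}$ pairwise commute. If $M$ were abelian, then all $\phi(M)$ would be abelian and pairwise commuting, making $\cgrp{\mc{N}} = G$ abelian, a contradiction. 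So $M$ is non-abelian, and in particular $\Z(M) \ne M$. Next, if $K$ is a proper closed normal subgroup of $M$, every $\phi(M) \ne M$ centralizes (and hence normalizes) $K$, while $M$ itself normalizes $K$; since $\N_G(K)$ is closed and contains the topologically generating family $\mc{N}$, it equals $G$. Thus $K \normal G$, so $K = \triv$ by minimality of $M$. Hence $M$ is topologically simple, which forces $\Z(M) = \triv$ (as $\Z(M) \ne M$), and the definition of component is then verified directly: $M \normal G \supseteq \ngrp{M}$, $M/\Z(M) = M$ is non-abelian, and any proper closed normal subgroup of $M$ is trivial and thus central in $\ngrp{M}$.

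The principal obstacle is the proof that a minimal closed normal subgroup $M$ must be topologically simple (and in particular non-abelian); everything else is an application of characteristic simplicity to a suitably chosen invariant subgroup. The crucial input is the commuting of distinct minimal closed normal subgroups combined with the fact that $\Aut(G)$-translates of $M$ topologically generate $G$, allowing us to promote normality in $M$ to normality in $G$.
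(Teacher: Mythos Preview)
Your proof is correct and follows essentially the same approach as the paper's: both establish $(2)\Rightarrow(1)$ via the characteristic subgroup $E(G)$, use Proposition~\ref{prop:strict_normal} for $(1)\Rightarrow(3)$, and for $(3)\Rightarrow(2)$ exploit that the $\Aut(G)$-translates of a minimal closed normal subgroup $M$ pairwise commute and topologically generate $G$, forcing any closed normal subgroup of $M$ to be $G$-normal and hence trivial. The only cosmetic difference is that you deduce $\Z(M)=\triv$ from the topological simplicity of $M$, whereas the paper observes it directly from minimality; the substance is identical.
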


\begin{proof}
Suppose that $G$ is of semisimple type.  Since $\Z(G)$ is characteristic, $G$ is of strict semisimple type, which ensures, via Proposition~\ref{prop:strict_normal}, that every component is a minimal closed normal subgroup of $G$.  Hence, $(1)$ implies $(2)$ and $(3)$.

If $G$ has a component, then $E(G)$ is a non-trivial closed characteristic subgroup of $G$, so $E(G) = G$, which implies $G$ is of semisimple type. We thus deduce $(2)$ implies $(1)$.

Suppose finally there is a minimal closed normal subgroup $M$ of $G$.  The group 
\[
\cgrp{\psi(M)\mid \psi \in \Aut(G)}
\]
is then a non-trivial closed characteristic subgroup of $G$, and thus, it equals $G$. Since $M$ is a minimal normal subgroup of $G$, the intersection $M\cap \psi(M)=\{1\}$ for all $\psi(M)\neq M$, so $[M,\psi(M)]=\triv$ for all $\psi(M)\neq M$. We thus deduce that $\{\psi(M)\mid \psi \in \Aut(G)\}$ is a generalized central factorization of $G$.

The group $M$ is non-abelian from the fact that $G$ is non-abelian, and the minimality of $M$ ensures $\Z(M)= \triv$.  Since all the other $\psi(M)$ commute with $M$, every normal subgroup of $M$ is indeed normal in $G$. On the other hand, $M$ is a minimal closed normal subgroup of $G$, so any proper $G$-invariant closed subgroup of $M$ is trivial. We conclude that $M$ is a component of $G$.  Thus, $(3)$ implies $(2)$, completing the cycle of implications.
\end{proof}

\subsection{Polish groups of semisimple type}
Let us finally consider the class of Polish groups of semisimple type. As expected, the number of components is bounded.

\begin{lem}\label{lem:semisimple:countable}
If $G$ is a Polish group, then $G$ has at most countably many components.
\end{lem}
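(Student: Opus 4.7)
The plan is to apply Proposition~\ref{prop:quasiproduct:countable:Polish}, which bounds the cardinality of any quasi-direct factorization of a second countable group. I will reduce the lemma to counting the quasi-factors in an appropriate quotient of $G$.

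First, I would reduce to the case that $G$ is itself of semisimple type. The layer $E(G)$ is a closed subgroup of the Polish group $G$, hence Polish. By Observation~\ref{obs:component}(4), every component of $G$ is also a component of $E(G)$, so it suffices to bound the number of components of $E(G)$. We may therefore assume $G = E(G)$.

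Next, I would pass to $G/\Z(G)$. Since $\Z(G)$ is a closed normal subgroup, $G/\Z(G)$ is Polish by Proposition~\ref{prop:quotient}, and in particular second countable. Letting $\mc{M}$ denote the set of components of $G$, Theorem~\ref{thm:norm_sgrps}(3) tells us that $G/\Z(G)$ is of strict semisimple type and that
\[
\mc{M}' := \{\ol{M\Z(G)}/\Z(G) \mid M \in \mc{M}\}
\]
is a quasi-direct factorization of $G/\Z(G)$. By Proposition~\ref{prop:quasiproduct:countable:Polish}, $\mc{M}'$ is countable.

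Finally, I would verify that the map $M \mapsto \ol{M\Z(G)}/\Z(G)$ from $\mc{M}$ onto $\mc{M}'$ is injective, so that $|\mc{M}| = |\mc{M}'|$ is countable. Since $\Z(G)$ centralizes $G$, we have $[M\Z(G), M\Z(G)] = [M, M]$, and a continuity argument gives
\[
\ol{[\ol{M\Z(G)}, \ol{M\Z(G)}]} = \ol{[M\Z(G), M\Z(G)]} = \ol{[M, M]}.
\]
Each component is topologically perfect by Observation~\ref{obs:component}(2), so $\ol{[M, M]} = M$. Hence $M$ is recovered from $\ol{M\Z(G)}$, and injectivity follows. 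The main obstacle is not substantial: all the real work is done by Theorem~\ref{thm:norm_sgrps} and Proposition~\ref{prop:quasiproduct:countable:Polish}, and the only calculation required is the short injectivity verification using topological perfectness and the centrality of $\Z(G)$.
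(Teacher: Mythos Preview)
Your proposal is correct and follows essentially the same approach as the paper: reduce to $G=E(G)$, pass to $G/\Z(G)$, invoke Theorem~\ref{thm:norm_sgrps}(3) to get the quasi-direct factorization $\mc{M}'$, and apply Proposition~\ref{prop:quasiproduct:countable:Polish}. The paper simply asserts $|\mc{M}'|=|\mc{M}|$ without justification, whereas you supply an explicit injectivity argument via topological perfectness; this is a welcome addition rather than a deviation.
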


\begin{proof}Let $\mc{M}$ be the set of components of $G$.  By replacing $G$ with the closed subgroup $E(G)$, we may assume that $G$ is of semisimple type.

Applying Theorem~\ref{thm:norm_sgrps}(3), the group $G/\Z(G)$ is a quasi-product of 
\[
\mc{M}': = \{\ol{M\Z(G)}/\Z(G) \mid M \in \mc{M}\},
\]
so Proposition~\ref{prop:quasiproduct:countable:Polish} implies $|\mc{M}'|\leq \aleph_0$. It follows that $|\mc{M}'|=|\mc{M}|$, hence $|\mc{M}| \le \aleph_0$.
\end{proof}

Semisimplicity is preserved up to abelian error under normal compressions between Polish groups.

\begin{prop}\label{prop:semisimple:compressions}
Let $G$ and $H$ be Polish groups, let $\mc{M}_G$ and $ \mc{M}_H$ list the components of $G$ and $H$, and suppose that $\psi: G \rightarrow H$ is a normal compression.
\begin{enumerate}[(1) ]
\item If $G$ is of semisimple type, then $H$ is of semisimple type, and $\mc{M}_H=\{\ol{\psi(K)}\mid K\in \mc{M}_G\}$.
\item If $H$ is of semisimple type, then $\ol{[G,G]}$ is of semisimple type, $\psi(\ol{[G,G]})$ is dense and normal in $H$, and $\mc{M}_G=\{\ol{[\psi\inv(L),\psi\inv(L)]}\mid L\in \mc{M}_H\}$.
\end{enumerate}
\end{prop}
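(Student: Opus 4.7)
The plan is to handle each part by restricting $\psi$ to a smaller normal compression and then leveraging Theorem~\ref{thm:compression_simple} and Proposition~\ref{prop:normal_compression}. A preliminary lemma I would record and reuse is that any topologically perfect group $M$ has $Z_2(M) = \Z(M)$: for $z \in Z_2(M)$, the hypothesis $[z,M] \le \Z(M)$ makes $m \mapsto [z,m]$ a continuous homomorphism $M \to \Z(M)$, so since $\Z(M)$ is abelian it vanishes on $\ol{[M,M]} = M$.

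For part (1), I fix $K \in \mc{M}_G$ and set $M := \ol{\psi(K)}$. Since $G$ is of semisimple type, Corollary~\ref{cor:components_normal} makes $K$ normal in $G$, and Proposition~\ref{prop:normal_compression}(1) gives $M \normal H$; then $\psi|_K \colon K \to M$ is a normal compression, $M$ is topologically perfect because $\psi([K,K])$ is dense in $M$, and the identity $\psi^{-1}(\Z(M)) = \Z(K)$ follows from injectivity and density of $\psi(K)$ in $M$. The key step is verifying condition (c) of the component definition: for a proper closed normal subgroup $L \lneq M$, the preimage $\psi^{-1}(L)$ is a proper closed normal subgroup of $K$, so the component axiom for $K$ (using $\ngrp{K}_G = K$) gives $\psi^{-1}(L) \le \Z(K)$, hence $L \cap \psi(K) \le \Z(M)$. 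Normality of $L$ and $\psi(K)$ in $M$ then yields $[L,\psi(K)] \le L \cap \psi(K) \le \Z(M)$, and continuity together with density upgrades this to $[L,M] \le \Z(M)$, i.e.\ $L \le Z_2(M)$; the preliminary lemma collapses this to $L \le \Z(M)$. With each $\ol{\psi(K)}$ established as a component, the family $\{\ol{\psi(K)} \mid K \in \mc{M}_G\}$ generates $H$ topologically (since $\psi(G)$ is dense and $G = \cgrp{K \mid K \in \mc{M}_G}$), so Proposition~\ref{prop:components}(3) identifies it with $\mc{M}_H$ and makes $H$ of semisimple type.

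For part (2), $H$ being of semisimple type is topologically perfect, so $\psi(\ol{[G,G]}) \supseteq [\psi(G),\psi(G)]$ is dense in $H$ and normal in $H$ (since $\ol{[G,G]}$ is characteristic in $G$), giving a normal compression $\ol{[G,G]} \to H$. For each $L \in \mc{M}_H$ I set $P_L := \psi^{-1}(L)$ and first show $L \cap \psi(G)$ is dense in $L$: otherwise its closure is a proper closed normal subgroup of $L$, hence central by the component axiom, and then $[\psi(G),L] \le \Z(L)$ propagates via density to $[H,L] \le \Z(L)$, making $L/\Z(L)$ abelian --- contradiction. Thus $\psi|_{P_L} \colon P_L \to L$ is a normal compression, and I set $K_L := \ol{[P_L,P_L]}$, which is characteristic in $P_L$, hence normal in $G$, with $\psi(K_L)$ dense in $L$ by the commutator-density argument together with topological perfectness of $L$. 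The main obstacle will be showing $K_L$ is topologically perfect; for this I would apply Proposition~\ref{prop:normal_compression}(2) to $\psi|_{P_L}$ and the closed normal subgroup $\ol{[K_L,K_L]}$ of $P_L$ (whose $\psi$-image is dense in $L$) to get $K_L = \ol{[P_L,P_L]} \le \ol{[K_L,K_L]}$. With this in hand, the component conditions for $K_L$ follow: $K_L/\Z(K_L)$ is non-abelian by topological perfectness plus non-abelianness of $K_L$, and for a proper closed normal $N \lneq K_L$ the closure $\ol{\psi(N)}$ is normal in $L$, so it either equals $L$ (and Proposition~\ref{prop:normal_compression}(2) applied to $\psi|_{K_L}$ forces $K_L \le N$, a contradiction) or is contained in $\Z(L)$ (in which case $\psi(n)$ commutes with $L \supseteq \psi(P_L)$, and injectivity gives $n \in \Z(P_L) \cap K_L \le \Z(K_L)$). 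Finally, the subgroup $\cgrp{K_L \mid L \in \mc{M}_H}$ has $\psi$-image dense in $H$, so Proposition~\ref{prop:normal_compression}(2) forces $\ol{[G,G]} \le \cgrp{K_L \mid L \in \mc{M}_H}$, making $\ol{[G,G]}$ of semisimple type. Any component $M$ of $G$ is topologically perfect, hence $M \le \ol{[G,G]}$, and Observation~\ref{obs:component}(4) together with Proposition~\ref{prop:components}(3) identifies $M$ with some $K_L$.
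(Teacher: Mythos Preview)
Your proof is correct and follows essentially the same route as the paper's. The only cosmetic difference is that where you invoke your preliminary lemma $Z_2(M)=\Z(M)$ for topologically perfect $M$, the paper instead appeals to Lemma~\ref{lem:perfect_commutators}; these two tools are interchangeable here. Your treatment of part~(2) is in fact slightly more explicit than the paper's --- you isolate the perfectness of $K_L$ as a separate step (via Proposition~\ref{prop:normal_compression}(2) applied to $\ol{[K_L,K_L]}$), whereas the paper folds this into its application of Proposition~\ref{prop:normal_compression} to the compression $K\to L$.
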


\begin{proof}
For $(1)$, let $K$ be a component of $G$. Since $K\normal G$, the subgroup $\psi(K)$ is normal in $H$ by Proposition~\ref{prop:normal_compression}.  In particular $\ol{\psi(K)}$ is normal in $H$, and that $K$ is a component of $G$ ensures $\ol{\psi(K)}$ is topologically perfect and $\ol{\psi(K)}/\Z(\ol{\psi(K)})$ is non-abelian.  Let $L$ be a proper closed normal subgroup of $\ol{\psi(K)}$.  Then $[L,\psi(K)]\leq L\cap \psi(K) < \psi(K)$, so $[L,\psi(K)]$ is central in $\psi(K)$; Lemma~\ref{lem:perfect_commutators} then implies $L \le \Z(\ol{\psi(K)})$.  Thus $\ol{\psi(K)}$ is a component of $H$.

It is now clear that
\[
H = \cgrp{\ol{\psi(K)} \mid K \in \mc{M}_G},
\]
so $H$ is of semisimple type.  We deduce via Proposition~\ref{prop:components} that the set $\{\ol{\psi(K)} \mid K \in \mc{M}_G\}$ contains every component of $H$, so $\mc{M}_H = \{\ol{\psi(K)} \mid K \in \mc{M}_G\}$, completing the proof of $(1)$.

\medskip

For $(2)$, let $L$ be a component of $H$.  The group $L$ is not central in $H$, so Lemma~\ref{lem:perfect_commutators} ensures $[L,\psi(G)]$ is also not central in $L$. We deduce that $L\cap \psi(G)$ is not central in $L$. As $L$ is a component, $L \cap \psi(G)$ must be dense in $L$. We thus obtain a normal compression from $K := \psi\inv(L)$ to $L$. 

Setting $D: = \ol{[K,K]}$, we see that $\psi(D)$ is dense in $L$, since $L$ is topologically perfect. We claim $D$ is a component of $G$; the only non-trivial condition to check is the third. Let $E$ be a proper closed normal subgroup of $D$.  Applying Proposition~\ref{prop:normal_compression} to the normal compression $K \rightarrow L$, we see that $\psi(E)$ is normal but not dense in $L$, so $\psi(E) \le \Z(L)$ and hence $E \le \Z(D)$.  Hence, $D$ is a component of $G$.

For each $L \in \mc{M}_G$, let $D_L: = \ol{[\psi\inv(L),\psi\inv(L)]}$ and set $S := \cgrp{D_L\mid L\in \mc{M}_G}$. Each $D_L$ is a component of $G$ by the previous paragraph, and $\psi(D_L)$ is dense in $L$.  We deduce that $\psi(S)$ is dense in $H$, and so $\ol{[G,G]} \le S$ by Proposition~\ref{prop:normal_compression}. On the other hand, $D_L\leq \ol{[G,G]}$ for each $L\in \mc{M}_G$, and thus, we indeed have $S=\ol{[G,G]}$. Each $D_L$ for $L\in \mc{M}_G$ is a component of $S$, so $S$ is of semisimple type. We have thus verified the first two claims of $(2)$. The same argument as in $(1)$ implies $\{D_L \mid L \in \mc{M}_G\}$ is precisely the set of components of $S$, finishing the proof.
\end{proof}

Given a normal compression $\psi: G \rightarrow H$ of Polish groups, Proposition~\ref{prop:semisimple:compressions} ensures that $\ol{\psi(E(G))} = E(H)$ and establishes a correspondence between the components of $G$ and those of $H$.  We will later establish a much more general correspondence of this nature between non-abelian chief factors (modulo a suitable notion of equivalence) of $G$ and those of $H$; see Theorem~\ref{thm:compression:chief}.

\section{Relationships between normal factors}\label{sec:associated}

Our general discussion of normal compressions and generalized direct products is now complete. At this point, we begin our study of normal factors of a topological group, with an emphasis on chief factors. 
 
 \begin{defn}
 	For a topological group $G$, a \defbold{normal factor}\index{normal factor} of $G$ is a quotient $K/L$ such that $K$ and $L$ are distinct closed normal subgroups of $G$ with $L < K$. We say $K/L$ is a (topological) \defbold{chief factor}\index{chief factor} if whenever $M$ is a closed normal subgroup of $G$ such that $L \le M \le K$, then either $M = L$ or $M = K$. 
 \end{defn}

\begin{rmk}
We consider a normal factor $K/L$ to be the group of $L$-cosets with representatives in $K$. As a consequence, normal factors $K/L$ and $N/M$ are equal if and only if $K=N$ and $L=M$.
\end{rmk}

\subsection{The association relation}

Suppose that $G$ is a Polish group and that $K, L$ are closed normal subgroups of $G$. As abstract groups, the second isomorphism theorem states that $KL/L\simeq K/K\cap L$. Unfortunately, this statement \textit{does not} hold in a topological sense in the setting of Polish groups; since $KL$ is not in general closed, $KL/L$ fails to be a Polish group. A motivation for the relation of association is to ``fix" the second isomorphism theorem for Polish groups. Instead of relating $K/K \cap L$ to $KL/L$, we relate it to $\ol{KL}/L$. This way we stay within the category of Polish groups.

We begin by studying the relationship between $\ol{KL}/L$ and $K/K\cap L$. 

\begin{lem}\label{lem:associated:secondiso}
Let $K$ and $L$ be closed normal subgroups of the topological group $G$. Then the map $\phi: K/(K \cap L) \rightarrow \ol{KL}/L$ via $\; k(K \cap L) \mapsto kL$ is a $G$-equivariant normal compression map.
\end{lem}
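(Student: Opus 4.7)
The plan is to verify the four defining properties of a normal compression---well-defined continuous homomorphism, injectivity, dense image, and normal image---together with $G$-equivariance. The lemma essentially upgrades the classical second isomorphism theorem to a topological statement in the category of topological groups, so each verification should be short and none presents a real obstacle.

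First I would handle the abstract group-theoretic content. The map $K \to \ol{KL}/L$ sending $k \mapsto kL$ is a homomorphism with kernel exactly $K \cap L$, so it factors through $K/(K \cap L)$ as a well-defined injective homomorphism $\phi$, whose image is precisely $KL/L$ regarded as a subgroup of $\ol{KL}/L$.

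Next I would verify the topological properties. Continuity is immediate: the inclusion $K \hookrightarrow \ol{KL}$ followed by the quotient map $q : \ol{KL} \to \ol{KL}/L$ is continuous and constant on $(K\cap L)$-cosets, so it descends to a continuous $\phi$. For density, since $KL$ is by construction dense in $\ol{KL}$ and $q$ is continuous and surjective, $q(KL) = KL/L$ is dense in $\ol{KL}/L$. For normality of the image, $K \normal G$ implies $K \normal \ol{KL}$, so for any $h \in \ol{KL}$ and $k \in K$ we have $h k h^{-1} \in K$, giving $h(kL)h^{-1} \in KL/L$.

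Finally, since $K \cap L$ and $L$ are normal in $G$, conjugation by any $g \in G$ induces continuous automorphisms of both $K/(K \cap L)$ and $\ol{KL}/L$, and the identity $\phi\bigl(g k g^{-1}(K \cap L)\bigr) = g k g^{-1} L = g(kL) g^{-1}$ shows $G$-equivariance. The only point that calls for slight care is simply to note that $\ol{KL}$ is itself a closed subgroup of $G$ in which $L$ remains closed and normal, so that $\ol{KL}/L$ is a topological group and the codomain of $\phi$ is meaningful.
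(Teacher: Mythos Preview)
Your proof is correct and follows essentially the same approach as the paper. The paper's version is terser---it obtains continuity by restricting the quotient map $\pi: G/(K\cap L) \to G/L$ to $K/(K\cap L)$ rather than invoking the universal property of $K \to K/(K\cap L)$ as you do---but the underlying verifications of injectivity, continuity, density, normality of the image, and $G$-equivariance are the same.
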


\begin{proof}
Let $\pi: G/(K \cap L) \rightarrow G/L$ by $g(K \cap L) \mapsto gL$. Equipping the groups $G/(K \cap L)$ and $G/L$ with the respective quotient topologies, the map $\pi$ is a quotient map, so in particular it is continuous. The restriction $\psi$ of $\pi$ to $K/K\cap L$ is then a continuous, injective group homomorphism. Restricting the range of $\psi$ to the closure of the image produces a normal compression map $\phi:K/K\cap L\rightarrow \ol{KL}/L$, which is obviously $G$-equivariant.
\end{proof}

Let $G$ be a topological group with normal factors $K_1/L_1$ and $K_2/L_2$. We say $K_2/L_2$ is an \defbold{internal compression}\index{internal compression} of $K_1/L_1$ if $K_2 = \ol{K_1L_2}$ and $L_1 = K_1 \cap L_2$.  The \defbold{internal compression map}\index{internal compression map} is defined to be $\phi: K_1/L_1 \mapsto K_2/L_2$ via $\phi(kL_1): = kL_2$.

We now introduce a symmetric relation on normal factors, which in particular relates $K_1/L_1$ and $K_2/L_2$ whenever $K_2/L_2$ is an internal compression of $K_1/L_1$.

\begin{defn}
Given a topological group $G$, we say the closed normal factors $K_1/L_1$ and $K_2/L_2$ are \defbold{associated}\index{association relation} to one another if the following equations hold:
\[
\ol{K_1L_2} = \ol{K_2L_1}; \; K_1 \cap \ol{L_1L_2} = L_1; \; K_2 \cap \ol{L_1L_2} = L_2.
\]
\end{defn}

As designed, the association relation captures internal compressions:
\begin{obs}\label{obs:compression:associated}
Let $G$ be a topological group and let $K_1/L_1$ and $K_2/L_2$ be normal factors of $G$. If $K_2/L_2$ is an internal compression of $K_1/L_1$, then $K_2/L_2$ is associated to $K_1/L_1$.
\end{obs}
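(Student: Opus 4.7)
The plan is to verify each of the three defining equations of association directly from the two hypotheses $K_2=\ol{K_1L_2}$ and $L_1=K_1\cap L_2$. The single technical fact that drives everything is that $L_1 \subseteq L_2$ (immediate from $L_1=K_1\cap L_2$), which forces $L_1L_2 \subseteq L_2$, and hence $\ol{L_1L_2}\subseteq L_2$ since $L_2$ is closed.

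First, I would check $\ol{K_1L_2}=\ol{K_2L_1}$. The inclusion $K_1L_2 \subseteq K_2L_1$ follows from $K_1\subseteq K_2$ and $L_2\subseteq K_2 = K_2\cdot 1 \subseteq K_2L_1$ (using $L_2\le K_2$ and $1\in L_1$), giving $\ol{K_1L_2}\subseteq \ol{K_2L_1}$. Conversely, $L_1\le K_1 \le \ol{K_1L_2}=K_2$, so $K_2L_1\subseteq K_2=\ol{K_1L_2}$, yielding the reverse inclusion after closure.

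Next, using $\ol{L_1L_2}\subseteq L_2$, I would compute
\[
K_1\cap \ol{L_1L_2} \subseteq K_1\cap L_2 = L_1,
\]
and the reverse inclusion is trivial since $L_1\subseteq K_1$ and $L_1\subseteq L_1L_2\subseteq \ol{L_1L_2}$. Similarly,
\[
K_2\cap \ol{L_1L_2}\subseteq K_2\cap L_2 = L_2,
\]
and the reverse inclusion again follows from $L_2\subseteq K_2$ and $L_2\subseteq \ol{L_1L_2}$.

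Since each of the three association equations has been verified, the observation follows. There is no real obstacle here; the verification is a direct bookkeeping exercise, and the only point worth highlighting is the role of the inclusion $L_1\subseteq L_2$ in forcing $\ol{L_1L_2}\subseteq L_2$, which collapses the second and third equations to trivialities.
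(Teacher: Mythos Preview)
Your proof is correct; the paper states this as an observation without proof, treating it as an immediate verification, and your direct check of the three association equations from the internal compression hypotheses is exactly the intended argument. The key simplification you identify, namely that $L_1\le L_2$ forces $\ol{L_1L_2}=L_2$, is precisely what makes the observation routine.
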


If two normal factors are associated, it does not follow that one is an internal compression of the other.  However, the factors do have an internal compression in common.

\begin{lem}\label{lem:associated:common_compression}
Let $G$ be a topological group and let $K_1/L_1$ and $K_2/L_2$ be associated normal factors of $G$.  Setting $K: = \ol{K_1K_2}$ and $L := \ol{L_1L_2}$, the normal factor $K/L$ is an internal compression of both $K_1/L_1$ and $K_2/L_2$.
\end{lem}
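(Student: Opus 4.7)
The plan is to verify directly the two defining conditions of an internal compression for $K/L$ over $K_1/L_1$, and then observe that the argument is symmetric in the indices~$1$ and~$2$. Write out the association hypotheses explicitly:
\[
\ol{K_1L_2} = \ol{K_2L_1}, \quad K_1 \cap \ol{L_1L_2} = L_1, \quad K_2 \cap \ol{L_1L_2} = L_2,
\]
and note that by definition $L = \ol{L_1L_2}$. The second condition for an internal compression, namely $L_1 = K_1\cap L$ (and symmetrically $L_2 = K_2\cap L$), is then immediate from the hypotheses.

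The main computation is to check $K = \ol{K_1L}$. First, I will use the standard fact that for subsets $A,B$ of a topological group, $\ol{A\,\ol{B}} = \ol{AB}$ (since each side is contained in the other by a short continuity argument). Applying this with $A = K_1$ and $B = L_1L_2$, together with the observation $L_1 \le K_1$, gives
\[
\ol{K_1 L} = \ol{K_1\,\ol{L_1L_2}} = \ol{K_1 L_1 L_2} = \ol{K_1 L_2}.
\]
Next I claim $\ol{K_1 L_2} = K$. On one hand $L_2 \le K_2$ gives $\ol{K_1L_2} \le \ol{K_1K_2} = K$. On the other hand, the common closure $\ol{K_1L_2} = \ol{K_2L_1}$ contains both $K_1$ (as a coset factor in $\ol{K_1L_2}$) and $K_2$ (as a coset factor in $\ol{K_2L_1}$), hence contains $K_1K_2$, and so $K \le \ol{K_1L_2}$. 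Combining these two inclusions yields $\ol{K_1L} = K$. The symmetric computation, swapping the roles of the two factors and using $\ol{K_2L_1}$ in place of $\ol{K_1L_2}$, gives $\ol{K_2L} = K$.

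To finish, I need $K/L$ to actually be a normal factor of $G$, i.e.\ $L < K$ with both closed and normal. Normality and closedness are clear since products and closures of closed normal subgroups are closed and normal. Strictness follows from $K_1 \cap L = L_1 < K_1 \le K$, so $L \neq K$. The main obstacle is essentially a bookkeeping one, namely verifying $\ol{K_1\,\ol{L_1L_2}} = \ol{K_1L_2}$ without the second isomorphism theorem; once this identity is in place, everything else falls out directly from the definition of association.
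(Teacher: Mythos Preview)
Your proof is correct and follows essentially the same approach as the paper: both establish $K = \ol{K_1L_2}$ and read off $K_1 \cap L = L_1$ directly from the association hypotheses, then invoke symmetry. The paper obtains $K = \ol{K_1L_2}$ via the single chain $K = \ol{K_1K_2} \subseteq \ol{K_1L_2K_2L_1} = \ol{K_1L_2} \subseteq K$, while you split it into two inclusions and make explicit the identity $\ol{K_1\,\ol{L_1L_2}} = \ol{K_1L_1L_2}$; you also add the verification that $L < K$, which the paper leaves implicit.
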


\begin{proof}
Since $\ol{K_1L_2} = \ol{K_2L_1}$, we observe that
\[
K = \ol{K_1K_2} \subseteq \ol{K_1L_2K_2L_1} = \ol{K_1L_2} \subseteq K,
\]
hence $K = \ol{K_1L_2}$.

The definition of association gives that $L_1 = K_1 \cap \ol{L_1L_2} = K_1 \cap L$ and $K = \ol{K_1L_2} = \ol{K_1L}$. Therefore, $K/L$ is an internal compression of $K_1/L_1$. The same argument for $K_2/L_2$ shows $K/L$ is an internal compression of $K_2/L_2$.
\end{proof}

To distinguish normal factors in a way that is stable under internal compressions, we consider centralizers in the ambient group.  For a normal factor $K/L$ of $G$, the \defbold{centralizer}\index{normal factor centralizer} of $K/L$ in $G$ is
\[
\CC_G(K/L):=\{g\in G\mid \forall k\in K\; [g,k]\in L\}.
\]
The set $\CC_G(K/L)$ is a closed normal subgroup $G$, and $g \in \CC_G(K/L)$ if and only if $[g,k] \in L$ as $k$ ranges over a dense subset of $K$.  Given a subgroup $H$ of $G$, we put $\CC_H(K/L): = \CC_G(K/L) \cap H$.

\begin{lem}\label{lem:association_centralizer}
Let $K_1/L_1$ and $K_2/L_2$ be normal factors of $G$. If $K_1/L_1$ and $K_2/L_2$ are associated, then $\CC_G(K_1/L_1) = \CC_G(K_2/L_2)$.
\end{lem}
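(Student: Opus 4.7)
The plan is to reduce to the case of an internal compression via Lemma~\ref{lem:associated:common_compression}, then verify both inclusions directly using the defining identities of an internal compression.

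By Lemma~\ref{lem:associated:common_compression}, setting $K := \ol{K_1K_2}$ and $L := \ol{L_1L_2}$, the factor $K/L$ is an internal compression of both $K_1/L_1$ and $K_2/L_2$. Thus it suffices to prove the following claim: if $K'/L'$ is an internal compression of $K/L$, so $K' = \ol{KL'}$ and $L = K \cap L'$, then $\CC_G(K/L) = \CC_G(K'/L')$.

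For the inclusion $\CC_G(K'/L') \subseteq \CC_G(K/L)$, I would take $g \in \CC_G(K'/L')$ and $k \in K \subseteq K'$. Then $[g,k] \in L'$ by assumption, and $[g,k] \in K$ since $K$ is normal in $G$, so $[g,k] \in K \cap L' = L$, giving $g \in \CC_G(K/L)$.

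For the reverse inclusion, let $g \in \CC_G(K/L)$. The set $E_g := \{x \in G \mid [g,x] \in L'\}$ is closed in $G$, since it is the preimage of the closed subgroup $L'$ under the continuous map $x \mapsto [g,x]$. Any $k \in K$ satisfies $[g,k] \in L \subseteq L'$, so $K \subseteq E_g$; and for $\ell \in L'$, we have $[g,\ell] \in L'$ since $L'$ is normal in $G$, so $L' \subseteq E_g$. Because $E_g$ is closed under products — if $[g,x], [g,y] \in L'$ then $[g,xy] = [g,x]\,x[g,y]x\inv \in L'$, using normality of $L'$ — we conclude $KL' \subseteq E_g$, and hence $K' = \ol{KL'} \subseteq E_g$ by closedness of $E_g$. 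This gives $g \in \CC_G(K'/L')$, completing the proof.

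The only mildly delicate point is the final inclusion, where one needs that $E_g$ is both closed and closed under products (so that it is actually a subgroup containing $KL'$); normality of $L'$ in $G$ is used precisely here. Once that is in hand, applying the claim twice (to $K_1/L_1$ and to $K_2/L_2$, with $K'/L' = K/L$) yields $\CC_G(K_1/L_1) = \CC_G(K/L) = \CC_G(K_2/L_2)$.
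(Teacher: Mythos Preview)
Your proof is correct and follows essentially the same approach as the paper: both reduce via Lemma~\ref{lem:associated:common_compression} to the case of an internal compression, then verify the two inclusions using $L = K \cap L'$ for one direction and a density argument for the other. Your phrasing of the density step via the closed set $E_g$ is slightly more explicit than the paper's (which appeals to the remark that $\CC_G(K/L)$ is determined by a dense subset of $K$), but the underlying argument is identical.
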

\begin{proof}
In view of Lemma~\ref{lem:associated:common_compression}, it suffices to consider the case that $K_2/L_2$ is an internal compression of $K_1/L_1$. Suppose this is so and take $h \in \CC_G(K_1/L_1)$. For all $k \in K_1$, we have $[h,k] \in L_1$, so $[h,k] \in L_2$ since $L_1 \le L_2$. We deduce further that $[h,kl] \in L_2$ for all $k \in K_1$ and $l \in L_2$. Since $K_2 = \ol{K_1L_2}$, it follows that $h$ centralizes a dense subgroup of $K_2/L_2$, so $h$ centralizes $K_2/L_2$. Thus, $h \in \CC_G(K_2/L_2)$.

Conversely, take $h \in \CC_G(K_2/L_2)$. Since $K_1 \le K_2$, we have $[h,k] \in L_2$ and $[h,k] \in K_1$ for all $k \in K_1$. By hypothesis, $L_1 = K_1 \cap L_2$, so we indeed have $[h,k] \in L_1$ for all $k \in K_1$. Thus, $h \in \CC_G(K_1/L_1)$.
\end{proof}

Lemma~\ref{lem:association_centralizer} admits a converse in the case that $K_1/L_1$ and $K_2/L_2$ are \textit{non-abelian chief factors}.

\begin{prop}\label{associated:centralizers:chief}
Let $G$ be a topological group and let $K_1/L_1$ and $K_2/L_2$ be non-abelian chief factors of $G$.  Then the following are equivalent:
\begin{enumerate}[(1) ]
\item $K_1/L_1$ is associated to $K_2/L_2$.
\item $\CC_G(K_1/L_1) = \CC_G(K_2/L_2)$.
\item $\ol{K_1L_2} = \ol{K_2L_1}$ and $\ol{K_1K_2} > \ol{L_1L_2}$.

\end{enumerate}
\end{prop}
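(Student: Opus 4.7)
The plan is to verify $(1)\Rightarrow (2)$ immediately via Lemma~\ref{lem:association_centralizer}, and then to establish $(1)\Leftrightarrow (3)$ and $(2)\Rightarrow (1)$ by repeatedly invoking the chief factor property to rule out alternatives.

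For $(1)\Rightarrow (3)$, the first clause of (3) is part of the definition of association, while $K_1\cap\ol{L_1L_2}=L_1<K_1$ forces $K_1\not\subseteq\ol{L_1L_2}$; since $K_1\le\ol{K_1K_2}$ and $\ol{L_1L_2}\le\ol{K_1K_2}$ trivially, the strict inequality $\ol{K_1K_2}>\ol{L_1L_2}$ follows. For $(3)\Rightarrow (1)$, set $M_i:=K_i\cap\ol{L_1L_2}$; each $M_i$ is a closed $G$-normal subgroup with $L_i\le M_i\le K_i$, so the chief property forces $M_i\in\{L_i,K_i\}$. The case $M_1=K_1$, $M_2=K_2$ gives $\ol{K_1K_2}\le\ol{L_1L_2}$, contradicting the strict inequality. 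If instead $M_1=K_1$ and $M_2=L_2$, then $K_1\subseteq\ol{L_1L_2}$ yields $\ol{K_2L_1}=\ol{K_1L_2}\le\ol{L_1L_2}$, forcing $K_2\subseteq\ol{L_1L_2}$ and so $M_2=K_2$ — contradiction. The symmetric case is identical, leaving $M_1=L_1$, $M_2=L_2$, which together with the first clause of (3) is exactly the content of (1).

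For $(2)\Rightarrow (1)$, set $C:=\CC_G(K_1/L_1)=\CC_G(K_2/L_2)$. Since $L_i$ is normal in the normal subgroup $K_i$, we have $L_i\le C$, while $K_i\not\le C$ as $K_i/L_i$ is non-abelian; the chief property then pins down $K_i\cap C=L_i$. Next I would show $K_2\le\ol{K_1L_2}$: the $G$-normal closed subgroup $K_2\cap\ol{K_1L_2}$ lies between $L_2$ and $K_2$, and if it equalled $L_2$, then $[K_1,K_2]\subseteq K_1\cap K_2\subseteq K_2\cap\ol{K_1L_2}=L_2$, forcing $K_1\le C$ and contradicting $K_1\cap C=L_1<K_1$. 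The symmetric inclusion $K_1\le\ol{K_2L_1}$ follows the same way, and since $L_i\le K_i$ these two inclusions combine to give $\ol{K_1L_2}=\ol{K_2L_1}$. Finally, the identity $[l_1l_2,k_2]=l_1[l_2,k_2]l_1^{-1}\cdot[l_1,k_2]$ together with $L_1\le C$ and $L_2\normal G$ shows $[L_1L_2,K_2]\subseteq L_2$; passing to closures (commutation is jointly continuous and $L_2$ is closed) gives $\ol{L_1L_2}\le C$, so $K_i\cap\ol{L_1L_2}\le K_i\cap C=L_i$, completing the three association equations.

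The main obstacle is the direction $(2)\Rightarrow (1)$, where centralizer equality alone must be converted into each of the three structural equations defining association. The key device is to deploy the chief factor hypothesis in two distinct rounds — first to identify $K_i\cap C$ with $L_i$, and then to identify $K_2\cap\ol{K_1L_2}$ with $K_2$ — so that the ``no intermediate closed normal subgroup'' rigidity translates the abstract centralizer information into the required equations on closed products and intersections.
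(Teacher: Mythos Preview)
Your proof is correct and uses the same core ideas as the paper: the chief factor property to pin down intermediate closed normal subgroups, the observation that $L_i\le\CC_G(K_i/L_i)$ while $K_i\not\le\CC_G(K_i/L_i)$, and the commutator inclusion $[K_1,K_2]\le L_2\Rightarrow K_1\le\CC_G(K_2/L_2)$ to derive a contradiction.

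The only organizational difference is that the paper runs a clean cycle $(1)\Rightarrow(2)\Rightarrow(3)\Rightarrow(1)$, which is slightly more economical: it avoids your separate $(1)\Rightarrow(3)$ and folds your $(2)\Rightarrow(1)$ into the two steps $(2)\Rightarrow(3)$ and $(3)\Rightarrow(1)$. In particular, the paper's route to $\ol{L_1L_2}\le C$ is simpler than your commutator-identity computation: since $L_1\le\CC_G(K_1/L_1)=C$ and $L_2\le\CC_G(K_2/L_2)=C$, and $C$ is a closed subgroup, one immediately has $\ol{L_1L_2}\le C$. Your argument via $[l_1l_2,k_2]=l_1[l_2,k_2]l_1^{-1}[l_1,k_2]$ is valid but unnecessary. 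Similarly, the paper's $(3)\Rightarrow(1)$ avoids your four-case split by first noting $\ol{K_1K_2}=\ol{K_1L_2}$, which directly forces $K_1\not\le\ol{L_1L_2}$.
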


\begin{proof}
$(1)\Rightarrow (2)$. Lemma~\ref{lem:association_centralizer} establishes this implication.

\

$(2)\Rightarrow (3)$. Suppose that $(2)$ holds. Since $K_1/L_1$ and $K_2/L_2$ are centerless, $\CC_{K_1}(K_1/L_1) = L_1$ and $\CC_{K_2}(K_2/L_2)=L_2$. We conclude that $\ol{L_1L_2}\leq \CC_G(K_1/L_1)$, and so
\[
\ol{L_1L_2} \cap K_1 = \CC_{K_1}(K_1/L_1) = L_1<K_1.
\]
Therefore, $\ol{L_1L_2}<\ol{K_1K_2}$.

We now suppose toward a contradiction that $\ol{K_1L_2}\neq\ol{K_2L_1}$; without loss of generality $ \ol{K_1L_2}\not\le\ol{K_2L_1}$.  The subgroup $K_1 \cap \ol{K_2L_1}$ is then a proper closed $G$-invariant subgroup of $K_1$ that contains $L_1$, so $K_1 \cap \ol{K_2L_1} = L_1$. In particular, $[K_1,K_2] \le L_1$, so 
\[ 
\ol{K_2L_1} \le \CC_G(K_1/L_1) = \CC_G(K_2/L_2).
\]
However, this is absurd as $K_2/L_2$ is non-abelian. We conclude $\ol{K_1L_2}=\ol{K_2L_1}$, verifying $(3)$.

\

$(3)\Rightarrow (1)$. Suppose that $(3)$ holds.  We see that $\ol{K_1K_2} = \ol{K_1L_2} = \ol{K_2L_1}$, hence $\ol{K_1L_2} > \ol{L_1L_2}$. Since $L_2 \le \ol{L_1L_2}$, it follows that $K_1 \not\le \ol{L_1L_2}$, so $K_1 \cap \ol{L_1L_2}$ is a proper subgroup of $K_1$.  Since $K_1 \cap \ol{L_1L_2}$ is closed and normal in $G$ and $K_1/L_1$ is a chief factor, in fact $K_1 \cap \ol{L_1L_2}= L_1$.  The same argument shows that $K_2 \cap \ol{L_1L_2} = L_2$.  Hence, $K_1/L_1$ and $K_2/L_2$ are associated.
\end{proof}

\begin{cor} 
For a topological group $G$, the relation of association is an equivalence relation on non-abelian chief factors.
\end{cor}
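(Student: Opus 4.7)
The plan is to deduce the corollary as an immediate consequence of Proposition~\ref{associated:centralizers:chief}, which characterizes the association relation on non-abelian chief factors by equality of centralizers in $G$. Since equality of centralizers is patently an equivalence relation on any collection of subgroups, transitivity of association follows at once once we have the centralizer characterization.

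In more detail, I would argue as follows. For reflexivity, any non-abelian chief factor $K/L$ trivially satisfies $\CC_G(K/L) = \CC_G(K/L)$, so $K/L$ is associated to itself by the equivalence (1) $\Leftrightarrow$ (2) of Proposition~\ref{associated:centralizers:chief}. (One can also verify the three defining equations of association directly: $\ol{KL} = K = \ol{KL}$, and $K \cap \ol{LL} = K \cap L = L$.) For symmetry, the three equations defining the association relation are manifestly symmetric under interchanging the indices $1$ and $2$, so this is immediate from the definition.

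For transitivity, suppose $K_1/L_1$ is associated to $K_2/L_2$ and $K_2/L_2$ is associated to $K_3/L_3$, where each $K_i/L_i$ is a non-abelian chief factor of $G$. By the implication (1) $\Rightarrow$ (2) of Proposition~\ref{associated:centralizers:chief}, we have
\[
\CC_G(K_1/L_1) = \CC_G(K_2/L_2) = \CC_G(K_3/L_3).
\]
Applying the implication (2) $\Rightarrow$ (1) of the same proposition, we conclude that $K_1/L_1$ is associated to $K_3/L_3$, as required. The three properties combine to show that association is an equivalence relation on the class of non-abelian chief factors of $G$.

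There is no substantive obstacle here, as all the work has been done in establishing Proposition~\ref{associated:centralizers:chief}; the corollary is simply packaging that equivalence of conditions into the statement that association is an equivalence relation.
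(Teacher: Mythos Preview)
Your proof is correct and matches the paper's approach: the corollary is stated without proof immediately after Proposition~\ref{associated:centralizers:chief}, as an evident consequence of the equivalence (1)~$\Leftrightarrow$~(2) there, and your write-up simply spells out the trivial verification.
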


The association relation between \textit{abelian} chief factors is not an equivalence relation; see \S\ref{ex:kleinfour}.

\begin{rmk}
The association relation and normal compressions are closely related. Say that $\psi:G\rightarrow H$ is a normal compression of Polish groups and form the semidirect product $G\rtimes_{\psi}H$. One verifies that the obvious copy of $G$ and $(G\rtimes_{\psi}H)/\ker\pi\simeq H$ in $G\rtimes_{\psi}H$ are associated. Polish groups $G$ and $H$ which admit a normal compression map between them are thus associated in some larger Polish group. On the other hand, Lemma~\ref{lem:associated:common_compression} shows associated factors admit normal compressions to a common third factor.
\end{rmk}

\subsection{The role of chief factors in normal series}

We now show how a chief factor ``appears" in any normal series in a unique way, as stated in Theorem~\ref{thmintro:Schreier_refinement}.  In fact, we can prove a result for topological groups in general, although the conclusion is somewhat weaker compared to the case of Polish groups.

\begin{thm}\label{thm:unique_associate}
Let $G$ be a topological group, $K/L$ be a non-abelian chief factor of $G$, and
\[
\triv = G_0 \le G_1 \le \dots \le G_n = G
\]
be a series of closed normal subgroups in $G$.  Then there is exactly one $i \in \{0,\dots,n-1\}$ such that there exist closed normal subgroups $G_i \le B \le A \le G_{i+1}$ of $G$ for which $A/B$ is a normal factor associated to $K/L$.  Specifically, this occurs for the least $i \in \{0,\dots,n-1\}$ such that $G_{i+1} \not\le \CC_G(K/L)$.

If $G$ is Polish, then $A$ and $B$ can be chosen so that $A/B$ is a non-abelian chief factor.
\end{thm}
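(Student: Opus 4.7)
My plan is to let $C := \CC_G(K/L)$ and observe that $K/L$ non-abelian forces $K \not\le C$ and hence $G \not\le C$, so there is a smallest index $i \in \{0,\dots,n-1\}$ with $G_{i+1} \not\le C$; equivalently, $G_i \le C$ and $G_{i+1} \not\le C$. Uniqueness then follows from a two-sided squeeze. If $A/B$ is any normal factor associated to $K/L$ with $G_j \le B \le A \le G_{j+1}$, then $A/B$ is non-abelian: by Lemma~\ref{lem:associated:common_compression} it shares the internal compression $\ol{KA}/\ol{LB}$ with $K/L$, and a normal compression of an abelian group is itself abelian. Lemma~\ref{lem:association_centralizer} gives $\CC_G(A/B) = C$, so $B$ (which centralizes $A/B$ since $B\normal A$) lies in $C$, and $A$ (which does not, by non-abelianness of $A/B$) does not. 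Hence $G_j \le C$ forces $j \le i$, and $G_{j+1}\not\le C$ forces $j\ge i$, so $j=i$.

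For existence I would take $A := \ol{KG_i}\cap G_{i+1}$ and $B := \ol{LG_i}\cap G_{i+1}$; both are closed normal subgroups of $G$ lying in $[G_i, G_{i+1}]$. The chief-factor property of $K/L$ supplies two elementary identities. First, $\ol{LG_i}\cap K = L$: this intersection is a closed normal subgroup of $G$ between $L$ and $K$, and if it equalled $K$ then $K\le\ol{LG_i}$ would place $K/L$ inside the closure of $G_iL/L$ in $G/L$; since $[G_i,K]\le L$, that closure centralizes $K/L$, forcing $K/L$ to centralize itself and so to be abelian. Second, $\ol{L(K\cap G_{i+1})} = K$: otherwise $K\cap G_{i+1}\le L$ would give $[K,G_{i+1}]\le L$ and $G_{i+1}\le C$, contradicting the choice of $i$. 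Using these, $\ol{KB}=\ol{KG_i}$ (from $G_i\le B\le\ol{LG_i}$ and $L\le K$), $\ol{AL}=\ol{KG_i}$ (since $A\supseteq K\cap G_{i+1}$ makes $\ol{AL}\supseteq K$, and $A\supseteq G_i$), $\ol{LB}=\ol{LG_i}$ and so $K\cap\ol{LB}=L$, and $A\cap\ol{LB} = \ol{KG_i}\cap G_{i+1}\cap\ol{LG_i} = \ol{LG_i}\cap G_{i+1} = B$. These are the three association conditions, and $B<A$ strictly since $B=A$ would force $\ol{KG_i}=\ol{LG_i}$.

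For the Polish refinement, I apply Theorem~\ref{thm:compression_simple}(1) to the $G$-equivariant normal compression $K/L \hookrightarrow Q := \ol{KG_i}/\ol{LG_i}$ (which is a normal compression by the first identity above). Since $K/L$ is $G$-simple, $\Z(Q)$ is the unique largest proper closed $G$-invariant subgroup of $Q$, and its preimage $M$ in $G$ is the unique maximal closed normal subgroup of $G$ in $[\ol{LG_i}, \ol{KG_i})$; consequently $\ol{KG_i}/M$ is a chief factor of $G$. One checks $\ol{KM}=\ol{KG_i}$ and $K\cap M=L$, so $\ol{KG_i}/M$ is an internal compression of $K/L$ and is associated to it. Setting $B' := M\cap G_{i+1}$, the induced map $A/B'\to\ol{KG_i}/M$ is a $G$-equivariant normal compression onto a non-abelian, topologically perfect, centerless, $G$-simple Polish group. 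Theorem~\ref{thm:compression_simple}(2) then identifies $\ol{[A,A]}B'/B'$ as the unique smallest non-trivial closed $G$-invariant subgroup of $A/B'$ and as being $G$-simple; writing $A' := \ol{[A,A]}B'$, this $G$-simplicity of $A'/B'$ is exactly the chief-factor condition, and a short check that $\ol{KG_i}/M$ is also an internal compression of $A'/B'$ (via $A'\cap M = B'$ and the density of the commutator's image) shows $A'/B'$ is associated to $K/L$. The main obstacle is this Polish refinement: the general-case pair $(B,A)$ need not itself be chief, because central elements of $Q$ can lift into $G_{i+1}$ and interpose closed normal subgroups between $B$ and $A$; the use of Theorem~\ref{thm:compression_simple} to pin down $\Z(Q)$ as the unique such obstruction is what lets one quotient it out in a single step.
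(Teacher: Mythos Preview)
Your proof is correct, but it takes a different route from the paper in two places. For existence in the general case, the paper works through the conjugation map $\alpha:G\to\Aut(K/L)$: it sets $B:=\CC_{G_{i+1}}(K/L)$, pulls back $\Inn(K/L)$ to a subgroup $R\le G_{i+1}$, and takes $A:=\ol{[R,K]B}$. Your construction $A:=\ol{KG_i}\cap G_{i+1}$, $B:=\ol{LG_i}\cap G_{i+1}$ is more direct and avoids the automorphism-group machinery entirely; the two key identities $\ol{LG_i}\cap K=L$ and $\ol{L(K\cap G_{i+1})}=K$ do all the work. For the Polish refinement, the paper passes through the uppermost representative $\ol{KC}/C$ with $C=\CC_G(K/L)$, obtaining compressions $K/L\to\ol{KC}/C\leftarrow A/B$; you instead stay inside the series and pass through $\ol{KG_i}/M$ with $M$ the preimage of $\Z(\ol{KG_i}/\ol{LG_i})$. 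Both routes then invoke Theorem~\ref{thm:compression_simple} twice and finish by comparing centralizers via Lemma~\ref{lem:association_centralizer}. Your uniqueness argument (squeezing $j$ between $i$ via $G_j\le C$ and $G_{j+1}\not\le C$) is also slightly different from the paper's, which simply cites Lemma~\ref{chief:associated:ordered}. One small notational slip: your $A':=\ol{[A,A]}B'$ should be $\ol{[A,A]B'}$, since the product of two closed subgroups need not be closed; this is what Theorem~\ref{thm:compression_simple}(2) actually produces.
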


For the uniqueness part of the proof, we need to show that two distinct factors of a normal series cannot be associated to the same non-abelian chief factor.  In fact, this is a more general phenomenon, not limited to chief factors.

\begin{lem}\label{chief:associated:ordered}
Let $G$ be a topological group and let $K_1/L_1$ and $K_2/L_2$ be non-trivial normal factors of $G$ such that $K_1 \le L_2$.  Then there does not exist a normal factor $K_3/L_3$ of $G$ such that $K_3/L_3$ is associated to both $K_1/L_1$ and $K_2/L_2$. In particular, $K_1/L_1$ and $K_2/L_2$ are not associated to each other.

\end{lem}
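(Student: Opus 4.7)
The plan is to derive a contradiction by showing that any hypothetical common associate $K_3/L_3$ would have to satisfy $K_3 = L_3$, violating the requirement that it be a non-trivial factor.

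First I would spell out the three defining equations for $K_3/L_3$ being associated to $K_1/L_1$ and the three for its being associated to $K_2/L_2$. The key observation is a containment chain. Since $K_1 \le L_2$, I have $\ol{K_1 L_3} \le \ol{L_2 L_3}$. Now the association of $K_3/L_3$ with $K_1/L_1$ gives $\ol{K_3 L_1} = \ol{K_1 L_3}$, so in particular
\[
K_3 \le \ol{K_3 L_1} = \ol{K_1 L_3} \le \ol{L_2 L_3}.
\]
Then the third equation of the association of $K_3/L_3$ with $K_2/L_2$, namely $K_3 \cap \ol{L_2 L_3} = L_3$, forces $K_3 = L_3$. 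This contradicts the requirement that $K_3/L_3$ be a normal factor, i.e.\ that $L_3 < K_3$.

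For the ``in particular'' clause, I would simply observe that $K_2/L_2$ is tautologically associated to itself: all three defining identities collapse to trivialities since $\ol{L_2 L_2} = L_2$ and $\ol{K_2 L_2} = K_2$. Hence if $K_1/L_1$ and $K_2/L_2$ were associated, then $K_3/L_3 := K_2/L_2$ would be a non-trivial normal factor associated to both, contradicting the first part of the statement.

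There is no real obstacle here; the argument is essentially a bookkeeping exercise once one writes out what ``associated'' means and exploits the hypothesis $K_1 \le L_2$ to inject $\ol{K_1 L_3}$ into $\ol{L_2 L_3}$. The only subtlety worth flagging is that the proof never uses the chief factor property or any Polish hypothesis, so the lemma holds at the level of arbitrary topological groups, as stated.
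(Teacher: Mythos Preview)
Your proof is correct and follows essentially the same route as the paper: both use the association with $K_1/L_1$ together with $K_1 \le L_2$ to force $K_3 \le \ol{L_2L_3}$, which is incompatible with the condition $K_3 \cap \ol{L_2L_3} = L_3$ required for association with $K_2/L_2$. The paper phrases the conclusion as showing directly that this third association condition fails (since $K_3 \cap \ol{L_2L_3} = K_3 > L_3$), whereas you assume it and derive $K_3 = L_3$, but this is only a cosmetic difference.
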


\begin{proof}
Suppose that $K_3/L_3$ is a factor of $G$ associated to $K_1/L_1$. Since $K_1 \le L_2$, we see that 
\[
\ol{K_3L_1} = \ol{K_1L_3} \le \ol{L_2L_3},
\]
and hence $K_3 \le \ol{L_2L_3}$.  In particular,
\[
K_3 \cap \ol{L_2L_3} = K_3 > L_3,
\]
so $K_3/L_3$ cannot be associated to $K_2/L_2$.
\end{proof}

\begin{proof}[Proof of Theorem~\ref{thm:unique_associate}.]
The uniqueness of $i$ is guaranteed by Lemma~\ref{chief:associated:ordered}.\par

We now show existence. Let $\alpha: G \rightarrow \Aut(K/L)$ be the homomorphism induced by the conjugation action of $G$ on $K/L$. Since $K/L$ is centerless, the normal subgroup $\Inn(K/L)$ of $\Aut(K/L)$ is isomorphic (as an abstract group) to $K/L$ and the action of $\Aut(K/L)$ on $\Inn(K/L)$ by conjugation faithfully replicates the action of $\Aut(K/L)$ on $K/L$ by automorphisms; in particular, $\Inn(K/L)$ has trivial centralizer in $\Aut(K/L)$.  Given a non-trivial subgroup $N$ of $\Aut(G)$ normalized by $\Inn(K/L)$, then $[N,\Inn(K/L)]$ is non-trivial, and since $N$ and $\Inn(K/L)$ normalize each other we have $[N,\Inn(K/L)] \le N \cap \Inn(K/L)$.  Thus $N$ has non-trivial intersection with $\Inn(K/L)$.

Take $i$ minimal such that $G_{i+1} \not\le \CC_G(K/L)$. The group $\alpha(G_{i+1})$ is then non-trivial and normalized by $\Inn(K/L)$, so $\Inn(K/L) \cap \alpha(G_{i+1})$ is non-trivial.  Set 
\[
B := \CC_{G_{i+1}}(K/L), \; R := \alpha\inv \left(\Inn(K/L)\right) \cap G_{i+1}, \; \text{and }A :=\ol{[R,K]B}.
\]
The groups $A$ and $B$ are then closed normal subgroups of $G$ such that $G_i \le B \le A \le G_{i+1}$.

Since $\Inn(K/L) \cap \alpha(G_{i+1})$ is non-trivial, we see that there are non-trivial inner automorphisms of $K/L$ induced by the action of $R$, so $[R,K] \not\le L$.  Since $K/L$ is a chief factor of $G$, it follows that $K = \ol{[R,K]L}$, hence $\ol{AL} = \ol{KB}$, verifying one condition of association. For the other two conditions, observe that $\ol{LB}$ centralizes $K/L$ since $\CC_G(K/L)$ is closed and contains both $L$ and $B$.  Additionally, $\CC_K(K/L) = L$ since $K/L$ is centerless, and $\CC_A(K/L) = B$ by the definition of $B$.  From these we deduce that $A \cap \ol{LB} = B$ and $K \cap \ol{LB} = L$.  Thus, $A/B$ is associated to $K/L$.

Finally, suppose that $G$ is a Polish group.  Letting $C: = \CC_G(K/L)$ and $M := \ol{KC}$, we see that $K \cap C = L$ and $A \cap C = B$. Furthermore,
\[
M = \ol{KC} = \ol{KBC} = \ol{ALC} = \ol{AC}.
\]
Lemma~\ref{lem:associated:secondiso} now supplies $G$-invariant normal compression maps $\psi_1: K/L \rightarrow M/C$ and $\psi_2: A/B \rightarrow M/C$ such that $\psi_1(kL) = kC$ for all $k \in K$ and $\psi_2(aB) = aC$ for all $a \in A$.

By the fact that $\psi_1$ is a $G$-equivariant map with dense image, we have
\[
\CC_G(M/C) = \CC_G(\psi_1(K/L)) = \CC_G(K/L) = C,
\] 
so $M/C$ is centerless.  The factor $K/L$ is chief, so it has no proper $G$-invariant closed normal subgroups. Theorem~\ref{thm:compression_simple} now implies $M/C$ has no $G$-invariant closed normal subgroups; that is to say, $M/C$ is a chief factor of $G$.  Applying Theorem~\ref{thm:compression_simple} to $\psi_2$, we deduce that $D/B$ is the unique smallest non-trivial closed $G$-invariant subgroup of $A/B$ where $D: = \ol{[A,A]B}$.  In particular, $D/B$ is a chief factor of $G$.  The image $\psi_2(D/B)$ is then a non-trivial $G$-invariant subgroup of $M/C$; since $M/C$ is a chief factor, in fact $\psi_2(D/B)$ must be dense in $M/C$.  In particular, 
\[
\CC_G(D/B) = \CC_G(\psi_2(D/B)) = \CC_G(M/C).
\]
We conclude $\CC_G(D/B)=\CC_G(K/L)$, and hence $D/B$ is associated to $K/L$ by Proposition~\ref{associated:centralizers:chief}.  Replacing $A$ with $D$, we therefore obtain a factor of $G_{i+1}/G_i$ that is associated to $K/L$ and additionally is a chief factor of $G$.
\end{proof}

\begin{cor}
Let $G$ be a Polish group and suppose that $K/L$ is a non-abelian chief factor of $G$. If $M/N$ is a normal factor associated to $K/L$, then there are $M\leq B<A\leq N$ with $A,B$ closed normal subgroups of $G$ so that $A/B$ is a non-abelian chief factor associated to $K/L$.
\end{cor}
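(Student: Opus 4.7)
The plan is to apply Theorem~\ref{thm:unique_associate} to the three-term normal series
\[
\triv = G_0 \le G_1 := N \le G_2 := M \le G_3 := G,
\]
all of whose terms are closed normal subgroups of $G$ (since $M/N$ being a normal factor of $G$ means both $N$ and $M$ are). Because $G$ is Polish, the theorem produces a unique index $i \in \{0,1,2\}$ together with closed normal subgroups $G_i \le B < A \le G_{i+1}$ of $G$ such that $A/B$ is a non-abelian chief factor associated to $K/L$.

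The key observation is that the hypothesis of the corollary itself pins down $i=1$. Indeed, the pair $(A,B) = (M,N)$ sits in the slot $i=1$ and, by assumption, gives a normal factor associated to $K/L$; thus this slot witnesses the existence clause of Theorem~\ref{thm:unique_associate}, and the theorem's uniqueness clause rules out the other two indices. (If one prefers, the same can be seen directly from Lemma~\ref{chief:associated:ordered}: the case $i=0$ would force $A \le N$, and the case $i=2$ would force $M \le B$, each exhibiting two nested non-trivial normal factors both associated to the common factor $K/L$, which the lemma forbids.)

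With $i=1$ so determined, the non-abelian chief factor $A/B$ produced by the Polish case of Theorem~\ref{thm:unique_associate} automatically satisfies $N \le B < A \le M$, which is the conclusion sought. There is no substantive obstacle here: all the work is packaged in Theorem~\ref{thm:unique_associate}, and the corollary amounts to the observation that applying that theorem to the shortest meaningful normal series built from the pair $(N,M)$ yields exactly the refinement of $M/N$ we want.
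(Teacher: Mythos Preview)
Your proof is correct and follows essentially the same route as the paper: apply Theorem~\ref{thm:unique_associate} to the series $\triv \le N \le M \le G$ and then argue that the chief factor must land between $N$ and $M$. The paper cites Lemma~\ref{chief:associated:ordered} directly for that last step, whereas you phrase it as the uniqueness clause of the theorem (which is proved using that lemma), but this is the same argument.
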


\begin{proof}
Plainly, $\triv \leq N<M\leq G$ is a normal series, so we may apply Theorem~\ref{thm:unique_associate}. We may thus insert normal subgroups $B<A$ into the series so that $A/B$ is a non-abelian chief factor associated to $K/L$.  Since $M/N$ is associated to $K/L$, Lemma~\ref{chief:associated:ordered} implies that $B$ and $A$ must be inserted between $N$ and $M$, verifying the corollary.
\end{proof}

\section{Chief blocks}\label{sec:chief_blocks}
In view of the theory of finite groups, a chief factor, when present, should be a ``basic building block" of a Polish group. Theorem~\ref{thm:unique_associate} suggests that it is indeed the \textit{association class} of a chief factor that is the ``basic building block" of a Polish group. We thus arrive at the following definitions:
 
\begin{defn}
For a topological group $G$, a \defbold{chief block}\index{chief block} of $G$ is an association class of non-abelian chief factors of $G$. We denote by $[K/L]$ the association class of a non-abelian chief factor $K/L$ of $G$.  The (possibly empty) collection of chief blocks of $G$ is denoted by $\mf{B}_G$.
\end{defn}

In this section, we study the chief blocks of a group $G$. In particular, we show $\mf{B}_G$ comes with a canonical partial order, find canonical representatives for chief block, and isolate an important set of chief blocks for later applications, the \emph{minimally covered} blocks.

\subsection{First definitions and properties}
By Proposition~\ref{associated:centralizers:chief}, the centralizer of a chief factor is an invariant of association. This suggests a definition on the level of chief blocks.

\begin{defn}
For a chief block $\mf{a}\in \mf{B}_G$ in a topological group $G$, the \defbold{centralizer}\index{chief block centralizer} of $\mf{a}$ is
\[
\CC_G(\mf{a}):=\CC_G(K/L)
\]
for some (any) representative $K/L$ of $\mf{a}$.  Given a subgroup $H$ of $G$, define $\CC_H(\mf{a}) := \CC_G(\mf{a}) \cap H$.
\end{defn}

We make a trivial but useful observation.

\begin{obs}\label{obs:cover_centralizer}
Suppose that $G$ is a topological group. If $A/B$ is a representative of $\mf{a}\in \mf{B}_G$, then $\CC_A(\mf{a}) = B$.
\end{obs}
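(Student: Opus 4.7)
The plan is to unpack the definitions and observe that $\CC_A(\mf{a})$ is forced to lie between $B$ and $A$, and then use the chief factor property together with non-abelianness to rule out the larger of these two.

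First, by definition of the chief block centralizer, $\CC_G(\mf{a})=\CC_G(A/B)$ for the chosen representative $A/B$, so
\[
\CC_A(\mf{a}) = \CC_G(\mf{a})\cap A = \CC_G(A/B)\cap A = \CC_A(A/B).
\]
Next I would note that $\CC_G(A/B)$ is the kernel of the continuous conjugation homomorphism $G \to \Aut(A/B)$ (well-defined since $A,B\normal G$), hence a closed normal subgroup of $G$. Therefore $\CC_A(A/B)$ is closed and normal in $G$, and by construction contains $B$ and is contained in $A$.

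Since $A/B$ is a chief factor of $G$, the only closed normal subgroups of $G$ lying between $B$ and $A$ are $B$ and $A$ themselves. This forces $\CC_A(\mf{a})$ to equal either $B$ or $A$. The equality $\CC_A(A/B)=A$ however would say that $A/B$ is centralized by itself, i.e.\ $A/B$ is abelian; this contradicts the fact that representatives of a chief block are by definition non-abelian chief factors. Hence $\CC_A(\mf{a})=B$.

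No real obstacle: the statement is essentially a tautology once one remembers that (i) the block centralizer is defined via a representative, (ii) $\CC_G(A/B)$ is a closed normal subgroup of $G$ containing $B$, and (iii) chief blocks consist of non-abelian factors.
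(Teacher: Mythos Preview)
Your argument is correct and is exactly the routine verification the paper has in mind; the paper itself records the statement as a ``trivial but useful observation'' without proof. There is nothing to add.
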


A topological group is \textbf{monolithic}\index{monolithic} if there is a unique minimal non-trivial closed normal subgroup. The \textbf{socle}\index{socle} of a topological group is the subgroup generated by all minimal non-trivial closed normal subgroups.  (If there are no minimal non-trivial closed normal subgroups, then the socle is defined to be trivial.)  We now show that the quotient of $G$ by the centralizer of a chief block is monolithic, and the socle of this quotient furthermore provides a canonical representative of the block.

\begin{prop}\label{prop:upper-rep}
Let $G$ be a Polish group and let $\mf{a}\in \mf{B}_G$.
\begin{enumerate}[(1) ]
\item $G/\CC_G(\mf{a})$ is monolithic, and the socle $M/\CC_G(\mf{a})$ of $G/\CC_G(\mf{a})$ is a representative of $\mf{a}$.
\item  If $R/S \in \mf{a}$, then $M/\CC_G(\mf{a})$ is an internal compression of $R/S$.
\end{enumerate}
\end{prop}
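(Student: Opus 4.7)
The plan is to fix a representative $K/L \in \mf{a}$, set $C := \CC_G(\mf{a})$ and $M := \ol{KC}$, and show that $M/C$ is a representative of $\mf{a}$ that is contained in every non-trivial closed normal subgroup of $G/C$; this identifies $M/C$ simultaneously as the socle of $G/C$ and as a canonical ``upper'' representative of $\mf{a}$.

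First I would verify that $M/C \in \mf{a}$. Since $K/L$ is a non-abelian chief factor, it is centerless, so $K \cap C = \CC_K(K/L) = L$, and Lemma~\ref{lem:associated:secondiso} supplies a $G$-equivariant normal compression $K/L \to M/C$. A direct check from the definition of association (using $L \le C$ and $C \le M$) shows $M/C$ and $K/L$ are associated, so Lemma~\ref{lem:association_centralizer} gives $\CC_G(M/C) = C$; in particular, $\Z(M/C) = \CC_M(M/C)/C = (M\cap C)/C = \triv$. Both $K/L$ and $M/C$ are non-abelian Polish groups, and $K/L$ is $G$-simple (with $G$ acting by conjugation), so Theorem~\ref{thm:compression_simple}(1) applied with $A = G$ yields that $M/C = (M/C)/\Z(M/C)$ is $G$-simple. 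Hence $M/C$ is a chief factor of $G$ associated to $K/L$, i.e.\ $M/C \in \mf{a}$.

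Next I would show $G/C$ is monolithic with monolith $M/C$. Let $N/C$ be a non-trivial closed normal subgroup of $G/C$. Since $N \not\le C = \CC_G(K/L)$, we have $[N,K] \not\le L$, so $\ol{[N,K]L}$ is a closed $G$-invariant subgroup of $K$ strictly larger than $L$; the chief factor property forces $\ol{[N,K]L} = K$. Normality of $N$ in $G$ gives $[N,K] \le N$, and since $L \le C \le N$ with $N$ closed, we deduce $K \le N$ and therefore $M = \ol{KC} \le N$. Hence $M/C \le N/C$ for every non-trivial closed normal subgroup $N/C$, making $M/C$ the unique minimal closed normal subgroup and so the socle of $G/C$.

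For (2), given any $R/S \in \mf{a}$, I would apply the argument of (1) with $R/S$ in place of $K/L$; the centralizer $C = \CC_G(\mf{a})$ is unchanged since associated chief factors share centralizers. Uniqueness of the monolith in $G/C$ forces $\ol{RC} = M$, and centerlessness of $R/S$ gives $R \cap C = \CC_R(R/S) = S$, which are exactly the two conditions for $M/C$ to be an internal compression of $R/S$. The one step where care is needed is the invocation of Theorem~\ref{thm:compression_simple}(1): a priori it only provides $G$-simplicity of $(M/C)/\Z(M/C)$, so one must first argue that $M/C$ is itself centerless in order to conclude $M/C$ is $G$-simple. This hinges on establishing association of $M/C$ with $K/L$, which transports the centralizer data through the normal compression; the remaining steps are routine manipulations of closed normal subgroups.
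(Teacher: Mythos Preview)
Your proof is correct and follows essentially the same approach as the paper's: both fix a representative, set $C=\CC_G(\mf{a})$ and $M=\ol{KC}$, establish that $M/C$ is an internal compression of $K/L$ (hence associated), transfer the centralizer via Lemma~\ref{lem:association_centralizer}, and invoke Theorem~\ref{thm:compression_simple} to conclude $M/C$ is a chief factor. The only minor differences are that the paper cites Observation~\ref{obs:compression:associated} for association rather than checking the definition directly, and for monolithicity the paper argues that any non-trivial $H/C$ must meet the chief factor $M/C$ non-trivially (hence contain it), whereas you argue directly from $\ol{[N,K]L}=K$ and $[N,K]\le N$ to get $K\le N$; both routes are equally short.
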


\begin{proof}
Set $C:=\CC_G(\mf{a})$ and let $A/B$ be a representative of $\mf{a}$. We see that $A \not\le C$, and by Observation~\ref{obs:cover_centralizer}, $A \cap C = B$.  There is thus an internal compression map from $A/B$ to $\ol{AC}/C$, so $\ol{AC}/C$ is associated to $A/B$ by Observation~\ref{obs:compression:associated}. Lemma~\ref{lem:association_centralizer} ensures $C=\CC_G(\ol{AC}/C)$, so $\ol{AC}/C$ is a normal subgroup of $G/C$ with trivial centralizer. As internal compressions are $G$-equivariant, we deduce from Theorem~\ref{thm:compression_simple} that $\ol{AC}/C$ is indeed a non-abelian chief factor. That is to say, $\ol{AC}/C \in \mf{a}$. 

Given a closed normal subgroup $H$ of $G$ such that $H > C$, the group $H/C$ does not centralize $\ol{AC}/C$, so $\ol{AC}/C \cap H/C$ is non-trivial. That $\ol{AC}/C$ is a chief factor implies $\ol{AC}/C \le H/C$. We thus deduce that $G/C$ is monolithic with socle $M/C := \ol{AC}/C$ which is a representative of $\mf{a}$, proving (1).

Suppose that $R/S\in \mf{a}$. We may run the same argument as before to conclude that $\ol{RC}/C$ is the socle of $G/C$ and is an internal compression of $R/S$. The socle is unique, so indeed $M/C$ is an internal compression of $R/S$, verifying $(2)$.  \end{proof}

In view of the previous proposition, the following definition is sensible.

\begin{defn} Let $G$ be a topological group and $\mf{a}\in \mf{B}_G$. The \textbf{uppermost representative}\index{uppermost representative} of $\mf{a}$ is the socle of $G/\CC_G(\mf{a})$. Letting $G^{\mf{a}}$ be the preimage of the socle of $G/\CC_G(\mf{a})$ under the usual projection, we denote the uppermost representative by $G^{\mf{a}}/\CC_{G}(\mf{a})$.
\end{defn}

In addition to giving canonical representatives, centralizers induce a partial order on $\mf{B}_G$. 

\begin{defn}
For $\mf{a},\mf{b}\in \mf{B}_G$, we define $\mf{a} \le \mf{b}$ if $\CC_G(\mf{a}) \le \CC_G(\mf{b})$.
\end{defn}

The partial order on blocks tells us how representatives can appear in normal series relative to each other. Making this precise requires several definitions.

\begin{defn}
Let $\mf{a}$ be a chief block and let $N/M$ be a normal factor of a topological group $G$.  We say $N/M$ \defbold{covers}\index{covers} $\mf{a}$ if $\CC_G(\mf{a})$ contains $M$ but not $N$.
If $N/M$ does not cover $\mf{a}$, we say that $N/M$ \defbold{avoids}\index{avoids} $\mf{a}$. A block can be avoided in one of two ways: The factor $N/M$ is \defbold{below} $\mf{a}$ if $N \le \CC_G(\mf{a})$; equivalently, $G/N$ covers $\mf{a}$. The factor $N/M$ is \defbold{above} $\mf{a}$ if $M \not\le \CC_G(\mf{a})$; equivalently, $M/\triv$ covers $\mf{a}$. For a closed normal subgroup $N\normal G$, we will say that $N$ covers or avoids a chief block by seeing $N$ as the normal factor $N/\triv$.
\end{defn}

To clarify this terminology, we restate Theorem~\ref{thm:unique_associate} in terms of chief blocks.

\begin{prop}\label{prop:cover_realize}
Let $G$ be a topological group, $\mf{a}\in \mf{B}_G$, and
	\[
	\triv = G_0 \le G_1 \le \dots \le G_n = G
	\]
be a series of closed normal subgroups in $G$.  Then there is exactly one $i \in \{0,\dots,n-1\}$ such that $G_{i+1}/G_i$ covers $\mf{a}$. If $G$ is also Polish, then there exist closed normal subgroups $G_i \le B < A \le G_{i+1}$ of $G$ so that $A/B\in\mf{a}$.
\end{prop}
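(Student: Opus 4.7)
The plan is to reformulate the statement entirely in terms of centralizers, using the definition that $N/M$ covers $\mf{a}$ exactly when $M \le \CC_G(\mf{a})$ and $N \not\le \CC_G(\mf{a})$. Once this translation is made, both existence and uniqueness become essentially bookkeeping about where the series $G_0 \le G_1 \le \cdots \le G_n$ crosses the centralizer.

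First, I would establish that $G \not\le \CC_G(\mf{a})$: if $K/L$ is any representative of $\mf{a}$, then $K/L$ is non-abelian by definition of a chief block, so $K \not\le \CC_G(K/L) = \CC_G(\mf{a})$, hence certainly $G \not\le \CC_G(\mf{a})$. Since $G_0 = \triv \le \CC_G(\mf{a})$ and $G_n = G \not\le \CC_G(\mf{a})$, there is a well-defined least index $i \in \{0,\dots,n-1\}$ such that $G_{i+1} \not\le \CC_G(\mf{a})$. By minimality, $G_i \le \CC_G(\mf{a})$, so $G_{i+1}/G_i$ covers $\mf{a}$, giving existence. For uniqueness, suppose $j > i$ and $G_{j+1}/G_j$ also covered $\mf{a}$; then $G_j \le \CC_G(\mf{a})$, but $G_{i+1} \le G_j \le \CC_G(\mf{a})$ contradicts the choice of $i$.

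For the Polish case, I would simply invoke Theorem~\ref{thm:unique_associate} applied to any representative $K/L\in\mf{a}$. That theorem produces closed normal subgroups $G_i \le B \le A \le G_{i+1}$ of $G$ with $A/B$ a non-abelian chief factor associated to $K/L$. By definition of the association class, $A/B \in \mf{a}$, and $B < A$ is automatic from $A/B$ being a normal factor. The index produced by Theorem~\ref{thm:unique_associate} is characterized there as the least $i$ with $G_{i+1} \not\le \CC_G(K/L) = \CC_G(\mf{a})$, which matches the $i$ identified above.

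There is no real obstacle here; the proposition is a reformulation of Theorem~\ref{thm:unique_associate} in the language of blocks, with the additional clean observation that uniqueness is an immediate monotonicity argument once ``covers'' is unpacked via centralizers. The only point requiring care is making sure that $G$ itself does not centralize $\mf{a}$, which rests on the convention that chief blocks are association classes of \emph{non-abelian} chief factors.
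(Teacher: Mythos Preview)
Your proposal is correct and matches the paper's approach: the paper presents this proposition explicitly as a restatement of Theorem~\ref{thm:unique_associate} in the language of chief blocks and gives no separate proof. Your explicit monotonicity argument for the first part is exactly the content of the clause ``Specifically, this occurs for the least $i$ such that $G_{i+1} \not\le \CC_G(K/L)$'' in Theorem~\ref{thm:unique_associate}, and your invocation of that theorem for the Polish case is precisely what the paper intends.
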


With these definitions in hand, we give a characterization of the partial ordering.

\begin{prop}\label{prop:block:ordering}
Let $G$ be a topological group with $\mf{a},\mf{b}\in\mf{B}_G$.
\begin{enumerate}[(1) ]
\item We have $\mf{a} \le \mf{b}$ if and only if every closed normal subgroup of $G$ that covers $\mf{b}$ also covers $\mf{a}$.
\item If $G$ is Polish, we have $\mf{a} < \mf{b}$ if and only if for every representative $A/B$ of $\mf{b}$, the subgroup $B$ covers $\mf{a}$.
\end{enumerate}
\end{prop}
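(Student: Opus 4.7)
For part (1), the plan is to unwind the definitions directly. The forward direction is a one-line inclusion chase: if $\CC_G(\mf{a}) \le \CC_G(\mf{b})$ and $N$ is a closed normal subgroup with $N \not\le \CC_G(\mf{b})$, then certainly $N \not\le \CC_G(\mf{a})$, so $N$ covers $\mf{a}$. For the converse I would argue by contrapositive: if $\CC_G(\mf{a}) \not\le \CC_G(\mf{b})$, then $N := \CC_G(\mf{a})$ is itself a closed normal subgroup of $G$ that covers $\mf{b}$ (since $N \not\le \CC_G(\mf{b})$) while manifestly failing to cover $\mf{a}$ (since $N \le \CC_G(\mf{a})$). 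This part does not use the Polish hypothesis.

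For the forward direction of (2), suppose $\mf{a} < \mf{b}$ and let $A/B \in \mf{b}$. The first observation is that $A \not\le \CC_G(\mf{b})$, since $A/B$ is a non-abelian chief factor centralized by $\CC_G(\mf{b})$. Assume for contradiction that $B \le \CC_G(\mf{a})$. Since $\CC_G(\mf{a}) \lneq \CC_G(\mf{b})$, the inclusion $A \le \CC_G(\mf{a})$ would force $A \le \CC_G(\mf{b})$, a contradiction; hence $A/B$ covers $\mf{a}$. I would then apply Theorem~\ref{thm:unique_associate} to the normal series $\triv \le B \le A \le G$ to extract closed normal subgroups $B \le B' < A' \le A$ of $G$ with $A'/B'$ a chief factor in $\mf{a}$. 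But $A/B$ is itself chief, so $B' = B$ and $A' = A$, forcing $A/B \in \mf{a}$; this contradicts $\mf{a} \neq \mf{b}$.

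For the backward direction of (2), the strategy is to split the conclusion $\mf{a} < \mf{b}$ into strictness and comparability. Proposition~\ref{prop:upper-rep} supplies the uppermost representative $G^{\mf{b}}/\CC_G(\mf{b})$ of $\mf{b}$, whose denominator is $\CC_G(\mf{b})$; the hypothesis says $\CC_G(\mf{b})$ covers $\mf{a}$, i.e.\ $\CC_G(\mf{b}) \not\le \CC_G(\mf{a})$, which immediately gives $\mf{a} \neq \mf{b}$. To obtain $\mf{a} \le \mf{b}$, I would invoke part (1): given any closed normal $N$ covering $\mf{b}$, the Polish case of Proposition~\ref{prop:cover_realize}, applied to the series $\triv \le N \le G$, produces a representative $A/B \in \mf{b}$ with $B < A \le N$. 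The standing hypothesis yields $B \not\le \CC_G(\mf{a})$, and since $B \le N$, we conclude $N \not\le \CC_G(\mf{a})$, as required.

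The argument is largely bookkeeping with the definitions, but the one delicate point I expect to be the crux is in the forward direction of (2): one must use chiefness of $A/B$ to collapse the chief factor produced by the refinement theorem back onto $A/B$ itself, which is precisely where the non-abelian hypothesis built into the definition of a chief block is essential.
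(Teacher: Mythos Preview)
Your proof is correct and follows essentially the same approach as the paper. Part~(1) is identical. In part~(2), your forward direction matches the paper's: both argue that if $B$ failed to cover $\mf{a}$ then $A/B$ would cover $\mf{a}$, and then use Theorem~\ref{thm:unique_associate} together with chiefness of $A/B$ to force $A/B \in \mf{a}$, a contradiction. For the backward direction you argue directly (using the uppermost representative from Proposition~\ref{prop:upper-rep} for strictness, and Proposition~\ref{prop:cover_realize} plus part~(1) for $\mf{a} \le \mf{b}$), whereas the paper argues by contrapositive (splitting into $\mf{a}=\mf{b}$ and $\mf{a}\nleq\mf{b}$, and in the latter case applying Proposition~\ref{prop:cover_realize} to $\CC_G(\mf{a})$ directly); this is a cosmetic difference and both routes use the same ingredients.
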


\begin{proof}
Suppose that $\mf{a} \le \mf{b}$. Letting $K$ be a closed normal subgroup of $G$ that covers $\mf{b}$, we have $K \not\le \CC_G(\mf{b})$, so $K \not\le \CC_G(\mf{a})$. Hence, $K$ covers $\mf{a}$. Conversely, suppose that $\mf{a} \nleq \mf{b}$.  The group $\CC_G(\mf{a})$ is then a closed normal subgroup that covers $\mf{b}$ (since $\CC_G(\mf{a}) \not\le \CC_G(\mf{b})$) but does not cover $\mf{a}$.  This completes the proof of $(1)$.

For $(2)$, first suppose that $\mf{a}<\mf{b}$ and $A/B\in \mf{b}$. The subgroup $A$ covers $\mf{a}$ by part $(1)$, but since $\mf{b} \neq \mf{a}$, the factor $A/B$ avoids $\mf{a}$. It now follows $B$ covers $\mf{a}$, via Proposition~\ref{prop:cover_realize}.

Conversely, suppose that $\mf{a} \not< \mf{b}$.  If $\mf{a} = \mf{b}$, then $A/B$ covers $\mf{a}$, so $B$ does not cover $\mf{a}$.  Otherwise $\mf{a} \nleq \mf{b}$, and as we saw in the proof of part $(1)$, the group $\CC_G(\mf{a})$ covers $\mf{b}$ but not $\mf{a}$.  Applying Proposition~\ref{prop:cover_realize}, there is a representative $R/S$ of $\mf{b}$ such that $R \le \CC_G(\mf{a})$, and since $S \le \CC_G(\mf{a})$, we deduce that $S$ avoids $\mf{a}$, completing the proof of $(2)$.
\end{proof}

\subsection{Minimally covered chief blocks}\label{sec:minimially}

We here isolate an important subset of $\mf{B}_G$, which, when non-empty, gives more powerful tools with which to study the normal subgroup structure of a Polish group. Describing this subset requires an observation about chief blocks.

\begin{lem}\label{block:covering:intersection}
Suppose that $G$ is a topological group with $\mf{a}\in \mf{B}_G$ and let $\mc{K}_{\mf{a}}$ be the set of closed normal subgroups of $G$ that cover $\mf{a}$.  Then $\mc{K}_{\mf{a}}$ is a filter of closed normal subgroups.\end{lem}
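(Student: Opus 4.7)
The plan is to verify the three filter axioms: non-emptiness, upward closure, and closure under finite intersections. Fix any representative $A/B \in \mf{a}$ and write $C := \CC_G(\mf{a}) = \CC_G(A/B)$. Note that $A/B$ is non-abelian and that $B \le C$ while $A \cap C = B$ by Observation~\ref{obs:cover_centralizer}. Non-emptiness is then immediate, since $A$ itself is a closed normal subgroup of $G$ with $A \not\le C$. Upward closure is equally clear: if $K \in \mc{K}_{\mf{a}}$ and $K \le L$ with $L$ a closed normal subgroup of $G$, then $K \not\le C$ forces $L \not\le C$, whence $L \in \mc{K}_{\mf{a}}$.

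The substance lies in closure under intersection, which I would handle by contradiction. Take $K, L \in \mc{K}_{\mf{a}}$ and suppose $K \cap L \le C$. The first step is an auxiliary claim: whenever a closed normal subgroup $N$ of $G$ satisfies $N \not\le C$, we have $A \le \ol{NC}$. To see this, note that $A \cap \ol{NC}$ is a closed $G$-invariant subgroup lying between $B$ and $A$, so by the chief factor property it must equal $B$ or $A$. The case $A \cap \ol{NC} = B$ forces $[N,A] \le \ol{NC} \cap A = B$ and hence $N \le \CC_G(A/B) = C$, contradicting our assumption; so $A \le \ol{NC}$. Applying this to both $K$ and $L$ yields $AC/C \le \ol{KC}/C$ and $AC/C \le \ol{LC}/C$.

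From $K \cap L \le C$ we get $[K,L] \le K \cap L \le C$, so the images of $K$ and $L$ in the quotient $G/C$ commute. Continuity of the commutator map then forces $\ol{KC}/C$ and $\ol{LC}/C$ to commute as well. Since $AC/C$ is contained in both, it commutes with itself and is therefore abelian. But $AC/C \cong A/(A \cap C) = A/B$, forcing $A/B$ to be abelian -- impossible, because chief blocks consist of non-abelian factors. The main obstacle is the auxiliary claim: since $KC$ and $K \cap A$ need not be closed in $G$, one cannot apply the chief factor property directly and must pass through the closure $\ol{NC}$; once that is in hand, the commutator/continuity argument in $G/C$ concludes the proof cleanly, without any appeal to the uppermost representative from Proposition~\ref{prop:upper-rep}.
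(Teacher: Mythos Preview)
Your proof is correct, but it takes a genuinely different route from the paper's. The paper argues directly: given $K,L\in\mc{K}_{\mf{a}}$, it observes that $[A,K]\not\le B$ (since $K\not\le C$), so by the chief factor property $\ol{[A,K]B}=A$; then $L$ fails to centralize the dense subgroup $[A,K]B/B$ of $A/B$, giving $[[A,K],L]\not\le B$ and hence $\ol{[[A,K],L]B}=A$. Thus $\ol{[[A,K],L]}\le K\cap L$ already covers $\mf{a}$. Your argument instead passes to the quotient $G/C$ and reaches a contradiction: you first prove the pleasant auxiliary fact that $A\le\ol{NC}$ whenever $N$ covers $\mf{a}$, then use $K\cap L\le C$ to force the closures $\ol{KC}/C$ and $\ol{LC}/C$ to commute, trapping the non-abelian $A/B\cong AC/C$ inside two commuting subgroups. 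The paper's approach is slightly more constructive (it exhibits an explicit subgroup of $K\cap L$ covering $\mf{a}$, namely $\ol{[[A,K],L]}$), while yours isolates a reusable structural statement about how $A$ sits relative to $\ol{NC}$; both are short and the underlying use of the chief factor dichotomy is the same.
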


\begin{proof}
Since $N$ covers $\mf{a}$ if and only if $N\nleq \CC_G(\mf{a})$, it is clear that $\mc{K}$ is upward-closed. It thus suffices to show that $\mc{K}_{\mf{a}}$ is closed under finite intersections. 

Let $K,L \in \mc{K}_{\mf{a}}$ and fix a representative $A/B$ of $\mf{a}$.  The subgroup $K$ does not centralize $A/B$, so $[A,K] \not\le B$. Since $A/B$ is a chief factor of $G$, we have $\ol{[A,K]B} = A$.  The subgroup $L$ also does not centralize $A/B$, so $L$ does not centralize the dense subgroup $[A,K]B/B$ of $A/B$. We deduce that $[[A,K],L] \not\le B$, so in fact $\ol{[[A,K],L]B} = A$. The group $\ol{[[A,K],L]}$ then covers $\mf{a}$, and since $\ol{[[A,K],L]} \le K \cap L$, we conclude that $K \cap L$ covers $\mf{a}$.
\end{proof}

In general, $\mc{K}_{\mf{a}}$ is \textit{not} a principal filter. For example, the free group on two generators $F_2$ is residually finite, so any chief block $\mf{a}$ represented by an infinite quotient of $F_2$ is such that $\mc{K}_{\mf{a}}$ is not principal; this follows from Theorem~\ref{thm:unique_associate} as an easy exercise. However, in the case that $\mc{K}_{\mf{a}}$ is a principal filter, much more can be said about the group and its normal subgroup structure.

\begin{defn} 
Given a topological group $G$ and $\mf{a}\in\mf{B}_{\mf{a}}$, define $G_{\mf{a}} := \bigcap_{K \in \mc{K}_{\mf{a}}}K$.  We say $\mf{a}$ is \defbold{minimally covered}\index{minimally covered} if $G_{\mf{a}}$ covers $\mf{a}$, \textit{i.e.}\ $\mc{K}_{\mf{a}}$ is a principal filter.  Write $\mf{B}^{\min}_G$ for the set of minimally covered chief blocks of $G$.
\end{defn}

\begin{rmk} 
The presence of minimally covered chief blocks is a finiteness condition. In \cite{RW_LC}, it is shown that minimally covered chief blocks arise in all locally compact Polish groups that have sufficiently non-trivial large-scale topological structure.
\end{rmk}

For a minimally covered chief block $\mf{a}$, we argue the subgroup $G_{\mf{a}}$ provides a canonical lowermost representative of $\mf{a}$. 

\begin{prop}\label{prop:minimalcovering}
Let $G$ be a topological group with $\mf{a} \in \mf{B}^{\min}_G$.
\begin{enumerate}[(1) ]
\item $G_{\mf{a}}/\CC_{G_{\mf{a}}}(\mf{a})$ is a representative of $\mf{a}$.
\item If $A/B\in \mf{a}$, then $A/B$ is an internal compression of $G_{\mf{a}}/\CC_{G_{\mf{a}}}(\mf{a})$.
\end{enumerate}
\end{prop}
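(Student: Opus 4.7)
The plan is to show that $G_{\mf a}/\CC_{G_{\mf a}}(\mf a)$ admits a canonical internal compression into every representative of $\mf a$; this will simultaneously establish (2) and, via Observation~\ref{obs:compression:associated}, that $G_{\mf a}/\CC_{G_{\mf a}}(\mf a)$ is associated to the representatives of $\mf a$. The remaining task for (1) will then be to verify that $G_{\mf a}/\CC_{G_{\mf a}}(\mf a)$ is itself a non-abelian chief factor of $G$. Throughout, write $C:=\CC_G(\mf a)$ and fix an arbitrary representative $A/B$ of $\mf a$.

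First I would show that $G_{\mf a}\le A$: the subgroup $A$ covers $\mf a$ by Observation~\ref{obs:cover_centralizer}, so $A\in\mc{K}_{\mf a}$, whence $G_{\mf a}\le A$. Consequently
\[
G_{\mf a}\cap B \;=\; G_{\mf a}\cap A\cap C \;=\; G_{\mf a}\cap C \;=\; \CC_{G_{\mf a}}(\mf a).
\]
Since $\mf a$ is minimally covered, $G_{\mf a}\in\mc{K}_{\mf a}$, so $G_{\mf a}\not\le C$, and $\CC_{G_{\mf a}}(\mf a)$ is a proper closed subgroup of $G_{\mf a}$. Next I would verify $\ol{G_{\mf a}B}=A$: this subgroup is closed and normal in $G$ and lies between $B$ and $A$; equality with $B$ would force $G_{\mf a}\le B\le C$, contradicting that $G_{\mf a}$ covers $\mf a$, so the chief-factor property of $A/B$ forces $\ol{G_{\mf a}B}=A$. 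Combined with $G_{\mf a}\cap B=\CC_{G_{\mf a}}(\mf a)$, this says that $A/B$ is an internal compression of $G_{\mf a}/\CC_{G_{\mf a}}(\mf a)$, which is exactly (2); Observation~\ref{obs:compression:associated} then records that the two factors are associated.

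To finish (1), I must show that $G_{\mf a}/\CC_{G_{\mf a}}(\mf a)$ is itself a non-abelian chief factor of $G$. Non-abelianness is immediate: the internal compression is continuous with dense image in $A/B$, and the closure of the image of an abelian group is abelian, so non-abelianness of $A/B$ propagates back. For the chief-factor property, let $N\normal G$ be closed with $\CC_{G_{\mf a}}(\mf a)\le N\le G_{\mf a}$; then either $N\le C$, in which case $N\le G_{\mf a}\cap C=\CC_{G_{\mf a}}(\mf a)$, or $N\not\le C$, so $N\in\mc{K}_{\mf a}$ and hence $N\ge G_{\mf a}$. The dichotomy forces $N\in\{\CC_{G_{\mf a}}(\mf a),G_{\mf a}\}$, completing the proof.

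The only genuinely load-bearing step is the initial observation that every representative $A/B$ satisfies $A\supseteq G_{\mf a}$, because this single containment collapses the technicalities by identifying $\CC_{G_{\mf a}}(\mf a)$ with $G_{\mf a}\cap B$; once that is in hand, both parts of the proposition fall out by unpacking the definitions of internal compression and of the filter $\mc{K}_{\mf a}$.
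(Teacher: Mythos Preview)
Your proof is correct and follows essentially the same approach as the paper's: both hinge on the containment $G_{\mf a}\le A$ for any representative $A/B$, the identification $G_{\mf a}\cap B=\CC_{G_{\mf a}}(\mf a)$ via Observation~\ref{obs:cover_centralizer}, and the dichotomy for intermediate closed normal subgroups coming from the minimality of $G_{\mf a}$. The only organizational difference is that you establish (2) first and deduce (1) from it via Observation~\ref{obs:compression:associated}, whereas the paper proves (1) first by invoking Proposition~\ref{prop:cover_realize} and then obtains (2); your direct argument for the chief-factor property is marginally more self-contained.
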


\begin{proof}Let $K: = G_\mf{a}$.  By construction, $K$ is the unique smallest closed normal subgroup of $G$ that covers $\mf{a}$.  At the same time, $\CC_K(\mf{a})$ is the unique largest closed $G$-invariant subgroup of $K$ that avoids $\mf{a}$.  By Proposition~\ref{prop:cover_realize}, it follows that $K/\CC_K(\mf{a})$ covers $\mf{a}$, and the minimality and maximality of $K$ and $\CC_K(\mf{a})$ respectively ensure that $K/\CC_K(\mf{a})$ is a chief factor of $G$, so in fact $K/\CC_K(\mf{a}) \in \mf{a}$. We have thus verified $(1)$. 

Consider $A/B$ an arbitrary representative of $\mf{a}$.  The subgroup $A$ covers $\mf{a}$, so $A \ge K$. On the other hand, $B = \CC_A(\mf{a})$ by Observation~\ref{obs:cover_centralizer}, so $\CC_K(\mf{a}) = K \cap B$.  The natural homomorphism $\phi: K/\CC_K(\mf{a}) \rightarrow A/B $ by $k\CC_K(\mf{a}) \mapsto kB$ is consequently injective. As both $K/\CC_K(\mf{a})$ and $A/B$ are chief factors and $\phi$ is non-trivial, $\phi$ has dense image. The map $\phi$ is therefore an internal compression map, verifying $(2)$.
\end{proof}

\begin{defn}Let $G$ be a topological group and $\mf{a}\in \mf{B}^{\min}_G$. The \textbf{lowermost representative}\index{lowermost representative} of $\mf{a}$ is defined to be $G_{\mf{a}}/\CC_{G_{\mf{a}}}(\mf{a})$.
\end{defn}

We note an easy consequence of Proposition~\ref{prop:block:ordering} for minimally covered blocks.

\begin{cor}\label{cor:ordering:min_cover}
Let $G$ be a topological group with $\mf{a} \in\mf{B}_G$ and $\mf{b} \in \mf{B}^{\min}_G$.
\begin{enumerate}[(1) ]
\item We have $\mf{a} \le \mf{b}$ if and only if $G_{\mf{b}}$ covers $\mf{a}$.
\item We have $\mf{b} \le \mf{a}$ if and only if $G_{\mf{b}} \le G_{\mf{a}}$.
\end{enumerate}
\end{cor}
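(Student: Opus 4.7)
The plan is to prove both parts by combining the characterisation of the partial order from Proposition~\ref{prop:block:ordering}(1) with the fact (Lemma~\ref{block:covering:intersection}) that the set $\mc{K}_\mf{a}$ of closed normal subgroups covering $\mf{a}$ is upward-closed, together with the defining property of minimally covered blocks: $G_\mf{b} \in \mc{K}_\mf{b}$ and $G_\mf{b}$ is the unique smallest such subgroup.

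For (1), the forward direction is immediate: $G_\mf{b}$ covers $\mf{b}$ since $\mf{b}$ is minimally covered, so by Proposition~\ref{prop:block:ordering}(1) it must also cover $\mf{a}$. For the reverse direction, assume $G_\mf{b}$ covers $\mf{a}$, and take any closed normal $K$ covering $\mf{b}$. Then $K \ge G_\mf{b}$ by minimality, and upward-closedness of $\mc{K}_\mf{a}$ gives $K \in \mc{K}_\mf{a}$. Proposition~\ref{prop:block:ordering}(1) then yields $\mf{a} \le \mf{b}$.

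For (2), the forward direction proceeds as follows: given $\mf{b} \le \mf{a}$, every $K \in \mc{K}_\mf{a}$ lies in $\mc{K}_\mf{b}$ by Proposition~\ref{prop:block:ordering}(1), so $K \ge G_\mf{b}$ by minimality of coverings for $\mf{b}$. Intersecting over all such $K$ gives $G_\mf{a} \ge G_\mf{b}$. Conversely, if $G_\mf{b} \le G_\mf{a}$ and $K$ covers $\mf{a}$, then $K \ge G_\mf{a} \ge G_\mf{b}$; since $G_\mf{b}$ covers $\mf{b}$ and $\mc{K}_\mf{b}$ is upward-closed, $K$ covers $\mf{b}$, and one more application of Proposition~\ref{prop:block:ordering}(1) gives $\mf{b} \le \mf{a}$.

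No real obstacle is expected: the statement is essentially a bookkeeping consequence of the previously established facts. The only subtlety is that in part (2), $\mf{a}$ is not assumed to be minimally covered, so $G_\mf{a}$ is defined only as the intersection $\bigcap \mc{K}_\mf{a}$ and need not itself cover $\mf{a}$; one must therefore argue via ``every element of $\mc{K}_\mf{a}$'' rather than via $G_\mf{a}$ directly. This is why the asymmetry between $\mf{a}$ and $\mf{b}$ in the hypotheses is natural and indeed forced by the argument.
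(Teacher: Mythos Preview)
Your proof is correct and follows essentially the same approach as the paper: both arguments combine Proposition~\ref{prop:block:ordering}(1) with the defining minimality of $G_{\mf{b}}$ and the upward-closedness of covering filters. The only cosmetic difference is that the paper phrases the converse implications in (1) and (2) as contrapositives, whereas you prove them directly; the underlying steps are identical.
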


\begin{proof}
If $\mf{a} \le \mf{b}$, then $G_{\mf{b}}$ covers $\mf{a}$ by Proposition~\ref{prop:block:ordering}.  If $\mf{a} \nleq \mf{b}$, then there exists $K \normal G$ covering $\mf{b}$ but not $\mf{a}$; since $G_{\mf{b}} \le K$, it follows that $G_{\mf{b}}$ avoids $\mf{a}$, proving $(1)$.

If $\mf{b} \le \mf{a}$, then for every closed normal subgroup $K$ covering $\mf{a}$, the group $K$ covers $\mf{b}$, hence $K \ge G_{\mf{b}}$.  It now follows from the definition of $G_{\mf{a}}$ that $G_{\mf{a}} \ge G_{\mf{b}}$.  If $\mf{b} \nleq \mf{a}$, then there exists $K \normal G$ covering $\mf{a}$ but not $\mf{b}$, so $G_{\mf{b}} \nleq K$. Since $G_{\mf{a}} \le K$, we conclude that $G_{\mf{b}}\nleq G_{\mf{a}}$, proving $(2)$.
\end{proof}

We close this subsection by proving a version of Theorem~\ref{thm:unique_associate} for infinite series. 

\begin{prop}\label{prop:unique_associate:minimal}
Let $G$ be a Polish group, $\mf{a}$ be a minimally covered chief block of $G$, and $\mc{C}$ be a chain of closed normal subgroups of $G$ ordered by inclusion.  Then there is a downward-closed subset $\mc{D}$ of $\mc{C}$ and $A/B \in \mf{a}$ such that $R \le B$ for all $R \in \mc{D}$ and $S \ge A$ for all $S \in \mc{C} \setminus \mc{D}$.  Moreover, $\mc{D}$ is unique; specifically, $\mc{D} = \{R \in \mc{C} \mid R \le \CC_G(\mf{a})\}$.
\end{prop}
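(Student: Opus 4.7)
The plan is to set $\mc{D} := \{R \in \mc{C} : R \le \CC_G(\mf{a})\}$ (trivially downward-closed), then construct a specific representative $A/B \in \mf{a}$, and finally verify the two inclusions together with uniqueness. The essential input from minimal coveredness is that $\mc{K}_{\mf{a}}$ is a principal filter with minimum $G_{\mf{a}}$, yielding the dichotomy that every closed normal subgroup $N$ of $G$ satisfies exactly one of $N \le \CC_G(\mf{a})$ or $N \ge G_{\mf{a}}$. Applied along the chain $\mc{C}$, this gives $\mc{C} \setminus \mc{D} = \{S \in \mc{C} : S \ge G_{\mf{a}}\}$; moreover, for $R \in \mc{D}$ and $S \in \mc{C} \setminus \mc{D}$, total ordering of $\mc{C}$ combined with $S \not\le \CC_G(\mf{a})$ forces $R \le S$.

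I would then put $V := \bigcap_{S \in \mc{C} \setminus \mc{D}} S$ when this index set is non-empty, and $V := G$ otherwise, and set
\[
B := V \cap \CC_G(\mf{a}), \qquad A := \ol{G_{\mf{a}} B}.
\]
For $R \in \mc{D}$, the observations above give $R \le V$ and $R \le \CC_G(\mf{a})$, hence $R \le B$. For $S \in \mc{C} \setminus \mc{D}$, we have $S \ge V \supseteq B$ and $S \ge G_{\mf{a}}$, and closedness of $S$ gives $S \ge \ol{G_{\mf{a}} B} = A$.

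The crux is showing $A/B \in \mf{a}$. Since $G_{\mf{a}} \le V$, we have $G_{\mf{a}} \cap B = G_{\mf{a}} \cap \CC_G(\mf{a}) = \CC_{G_{\mf{a}}}(\mf{a})$, so $g \mapsto gB$ is a $G$-equivariant normal compression of the lowermost representative $G_{\mf{a}}/\CC_{G_{\mf{a}}}(\mf{a})$ onto $A/B$. To show that $A/B$ is a chief factor of $G$, I would take any closed normal $M$ with $B \le M \le A$ and examine $M \cap G_{\mf{a}}$, which lies between $B \cap G_{\mf{a}} = \CC_{G_{\mf{a}}}(\mf{a})$ and $A \cap G_{\mf{a}} = G_{\mf{a}}$. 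Since $G_{\mf{a}}/\CC_{G_{\mf{a}}}(\mf{a})$ is chief, $M \cap G_{\mf{a}}$ equals one of the endpoints. In the upper case, $G_{\mf{a}} \le M$ forces $M \ge \ol{G_{\mf{a}}B} = A$; in the lower case, $[M, G_{\mf{a}}] \le M \cap G_{\mf{a}} \le \CC_G(\mf{a})$ means $M$ centralizes the chief factor, so $M \le \CC_G(\mf{a})$ and hence $M \le A \cap \CC_G(\mf{a}) \le V \cap \CC_G(\mf{a}) = B$. Finally, $G$-equivariance and density of the compression yield $\CC_G(A/B) = \CC_G(\mf{a})$, so Proposition~\ref{associated:centralizers:chief} delivers $A/B \in \mf{a}$.

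Uniqueness is immediate: any downward-closed $\mc{D}'$ with associated $A'/B' \in \mf{a}$ has $B' \le \CC_G(\mf{a})$, so $R \in \mc{D}'$ forces $R \le B' \le \CC_G(\mf{a})$, whence $\mc{D}' \subseteq \mc{D}$; conversely, any $R \in \mc{D} \setminus \mc{D}'$ would satisfy both $R \le \CC_G(\mf{a})$ and $R \ge A' \ge G_{\mf{a}}$, contradicting the fact that $G_{\mf{a}}$ covers $\mf{a}$. The main obstacle is the chief factor verification: the specific choice $B = V \cap \CC_G(\mf{a})$ is engineered precisely so that $A \cap \CC_G(\mf{a}) = B$ (via $A \le V$), eliminating the central remnant that a normal compression of a chief factor could in general contribute. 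A naive choice such as $B = \ol{\bigcup_{R \in \mc{D}}R}$, or using the lowermost or uppermost representative outright, fails in the non-extremal cases for exactly this reason.
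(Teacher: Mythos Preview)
Your proof is correct, but it takes a different and more hands-on route than the paper's.

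Both arguments define $\mc{D} = \{R \in \mc{C} \mid R \le \CC_G(\mf{a})\}$ and both form the intersection $K = \bigcap(\mc{C}\setminus\mc{D})$ (your $V$). The divergence is in how the representative $A/B$ is produced. The paper also forms $L := \ol{\bigcup_{R\in\mc{D}}R}$, observes that $K$ covers $\mf{a}$ while $L$ avoids it, and then simply invokes Proposition~\ref{prop:cover_realize} (equivalently Theorem~\ref{thm:unique_associate}) to extract closed normal $L \le B < A \le K$ with $A/B \in \mf{a}$. Uniqueness is likewise dispatched by a one-line appeal to Theorem~\ref{thm:unique_associate}. So the paper's proof is essentially a three-line reduction to the finite Schreier refinement theorem.

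Your argument instead builds the representative explicitly: you set $B = V \cap \CC_G(\mf{a})$ and $A = \ol{G_{\mf{a}}B}$, and then verify by hand that $A/B$ is a chief factor of $G$ in $\mf{a}$, using the internal compression from the lowermost representative $G_{\mf{a}}/\CC_{G_{\mf{a}}}(\mf{a})$ and the dichotomy $N \le \CC_G(\mf{a})$ versus $N \ge G_{\mf{a}}$. This is more work but more explicit, and it avoids calling the Polish case of Theorem~\ref{thm:unique_associate} directly. One small correction to your closing remark: the paper \emph{does} use $L = \ol{\bigcup_{R \in \mc{D}}R}$, but only as a lower bound into which the representative is sandwiched via Proposition~\ref{prop:cover_realize}; it never claims $\ol{G_{\mf{a}}L}/L$ itself is the representative, so your objection about a central remnant does not apply to the paper's argument.
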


\begin{proof}
Put $\mc{D}: = \{R \in \mc{C} \mid R \le \CC_G(\mf{a})\}$ and $L: = \ol{\bigcup_{R \in \mc{D}}R}$.  Clearly $\mc{D}$ is downward-closed in $\mc{C}$.  We see that $L \le \CC_G(\mf{a})$, so $L$ avoids $\mf{a}$. On the other hand, for all $S \in \mc{C} \setminus \mc{D}$, it is the case that $S \nleq \CC_G(\mf{a})$, so $S$ covers $\mf{a}$.  Since $\mf{a}$ is minimally covered, we deduce that $K := \bigcap_{S\in \mc{C} \setminus \mc{D}}S$ covers $\mf{a}$. Proposition~\ref{prop:cover_realize} now ensures $K/L$ covers $\mf{a}$, so there exists $L \le B < A \le K$ such that $A/B \in \mf{a}$.  By the construction of $K$ and $L$, we have $R \le B$ for all $R \in \mc{D}$ and $S \ge A$ for all $S \in \mc{C} \setminus \mc{D}$.

The uniqueness of $\mc{D}$ follows from Theorem~\ref{thm:unique_associate}, noting that $R \in \mc{C}$ avoids $\mf{a}$ if and only if $R \in \mc{D}$.
\end{proof}

By starting with the trivial chain $\{1\}\leq G$ and repeatedly inserting representatives of chief factors into a chain of normal subgroups, one obtains the following corollary.

\begin{cor}
For $G$ a Polish group, there exists a chain $\mc{C}$ of closed normal subgroups of $G$ ordered by inclusion with the following property: there is an injective map ${\psi: \mf{B}_G^{\min} \rightarrow \mc{C}}$ such that for each $\mf{a} \in \mf{B}_G^{\min}$, the set $\{S \in \mc{C} \mid S > \psi(\mf{a})\}$ has a least element $\psi(\mf{a})^+$, and $\psi(\mf{a})^+/\psi(\mf{a})$ is a representative of $\mf{a}$.
\end{cor}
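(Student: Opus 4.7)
The plan is to construct $\mc{C}$ and $\psi$ via Zorn's lemma, inserting a representative of each minimally covered chief block one at a time. Consider the poset $\mc{P}$ of pairs $(\mc{C}', \psi')$ in which $\mc{C}'$ is a chain of closed normal subgroups of $G$ containing $\triv$ and $G$, and $\psi': D' \to \mc{C}'$ is an injection from some $D' \subseteq \mf{B}_G^{\min}$ such that for each $\mf{b} \in D'$ the set $\{S \in \mc{C}' \mid S > \psi'(\mf{b})\}$ has a least element $\psi'(\mf{b})^+$ with $\psi'(\mf{b})^+/\psi'(\mf{b}) \in \mf{b}$. Order $\mc{P}$ by declaring $(\mc{C}_1, \psi_1) \le (\mc{C}_2, \psi_2)$ when $\mc{C}_1 \subseteq \mc{C}_2$, $\psi_2$ extends $\psi_1$, and no element of $\mc{C}_2$ strictly separates $\psi_1(\mf{b})$ from $\psi_1(\mf{b})^+$ for any $\mf{b} \in D_1$, so that the recorded assignments persist. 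Unions handle totally ordered subfamilies, and Zorn's lemma produces a maximal element $(\mc{C}, \psi)$ with some domain $D$. The goal is then to show $D = \mf{B}_G^{\min}$.

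Assume for contradiction that there exists $\mf{a} \in \mf{B}_G^{\min} \setminus D$. Applying Proposition~\ref{prop:unique_associate:minimal} to $\mc{C}$ and $\mf{a}$ will produce a downward-closed $\mc{D} \subseteq \mc{C}$ together with closed normal subgroups $B < A$ satisfying $A/B \in \mf{a}$, $R \le B$ for all $R \in \mc{D}$, and $S \ge A$ for all $S \in \mc{C} \setminus \mc{D}$. Set $\mc{C}' := \mc{C} \cup \{B, A\}$, which remains a chain, and extend $\psi$ to $\psi'$ on $D \cup \{\mf{a}\}$ by $\psi'(\mf{a}) := B$. Showing that $(\mc{C}', \psi') \in \mc{P}$ and strictly dominates $(\mc{C}, \psi)$ in the order on $\mc{P}$ will contradict maximality, forcing $D = \mf{B}_G^{\min}$.

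The main obstacle is verifying this order compatibility: one must ensure that no element of $\mc{C}'$ lies strictly between $\psi(\mf{b})$ and $\psi(\mf{b})^+$ for any $\mf{b} \in D$, and also that $B \neq \psi(\mf{b})$ whenever $\mf{b} \neq \mf{a}$, so that injectivity of $\psi'$ is preserved. The crucial leverage is that $\psi(\mf{b})^+/\psi(\mf{b})$ is a chief factor of $G$, so no closed normal subgroup of $G$ can sit strictly between $\psi(\mf{b})$ and $\psi(\mf{b})^+$. The only configuration that could cause trouble is $\psi(\mf{b}) \in \mc{D}$ together with $\psi(\mf{b})^+ \notin \mc{D}$, which forces $\psi(\mf{b}) \le B < A \le \psi(\mf{b})^+$; the chief factor condition applied to the closed normal subgroups $B$ and $A$ then compels $B = \psi(\mf{b})$ and $A = \psi(\mf{b})^+$. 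Consequently $A/B \in \mf{a} \cap \mf{b}$, giving $\mf{a} = \mf{b}$ and contradicting $\mf{a} \notin D$. The same reasoning rules out $B = \psi(\mf{b})$ for $\mf{a} \neq \mf{b}$: if $B = \psi(\mf{b})$ then $A$, being a closed normal subgroup of $G$ strictly greater than $\psi(\mf{b})$, must equal $\psi(\mf{b})^+$ by the chief factor property, again yielding $\mf{a} = \mf{b}$. This completes the verification, the contradiction with maximality, and hence the proof.
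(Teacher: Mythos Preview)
Your proposal is correct and follows precisely the approach the paper sketches in the sentence preceding the corollary (``By starting with the trivial chain $\{1\}\leq G$ and repeatedly inserting representatives of chief factors\dots''): you formalize that transfinite insertion via Zorn's lemma, using Proposition~\ref{prop:unique_associate:minimal} at each step. One remark: the ``no strict separation'' clause in your partial order is automatically satisfied, since $\psi(\mf{b})^+/\psi(\mf{b})$ is a chief factor of $G$ and hence admits no closed normal subgroup strictly between; you observe this yourself later, so the clause is redundant but harmless.
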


\subsection{Chief blocks of groups of semisimple type}

In general, the presence of minimally covered blocks is hard to ascertain.  However, they appear naturally in the context of components.

\begin{lem}\label{lem:component:min_cover}
Let $G$ be a topological group, let $\mc{C}$ be the set of components of $G$ and let $\mc{N}: = \{ \ngrp{M}{G} \mid M \in \mc{C}\}$.  Then there is an injective map $\Psi:\mc{N}\rightarrow \mf{B}^{\min}_G$ given by $K\mapsto[K/\Z(K)]$.  In particular, each $K \in \mc{N}$ covers exactly one chief block.
\end{lem}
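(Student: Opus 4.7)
The plan is to show, for each $K = \ngrp{M} \in \mc{N}$ with $M$ a component having $\ngrp{M} = K$, that $K/\Z(K)$ is a non-abelian chief factor of $G$ whose block $\mf{a} := [K/\Z(K)]$ satisfies $G_{\mf{a}} = K$ and is the unique chief block covered by $K$.  Injectivity of $\Psi$ will then be immediate from the identification $G_{\Psi(K)} = K$.  First I would note that $K$ is topologically generated by the $G$-conjugates of $M$, each of which remains a component of $G$ (automorphisms of $G$ carry components to components) and hence a component of $K$ by Observation~\ref{obs:component}(4).  Thus $K = E(K)$, and by Proposition~\ref{prop:components}(3) the $G$-conjugates of $M$ are precisely the components of $K$.

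The key technical step is the dichotomy: \emph{for every closed normal subgroup $H$ of $G$, either $K \le H$ or $H \cap K \le \Z(K)$}.  Indeed, $L := H \cap K$ is closed and normal in $K$, so Proposition~\ref{prop:components}(1) applied inside $K$ says that each component $N$ of $K$ either lies in $L$ or is centralized by $L$.  Because $L$ is $G$-invariant and $G$ permutes the components of $K$ transitively, the first alternative for a single $N$ propagates to all of them and yields $L = K$; otherwise $L$ centralizes every component, hence $L \le \Z(K)$.  From this the lemma falls out in a few steps.  Applying the dichotomy to closed normal subgroups $\Z(K) \le L \le K$ of $G$ shows that $K/\Z(K)$ is a chief factor.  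It is non-abelian: one checks $M \cap \Z(K) = \Z(M)$ (elements of $M$ centralizing $K$ centralize $M$; conversely $\Z(M)$ centralizes $M$ and commutes with every other $G$-conjugate by Lemma~\ref{lem:components:equiv}, hence lies in $\Z(K)$), so $K/\Z(K)$ contains the non-abelian group $M/\Z(M)$.  For minimal coverage, if $H \normal G$ covers $\mf{a}$, the alternative $H \cap K \le \Z(K)$ would force $[H,K] \le \Z(K)$ and hence $H \le \CC_G(\mf{a})$, contradicting coverage; the dichotomy therefore forces $K \le H$, so $G_{\mf{a}} = K$ and $\mf{a}$ is minimally covered.

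Uniqueness of the block covered by $K$ I would derive by applying Theorem~\ref{thm:unique_associate} to the series $\triv \le K \le G$: any block $\mf{b}$ covered by $K$ admits a representative $A/B \in \mf{b}$ with $A,B$ closed normal in $G$ and $B < A \le K$.  The dichotomy applied to $A$ rules out $A \le \Z(K)$ (since $A/B$ is non-abelian), forcing $A = K$; then $B$ closed normal in $G$ with $B \le \Z(K) < K$ together with chiefness of $K/B$ forces $B = \Z(K)$, whence $\mf{b} = \mf{a}$.  Injectivity of $\Psi$ is now immediate: $\Psi(K_1) = \Psi(K_2) = \mf{a}$ yields $K_1 = G_{\mf{a}} = K_2$.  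I do not foresee a serious obstacle; the only mildly delicate point is threading the interaction between $G$-normality and $K$-normality, which is resolved cleanly by the $G$-transitivity on the components of $K$ that drives the dichotomy.
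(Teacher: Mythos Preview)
Your approach is essentially the paper's: both hinge on showing that every proper closed $G$-invariant subgroup of $K$ lies in $\Z(K)$, which yields that $K/\Z(K)$ is a non-abelian chief factor with $G_{\mf{a}} = K$.  Your injectivity argument via $G_{\Psi(K)} = K$ is a clean alternative to the paper's (which instead observes that distinct $K, J \in \mc{N}$ commute, so $K \le \CC_G(\Psi(J))$).

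There is one small gap in your uniqueness step.  You apply Theorem~\ref{thm:unique_associate} to the series $\triv \le K \le G$ and claim it yields a \emph{representative} $A/B \in \mf{b}$ with $B < A \le K$; but the clause of Theorem~\ref{thm:unique_associate} that produces a chief factor requires $G$ to be Polish, whereas the lemma is stated for arbitrary topological groups.  In general you only obtain a normal factor $A/B$ associated to a representative of $\mf{b}$, so ``chiefness of $K/B$'' is unavailable to force $B = \Z(K)$.  The fix is easy: apply Theorem~\ref{thm:unique_associate} instead to the finer series $\triv \le \Z(K) \le K \le G$.  The segment $\Z(K)/\triv$ cannot host the associated factor (it is abelian, while any normal factor associated to a non-abelian chief factor is non-abelian via Lemma~\ref{lem:associated:common_compression}), so the associated factor lands in the segment $K/\Z(K)$; but that segment is already chief, forcing $A/B = K/\Z(K)$ and hence $\mf{b} = \mf{a}$.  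Alternatively, once you have $A = K$ and $B \le \Z(K)$, Lemma~\ref{lem:perfect_commutators} and the topological perfectness of $K$ give $\CC_G(K/B) = \CC_G(K) = \CC_G(K/\Z(K))$, so $\CC_G(\mf{b}) = \CC_G(\mf{a})$ directly.
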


\begin{proof}
Take $K\in \mc{N}$ and suppose that $K = \ngrp{M}{G}$ for some component $M$ of $G$. Any proper closed $G$-invariant subgroup $L$ of $K$ is so that $gMg\inv \nleq L$ for some $g \in G$, hence $gMg\inv \nleq L$ for all $g \in G$. Proposition~\ref{prop:components} now implies $L \le \Z(K)$. In particular, $\Z(K)$ is the unique largest proper $G$-invariant subgroup of $K$, so $K/\Z(K)$ is a chief factor.  Moreover, $K/\Z(K)$ is non-abelian, since it contains an injective image of $M/\Z(M)$. The factor $K/\Z(K)$ is thus a non-abelian chief factor of $G$, and $K$ cannot cover any other chief block, since $\Z(K)$ is abelian.

Setting $\mf{a} := [K/\Z(K)]$, the closed normal subgroups of $G$ covering $\mf{a}$ form a filter. On the other hand, $K$ admits no $G$-invariant subgroup which is non-central, so any normal subgroup covering $\mf{a}$ contains $K$. That is to say, $\mf{a}$ is minimally covered. The map $\Psi$ is thus well-defined with image in $\mf{B}_G^{\min}$.  For $K\neq J\in \mc{M}$, we see $[K,J]=\triv$ by Proposition~\ref{prop:components}, so $K\leq \CC_G([J/\Z(J)])$. We deduce that $[K/\Z(K)]\neq [J/\Z(J)]$ and $\Psi$ is injective.
\end{proof}

\begin{prop}\label{prop:semisimple_type:blocks}
Let $G$ be a topological group of semsimple type with $\mc{M}$ the set of components of $G$. 
\begin{enumerate}[(1) ]
\item There is a bijection $\Psi:\mc{M}\rightarrow \mf{B}_G$ by $K\mapsto[K/Z(K)]$.
\item $\mf{B}_G=\mf{B}_G^{\min}$, and $\mf{B}_G$ is an antichain.
\end{enumerate}
\end{prop}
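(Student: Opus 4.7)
The plan is to establish the bijection $\Psi$ in two steps---well-definedness with injectivity from Lemma~\ref{lem:component:min_cover}, and surjectivity by analyzing the uppermost representative---and then to deduce part (2) as corollaries. Since $G$ is of semisimple type, Corollary~\ref{cor:components_normal} yields that every $M\in\mc{M}$ is already normal in $G$, so $\ngrp{M} = M$ and Lemma~\ref{lem:component:min_cover} applies to the whole set $\mc{M}$, giving an injective map $\Psi:\mc{M}\rightarrow \mf{B}_G^{\min}\subseteq \mf{B}_G$ with $\Psi(M) = [M/\Z(M)]$. I would also record at this stage that $\CC_G(M) = \CC_G(M/\Z(M)) = \CC_G(\Psi(M))$: the nontrivial inclusion uses topological perfectness of $M$, since for $g$ with $[g,M]\subseteq \Z(M)$ the map $m\mapsto [g,m]$ is a continuous homomorphism $M\rightarrow \Z(M)$ that vanishes on the dense subgroup $[M,M]$.

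For surjectivity, fix $\mf{a}\in\mf{B}_G$ and set $C := \CC_G(\mf{a})$. By Proposition~\ref{prop:upper-rep} the group $G/C$ is monolithic with socle the uppermost representative $G^{\mf{a}}/C$, and the centralizer of this socle in $G/C$ is trivial, so $\Z(G/C)=\triv$. Combined with $G/C$ being of semisimple type by Theorem~\ref{thm:norm_sgrps}(2), the quotient $G/C$ is of strict semisimple type. Proposition~\ref{prop:strict_normal} now identifies $G^{\mf{a}}/C$ as a component of $G/C$, and the explicit description of components in Theorem~\ref{thm:norm_sgrps}(2) produces a component $M$ of $G$ with $[M,C]=\triv$ and $\ol{MC}/C = G^{\mf{a}}/C$. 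The inclusion $C\leq \CC_G(M)$ is immediate; conversely, any $g\in \CC_G(M)$ satisfies $[g,m]=1\in C$ for all $m\in M$, so $g$ centralizes a dense subgroup of $\ol{MC}/C$ and hence lies in $\CC_G(G^{\mf{a}}/C) = C$. Thus $\CC_G(\Psi(M)) = \CC_G(M) = C = \CC_G(\mf{a})$, and Proposition~\ref{associated:centralizers:chief} forces $\Psi(M) = \mf{a}$.

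Part (2) then falls out immediately. Since $\Psi$ takes values in $\mf{B}_G^{\min}$ and is surjective onto $\mf{B}_G$, we get $\mf{B}_G = \mf{B}_G^{\min}$. For the antichain property, take distinct $M, N \in \mc{M}$; by Proposition~\ref{prop:components}(2) distinct components commute, so $N\leq \CC_G(M) = \CC_G(\Psi(M))$, whereas $N\not\leq \CC_G(N) = \CC_G(\Psi(N))$ because components are non-abelian. Hence $\CC_G(\Psi(N))\not\leq \CC_G(\Psi(M))$, so $\Psi(N)\not\leq \Psi(M)$, and symmetry finishes the argument. The main obstacle is the surjectivity step: one must pass to $G/C$, recognise it as strictly semisimple, identify the socle as a component there, and then lift this back to a component of $G$ using the correspondence in Theorem~\ref{thm:norm_sgrps}(2)---the delicate calculation being the verification that $\CC_G(M) = C$ for the lifted component.
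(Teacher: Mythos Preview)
Your argument has a genuine gap: Proposition~\ref{prop:upper-rep} is stated and proved only for Polish groups (its proof goes through Theorem~\ref{thm:compression_simple}), whereas the present proposition is for arbitrary topological groups of semisimple type. So the appeal to the uppermost representative is not justified as written. The gap is repairable without leaving the general setting: fix a representative $A/B$ of $\mf{a}$ and note $A\cap C=B$ by Observation~\ref{obs:cover_centralizer}. For any closed normal $L>C$ one has $[L,A]\not\le B$, hence $\ol{[L,A]B}=A$ since $A/B$ is chief; but $[L,A]B\le L$ (as $B\le C\le L$), so $A\le L$ and thus $\ol{AC}\le L$. This shows directly that $G/C$ is monolithic with socle $\ol{AC}/C$, after which your strict-semisimple and Theorem~\ref{thm:norm_sgrps}(2) lifting argument goes through unchanged. (The computation $\Z(G/C)=\triv$ can likewise be done directly: if $[g,G]\le C$ then $[g,A]\le A\cap C=B$, so $g\in C$.)

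Once patched, your route is genuinely different from the paper's. The paper stays inside $G$: given $A/B\in\mf{a}$, it uses Theorem~\ref{thm:norm_sgrps}(1) to see that $\ol{[A,G]}$ is generated by components, picks a component $K\le A$ with $K\not\le B$, and then invokes the uniqueness in Lemma~\ref{lem:component:min_cover} (each component covers exactly one block) to force $[A/B]=[K/\Z(K)]$. Your approach instead passes to the quotient $G/C$, recognises the socle as its unique component, and lifts back via Theorem~\ref{thm:norm_sgrps}(2). The paper's argument is shorter and sidesteps monolithicity; yours yields the additional identity $\CC_G(M)=\CC_G(\mf{a})$ for the component $M$ hitting $\mf{a}$.

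One small slip in part~(2): from $N\le\CC_G(\Psi(M))$ and $N\not\le\CC_G(\Psi(N))$ you deduce $\CC_G(\Psi(M))\not\le\CC_G(\Psi(N))$, i.e.\ $\Psi(M)\not\le\Psi(N)$, not the inequality you wrote. Symmetry then gives the other direction, so the antichain conclusion survives.
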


\begin{proof}
Take $K\in \mc{M}$.  By Corollary~\ref{cor:components_normal}, $K$ is normal in $G$, and Lemma~\ref{lem:component:min_cover} ensures $\Psi:K\mapsto [K/Z(K)]$ is well-defined and injective with image in $\mf{B}_G^{\min}$. 

We now argue $\Psi$ is surjective. Let $\mf{a}\in \mf{B}_G$ and $A/B \in \mf{a}$. By Theorem~\ref{thm:norm_sgrps}, the group $L = \ol{[A,G]}$ is topologically generated by components of $G$.  Since $A/B$ is non-abelian, $L \nleq B$.  There thus exists a component $K$ of $G$ such that $K \le L \le A$, but $K \nleq B$.  Since $A/B$ is a chief factor of $G$, it follows that $A = \ol{KB}$, and $K$ covers $[A/B]$.  Since $K$ only covers one chief block by Lemma~\ref{lem:component:min_cover}, we conclude that $[A/B]=[K/\Z(K)]$, so $\Psi$ is surjective, verifying $(1)$. 

As the image of $\Psi$ consists of minimally covered blocks, we deduce that $\mf{B}_G^{\min}=\mf{B}_G$.
Following from Proposition~\ref{prop:block:ordering}, the block $[K/\Z(K)]$ is a minimal element of $\mf{B}_G$, hence $\mf{B}_G$ is an antichain. We have thus demonstrated $(2)$.
\end{proof}

Minimally covered chief blocks give a second characterization of Polish groups of (strict) semisimple type.

\begin{thm}\label{thm:semisimple_quot}
Let $G$ be a Polish group and $\mc{N}$ list all closed $N\normal G$ so that $G/N$ is non-abelian and topologically simple, and $[G/N]$ is minimally covered.
\begin{enumerate}[(1) ]
\item The group $G$ is of semisimple type if and only if $\bigcap\mc{N} =\Z(G)$ and every proper closed normal subgroup of $G$ is contained in some $N \in \mc{N}$.
\item The group $G$ is of strict semisimple type if and only if  $\bigcap\mc{N} = \triv$ and every proper closed normal subgroup of $G$ is contained in some $N \in \mc{N}$.
\end{enumerate}
\end{thm}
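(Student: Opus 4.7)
Part (2) is an immediate consequence of (1), obtained by specializing to $\Z(G)=\triv$, so I focus on (1).

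Forward direction of (1): Assume $G$ is of semisimple type. The corollary following Proposition~\ref{prop:top_simple} identifies the closed $N\normal G$ with $G/N$ topologically simple as exactly $\{\CC_G(M):M\in\mc{M}\}$, while Proposition~\ref{prop:top_simple} itself shows each such quotient is non-abelian. A closed normal subgroup $K\normal G$ covers $[G/\CC_G(M)]$ iff $K\not\leq\CC_G(M)$, which by Proposition~\ref{prop:components}(1) forces $M\leq K$, so $M$ is the minimum such $K$ and $[G/\CC_G(M)]\in\mf{B}^{\min}_G$. Hence $\mc{N}=\{\CC_G(M):M\in\mc{M}\}$, giving $\bigcap\mc{N}=\CC_G(E(G))=\Z(G)$. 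Any proper closed normal $K\normal G$ must omit some component $M$ (else $K\supseteq E(G)=G$), so $K\leq\CC_G(M)\in\mc{N}$ again by Proposition~\ref{prop:components}(1).

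Converse direction: Assume the hypotheses and suppose toward contradiction $E(G)<G$. Then $E(G)\leq N_0$ for some $N_0\in\mc{N}$; set $\mf{a}=[G/N_0]$, let $G_{\mf{a}}$ be the minimal closed normal subgroup covering $\mf{a}$, and put $C=G_{\mf{a}}\cap N_0=\CC_{G_{\mf{a}}}(\mf{a})$. Topological simplicity of $G/N_0$ forces $G_{\mf{a}}N_0$ to be dense in $G$, so the inclusion-quotient induces a normal compression $G_{\mf{a}}/C\rightarrow G/N_0$. Combining Theorem~\ref{thm:compression_simple}(2) with the $G$-simplicity of the chief factor $G_{\mf{a}}/C$ shows $G_{\mf{a}}/C$ is topologically simple, non-abelian, and topologically perfect, with $\Z(G_{\mf{a}})\leq C$ (since non-abelian chief factors are centerless). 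A contradiction will be produced by locating a component of $G$ inside $G_{\mf{a}}$: such a component cannot lie in $N_0$ and so cannot lie in $E(G)\leq N_0$.

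The plan has two stages. First, show $G/\Z(G)$ is of strict semisimple type. The diagonal $\bar\delta:G/\Z(G)\hookrightarrow\prod_{N\in\mc{N}}G/N$ is injective with surjective coordinate projections onto centerless topologically simple groups, so $G/\Z(G)$ is itself centerless. Setting $K_N:=\bigcap_{N'\neq N}N'$, Theorem~\ref{thmintro:directproduct:quasiproduct}(2) presents $\{K_N/\Z(G):N\in\mc{N}\}$ as a quasi-direct factorization of $\cgrp{K_N/\Z(G):N}$. The hypothesis forces $\cgrp{K_N:N\in\mc{N}}=G$: otherwise this proper closed normal subgroup lies in some $N^*\in\mc{N}$, forcing $K_{N^*}\leq N^*\cap K_{N^*}=\Z(G)$ and collapsing the $N^*$-th quasi-factor, contradicting the required density of its image in the simple $G/N^*$. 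Applying Proposition~\ref{prop:semisimple:compressions}(2) to each normal compression $K_N/\Z(G)\rightarrow G/N$ (noting $G/N$ is semisimple with itself as unique component) identifies $M_N:=\ol{[K_N/\Z(G),K_N/\Z(G)]}$ as the unique component of $K_N/\Z(G)$; being characteristic there and $G$-stable, $M_N$ is also a component of $G/\Z(G)$. Repeating the density argument yields $\cgrp{M_N:N}=G/\Z(G)$, so $G/\Z(G)$ is of strict semisimple type.

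The remaining stage, which I anticipate as the main obstacle, is lifting each $M_N$ back to a component $\hat M_N$ of $G$. A natural candidate is $\hat M_N:=\ol{[\pi^{-1}(M_N),G]}$, where $\pi:G\rightarrow G/\Z(G)$; this is $G$-normal and closed, and its image under $\pi$ is $M_N$ (by topological perfectness of $M_N$), so $\hat M_N\Z(G)=\pi^{-1}(M_N)$. Condition (b) of being a component follows because $\hat M_N/\Z(\hat M_N)$ has the simple non-abelian quotient $M_N/\Z(M_N)$ of Observation~\ref{obs:component}(1). The delicate condition (c)---every proper closed normal subgroup of $\hat M_N$ must be central---is where the argument is most intricate. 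The plan is to combine Corollary~\ref{cor:normal_invar_equi_action} (which forces every closed normal subgroup of the chief factor $\hat M_N/(\hat M_N\cap N_0)$ to be $G$-invariant) with the hypothesis on proper closed normal subgroups of $G$, showing that any would-be counterexample $K\leq\hat M_N$ has its $G$-invariant core forced into some $N'\in\mc{N}$, and cross-referencing density in the complementary quasi-factors $K_{N'}$ to rule out non-central possibilities. Once each $\hat M_N$ is established as a component of $G$, one final application of the hypothesis to $\cgrp{\hat M_N:N\in\mc{N}}$ yields $\cgrp{\hat M_N:N}=G$, producing a component outside $N_0$ and contradicting $E(G)\leq N_0$.
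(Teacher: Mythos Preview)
Your forward direction is correct and essentially matches the paper's argument.

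For the converse, your approach diverges from the paper's and has two real gaps.

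\textbf{Stage 1.} You never explicitly invoke the minimal covering hypothesis to show that $K_N=\bigcap_{N'\neq N}N'$ covers $[G/N]$. This is the crux: each $N'\neq N$ covers $[G/N]$ since $N'\nleq N$, and minimal covering of $[G/N]$ (the filter being principal) then forces the intersection $K_N$ to cover as well, i.e., $K_NN$ is dense in $G$. Without this, your phrase ``contradicting the required density'' has no force: that density is precisely what must be established, and it is also the hypothesis of Theorem~\ref{thmintro:directproduct:quasiproduct}(2). The argument as written is circular.

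\textbf{Stage 2.} Lifting components from $G/\Z(G)$ to $G$ is not automatic. For instance, $G=S\times\Zb$ with $S$ simple has $G/\Z(G)=S$ of strict semisimple type, yet $E(G)=S\times\{0\}\neq G$. So the remaining hypothesis must enter somewhere in your verification of condition~(c), and your plan---invoking Corollary~\ref{cor:normal_invar_equi_action} for ``the chief factor $\hat M_N/(\hat M_N\cap N_0)$''---does not work as stated: that quotient is not obviously a chief factor, and the corollary yields invariance under the \emph{codomain} of a compression, not under $G$. You acknowledge this stage as the main obstacle, and indeed it is where the argument is incomplete.

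The paper avoids both difficulties by working directly in $G$ rather than detouring through $G/\Z(G)$. For each $N\in\mc{N}$, minimal covering gives that $K:=\bigcap_{N'\neq N}N'$ covers $[G/N]$, so there is a normal compression $K/L\to G/N$ where $L=K\cap N=\bigcap\mc{N}=\Z(G)$. Theorem~\ref{thm:compression_simple} then makes $\ol{[K,K]L}/L$ topologically simple; since $L\leq\Z(G)$, one checks directly that $\ol{[K,K]L}$ is a component of $G$. These components cover every block $[G/N]$, so the closed subgroup they generate lies in no $N\in\mc{N}$ and hence equals $G$. No quotient-and-lift step is needed.
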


\begin{proof}
Part $(2)$ is clear given part $(1)$, so it suffices to prove $(1)$.

Let $\mc{M}$ be the set of components of $G$ and set $R:=\bigcap \mc{N}$.  Note that $\Z(G) \le R$ since $G/N$ is centerless for all $N \in \mc{N}$.

Suppose first that $G$ is of semisimple type. Proposition~\ref{prop:semisimple_type:blocks} ensures all chief blocks are minimally covered, so Corollary~\ref{cor:components_simple} implies $\CC_G(M) \in \mc{N}$ for all $M \in \mc{M}$.  As $G$ is topologically generated by $\mc{M}$, we deduce that $\bigcap_{M \in \mc{M}}\CC_G(M) = \Z(G)$. The group $R$ is thus central in $G$, and it follows $R=\Z(G)$.

For the second claim, take $S$ a proper closed normal subgroup of $G$.  There is some component $M$ of $G$ such that $M \nleq S$.  By Proposition~\ref{prop:components}, $S$ centralizes $M$, and so $S \le \CC_G(M) \in \mc{N}$.

\

Conversely, suppose that $\bigcap\mc{N} =\Z(G)$ and every proper closed normal subgroup of $G$ is contained in some $N \in \mc{N}$. If $\mc{N} = \{N\}$, we have $N = \Z(G)$, and $G$ itself is a component. Plainly then, $G$ is of semisimple type. We thus suppose that $|\mc{N}| \ge 2$.

Fix $N\in \mc{N}$, set $\mf{a}:=[G/N]$, and put $K:=\bigcap\mc{N}\setminus\{N\}$. Since $\mf{a}$ is minimally covered and each $N'\in \mc{N}\setminus\{N\}$ covers $\mf{a}$, the group $K$ covers $\mf{a}$. Letting $G\rightarrow G/N$ be the usual projection, the restriction $K/K\cap N\rightarrow G/N$ is a normal compression. Given the definition of $K$, we further have that $L:=K\cap N\leq \Z(G)$. Theorem~\ref{thm:compression_simple} now implies that $\ol{[K,K]L}/L$ is a topologically simple group, and it follows that $\ol{[K,K]L}$ is a component of $G$.  In particular, note that $\ol{[K,K]} \nleq L$, implying that $\ol{[K,K]}L \nleq N$, and hence $\ol{[K,K]}L$ covers $\mf{a}$.

The previous paragraph ensures that the closed normal subgroup $E(G)$ generated by all the components covers all chief blocks $[G/N]$ for $N\in \mc{N}$. Thus $E(G)$ is not contained in $N$ for any $N\in \mc{N}$. Given our hypothesis that every proper closed normal subgroup of $G$ is contained in some $N \in \mc{N}$, we conclude that $E(G)=G$. That is to say, $G$ is of semisimple type, completing the proof.
\end{proof}

\section{The chief block space}\label{sec:morphisms}

We now define and study a space that captures more of the chief factor data of a Polish group. In the previous section, we saw that $\mf{B}_G$ comes with a partial ordering which controls how chief factors can be ordered in a normal series. The group theoretic structure of the chief factors, however, is not recorded by $\mf{B}_G$. We thus step back and consider the set of non-abelian chief factors as a preorder, where the ordering is given by comparing centralizers.

\begin{defn}
For $G$ a Polish group, the \textbf{chief block space}\index{chief block space} of $G$ is the collection $X_G$ of non-abelian chief factors of $G$, equipped with the preorder given by setting $K_0/L_0\leq K_1/L_1$ if $\CC_G(K_0/L_0)\leq \CC_G(K_1/L_1)$.
\end{defn}

Taking the quotient of $(X_G,\leq)$ to make a partial ordering recovers the partially ordered collection of chief blocks $\mf{B}_G$. We stress that the chief block space is more than a preorder, since we also want to keep track of the chief factors as topological groups. This datum is captured by the notion of morphism.

\begin{defn}\label{defn:block_morphism}
	For $G$ and $H$ Polish groups, a \textbf{strong/covariant/contravariant block morphism}\index{block morphism} $\psi:(X_G,\leq)\rightarrow (X_H,\leq)$ is an homomorphism of preorders such that for each $A/B\in X_G$ the following holds, respectively:
	\begin{enumerate}[$\bullet$]
		\item $\psi(A/B)\simeq A/B$ as topological groups. (strong)
		\item There is a normal compression $A/B\rightarrow\psi(A/B)$. (covariant)
		\item There is a normal compression $\psi(A/B)\rightarrow A/B$. (contravariant)
	\end{enumerate}
When we say \textbf{block morphism}\index{monomorphism}, we mean any one of the three types.
\end{defn}

If $\psi:(X_G,\leq)\rightarrow (X_H,\leq)$ is a strong/covariant/contravariant block morphism, then $\psi$ induces a map of partially ordered sets $\wt{\psi}:\mf{B}_G\rightarrow \mf{B}_H$. If the induced map $\wt{\psi}:\mf{B}_G\rightarrow \mf{B}_H$ is injective, we say that $\psi$ is a \textbf{strong/covariant/contravariant block monomorphism}. For the notion of a block isomorphism, we place additional restrictions. 

\begin{defn}
Let $G$ and $H$ be Polish groups. A (covariant) contravariant block morphism $\psi:(X_G,\leq)\rightarrow (X_H,\leq)$ is a \textbf{(covariant) contravariant block isomorphism}\index{chief block space isomorphism} if there is a (contravariant) covariant block morphism $\chi:(X_H,\leq)\rightarrow (X_G,\leq)$ such that $\CC_G(\chi\circ \psi(A/B))=\CC_G(A/B)$ for $A/B\in X_G$ and $\CC_H(\psi\circ\chi(A/B))=\CC_H(A/B)$ for $A/B\in X_H$.  We call the block morphism $\chi$ an \textbf{inverse block morphism} of $\psi$.
\end{defn}

The condition on centralizers in the previous definition is equivalent to the induced maps $\wt{\chi}:\mf{B}_H\rightarrow \mf{B}_G$ and $\wt{\psi}:\mf{B}_G\rightarrow \mf{B}_H$ being such that $\wt{\chi}\circ\wt{\psi}=\mathrm{id}_{\mf{B}_G}$ and $\wt{\psi}\circ\wt{\chi}=\mathrm{id}_{\mf{B}_H}$. As a consequence, the partially ordered sets $\mf{B}_G$ and $\mf{B}_H$ are isomorphic as partial orders. For $\psi:(X_G,\leq)\rightarrow (X_H,\leq)$ a block isomorphism, we stress that $\psi$ need not even be a bijection. Additionally, an inverse block morphism need not be unique.

\subsection{Block space morphisms via group homomorphisms}
Various group homomorphisms induce block space morphisms. The first, easiest example of this phenomenon is for surjective maps.

\begin{prop}\label{prop:quot_block_morphism} 
Suppose that $G$ and $H$ are Polish groups. If $\phi:G\rightarrow H$ is a continuous, surjective homomorphism, then there is a strong block monomorphism $\psi:(X_H,\leq)\rightarrow (X_G,\leq)$ defined by $\psi(K/L):=\phi^{-1}(K)/\phi^{-1}(L)$.
\end{prop}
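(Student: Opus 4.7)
The plan is to verify four things in turn: that $\phi^{-1}(K)/\phi^{-1}(L)$ is a chief factor of $G$ whenever $K/L \in X_H$, that $\psi(K/L) \simeq K/L$ as topological groups (which in particular forces $\psi(K/L)$ to be non-abelian and supplies the ``strong'' condition), that $\psi$ preserves the preorder $\leq$, and finally that the induced map $\mf{B}_H \to \mf{B}_G$ is injective.

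For the chief factor property, both $\phi^{-1}(K)$ and $\phi^{-1}(L)$ are closed normal subgroups of $G$ by continuity of $\phi$, distinct by surjectivity, and both contain $\ker(\phi)$. Any closed normal subgroup $M$ of $G$ with $\phi^{-1}(L) \leq M \leq \phi^{-1}(K)$ automatically contains $\ker(\phi)$, so by the correspondence of closed normal subgroups under the quotient map $G \to G/\ker(\phi)$ and the identification $G/\ker(\phi) \simeq H$ as topological groups (which follows from Theorem~\ref{thm:lusin-souslin} and Theorem~\ref{thm:borel_measurable} applied to the continuous bijective homomorphism $G/\ker(\phi) \to H$), the subgroup $M$ must coincide with $\phi^{-1}(L)$ or $\phi^{-1}(K)$. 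The same Lusin--Souslin plus automatic continuity argument, applied this time to the continuous bijective homomorphism $\phi^{-1}(K)/\phi^{-1}(L) \to K/L$ induced by $\phi$, yields the strong isomorphism $\psi(K/L) \simeq K/L$.

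For the preorder condition, the key is the identity
\[
\CC_G(\phi^{-1}(K)/\phi^{-1}(L)) = \phi^{-1}(\CC_H(K/L)),
\]
which unwinds directly: $g$ centralizes $\phi^{-1}(K)/\phi^{-1}(L)$ if and only if $[\phi(g),\phi(k)] \in L$ for every $k \in \phi^{-1}(K)$, and since $\phi(\phi^{-1}(K)) = K$ by surjectivity, this is equivalent to $\phi(g) \in \CC_H(K/L)$. Taking preimages preserves containments, so this formula immediately gives that $\psi$ is a homomorphism of preorders.

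Finally, for the monomorphism property, suppose $\psi(K_0/L_0)$ and $\psi(K_1/L_1)$ represent the same chief block of $G$. By Proposition~\ref{associated:centralizers:chief} their centralizers in $G$ agree, i.e. $\phi^{-1}(\CC_H(K_0/L_0)) = \phi^{-1}(\CC_H(K_1/L_1))$; surjectivity of $\phi$ forces $\CC_H(K_0/L_0) = \CC_H(K_1/L_1)$, and a second application of Proposition~\ref{associated:centralizers:chief} places $K_0/L_0$ and $K_1/L_1$ in the same chief block of $H$. The only genuinely delicate step in the entire plan is the identification of the various continuous bijective homomorphisms between Polish groups as topological isomorphisms; everything else is formal, and this identification rests on the Lusin--Souslin theorem together with the automatic continuity of Borel measurable homomorphisms recorded earlier.
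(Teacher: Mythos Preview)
Your proof is correct and follows essentially the same route as the paper: establish that $\psi(K/L)$ is a chief factor, note the topological isomorphism $\psi(K/L)\simeq K/L$, prove the centralizer identity $\CC_G(\psi(K/L))=\phi^{-1}(\CC_H(K/L))$, and read off both the preorder preservation and the injectivity on blocks from it. You are more explicit than the paper in justifying the topological isomorphism via Lusin--Souslin plus automatic continuity, and in invoking Proposition~\ref{associated:centralizers:chief} for the monomorphism step, but the underlying argument is the same.
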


\begin{proof}
For $K/L$ a chief factor of $H$, the factor $\phi^{-1}(K)/\phi^{-1}(L)$ is a chief factor of $G$, so the map $\psi:X_H\rightarrow X_G$ is well-defined on the level of sets. Furthermore, $\psi(K/L)$ is isomorphic to $K/L$ for all $K/L\in X_H$.

Taking a non-abelian chief factor $K/L$ of $H$, we observe that $\phi\inv(\CC_H(K/L)) = \CC_G(\psi(K/L))$, hence for a second non-abelian chief factor $K'/L'$ of $H$,
\[
\CC_H(K/L) \le \CC_H(K'/L') \Leftrightarrow \CC_G(\psi(K/L)) \le \CC_G(\psi(K'/L')).
\]
It now follows that $\psi$ is a preorder homomorphism and furthermore that $\psi$ induces an embedding of $\mf{B}_H$ into $\mf{B}_G$.
\end{proof}

\begin{rmk} 
For each Polish group $G$ and $N\normal G$ a closed normal subgroup, Proposition~\ref{prop:quot_block_morphism} supplies an embedding of partial orders $\mf{B}_{G/N}\rightarrow \mf{B}_G$. On the other hand, $\mb{F}_{\omega}$, the free group on countably many generators, surjects onto every countable group. The partial order on $\mf{B}_{\mb{F}_{\omega}}$ must therefore be extremely rich. One naturally asks if this partial order has some universality properties. Perhaps naively, does every countable partial order embed into $\mf{B}_{\mb{F}_{\omega}}$? One can ask similar questions for a surjectively universal Polish group, \cite{D12}.
\end{rmk}

We now consider the more difficult case of normal compressions. Let us begin by observing a general lemma. Suppose that $G$ and $H$ are Polish groups with $\psi:G\rightarrow H$ a normal compression. Corollary~\ref{cor:normal_invar_equi_action} ensures every closed normal subgroup of $G$ is invariant under the $\psi$-equivariant action of $H$. For a normal factor $A/B$ of $G$, the group $H$ thus has an action on $A/B$ induced by the $\psi$-equivariant action: $h.aB:=(h.a)B$. This action has a useful feature:

\begin{lem}\label{lem:factor_kernel_centralizer}
Suppose that $G$ and $H$ are Polish groups with $\psi:G\rightarrow H$ a normal compression. If $A/B$ is a normal factor of $G$, then under the $\psi$-equivariant action of $H$ on $A/B$, the following holds: $\psi(g)\in \ker(H\acts A/B)$ if and only if $g\in \CC_G(A/B)$.
\end{lem}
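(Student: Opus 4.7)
The plan is to unwind the definition of the $\psi$-equivariant action restricted to the image $\psi(G)$, and observe that on $\psi(G)$ it is just ordinary conjugation in $G$.

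First, I would record the following computation. For any $g_0 \in G$ and any $g \in G$, applying the definition of $\phi_{\psi(g_0)}$ from Corollary~\ref{cor:Polish:compression_pullback} gives
\[
\phi_{\psi(g_0)}(g) = \psi^{-1}\bigl(\psi(g_0)\psi(g)\psi(g_0)^{-1}\bigr) = \psi^{-1}\bigl(\psi(g_0 g g_0^{-1})\bigr) = g_0 g g_0^{-1},
\]
using that $\psi$ is an injective homomorphism. Hence the restriction of the $\psi$-equivariant action to $\psi(G) \le H$ is simply conjugation by the corresponding element of $G$.

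Next, fix a normal factor $A/B$ of $G$. By Corollary~\ref{cor:normal_invar_equi_action} both $A$ and $B$ are invariant under the $\psi$-equivariant action of $H$, so we indeed have the induced action $h.(aB) := (h.a)B$ on $A/B$. Given $g \in G$ and $a \in A$, the previous paragraph shows that
\[
\psi(g).(aB) = (gag^{-1})B,
\]
and therefore $\psi(g).(aB) = aB$ if and only if $gag^{-1}a^{-1} = [g,a] \in B$.

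Quantifying over $a \in A$, we conclude that $\psi(g) \in \ker(H \acts A/B)$ if and only if $[g,a] \in B$ for every $a \in A$, which is precisely the condition $g \in \CC_G(A/B)$. There is no real obstacle here; the only point worth flagging is that one must invoke Corollary~\ref{cor:normal_invar_equi_action} to make sense of the induced action on $A/B$ in the first place, and the content of the lemma is then just the observation that $\psi(g)$ acts on $G$ as inner conjugation by $g$.
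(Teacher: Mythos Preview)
Your proof is correct and follows essentially the same approach as the paper's: both reduce to the identity that the $\psi$-equivariant action of $\psi(g)$ on $G$ is ordinary conjugation by $g$, so that $(\psi(g).a)a^{-1} = [g,a]$ for all $a \in A$. The paper compresses this into the single displayed line $[g,a]=\psi^{-1}([\psi(g),\psi(a)])= (\psi(g).a)a^{-1}$, whereas you spell out the computation of $\phi_{\psi(g_0)}$ explicitly, but the content is the same.
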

\begin{proof}
For all $g\in G$ and $a\in A$, observe that
\[
[g,a]=\psi\inv([\psi(g),\psi(a)])= (\psi(g).a)a\inv
\]
where $\psi(g).a$ denotes the $\psi$-equivariant action. We infer that $\psi(g)\in \ker(H\acts A/B)$ if and only if $g\in \CC_G(A/B)$.
\end{proof}

We now state and prove the desired theorem.
\begin{thm}\label{thm:compression:chief}
Suppose that $G$ and $H$ are Polish groups. If $\psi: G \rightarrow H$ is a normal compression, then there is a contravariant block isomorphism $\phi:(X_H,\leq)\rightarrow (X_G,\leq)$ defined by
\[
\phi:K/L\mapsto \frac{\ol{[\psi^{-1}(K),\psi^{-1}(K)]\psi^{-1}(L)}}{\psi^{-1}(L)}
\]
with inverse $\chi:(X_G,\leq)\rightarrow (X_H,\leq)$ defined by
\[
\chi:A/B\mapsto \frac{\ol{\psi(A)}}{\CC_{\ol{\psi(A)}}\left(\ol{\psi(A)}/\ol{\psi(B)}\right)}.
\]
\end{thm}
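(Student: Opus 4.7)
The plan is to verify, for each of $\phi$ and $\chi$ in turn, that it sends non-abelian chief factors to non-abelian chief factors and carries a compression in the claimed direction, then to establish the two centralizer identities and order-preservation in both directions. I will use Corollary~\ref{cor:normal_invar_equi_action} repeatedly to identify closed $G$-invariant with closed $H$-invariant subgroups under the $\psi$-equivariant action. For $\phi$, fix $K/L\in X_H$ and set $K':=\psi^{-1}(K)$, $L':=\psi^{-1}(L)$. The natural map $\bar\psi:K'/L'\to K/L$, $gL'\mapsto\psi(g)L$, is injective, and the closure of its image $(\psi(G)\cap K)L/L$ is a closed $H$-invariant subgroup of $K/L$, so by chiefness it is either $L/L$ or $K/L$. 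The first case, $\psi(G)\cap K\subseteq L$, would force the dense normal subgroup $\psi(G)L/L$ of $H/L$ to have trivial intersection with $K/L$, hence commute with $K/L$, which would make $K/L$ central in $H/L$ and so abelian, contradicting hypothesis; thus $\bar\psi$ has dense image, which is normal in $K/L$ because $\psi(G)\cap K$ is normal in $H$. Theorem~\ref{thm:compression_simple}(2) applied with $A=H$ then identifies $\phi(K/L)/L'=\ol{[K'/L',K'/L']}$ as the unique minimal nontrivial closed $H$-invariant subgroup of $K'/L'$, itself non-abelian and $H$-simple. Hence $\phi(K/L)$ is a non-abelian chief factor of $G$, and the restriction of $\bar\psi$ is the contravariant compression $\phi(K/L)\to K/L$.

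For $\chi$, fix $A/B\in X_G$ and set $N:=\ol{\psi(A)}$, $M:=\ol{\psi(B)}$; both are closed normal subgroups of $H$ by Proposition~\ref{prop:normal_compression}(1). The crux is injectivity of the natural map $\bar\psi:A/B\to N/M$, which amounts to $\psi(A)\not\subseteq M$. Being a non-abelian chief factor, $A/B$ has trivial center (characteristic in $A/B$, hence $G$-invariant, hence trivial), so $\CC_G(A/B)\cap A=B$. Since $B\subseteq\CC_G(A/B)$, Lemma~\ref{lem:factor_kernel_centralizer} yields $\psi(B)\subseteq\ker(H\curvearrowright A/B)$, and continuity of the $\psi$-equivariant action makes this kernel closed, so $M\subseteq\ker(H\curvearrowright A/B)$; were $\psi(A)$ also contained in $M$, the lemma would give $A\subseteq\CC_G(A/B)\cap A=B$, absurd. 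Density and normality of the image are then routine, and $N/M$ is non-abelian because $A/B$ injects densely into it. Theorem~\ref{thm:compression_simple}(1) with $A=H$ identifies $\Z(N/M)=C/M$ as the unique largest proper closed $H$-invariant subgroup of $N/M$ and $N/C\cong(N/M)/\Z(N/M)$ as non-abelian $H$-simple, so $\chi(A/B)=N/C$ is a non-abelian chief factor of $H$. The composition $A/B\to N/M\to N/C$ stays injective: its kernel $\bar\psi^{-1}(\Z(N/M))$ is a closed $G$-invariant subgroup of $A/B$, hence trivial or all of $A/B$ by chiefness, and the latter would embed $A/B$ in $\Z(N/M)$, contradicting non-abelianness. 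This composition is therefore the covariant compression $A/B\to\chi(A/B)$.

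The centralizer identities rest on the following observation: for any $H$-equivariant normal compression $\eta:\mc{A}\to\mc{B}$ with $\mc{A}$ a factor of $G$ (under the $\psi$-equivariant action) and $\mc{B}$ a factor of $H$ (under conjugation), $\ker(H\curvearrowright\mc{A})=\CC_H(\mc{B})$. Indeed, $H$-equivariance and injectivity of $\eta$ force the kernel on $\mc{A}$ to equal the kernel on the dense image in $\mc{B}$, and since $h$-fixed sets are closed, fixing a dense subset is the same as fixing its closure. Applying this to the contravariant compression $\phi(K/L)\to K/L$ and the covariant compression $\phi(K/L)\to\chi(\phi(K/L))$, both kernels equal $\ker(H\curvearrowright\phi(K/L))$, giving $\CC_H(\chi\circ\phi(K/L))=\CC_H(K/L)$; the identity $\CC_G(\phi\circ\chi(A/B))=\CC_G(A/B)$ is symmetric. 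Order preservation of $\phi$ is then immediate from $\CC_G(\phi(K/L))=\psi^{-1}(\CC_H(K/L))$, which follows from Lemma~\ref{lem:factor_kernel_centralizer}.

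The main technical obstacle is order preservation of $\chi$, since $\CC_G(A/B)=\psi^{-1}(\CC_H(\chi(A/B)))$ only controls intersections with $\psi(G)$, and such containments do not in general descend to $H$. Assume $\CC_G(A_1/B_1)\le\CC_G(A_2/B_2)$ and set $K_i:=\CC_H(\chi(A_i/B_i))$. Then $K_1\cap\psi(G)\le K_2$, and normality of both $K_1$ and $\psi(G)$ in $H$ gives $[K_1,\psi(G)]\le K_1\cap\psi(G)\le K_2$, so $K_1K_2/K_2$ commutes with the dense normal subgroup $\psi(G)K_2/K_2$ of $H/K_2$ and hence lies in $\Z(H/K_2)$. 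Writing $\chi(A_2/B_2)=N_2/C_2$, centerlessness of $N_2/C_2$ gives $N_2\cap K_2=C_2$, so $N_2K_2/K_2\cong N_2/C_2$ is a centerless normal subgroup of $H/K_2$ meeting $\Z(H/K_2)$ trivially. Normal subgroups of $H/K_2$ with trivial intersection commute, yielding $[K_1,N_2]\le K_2$; combined with $[K_1,N_2]\le N_2$ from normality of $N_2$, this gives $[K_1,N_2]\le K_2\cap N_2=C_2$, i.e., $K_1\le\CC_H(N_2/C_2)=K_2$, as required.
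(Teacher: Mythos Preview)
Your proof is correct and follows the same overall architecture as the paper's: establish that $\phi$ and $\chi$ land in the right target (via Theorem~\ref{thm:compression_simple}), exhibit the compressions, verify the centralizer identities, and check order-preservation. The general observation you isolate---that for any $H$-equivariant normal compression $\eta:\mc{A}\to\mc{B}$ one has $\ker(H\curvearrowright\mc{A})=\CC_H(\mc{B})$---is exactly the content the paper extracts piecemeal in Lemmas~\ref{lem:phi_map}(3) and~\ref{lem:chi_map}(3), and packaging it once makes the inverse relation cleaner.

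The one place you genuinely diverge is order-preservation of $\chi$. The paper argues via commutators: from $[S,\psi(A')]\le\psi(\CC_G(A/B))\le\ker(H\curvearrowright A'/B')$ it gets $[[S,\widehat{A'}],\widehat{A'}]\le\widehat{B'}$, then invokes Lemma~\ref{lem:perfect_commutators} (the Three Subgroups Lemma together with topological perfectness of $\widehat{A'}/\widehat{B'}$) to conclude $[S,\widehat{A'}]\le\widehat{B'}$. Your route is more elementary: from $[K_1,\psi(G)]\le K_2$ you pass to $H/K_2$, observe $K_1K_2/K_2$ is central there, and read off $[K_1,N_2]\le K_2\cap N_2=C_2$ directly from centrality and $N_2\cap K_2=C_2$. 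This avoids the Three Subgroups Lemma entirely and does not use that $\widehat{A'}/\widehat{B'}$ is topologically perfect. (The sentence about ``normal subgroups with trivial intersection commute'' is superfluous---once $K_1K_2/K_2$ is central it already commutes with everything---but harmless.) The paper's argument stays closer to the factor $\widehat{A'}/\widehat{B'}$ itself; yours exploits the ambient quotient $H/K_2$ and the centerlessness of $N_2/C_2$, which is arguably the cleaner mechanism.
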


To prove Theorem~\ref{thm:compression:chief}, we shall prove a series of lemmas and propositions. For the remainder of this subsection, it is assumed that $G$ and $H$ are Polish groups and that $\psi: G \rightarrow H$ is a normal compression. Some notation must also be established.

\begin{notation}\label{notation:compressions} For a chief factor $K/L$ of $H$, we set
\[
D:= \ol{[\psi^{-1}(K),\psi^{-1}(K)]\psi^{-1}(L)}.
\]
For $A\subseteq G$, we write $\widehat{A}$ for $\ol{\psi(A)}$. For a chief factor $A/B$ of $G$, we set
\[
R:=\CC_{\wh{A}}(\wh{A}/\wh{B}).
\]
\end{notation}
We begin our proof by verifying that $\phi$ is indeed a contravariant block morphism.

\begin{lem}\label{lem:phi_map}
For $K/L\in X_H$, the following hold:

\begin{enumerate}[(1) ]
\item The factor $D/\psi\inv(L)$ is a non-abelian chief factor of $G$.\label{lem:compression_1}
\item The restriction of $\psi$ gives an $H$-equivariant normal compression map\label{lem:compression_2} $D/\psi\inv(L)\rightarrow K/L$.
\item  Under the $\psi$-equivariant action of $H$ on $G$,\label{lem:compression_3}
\[
\ker(H\acts D/\psi\inv(L)) = \CC_H(K/L).
\] 
\end{enumerate}
\end{lem}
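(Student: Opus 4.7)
Set $P := \psi\inv(K)$ and $Q := \psi\inv(L)$; these are closed normal subgroups of $G$, and by Corollary~\ref{cor:normal_invar_equi_action} they are $H$-invariant under the $\psi$-equivariant action. The map $\psi$ restricts to a continuous injective $H$-equivariant homomorphism $P\to K$ that sends $Q$ into $L$, descending to an $H$-equivariant map $\bar\psi:P/Q\to K/L$ with trivial kernel. The first step will be to establish that the image of $\bar\psi$, namely $(\psi(G)\cap K)L/L$, is dense in $K/L$: if not, its closure is a proper closed $H$-invariant subgroup of the chief factor $K/L$ and hence trivial, whence $\psi(G)L/L$ and $K/L$ are closed normal subgroups of $H/L$ with trivial intersection; they therefore commute, and since $\psi(G)L/L$ is dense in $H/L$, this would force $K/L$ to be central in $H/L$, contradicting non-abelianness. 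Thus $\bar\psi$ is a normal compression of Polish groups. Because $K/L$ is a non-abelian chief factor and hence topologically perfect, a standard density--continuity argument then shows $\ol{[\psi(G)\cap K,\psi(G)\cap K]L}=K$, and since $\psi(D)$ contains $[\psi(P),\psi(P)]=[\psi(G)\cap K,\psi(G)\cap K]$, it follows that $\psi(D)L$ is dense in $K$. This gives part~(2) and shows $D\neq Q$.

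For part~(1), I will use that $\bar\psi:P/Q\to K/L$ is a normal compression of Polish groups. Let $M$ be a closed normal subgroup of $G$ with $Q\leq M\leq D$. Then $\ol{\psi(M)L}$ is a closed $H$-invariant subgroup of $K$ containing $L$, so the chief property of $K/L$ forces $\ol{\psi(M)L}\in\{L,K\}$. In the first case $\psi(M)\subseteq L$, so $M\subseteq Q$ and $M=Q$. In the second case, $M/Q$ is a closed normal subgroup of $P/Q$ whose image under $\bar\psi$ is dense in $K/L$, so Proposition~\ref{prop:normal_compression}(2) applied to $\bar\psi$ yields $\ol{[P/Q,P/Q]}\leq M/Q$; noting that $\ol{[P/Q,P/Q]}=\ol{[P,P]Q}/Q=D/Q$, we obtain $M=D$. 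Hence $D/Q$ is a chief factor of $G$, and it is non-abelian because it admits an injective continuous homomorphism into the non-abelian group $K/L$.

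Part~(3) follows immediately from the $H$-equivariance of the compression in~(2): $h\in H$ acts trivially on $D/Q$ iff it fixes every element of the dense image $\psi(D)L/L\subseteq K/L$, which by continuity of the conjugation action of $H$ on $K/L$ is equivalent to $h$ centralizing all of $K/L$, i.e.\ $h\in\CC_H(K/L)$. The main obstacle is part~(1): showing no closed normal subgroup of $G$ lies strictly between $Q$ and $D$ requires passing to the Polish-group compression $P/Q\to K/L$ and invoking Proposition~\ref{prop:normal_compression}(2), whose hypothesis matches exactly the density of $\psi(M)L$ in $K$ furnished by the chief property of $K/L$.
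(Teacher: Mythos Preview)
Your proof is correct and follows essentially the same route as the paper's: both first establish that $\bar\psi:P/Q\to K/L$ is an $H$-equivariant normal compression via the same centrality contradiction, and then deduce that $D/Q$ is a chief factor; the only difference is that the paper packages the chief-factor step by citing Theorem~\ref{thm:compression_simple} (which says $D/Q$ is $H$-simple, hence chief by Corollary~\ref{cor:normal_invar_equi_action}), whereas you unpack that argument by directly invoking Proposition~\ref{prop:normal_compression}(2) in the dichotomy on $\ol{\psi(M)L}$. One small imprecision: your stated reason for $D/Q$ being non-abelian (``it injects into the non-abelian group $K/L$'') is not sufficient on its own---the point is that the image is \emph{dense} in $K/L$, which you have already shown, so if $D/Q$ were abelian then so would $K/L$ be.
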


\begin{proof}
Suppose first for contradiction that $(\psi(G)L) \cap K = L$.  The groups $\psi(G)L/L$ and $K/L$ are then normal subgroups of $H/L$ with trivial intersection, so they commute. Since $\psi(G)L/L$ is dense in $H/L$, it follows that $K/L$ is central in $H/L$.  This is absurd as we have assumed that $K/L$ is non-abelian. We deduce that $(\psi(G)L) \cap K > L$. Since $K/L$ is a chief factor of $H$, it follows that $(\psi(G)L) \cap K = (\psi(G) \cap K)L$ is dense in $K$.

The factor $K/L$ is centerless and $H$-simple, and the map
\[
\psi':\psi\inv(K)/\psi\inv(L)\rightarrow K/L \text{ via } k\psi\inv(L) \mapsto \psi(k)L
\]
is an $H$-equivariant map where $H\acts \psi\inv(K)/\psi\inv(L)$ by the $\psi$-equivariant action. The previous paragraph ensures that $\psi'$ is also a normal compression. Applying Theorem~\ref{thm:compression_simple}, we conclude that $D/\psi\inv(L)$ is $H$-simple, and it follows that $D/\psi\inv(L)$ is a non-abelian chief factor of $G$. We have thus established claims $(1)$ and $(2)$.

Taking $h \in \CC_H(K/L)$, we have that $[h,k] \in L$ for all $k \in K$, so $[h,\psi(d)] \in L \cap \psi(G)$ for all $d \in D$.  Therefore, $h \in \CC_H(\psi(D)/(\psi(G) \cap L)) = \ker(H\acts D/\psi\inv(L))$.  On the other hand, if $h \in \ker(H\acts D/\psi\inv(L))$, then $h \in \CC_H(\psi(D)L/L)$, so $h$ centralizes a dense subgroup of $K/L$. We infer that $h \in \CC_H(K/L)$, proving that $\CC_H(K/L) = \ker(H\acts D/\psi\inv(L))$.
\end{proof}

\begin{prop}\label{prop:covariant:block}
The map $\phi:(X_H,\leq)\rightarrow (X_G,\leq)$ is a contravariant block morphism.
\end{prop}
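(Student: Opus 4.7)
The plan is to derive the proposition from Lemma~\ref{lem:phi_map} together with Lemma~\ref{lem:factor_kernel_centralizer}. There are really two things to check: first, that $\phi$ is well-defined and satisfies the contravariant normal compression property on individual factors; second, that it preserves the preorder $\leq$ induced by centralizers.

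For the first point, given $K/L \in X_H$, Lemma~\ref{lem:phi_map}(1) asserts that $\phi(K/L) = D/\psi^{-1}(L)$ is a non-abelian chief factor of $G$, so $\phi$ does map $X_H$ into $X_G$. Lemma~\ref{lem:phi_map}(2) then supplies an $H$-equivariant normal compression $\phi(K/L) \to K/L$, which is precisely the datum required of a contravariant block morphism.

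For the second point, I need a formula for $\CC_G(\phi(K/L))$ in terms of $K/L$. Applying Lemma~\ref{lem:factor_kernel_centralizer} to the normal factor $D/\psi^{-1}(L)$ of $G$, we have
\[
\psi(g) \in \ker(H \acts D/\psi^{-1}(L)) \iff g \in \CC_G(\phi(K/L)).
\]
Combining this with Lemma~\ref{lem:phi_map}(3), which identifies $\ker(H \acts D/\psi^{-1}(L)) = \CC_H(K/L)$, we deduce the clean identity
\[
\CC_G(\phi(K/L)) = \psi^{-1}(\CC_H(K/L)).
\]
Since preimage under any map preserves inclusions, if $K_1/L_1,K_2/L_2 \in X_H$ satisfy $\CC_H(K_1/L_1) \leq \CC_H(K_2/L_2)$, then $\CC_G(\phi(K_1/L_1)) \leq \CC_G(\phi(K_2/L_2))$, which is exactly $\phi(K_1/L_1) \leq \phi(K_2/L_2)$ in $X_G$. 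Hence $\phi$ is a preorder homomorphism.

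There is no real obstacle here, since the two lemmas above have been set up precisely to make this argument routine. The only mild subtlety is recognizing that the relation between centralizers in $H$ and in $G$ is mediated by the $\psi$-equivariant action on the factor, which is why Lemma~\ref{lem:factor_kernel_centralizer} is needed as a bridge; once this is observed, everything else is bookkeeping.
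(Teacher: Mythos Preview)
Your proof is correct and follows essentially the same approach as the paper: both use Lemma~\ref{lem:phi_map} for well-definedness and the contravariant compression, then combine Lemma~\ref{lem:phi_map}(3) with Lemma~\ref{lem:factor_kernel_centralizer} to handle the preorder. Your version is in fact slightly cleaner, since you make the identity $\CC_G(\phi(K/L)) = \psi^{-1}(\CC_H(K/L))$ explicit, from which preorder preservation is immediate; the paper leaves this step implicit.
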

\begin{proof}
Lemma~\ref{lem:phi_map} implies that $\phi:X_H\rightarrow X_G$ is well-defined and that there  is a normal compression $\phi(K/L)\rightarrow K/L$ for each $K/L\in X_H$. It remains to show that $\phi$ respects the preorder.

Say that $K/L$ and $K'/L'$ in $X_H$ are such that $\CC_H(K/L)\leq \CC_H(K'/L')$. Lemma~\ref{lem:phi_map} ensures that
\[
\ker(H\acts \phi(K/L))\leq \ker(H\acts \phi(K/L)),
\]
and from Lemma~\ref{lem:factor_kernel_centralizer}, we deduce that $\CC_G(\phi(K/L))\leq \CC_G(\phi(K'/L'))$. That is to say, $\phi$ is a preorder homomorphism.\end{proof}

We now argue $\chi$ is a covariant block morphism. Recalling Notation~\ref{notation:compressions}, let us begin with an analogue of Lemma~\ref{lem:phi_map} for $\chi$.

\begin{lem}\label{lem:chi_map}
For $A/B\in X_G$, the following hold:
\begin{enumerate}[(1) ]
\item The factor $\wh{A}/ R$ is a non-abelian chief factor of $H$.\label{lem:inv_compression_1}
\item The restriction of $\psi$ gives an $H$-equivariant normal compression map\label{lem:inv_compression_2} $\psi:A/B\rightarrow\widehat{A}/ R$.
\item  Under the $\psi$-equivariant action of $H$ on $G$,\label{lem:inv_compression_3}
\[
\CC_H(\widehat{A}/R) = \CC_H(\widehat{A}/\widehat{B}) = \ker(H\acts A/B).
\] 
\end{enumerate}
\end{lem}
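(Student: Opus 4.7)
The plan is to mirror the structure of the analogous Lemma~\ref{lem:phi_map}, relying on Theorem~\ref{thm:compression_simple} applied to a normal compression built from $\psi|_A$, together with the fact (Corollary~\ref{cor:normal_invar_equi_action}) that closed normal subgroups of $G$ are $H$-invariant under the $\psi$-equivariant action. First, I would observe that $\wh{A}$, $\wh{B}$, and $R$ are closed normal in $H$: the first two follow from Proposition~\ref{prop:normal_compression}(1), and $R$ is normal in $H$ by the standard commutator computation $[hxh\inv,\wh{A}] = h[x,\wh{A}]h\inv \subseteq h\wh{B}h\inv = \wh{B}$. The key preliminary step is to establish that $\wh{B}$ is proper in $\wh{A}$: if equality held, then $\psi(B)$ would be dense in $\wh{A}$, and Proposition~\ref{prop:normal_compression}(2) applied to the restricted normal compression $\psi|_A \colon A \to \wh{A}$ would force $\ol{[A,A]} \le B$, contradicting that $A/B$ is non-abelian.

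Next, I would show that the induced map $\bar\psi \colon A/B \to \wh{A}/\wh{B}$ sending $aB \mapsto \psi(a)\wh{B}$ is an $H$-equivariant normal compression. The kernel is $(\psi\inv(\wh{B}) \cap A)/B$, where $\psi\inv(\wh{B}) \cap A$ is a closed subgroup of $G$ that is normal in $G$ (using that $\wh{B}$ is normal in $H$) and lies between $B$ and $A$; the chief factor hypothesis forces it to equal $B$, since equality with $A$ would yield $\wh{A} = \wh{B}$. Density and normality of the image in $\wh{A}/\wh{B}$ follow from the corresponding properties of $\psi(A)$ in $H$; $H$-equivariance is a direct check using the identity $\psi(h.a) = h\psi(a)h\inv$.

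Now Theorem~\ref{thm:compression_simple}(1) can be applied to $\bar\psi$ with acting group $H$: $A/B$ is $H$-simple, since any $H$-invariant closed normal subgroup of $A/B$ pulls back to a $G$-invariant closed subgroup of $G$ between $B$ and $A$, which must equal $B$ or $A$. The theorem gives that $\Z(\wh{A}/\wh{B})$ is the unique maximal proper closed $H$-invariant normal subgroup of $\wh{A}/\wh{B}$ and that the quotient is $H$-simple and non-abelian. Since $R/\wh{B} = \Z(\wh{A}/\wh{B})$ by the commutator definition of $R$, this yields (1): $\wh{A}/R$ is a non-abelian chief factor of $H$. For (2), I would compose $\bar\psi$ with the quotient $\wh{A}/\wh{B} \to \wh{A}/R$; repeating the chief-factor argument applied to $\psi\inv(R) \cap A$ (which cannot equal $A$ lest $\wh{A}/R$ be trivial) gives injectivity of the composition, and density and normality of the image transfer immediately.

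For (3), the $H$-equivariance of $\bar\psi$ together with its injectivity, the density of its image, and the continuity of the conjugation action of $H$ on $\wh{A}/\wh{B}$ give that an element $h \in H$ acts trivially on $A/B$ if and only if it fixes every element of $\bar\psi(A/B)$, which by density and continuity holds if and only if $h$ centralizes all of $\wh{A}/\wh{B}$; the same argument applied to the compression $A/B \to \wh{A}/R$ yields the second equality. The main obstacle throughout is leveraging the chief factor hypothesis to ensure the pullback subgroups $\psi\inv(\wh{B}) \cap A$ and $\psi\inv(R) \cap A$ collapse to $B$ rather than equal $A$, as the latter alternative would trivialize the entire argument.
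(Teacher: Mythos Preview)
Your proof is correct and follows essentially the same route as the paper's: establish $\wh{A}>\wh{B}$ via Proposition~\ref{prop:normal_compression}(2), use the chief-factor hypothesis to force $\psi\inv(\wh{B})\cap A=B$, obtain the $H$-equivariant normal compression $A/B\to\wh{A}/\wh{B}$, and apply Theorem~\ref{thm:compression_simple} with $R/\wh{B}=\Z(\wh{A}/\wh{B})$ to deduce (1) and (2).

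The only point of divergence is in (3). The paper proves $\CC_H(\wh{A}/R)=\CC_H(\wh{A}/\wh{B})$ directly via Lemma~\ref{lem:perfect_commutators}, using that $\wh{A}/\wh{B}$ is topologically perfect, and then establishes $\CC_H(\wh{A}/\wh{B})=\ker(H\acts A/B)$ by an explicit computation with the identity $\wh{B}\cap\psi(A)=\psi(B)$. You instead run the same equivariance--injectivity--density argument twice, once for each of the two normal compressions $A/B\to\wh{A}/\wh{B}$ and $A/B\to\wh{A}/R$, to show both centralizers coincide with the kernel. Your approach is arguably cleaner since it avoids the Three Subgroups Lemma, at the cost of needing the extra (easy) verification that $\psi\inv(R)\cap A=B$, which you handle correctly.
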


\begin{proof}
Consider the normal subgroups $\wh{A}$ and $\wh{B}$ of $H$. If $\widehat{A} = \wh{B}$, then Proposition~\ref{prop:normal_compression} implies $ B\ge \ol{[A,A]}$ as $\psi:A\rightarrow \wh{A}$ is a normal compression, and this is absurd since $A/B$ is non-abelian. We conclude that $\widehat{A} > \widehat{B}$, so $\widehat{B}$ does not contain $\psi(A)$.  The subgroup $C: = \psi\inv(\wh{B}) \cap A$ is thus a proper $G$-invariant closed subgroup of $A$, and seeing as $A/B$ is a chief factor of $G$, in fact $C = B$.  The image of $C$ under $\psi$ is then $\widehat{B} \cap \psi(A)$, and thus, we deduce that $\widehat{B} \cap \psi(A) = \psi(B)$.

We now have a non-trivial normal factor $\wh{A}/\wh{B}$ of $H$, and the induced map $\psi:A/B\rightarrow \wh{A}/\wh{B}$ is an $H$-equivariant normal compression. Passing to $\wh{A}/R$ for $R:=\CC_{\wh{A}}(\wh{A}/\wh{B})$, the induced map $\psi:\wh{A}/\wh{B}\rightarrow \wh{A}/R$ is again an $H$-equivariant normal compression, verifying $(2)$. The subgroup $R/\wh{B}$ is the center of $\wh{A}/\wh{B}$, and since $A/B$ is $H$-simple, $\wh{A}/R$ is also $H$-simple by Theorem~\ref{thm:compression_simple}. The factor $\wh{A}/R$ is thus a non-abelian chief factor of $H$, establishing $(1)$. 

Letting $E: = \CC_H(\widehat{A}/R)$, we have $[E/\wh{B},\widehat{A}/\wh{B}] \le R/\wh{B}$, so $[[E/\wh{B},\wh{A}/\wh{B}],\wh{A}/\wh{B}] =\{1\}$. Since $\wh{A}/\wh{B}$ is topologically perfect, Lemma~\ref{lem:perfect_commutators} implies that indeed $[E,\wh{A}] \le \wh{B}$, so $E \le \CC_H(\wh{A}/\wh{B})$.  On the other hand, $\widehat{A}/R$ is a quotient of $\widehat{A}/\widehat{B}$, so any element of $H$ that centralizes $\widehat{A}/\widehat{B}$ also centralizes $\widehat{A}/R$.  Hence, $\CC_H(\widehat{A}/R) = \CC_H(\widehat{A}/\widehat{B})$.  

Let $h \in H$.  Under the $\psi$-equivariant action, $\psi((h.a)a\inv)=[h,\psi(a)]$ for any $a\in A$.  In particular, we see that $[h,\psi(a)] \in \psi(B)$ for all $a \in A$ if and only if $(h.a)a\inv \in B$ for all $a \in A$, in other words $\CC_H(\psi(A)/\psi(B))= \ker(H\acts A/B)$.  If $h \in \CC_H(\psi(A)/\psi(B))$, then $h$ centralizes a dense subgroup of $\wh{A}/\wh{B}$, so $h \in \CC_H(\wh{A}/\wh{B})$.  Conversely if $h \in \CC_H(\wh{A}/\wh{B})$, then $[h,\psi(a)] \in \wh{B} \cap \psi(A) = \psi(B)$ for all $a \in A$, so $h \in \CC_H(\psi(A)/\psi(B))$. The equality $\CC_H(\wh{A}/\wh{B}) = \ker(H\acts A/B)$ thus holds, verifying $(3)$.
\end{proof}

\begin{prop} 
The map $\chi:(X_G,\leq)\rightarrow (X_H,\leq)$ is a covariant block morphism.
\end{prop}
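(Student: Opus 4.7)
The plan is to mirror the template of Proposition~\ref{prop:covariant:block}: Lemma~\ref{lem:chi_map}(1)--(2) already establishes that $\chi$ sends $X_G$ into $X_H$ and that the restriction of $\psi$ furnishes an $H$-equivariant normal compression $A/B \to \chi(A/B)$ for each $A/B \in X_G$, which is exactly the covariant normal-compression condition of Definition~\ref{defn:block_morphism}. So the only real content is to verify that $\chi$ is a preorder homomorphism.

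To that end, I would take $A/B, A'/B' \in X_G$ with $\CC_G(A/B) \le \CC_G(A'/B')$ and set $N := \CC_H(\chi(A/B))$ and $N' := \CC_H(\chi(A'/B'))$, with the aim of showing $N \le N'$. Combining Lemma~\ref{lem:chi_map}(3) with Lemma~\ref{lem:factor_kernel_centralizer} yields $\psi\inv(N) = \CC_G(A/B)$ and $\psi\inv(N') = \CC_G(A'/B')$. I would then introduce $W := \ol{\psi(\CC_G(A/B))}$, noting that since $\CC_G(A/B) \normal G$ is $H$-invariant under the $\psi$-equivariant action (Corollary~\ref{cor:normal_invar_equi_action}), $\psi(\CC_G(A/B))$ is normal in $H$, hence $W \normal H$; continuity of the action also gives $W \le N$.

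The decisive step is the identity $N \cap \psi(G)W = W$, which I would verify directly: if $\psi(g)w \in N$ with $g \in G$ and $w \in W \le N$, then $\psi(g) \in N$, so $g \in \CC_G(A/B)$ and $\psi(g) \in W$, whence $\psi(g)w \in W$. Both $N$ and $\psi(G)W$ are normal subgroups of $H$ (using that $\psi(G)$ itself is normal), so their images in $H/W$ are normal subgroups with trivial intersection and therefore commute. This yields $[N, \psi(G)] \le W$. The same reasoning applied to $A'/B'$ shows $W \le \ol{\psi(\CC_G(A'/B'))} \le N'$, so altogether $[N, \psi(G)] \le N'$.

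Finally, in $H/N'$ the image of $N$ commutes with the dense image of $\psi(G)$ and so lies in $\Z(H/N')$. By Proposition~\ref{prop:upper-rep}, $H/N'$ is monolithic with socle a representative of $[\chi(A'/B')]$; since non-abelian chief factors are centerless, the socle of $H/N'$ is centerless, and a monolithic group with centerless socle has trivial center. Thus the image of $N$ in $H/N'$ is trivial, giving $N \le N'$, as required. The main obstacle I foresee is that the naive hope $N = W$ fails in general, since a closed normal subgroup of $H$ can be strictly larger than the closure of its intersection with $\psi(G)$; the argument has to sidestep this by using the normality of $\psi(G)$ in $H$ to confine $[N,\psi(G)]$ inside $W$, and then exploit the rigidity of the monolithic, centerless-socle structure of $H/N'$ to pass from ``central in $H/N'$'' to ``trivial in $H/N'$.''
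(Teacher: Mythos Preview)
Your argument is correct, but it follows a genuinely different route from the paper's. The paper works \emph{locally} with the specific factor $A'/B'$: setting $S := \ker(H\acts A/B) = \CC_H(\chi(A/B))$, it observes that $[S,\psi(A')] \le S \cap \psi(A') \le \psi(\CC_G(A/B)) \le \psi(\CC_G(A'/B'))$, hence $[[S,\psi(A')],\psi(A')] \le \psi(B')$; taking closures and invoking Lemma~\ref{lem:perfect_commutators} (the Three-Subgroups consequence for topologically perfect groups) against $\wh{A'}/\wh{B'}$ yields $[S,\wh{A'}] \le \wh{B'}$, i.e.\ $S \le \CC_H(\wh{A'}/\wh{B'}) = N'$ directly.

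By contrast, you work \emph{globally}: you show $[N,\psi(G)] \le W \le N'$ using the trivial-intersection trick with $\psi(G)W/W$, and then finish by appealing to the monolithic, centerless-socle structure of $H/N'$ from Proposition~\ref{prop:upper-rep}. This is a perfectly valid and clean alternative; its cost is that it imports a later structural result about uppermost representatives, whereas the paper's route stays at the level of commutator calculus and topological perfectness, which is slightly more elementary and self-contained within \S\ref{sec:morphisms}. On the other hand, your argument makes no use of the particular subgroup $A'$ at all once $N'$ is fixed, which is conceptually pleasant and highlights that the only obstruction to $N \le N'$ is a central one.
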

\begin{proof}
Lemma~\ref{lem:chi_map} implies that $\chi:X_G\rightarrow X_H$ is well-defined and that there is a normal compression $A/B\rightarrow \chi(A/B)$ for each $A/B\in X_G$. It remains to show that $\chi$ respects the preorder.

Say that $A/B$ and $A'/B'$ in $X_G$ are such that $\CC_G(A/B)\leq \CC_G(A'/B')$ and let 
\[
S := \ker(H\acts A/B)=\CC_H(\widehat{A}/R)=\CC_H(\chi(A/B)),
\]
where the middle equality is given by Lemma~\ref{lem:chi_map}. Lemma~\ref{lem:factor_kernel_centralizer} ensures that $S \cap \psi(G) \le \psi(\CC_G(A/B))$.  The image $\psi(A')$ is normal in $H$ by Proposition~\ref{prop:normal_compression}, so  $[S,\psi(A')] \le S \cap \psi(A')$; in particular, $[S,\psi(A')]\leq \psi(\CC_G(A/B))$. 

Since $\CC_G(A/B) \le \CC_G(A'/B')$, a second application of Lemma~\ref{lem:factor_kernel_centralizer} now gives 
\[
[S,\psi(A')] \le \ker(H\acts A'/B').
\]
Thus, $[[S,\psi(A')],\psi(A')] \le \psi(B')$.  Taking closures in $H$, we indeed have $[[S,\wh{A'}],\wh{A'}] \le \wh{B'}$.  The factor $\wh{A'}/\wh{B'}$ is topologically perfect, so Lemma~\ref{lem:perfect_commutators} implies $[S,\wh{A'}] \le \wh{B'}$. We thus deduce that
\[ 
S \le \CC_H(\widehat{A'}/\widehat{B'})=\CC_H(\wh{A'}/R')=\CC_H(\chi(A'/B')),
\]
where the middle equality follows from Lemma~\ref{lem:chi_map}. We conclude that $\chi$ respects the preordering.
\end{proof}

Concluding that $\chi$ is the desired inverse is now easy.

\begin{lem}\label{lem:compression:chief:order_isomorphism}
The block morphism $\chi$ is an inverse of $\phi$.
\end{lem}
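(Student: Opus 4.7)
The plan is to verify the two centralizer identities required by the definition of inverse block morphism, namely
\[
\CC_H(\chi\circ\phi(K/L))=\CC_H(K/L) \quad \text{for } K/L\in X_H,
\]
and
\[
\CC_G(\phi\circ\chi(A/B))=\CC_G(A/B) \quad \text{for } A/B\in X_G,
\]
by chaining together the three kernel/centralizer identifications already proved: Lemma~\ref{lem:factor_kernel_centralizer} (centralizer in $G$ matches kernel of the $\psi$-equivariant $H$-action), Lemma~\ref{lem:phi_map}(3) ($\ker(H\acts\phi(K/L))=\CC_H(K/L)$), and Lemma~\ref{lem:chi_map}(3) ($\CC_H(\chi(A/B))=\ker(H\acts A/B)$).

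First I would handle the $H$-side identity. Given $K/L\in X_H$, I set $A/B:=\phi(K/L)\in X_G$ and apply Lemma~\ref{lem:chi_map}(3) to compute $\CC_H(\chi(A/B))=\ker(H\acts A/B)$. Since $A/B=\phi(K/L)$, Lemma~\ref{lem:phi_map}(3) immediately identifies this kernel with $\CC_H(K/L)$, yielding the first identity.

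Next I would handle the $G$-side identity. Given $A/B\in X_G$, I set $K/L:=\chi(A/B)\in X_H$ and compute in two stages. By Lemma~\ref{lem:phi_map}(3) applied to $K/L$, we have $\ker(H\acts\phi(K/L))=\CC_H(K/L)$, while Lemma~\ref{lem:chi_map}(3) applied to $A/B$ gives $\CC_H(K/L)=\CC_H(\chi(A/B))=\ker(H\acts A/B)$. Hence the $H$-kernels of the actions on $\phi(\chi(A/B))$ and on $A/B$ coincide. Pulling this equality back to $G$ via Lemma~\ref{lem:factor_kernel_centralizer} (which says $g\in\CC_G(X/Y)$ iff $\psi(g)$ lies in the kernel of the $H$-action on $X/Y$, for any normal factor $X/Y$ of $G$), and using that $\psi(G)$ is dense in $H$ so centralizers are determined by their intersection with $\psi(G)$, we obtain $\CC_G(\phi(\chi(A/B)))=\CC_G(A/B)$.

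There is no real obstacle here: the technical work has all been done in the preceding two propositions and their lemmas, and this final verification is essentially a two-line bookkeeping argument invoking the three kernel/centralizer dictionaries in the correct order. The only point requiring a moment of care is that Lemma~\ref{lem:factor_kernel_centralizer} characterizes $\CC_G(X/Y)$ in terms of the $H$-kernel, rather than characterizing the $H$-kernel itself; but since $\CC_G$ is determined by this characterization, equality of the two $H$-kernels forces equality of the corresponding $G$-centralizers, completing the proof.
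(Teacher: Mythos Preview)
Your proof is correct and follows essentially the same route as the paper: both chain Lemma~\ref{lem:phi_map}(3) and Lemma~\ref{lem:chi_map}(3) through the common quantity $\ker(H\acts\,\cdot\,)$ to obtain the $H$-centralizer identity, and then invoke Lemma~\ref{lem:factor_kernel_centralizer} to pass from equal $H$-kernels to equal $G$-centralizers for the second identity. Your aside about density of $\psi(G)$ is unnecessary---Lemma~\ref{lem:factor_kernel_centralizer} already gives the biconditional $g\in\CC_G(X/Y)\Leftrightarrow\psi(g)\in\ker(H\acts X/Y)$, so equal $H$-kernels immediately force equal $G$-centralizers without any further appeal to density---but this does not affect correctness.
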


\begin{proof}
Let us first consider $\chi\circ\phi$. Taking $K/L\in X_H$, the map $\phi$ sends $K/L$ to $D/\psi\inv(L)$. Appealing to Lemma~\ref{lem:phi_map}, we have that $\CC_H(K/L)=\ker(H\acts D/\psi\inv(L))$. The map $\chi$ sends $D/\psi\inv(L)$ to $\wh{D}/R$ where $R = \CC_{\wh{D}}(\wh{D}/\wh{\psi\inv(L)})$, and Lemma~\ref{lem:chi_map} implies that $\CC_H(\wh{D}/R)=\ker(H\acts D/\psi\inv(L))$. We conclude that 
\[
\CC_H(K/L)=\CC_H(\chi\circ\phi(K/L)).
\]

We now consider $\phi\circ \chi$. A similar argument as in the previous paragraph yields that
\[
\ker(H\acts A/B)=\ker(H\acts \phi\circ\chi (A/B))
\]
for any $A/B\in X_G$. From Lemma~\ref{lem:factor_kernel_centralizer}, we infer that $\CC_G(A/B)=\CC_G(\phi\circ\chi(A/B))$.
\end{proof}

The proof of Theorem~\ref{thm:compression:chief} is now complete. 

\subsection{Minimally covered blocks under block space morphisms}

We conclude the section by showing that minimally covered blocks are preserved by normal compressions; this will be used later to prove Theorem~\ref{thm:prop:type_invariant}. For a minimally covered block $\mf{a}\in \mf{B}_G$, recall that $G_{\mf{a}}$ is the unique smallest closed normal subgroup covering $\mf{a}$.
 
\begin{prop}\label{prop:compression_minimally_covered}
Suppose that $G$ and $H$ are Polish groups with a normal compression $\psi: G \rightarrow H$. Let $\phi: (X_H,\leq)\rightarrow (X_G,\leq)$ and $\chi:(X_G,\leq)\rightarrow (X_H,\leq)$ be the canonical block isomorphism and inverse and let $\wt{\phi}:\mf{B}_H\rightarrow \mf{B}_G$ and $\wt{\chi}:\mf{B}_G\rightarrow\mf{B}_H$ be the induced partial order isomorphisms.
\begin{enumerate}[(1) ]
\item If $\mf{a}\in \mf{B}_H$ is minimally covered, then $\wt{\phi}(\mf{a})$ is minimally covered in $G$, and 
\[
G_{\wt{\phi}(\mf{a})} = \ol{[\psi\inv(H_{\mf{a}}),\psi\inv(H_{\mf{a}})]}.
\]
\item  If $\mf{b}\in \mf{B}_G$ is minimally covered, then $\wt{\chi}(\mf{b})$ is minimally covered in $H$, and 
\[
H_{\wt{\chi}(\mf{b})} = \ol{\psi(G_{\mf{b}})}.
\]
\end{enumerate}
In particular, $\mf{a}\in \mf{B}_H$ is minimally covered if and only if $\wt{\phi}(\mf{a})\in \mf{B}_G$ is minimally covered.
\end{prop}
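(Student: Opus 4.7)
The plan is to prove (2) first by directly identifying the smallest closed normal subgroup of $H$ covering $\wt{\chi}(\mf{b})$, then to establish (1) by a parallel but subtler argument involving a commutator trick, and finally to deduce the ``in particular'' clause by combining the two parts via the identity $\wt{\chi}\circ\wt{\phi}=\mathrm{id}_{\mf{B}_H}$.

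For (2), I would apply $\chi$ to the lowermost representative $G_{\mf{b}}/\CC_{G_{\mf{b}}}(\mf{b})$ of $\mf{b}$; by Lemma~\ref{lem:chi_map}, the result is a non-abelian chief factor representing $\wt{\chi}(\mf{b})$ whose top is $\ol{\psi(G_{\mf{b}})}$, so this subgroup certainly covers $\wt{\chi}(\mf{b})$. Conversely, if $L$ is any closed normal subgroup of $H$ covering $\wt{\chi}(\mf{b})$, Proposition~\ref{prop:cover_realize} applied to the chain $\triv\le L\le H$ supplies closed normal subgroups $B'<A'\le L$ of $H$ with $A'/B'\in\wt{\chi}(\mf{b})$; then $\phi(A'/B')$ is a representative of $\wt{\phi}(\wt{\chi}(\mf{b}))=\mf{b}$ whose top lies inside $\psi^{-1}(A')\subseteq\psi^{-1}(L)$. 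Thus $\psi^{-1}(L)$ covers $\mf{b}$, so minimal covering forces $\psi^{-1}(L)\supseteq G_{\mf{b}}$, and consequently $L\supseteq\ol{\psi(G_{\mf{b}})}$.

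For (1), let $A:=\psi^{-1}(H_{\mf{a}})$. The preliminary observation is that $\psi|_A:A\to H_{\mf{a}}$ is itself a normal compression: normality of $\psi(A)=\psi(G)\cap H_{\mf{a}}$ in $H$ is Proposition~\ref{prop:normal_compression}(1), and density of $\psi(A)$ in $H_{\mf{a}}$ follows from the observation used in the proof of Lemma~\ref{lem:phi_map} that $\psi(G)\cap H_{\mf{a}}$ is not contained in $\CC_H(\mf{a})$, combined with minimal covering of $\mf{a}$ applied to the closed normal subgroup $\ol{\psi(A)}$ of $H$. Joint continuity of commutators with density of $\psi(A)$ then gives $\ol{\psi([A,A])}=\ol{[H_{\mf{a}},H_{\mf{a}}]}$; the latter equals $H_{\mf{a}}$ because it is a closed normal subgroup of $H$ covering $\mf{a}$ (the chief factor $H_{\mf{a}}/\CC_{H_{\mf{a}}}(\mf{a})$ being non-abelian) and $\mf{a}$ is minimally covered. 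Hence $\ol{[A,A]}$ covers $\wt{\phi}(\mf{a})$.

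The main obstacle is to show that every closed normal subgroup $N$ of $G$ covering $\wt{\phi}(\mf{a})$ contains $\ol{[A,A]}$, since $N$ need not a priori lie inside $A$. The resolution is to pass to $N':=\ol{[A,N]}$: using $\ol{\psi(N)}\supseteq H_{\mf{a}}$ (from $\mf{a}$ minimally covered) together with density of $\psi(A)$ in $H_{\mf{a}}$, the same commutator-continuity argument yields $\ol{\psi(N')}=H_{\mf{a}}$, while $N'\subseteq A\cap N$. Proposition~\ref{prop:normal_compression}(2), applied to the normal compression $\psi|_A:A\to H_{\mf{a}}$ and to the closed normal subgroup $N'$ of $A$ (whose $\psi|_A$-image is now dense in $H_{\mf{a}}$), forces $\ol{[A,A]}\subseteq N'\subseteq N$. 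This simultaneously shows $\wt{\phi}(\mf{a})$ is minimally covered and identifies $G_{\wt{\phi}(\mf{a})}$ with $\ol{[A,A]}$. The final ``in particular'' claim is immediate from (2) applied with $\mf{b}:=\wt{\phi}(\mf{a})$, using $\wt{\chi}(\wt{\phi}(\mf{a}))=\mf{a}$.
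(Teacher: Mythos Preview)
Your proof is correct and follows essentially the same strategy as the paper's: establish the restricted normal compression $\psi^{-1}(H_{\mf{a}})\to H_{\mf{a}}$, identify $\ol{[A,A]}$ (respectively $\ol{\psi(G_{\mf{b}})}$) as a covering subgroup, and then use Proposition~\ref{prop:normal_compression}(2) together with minimal covering to verify minimality. The only tactical differences are that you handle an arbitrary covering subgroup $N$ in part~(1) by passing to $N':=\ol{[A,N]}\subseteq A\cap N$, whereas the paper reduces (implicitly via the filter property of Lemma~\ref{block:covering:intersection}) to $N\le\psi^{-1}(H_{\mf{a}})$; and in part~(2) you invoke Proposition~\ref{prop:cover_realize} and pull back through $\phi$, while the paper argues directly with the centralizer/kernel identities of Lemma~\ref{lem:chi_map}.
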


\begin{proof}
Suppose that $\mf{a}\in \mf{B}_H$ is minimally covered. Since the lowermost representative $H_{\mf{a}}/\CC_{H_{\mf{a}}}(\mf{a})$ is topologically perfect, the subgroup $[H_{\mf{a}},H]\CC_{H_{\mf{a}}}(\mf{a})$ is dense in $H_{\mf{a}}$, and \textit{a fortiori} $\ol{[H_{\mf{a}},H]}\nleq \CC_{H_{\mf{a}}}(\mf{a})$. We conclude that $\ol{[H_{\mf{a}},H]}$ covers $\mf{a}$, but since $H_{\mf{a}}$ is minimal, it is indeed the case that $\ol{[H_{\mf{a}},H]}=H_{\mf{a}}$.  The subgroup $[H_{\mf{a}},\psi(G)]$ is then dense in $H_{\mf{a}}$, so $H_{\mf{a}} \cap \psi(G)$ is dense in $H_{\mf{a}}$. The map $\psi$ thus restricts to a normal compression $\psi:\psi\inv(H_{\mf{a}})\rightarrow H_{\mf{a}}$.

Let $K: = \ol{[\psi\inv(H_{\mf{a}}),\psi\inv(H_{\mf{a}})]}$; from the definition of $\phi$, we see that $K$ covers $\wt{\phi}(\mf{a})$.  If $N\normal G$ is some closed subgroup of $\psi\inv(H_{\mf{a}})$ that covers $\wt{\phi}(\mf{a})$, then  $N \not\le \CC_G(\wt{\phi}(\mf{a}))$. From Lemmas~\ref{lem:phi_map} and \ref{lem:factor_kernel_centralizer}, we infer that $\psi(N) \not\le \CC_{H_{\mf{a}}}(\mf{a})$, and as $H_{\mf{a}}$ is the smallest closed normal subgroup of $H$ not contained in $\CC_{H}(\mf{a})$, the subgroup $\psi(N)$ is dense in $H_{\mf{a}}$. Proposition~\ref{prop:normal_compression} now ensures that $K\leq N$. We conclude that $\wt{\phi}(\mf{a})$ is minimally covered and that $\ol{[\psi\inv(H_{\mf{a}}),\psi\inv(H_{\mf{a}})]}$ is the smallest closed normal subgroup that covers $\wt{\phi}(\mf{a})$, verifying $(1)$.

For part $(2)$, suppose that $\mf{b}\in \mf{B}_G$ is minimally covered. The factor $G_{\mf{b}}/\CC_{G_{\mf{b}}}(\mf{b})$ is a representative of $\mf{b}$, so $\wh{G_{\mf{b}}}:=\ol{\psi(G_{\mf{b}})}$ covers $\wt{\chi}(\mf{b})$. Consider $M\normal H$ a closed subgroup of $\wh{G_{\mf{b}}}$ that also covers $\wt{\chi}(\mf{b})$; equivalently, $M \not\le \CC_{\wh{G_{\mf{b}}}}(\wt{\chi}(\mf{b}))$. By Lemma~\ref{lem:chi_map}, $M\nleq \ker(H\acts G_{\mf{b}}/\CC_{G_{\mf{b}}}(\mf{b}))$, hence $[M,\psi(G_{\mf{b}})] \not\le \psi(\CC_{G_{\mf{b}}}(\mf{b}))$.  In particular, $M \cap \psi(G_{\mf{b}}) \not\le \psi(\CC_{G_{\mf{b}}}(\mf{b}))$.  The subgroup $\psi\inv(M) \cap G_{\mf{b}}$ is then a closed $G$-invariant subgroup of $G_{\mf{b}}$ that is not contained in $\CC_{G_{\mf{b}}}(\mf{b})$, so it covers $\mf{b}$. In view of the minimality of $G_{\mf{b}}$, it is indeed the case that $\psi\inv(M) \cap G_{\mf{b}}= G_{\mf{b}}$.  We conclude that $M \ge \psi(G_{\mf{b}})$, and so $M = \widehat{G_{\mf{b}}}$. The block $\wt{\chi}(\mf{b})$ is thus minimally covered, and $\wh{G_{\mf{b}}}=H_{\wt{\chi}(\mf{b})}$, verifying $(2)$.

The final assertion follows since $\wt{\phi}\circ\wt{\chi}=\mathrm{id}_{\mf{B}_G}$ and $\wt{\chi}\circ \wt{\phi}=\mathrm{id}_{\mf{B}_H}$.
\end{proof}

\section{Extension of chief blocks}
We here relate chief blocks of groups to chief blocks of subgroups.  In particular, we consider the chief block structure of a normal subgroup to obtain a division of chief factors into three basic types. The primary tool for these explorations is a notion of extension for chief blocks.

\subsection{First properties of extensions}
\begin{defn}
Let $G$ be a topological group with $H$ a closed subgroup of $G$.  For $\mf{a} \in \mf{B}_H$ and $\mf{b} \in \mf{B}_G$, we say $\mf{b}$ is an \defbold{extension} of $\mf{a}$ in $G$ if for every closed normal subgroup $K$ of $G$, the subgroup $K$ covers $\mf{b}$ if and only if $K \cap H$ covers $\mf{a}$.
\end{defn}

It is not in general clear when extensions exist.  However, if an extension exists, then it is unique.

\begin{lem}\label{lem:extension_uniqueness}
Let $G$ be a topological group with $H$ a closed subgroup of $G$ and $\mf{a} \in \mf{B}_H$. If $\mf{b},\mf{c} \in \mf{B}_G$ are both extensions of $\mf{a}$, then $\mf{b} = \mf{c}$.
\end{lem}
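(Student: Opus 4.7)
The plan is to reduce block equality to having the same centralizer in $G$, and to extract this from the definition of extension together with the characterization of the partial order on blocks via covering.

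First I would observe that the definition of extension directly tells us which closed normal subgroups of $G$ cover $\mf{b}$, and similarly for $\mf{c}$: namely, in both cases, exactly those $K \normal G$ (closed) such that $K\cap H$ covers $\mf{a}$ in $H$. Therefore $\mf{b}$ and $\mf{c}$ are covered by exactly the same set of closed normal subgroups of $G$. Applying Proposition~\ref{prop:block:ordering}(1) in both directions then gives $\mf{b}\le\mf{c}$ and $\mf{c}\le\mf{b}$, so $\CC_G(\mf{b}) = \CC_G(\mf{c})$.

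Next I would invoke Proposition~\ref{associated:centralizers:chief} (equivalence of centralizer-equality and association for non-abelian chief factors): picking representatives $A/B\in \mf{b}$ and $A'/B'\in\mf{c}$ and using the common-centralizer condition, we conclude $A/B$ and $A'/B'$ are associated, so $\mf{b}=\mf{c}$.

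There is no real obstacle here; the lemma is essentially a bookkeeping consequence of two facts already established: that the covering pattern of a block in the lattice of closed normal subgroups of $G$ is determined by its centralizer in $G$, and that non-abelian chief factors with the same centralizer are associated. The only thing to be slightly careful about is to phrase the argument so that it uses the characterization of $\le$ by covering (rather than directly pulling $\CC_G(\mf{b})$ and $\CC_G(\mf{c})$ out of thin air), since that is exactly the translation point between the hypothesis (which is phrased in terms of covering) and the conclusion (which amounts to an equality of association classes).
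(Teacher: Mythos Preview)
Your argument is correct. You and the paper take genuinely different routes, though both are short. The paper picks a representative $K/L\in\mf{b}$, observes from the extension hypothesis that $K$ covers $\mf{c}$ while $L$ avoids $\mf{c}$, and then applies Theorem~\ref{thm:unique_associate} to the series $\{1\}\le L\le K\le G$ to locate a representative of $\mf{c}$ strictly between $L$ and $K$; since $K/L$ is already chief, that forces $K/L\in\mf{c}$. Your approach instead stays at the level of the covering filter: the extension hypothesis says $\mf{b}$ and $\mf{c}$ have identical covering sets among closed normal subgroups of $G$, so Proposition~\ref{prop:block:ordering}(1) gives $\mf{b}\le\mf{c}\le\mf{b}$, i.e.\ $\CC_G(\mf{b})=\CC_G(\mf{c})$, and then Proposition~\ref{associated:centralizers:chief} converts equal centralizers into association. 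Your route is arguably more direct and avoids invoking the refinement theorem; the paper's route has the minor advantage of exhibiting a common representative rather than passing through centralizers. Both are valid for general topological groups.
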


\begin{proof}
Take $K/L \in \mf{b}$.  Since $K$ covers $\mf{b}$, the intersection $K \cap H$ covers $\mf{a}$, and thus, $K$ covers $\mf{c}$. On the other hand, $L$ avoids $\mf{b}$, and the same argument ensures $L$ avoids $\mf{c}$.  Applying Theorem~\ref{thm:unique_associate}, there exists $L \le B < A \le K$ such that $A/B$ is associated to a representative of $\mf{c}$.  Since $K/L$ is a chief factor of $G$, we indeed have $L = B$ and $A = K$, hence  $\mf{b} = \mf{c}$.
\end{proof}

\begin{defn}
Let $G$ be a topological group with $H$ a closed subgroup of $G$.  If $\mf{a} \in \mf{B}_H$ has an extension in $G$, we say $\mf{a}$ is \defbold{extendable to $G$} and write $\mf{a}^G$ for the extension of $\mf{a}$ in $G$.
\end{defn}


We extract the following useful observation implicit in the proof of Lemma~\ref{lem:extension_uniqueness}:
\begin{obs}\label{obs:extensions}
	Let $G$ be a topological group with $H$ a closed subgroup and suppose that $\mf{a}\in H$ is extendable to $G$. Then, a normal factor $K/L$ of $G$ covers $\mf{a}^G$ if and only if $K\cap H/L\cap H$ covers $\mf{a}$.
\end{obs}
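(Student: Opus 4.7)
The observation is a pure unpacking of definitions, so the plan is to reduce it to the definition of extension (applied to single normal subgroups) combined with the definition of when a normal factor covers a chief block.

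First I would rewrite each side as a conjunction of two conditions about single normal subgroups. Unpacking the definition of ``$K/L$ covers $\mf{a}^G$'' (which, per the definitions in Section 6, means $L\leq\CC_G(\mf{a}^G)$ and $K\not\leq\CC_G(\mf{a}^G)$), this is equivalent to saying that $K$ (viewed as the normal factor $K/\triv$) covers $\mf{a}^G$ while $L$ avoids $\mf{a}^G$. Similarly, $K\cap H/L\cap H$ covers $\mf{a}$ if and only if $K\cap H$ covers $\mf{a}$ and $L\cap H$ avoids $\mf{a}$ in $H$; note that $K\cap H$ and $L\cap H$ are indeed closed normal subgroups of $H$ since $K,L\normal G$ and $H$ is closed.

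Next I would apply the defining property of the extension $\mf{a}^G$ separately to the closed normal subgroups $K$ and $L$ of $G$. By definition, $K$ covers $\mf{a}^G$ if and only if $K\cap H$ covers $\mf{a}$, and likewise ``$L$ avoids $\mf{a}^G$'' translates to ``$L\cap H$ avoids $\mf{a}$'' (as the negation of a covering equivalence is again an equivalence). Combining these two biconditionals gives exactly the desired equivalence between the two conjunctions identified in the previous step, completing the proof.

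There is no real obstacle here; the only point that warrants a brief comment is that the argument does not require $L\cap H<K\cap H$ as a strict inclusion. Indeed, if $L\cap H=K\cap H$, then both sides of the claimed equivalence are automatically false (no factor with equal top and bottom can cover a chief block, and by the definition of extension a covering $K$ would force $L$ to cover as well), so the equivalence holds vacuously. Thus the proof is essentially a one-line chain of iffs, which I would present as above.
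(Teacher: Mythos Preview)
Your argument is correct and is exactly the unwinding of definitions that the paper has in mind; the paper itself gives no separate proof, merely flagging the observation as implicit in the proof of Lemma~\ref{lem:extension_uniqueness}, which uses precisely the two applications of the defining property of $\mf{a}^G$ (to $K$ and to $L$) that you spell out. Your handling of the degenerate case $K\cap H=L\cap H$ is also fine.
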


Extensions are also transitive when they exist.

\begin{lem}\label{lem:extensions_transitive}
Let $A \le B \le G$ be closed subgroups of the topological group $G$ and suppose that $\mf{a} \in \mf{B}_A$ is extendable to $B$.  Then $\mf{a}$ is extendable to $G$ if and only if $\mf{a}^B$ is extendable to $G$. If $\mf{a}$ is extendable to $G$, then $\mf{a}^G = (\mf{a}^B)^G$.
\end{lem}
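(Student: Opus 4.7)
The plan is to unwind the definitions and observe that the covering criterion chains neatly, using the trivial fact that if $N \normal G$ is closed then $N \cap B$ is a closed normal subgroup of $B$, and $(N \cap B) \cap A = N \cap A$.

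First I would record the following ingredient. By hypothesis, $\mf{a}^B \in \mf{B}_B$ satisfies: for every closed normal subgroup $M$ of $B$,
\[
M \text{ covers } \mf{a}^B \iff M \cap A \text{ covers } \mf{a}.
\]
Applied to $M = N \cap B$ for any closed $N \normal G$, this gives
\[
(N \cap B) \text{ covers } \mf{a}^B \iff (N \cap A) \text{ covers } \mf{a}. \tag{$\ast$}
\]

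For the forward direction, suppose $\mf{a}^G$ exists, so $N$ covers $\mf{a}^G$ iff $N \cap A$ covers $\mf{a}$ for every closed $N \normal G$. Combining with $(\ast)$, we get $N$ covers $\mf{a}^G$ iff $N \cap B$ covers $\mf{a}^B$ for every closed $N \normal G$. By definition, this says $\mf{a}^G$ is an extension of $\mf{a}^B$ in $G$, and by Lemma~\ref{lem:extension_uniqueness} it is the unique one; hence $(\mf{a}^B)^G$ exists and equals $\mf{a}^G$. For the reverse direction, suppose $(\mf{a}^B)^G$ exists, so $N$ covers $(\mf{a}^B)^G$ iff $N \cap B$ covers $\mf{a}^B$ for every closed $N \normal G$. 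Combining again with $(\ast)$, we get $N$ covers $(\mf{a}^B)^G$ iff $N \cap A$ covers $\mf{a}$, so $(\mf{a}^B)^G$ witnesses that $\mf{a}$ is extendable to $G$, and $\mf{a}^G = (\mf{a}^B)^G$ by Lemma~\ref{lem:extension_uniqueness}.

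There is essentially no obstacle here; the only point requiring any care is confirming that for closed $N \normal G$, the intersection $N \cap B$ is a closed normal subgroup of $B$ (immediate, since $B \le G$ normalizes $N$) so that the defining property of $\mf{a}^B$ is applicable to it. Everything else is a formal chase through the definitions and an appeal to uniqueness of extensions.
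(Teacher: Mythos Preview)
Your proof is correct and essentially identical to the paper's own argument: both chain the covering equivalence through $(N\cap B)\cap A = N\cap A$, using that $N\cap B$ is a closed normal subgroup of $B$ so the defining property of $\mf{a}^B$ applies. The paper merely says ``the argument for the converse is similar,'' whereas you spell it out and make the appeal to uniqueness explicit, but the substance is the same.
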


\begin{proof}
Suppose that $\mf{a}$ is extendable to $G$. For a closed normal subgroup $K$ of $G$, $K\cap A$ covers $\mf{a}$ if and only if $K$ covers $\mf{a}^G$. At the same time, $K \cap B$ is a closed normal subgroup of $B$, so $K\cap A$ covers $\mf{a}$ if and only if $K\cap B$ covers $\mf{a}^B$. We thus deduce that $K$ covers $\mf{a}^G$ if and only if $K\cap B$ covers $\mf{a}^B$, hence $\mf{a}^G = (\mf{a}^B)^G$.

The argument for the converse is similar.
\end{proof}

Let us note an obvious restriction on centralizers.

\begin{lem}\label{lem:centralizers} 
	Suppose that $G$ is a topological group with $H$ a closed subgroup. If $K/L$ is a normal factor of $G$ so that $K\cap H/L\cap H$ covers $\mf{a}\in \mf{B}_H$, then $\CC_H(K/L)\leq \CC_H(\mf{a})$.
\end{lem}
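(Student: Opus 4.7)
The plan is to compare $K/L$ with a suitable normal factor living entirely inside $H$. First I would fix a representative $A/B$ of $\mf{a}$, so that $\CC_H(\mf{a}) = \CC_H(A/B)$. The strategy is then to produce a normal factor of $H$ that is associated to $A/B$ and sits inside $(K\cap H)/(L\cap H)$; once this is in hand, every element of $\CC_H(K/L)$ will automatically centralize the inner factor, and Lemma~\ref{lem:association_centralizer} will equate its centralizer with $\CC_H(\mf{a})$.

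To locate the inner factor, I would apply Theorem~\ref{thm:unique_associate} inside $H$, using that $L\cap H$ and $K\cap H$ are closed normal subgroups of $H$, with the non-abelian chief factor $A/B$ and the series
\[
\triv \le L\cap H \le K\cap H \le H.
\]
The hypothesis that $(K\cap H)/(L\cap H)$ covers $\mf{a}$ unwinds to $L\cap H \le \CC_H(\mf{a})$ and $K\cap H \not\le \CC_H(\mf{a})$, so the least index $i$ with $G_{i+1} \not\le \CC_H(\mf{a})$ is $i=1$. Theorem~\ref{thm:unique_associate} therefore yields closed normal subgroups $L\cap H \le B' < A' \le K\cap H$ of $H$ such that $A'/B'$ is a normal factor associated to $A/B$, and Lemma~\ref{lem:association_centralizer} gives $\CC_H(A'/B') = \CC_H(A/B) = \CC_H(\mf{a})$.

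The remainder is a short commutator calculation. For $h \in \CC_H(K/L)$ and any $a \in A' \subseteq K\cap H$, the commutator $[h,a]$ lies in $L$ by definition of $\CC_H(K/L)$, and it lies in $H$ because $h,a\in H$; hence $[h,a] \in L\cap H \le B'$. This shows $h \in \CC_H(A'/B') = \CC_H(\mf{a})$, which is the desired conclusion. There is no real obstacle: the content lies entirely in the observation that Theorem~\ref{thm:unique_associate} is the correct instrument for transferring $\mf{a}$-data from a factor of $G$ into its intersection with $H$, and the rest follows from association of centralizers.
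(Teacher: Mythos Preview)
Your proof is correct and follows essentially the same route as the paper. The paper's argument is compressed into two sentences: first observe that any $h\in\CC_H(K/L)$ centralizes $(K\cap H)/(L\cap H)$, and then assert that covering $\mf{a}$ forces such an $h$ to centralize $\mf{a}$. Your proof simply unpacks that second assertion by explicitly invoking Theorem~\ref{thm:unique_associate} to locate an associated factor $A'/B'$ between $L\cap H$ and $K\cap H$, and then Lemma~\ref{lem:association_centralizer} to identify $\CC_H(A'/B')$ with $\CC_H(\mf{a})$; this is exactly what the paper is tacitly using.
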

\begin{proof}
	Any $h\in H$ centralizing $K/L$ must centralize $K\cap H/L\cap H$. Since $\mf{a}$ is covered by $K\cap H/L\cap H$, the element $h$ centralizes $\mf{a}$, hence $\CC_H(K/L)\leq \CC_H(\mf{a})$. 
\end{proof}

Observation~\ref{obs:extensions} can be reworked into a criterion for the existence of extensions in the case of minimally covered blocks.

\begin{lem}\label{lem:extension:min_covered}
Let $G$ be a topological group with $H$ a closed subgroup of $G$ and $\mf{a} \in \mf{B}^{\min}_H$.  Then $\mf{a}$ is extendable to $G$ if and only if there exists $\mf{b} \in \mf{B}^{\min}_G$ such that $(G_{\mf{b}} \cap H)/\CC_{(G_{\mf{b}} \cap H)}(\mf{b})$ covers $\mf{a}$.  If $\mf{a}$ is extendable to $G$, then $\mf{a}^G = \mf{b}$ and $G_{\mf{b}} = \ngrp{H_{\mf{a}}}{G}$.
\end{lem}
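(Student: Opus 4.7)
The plan is to prove both directions of the biconditional using Observation~\ref{obs:extensions} and Lemma~\ref{lem:centralizers}, then deduce the identity $G_{\mf{b}} = \ngrp{H_{\mf{a}}}_G$ from the minimality of $G_{\mf{b}}$ and $H_{\mf{a}}$.

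For the forward direction, suppose $\mf{a}$ extends to $G$ and set $\mf{b} := \mf{a}^G$. I would first show $\mf{b}$ is minimally covered with $G_{\mf{b}} = N$, where $N := \ngrp{H_{\mf{a}}}_G$. Since $N \cap H \supseteq H_{\mf{a}}$ and $H_{\mf{a}}$ covers $\mf{a}$, the group $N \cap H$ covers $\mf{a}$, so $N$ covers $\mf{b}$ by the definition of extension. Conversely, if $K \normal G$ is closed and covers $\mf{b}$, then $K \cap H$ covers $\mf{a}$, so $K \cap H \supseteq H_{\mf{a}}$ by minimality of $H_{\mf{a}}$; normality of $K$ then gives $K \supseteq N$. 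Thus $N$ is the smallest closed normal subgroup of $G$ covering $\mf{b}$, showing $\mf{b} \in \mf{B}_G^{\min}$ and $G_{\mf{b}} = N$. Finally, applying Observation~\ref{obs:extensions} to the normal factor $G_{\mf{b}}/\CC_{G_{\mf{b}}}(\mf{b})$ (which covers $\mf{b}$) and noting the trivial identity $\CC_{G_{\mf{b}}}(\mf{b}) \cap H = \CC_{(G_{\mf{b}} \cap H)}(\mf{b})$ yields that $(G_{\mf{b}} \cap H)/\CC_{(G_{\mf{b}} \cap H)}(\mf{b})$ covers $\mf{a}$, completing this direction.

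For the reverse direction, suppose $\mf{b} \in \mf{B}_G^{\min}$ satisfies $(G_{\mf{b}} \cap H)/\CC_{(G_{\mf{b}} \cap H)}(\mf{b})$ covers $\mf{a}$. I would verify the extension condition directly. Let $K$ be a closed normal subgroup of $G$. If $K$ covers $\mf{b}$, then $K \supseteq G_{\mf{b}}$ by minimality, so $K \cap H \supseteq G_{\mf{b}} \cap H$; since $G_{\mf{b}} \cap H \not\le \CC_H(\mf{a})$ by hypothesis, neither is $K \cap H$, so $K \cap H$ covers $\mf{a}$. Conversely, if $K$ does not cover $\mf{b}$, then $K \le \CC_G(\mf{b})$, and here is the step that needs Lemma~\ref{lem:centralizers}: applied to the representative $G_{\mf{b}}/\CC_{G_{\mf{b}}}(\mf{b})$ of $\mf{b}$, whose $H$-intersection factor covers $\mf{a}$ by hypothesis, the lemma yields $\CC_H(G_{\mf{b}}/\CC_{G_{\mf{b}}}(\mf{b})) \le \CC_H(\mf{a})$. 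Since this centralizer equals $\CC_G(\mf{b}) \cap H$, we obtain $K \cap H \le \CC_G(\mf{b}) \cap H \le \CC_H(\mf{a})$, so $K \cap H$ does not cover $\mf{a}$. This establishes $\mf{a}^G = \mf{b}$, and the identification $G_{\mf{b}} = \ngrp{H_{\mf{a}}}_G$ follows from the forward direction now applied to the extension $\mf{a}^G = \mf{b}$.

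The only step that is not a formal consequence of the definitions is the reverse implication $K \le \CC_G(\mf{b}) \Rightarrow K \cap H \le \CC_H(\mf{a})$; this is the main obstacle and is precisely what Lemma~\ref{lem:centralizers} is designed to handle. Everything else is a straightforward minimality and normal-closure argument, so once that centralizer containment is in hand, the proof is complete.
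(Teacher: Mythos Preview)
Your proof is correct and uses the same key ingredients as the paper: minimality of $H_{\mf{a}}$ and $G_{\mf{b}}$, together with Lemma~\ref{lem:centralizers} for the centralizer containment in the reverse direction. Your organization is in fact slightly more streamlined---the paper routes the reverse implication through showing $\ngrp{H_{\mf{a}}}_G = G_{\mf{b}}$ first (via Theorem~\ref{thm:unique_associate} and a contradiction argument), whereas you verify the extension condition directly and recover that identity afterward from the forward direction---but the substance is the same.
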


\begin{proof}
Suppose that $\mf{a}$ is extendable to $G$. Let $\mf{b} := \mf{a}^G$ and take $\mc{K}$ to be the set of closed normal subgroups of $G$ which cover $\mf{a}^G$.  For each $K \in \mc{K}$, the intersection $K \cap H$ covers $\mf{a}$, and since $\mf{a}$ is minimally covered,
\[
\bigcap_{K \in \mc{K}}(K \cap H) = \left(\bigcap_{K \in \mc{K}}K\right) \cap H
\]
covers $\mf{a}$. We conclude $\bigcap_{K \in \mc{K}}K$ covers $\mf{b}$, and thus, $\mf{b}$ is minimally covered.  It is now clear that $G_{\mf{b}} = \ngrp{H_{\mf{a}}}{G}$ and that $(G_{\mf{b}} \cap H)/\CC_{(G_{\mf{b}} \cap H)}(\mf{b})$ covers $\mf{a}$.

Conversely, suppose that there exists $\mf{b} \in \mf{B}^{\min}_G$ with lowermost representative $G_{\mf{b}}/L$ such that $(G_{\mf{b}} \cap H)/(L \cap H)$ covers $\mf{a}$. Let $H_{\mf{a}}$ be the lowermost representative of $\mf{a}$ and put $M:=\ngrp{H_{\mf{a}}}{G}$. Consider the normal series $\triv < M \le G$. In view of Theorem~\ref{thm:unique_associate}, exactly one of the factors $M/\triv$ and $G/M$ covers $\mf{b}$.  If $G/M$ covers $\mf{b}$, we have $M \le \CC_G(\mf{b})$, and hence
\[
H_{\mf{a}} \le M \cap H \le \CC_H(\mf{b})\leq \CC_H(\mf{a}),
\]
where Lemma~\ref{lem:centralizers} ensures that $\CC_H(\mf{b})\leq \CC_H(\mf{a})$. This is absurd as $H_{\mf{a}}$ covers $\mf{a}$. We conclude that the subgroup $M$ covers $\mf{b}$. 

Since $G_{\mf{b}}/L$ is the lowermost representative of $\mf{b}$, it is the case that $M \ge G_{\mf{b}}$.  On the other hand, $G_{\mf{b}} \cap H$ covers $\mf{a}$, so $G_{\mf{b}} \ge H_{\mf{a}}$. It then follows that $M = G_{\mf{b}}$.  Given a closed normal subgroup $S$ of $G$, we now have the following equivalences:
\[
S \text{ covers } \mf{b} \Leftrightarrow S \ge G_{\mf{b}} \Leftrightarrow S \cap H \ge H_{\mf{a}} \Leftrightarrow S \cap H \text{ covers } \mf{a}.
\]
Therefore, $\mf{b} = \mf{a}^G$ as required.
\end{proof}

In particular, note that if $\mf{a} \in \mf{B}^{\min}_H$ is extendable to $G$, then $\mf{a}^G$ is also minimally covered.

\subsection{Chief factors extended from normal subgroups}
Suppose that $G$ is a topological group with $H\normal G$ a closed normal subgroup. The group $G$ has an action on $\mf{B}_H$ by $g.[K/L]:=[gKg^{-1}/gLg^{-1}]$. One easily verifies from the definition that this action is well-defined and preserves the subset $\mf{B}^{\min}_H$.

\begin{prop}\label{prop:induced_block}
Let $G$ be a topological group with $H$ a closed normal subgroup of $G$ and $\mf{a} \in \mf{B}^{\min}_H$.  Then $\mf{a}$ is extendable to $G$.  The lowermost representative of $\mf{a}^G$ is a closed normal factor $M/N$ of $H$, where
\[
M := \ngrp{H_{\mf{a}}}{G}\quad \text{ and } \quad N := \bigcap_{g \in G}g\CC_G(\mf{a})g\inv \cap M.
\]
\end{prop}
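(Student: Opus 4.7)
The plan is to apply Lemma~\ref{lem:extension:min_covered}: it suffices to exhibit a minimally covered block $\mf{b} \in \mf{B}^{\min}_G$ with $G_{\mf{b}} = M$ such that $(G_{\mf{b}} \cap H)/\CC_{G_{\mf{b}}\cap H}(\mf{b})$ covers $\mf{a}$. Since $H \normal G$ and $H_{\mf{a}} \le H$, the subgroup $M = \ngrp{H_{\mf{a}}}_G$ is contained in $H$, so $M \cap H = M$; the candidate block will be $\mf{b} := [M/N]$. Writing $C := \bigcap_{g \in G} g\CC_G(\mf{a})g\inv$, we have $N = C \cap M$, which is a closed $G$-invariant subgroup of $M$ contained in $\CC_G(\mf{a})$.

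The core of the proof is the following $G$-invariance argument, which I plan to apply twice. Suppose $T$ is a closed $G$-invariant subgroup of $M$ with $T \nleq N$. Since $T \le M$ and $N = C \cap M$, we have $T \nleq C$; but for $G$-invariant $T$ the conditions $T \le \CC_G(\mf{a})$ and $T \le C$ are equivalent, so $T \nleq \CC_G(\mf{a})$. Because $T \le H$ is closed and $H$-normal and does not centralize $\mf{a}$, the subgroup $T$ covers $\mf{a}$ in $H$, and the minimal-cover hypothesis delivers $T \ge H_{\mf{a}}$. A second use of $G$-invariance gives $T \ge gH_{\mf{a}}g\inv$ for every $g \in G$, so $T \ge M$ and hence $T = M$.

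Applying this to any closed $G$-normal subgroup $S$ with $N \le S \le M$ and $S \neq N$ shows $S = M$, so $M/N$ is a chief factor of $G$. Non-abelianness: if $M/N$ were abelian, then $[H_{\mf{a}}, H_{\mf{a}}] \le [M,M] \le N \le \CC_G(\mf{a}) \cap H = \CC_H(\mf{a})$, contradicting the fact that $H_{\mf{a}}/\CC_{H_{\mf{a}}}(\mf{a})$ is non-abelian. For the minimal-cover claim on $\mf{b} := [M/N]$, let $K \normal G$ cover $\mf{b}$, i.e., $[K,M] \nleq N$. Since $[K,M] \le K \cap M$, the subgroup $K \cap M$ is a $G$-invariant subgroup of $M$ not contained in $N$, and the argument above gives $K \cap M \ge M$, whence $K \ge M$. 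Thus $\mf{b} \in \mf{B}^{\min}_G$ with $G_{\mf{b}} = M$; Observation~\ref{obs:cover_centralizer} gives $\CC_M(\mf{b}) = N$, and it is immediate that $M/N$ covers $\mf{a}$ in $H$, since $H_{\mf{a}} \le M$ does not centralize $\mf{a}$ while $N \le \CC_G(\mf{a}) \cap H = \CC_H(\mf{a})$. Lemma~\ref{lem:extension:min_covered} then yields that $\mf{a}$ is extendable to $G$, $\mf{a}^G = \mf{b}$, and $G_{\mf{a}^G} = M$, so the lowermost representative is $M/N$.

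The main obstacle is the chief-factor step, where a single inclusion $H_{\mf{a}} \le T$ must be leveraged into $M \le T$; the combination of $G$-invariance of $T$ with the minimal-cover property of $\mf{a}$ in $H$ is precisely what enables this, and the intersection with $M$ in the definition of $N$ is essential for the equivalence between $T \le \CC_G(\mf{a})$ and $T \le C$ to bite.
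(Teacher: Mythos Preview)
Your proof is correct and follows essentially the same approach as the paper: both arguments hinge on showing that any closed $G$-invariant subgroup of $M$ (or of $H$, in the paper's version) that is not contained in $N$ must contain $H_{\mf{a}}$ by the minimal-cover property, and hence equals $M$ by $G$-invariance. Your treatment is if anything slightly more explicit than the paper's on the minimal-cover step for $\mf{b}$, where you spell out that $[K,M] \nleq N$ forces $K \cap M = M$ via the core argument; the paper asserts this consequence without isolating it.
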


\begin{proof}
It is easy to see that $N < M$. Additionally, the group $H_{\mf{a}}/(H_{\mf{a}}\cap N)$ is a subgroup of $M/N$, and $H_{\mf{a}}/(H_{\mf{a}}\cap N)$ is non-abelian since it admits $H_{\mf{a}}/C_{H_{\mf{a}}}(\mf{a}) $ as a quotient. The factor $M/N$ is therefore a non-abelian normal factor of $G$.

Consider $L$ a closed $G$-invariant subgroup of $H$ such that $L \ngeq M$.  There exists $g \in G$ such that $gH_{\mf{a}}g\inv \nleq L$, and since $\mf{a} \in \mf{B}_H^{\min}$, the subgroup $gH_{\mf{a}}g\inv$ is the least normal subgroup of $H$ covering the chief block $g.\mf{a}$. In particular, $L$ avoids $g.\mf{a}$, and consequently $L \le \CC_H(g.\mf{a})$. Seeing that $\CC_H(g.\mf{a}) = g\CC_H(\mf{a})g\inv$, we deduce that $L \le k\CC_H(\mf{a})k\inv$ for all $k \in G$, and thus, $L \le N$.

It now follows that $M/N$ is a non-abelian chief factor of $G$ and that $M$ is the unique smallest closed normal subgroup of $G$ that covers $\mf{b}:=[M/N]$.  That is to say, $M/N$ is the lowermost representative of the chief block $\mf{b}$.  In view of Lemma~\ref{lem:extension:min_covered}, we deduce that $\mf{b}$ is the extension of $\mf{a}$ to $G$, completing the proof.
\end{proof}

The extension of chief blocks from $H$ to $G$ thus produces an equivalence relation on $\mf{B}^{\min}_H$ where two chief blocks are equivalent if they have the same extension to $G$.  It turns out that this relation is completely determined by the structure of $\mf{B}^{\min}_H$ as a poset together with the action of $G$ on this poset.

\begin{defn}
Let $G$ be a topological group and let $H$ be a closed normal subgroup of $G$.  Define a preorder $\preceq_G$ on $\mf{B}^{\min}_H$ by setting $\mf{a} \preceq_G \mf{b}$ if there exists $g \in G$ such that $g.\mf{a} \le \mf{b}$. If $\mf{a} \preceq_G \mf{b}$ and $\mf{b} \preceq_G \mf{a}$, we say $\mf{a}$ and $\mf{b}$ are in the same \defbold{$G$-stacking class}\index{stacking class} and write $\mf{a} \sim_G \mf{b}$. 
\end{defn}

There are two kinds of $G$-stacking class.
\begin{defn}
Let $G$ be a topological group, let $H$ be a closed normal subgroup of $G$, and let $\mc{S}$ be a $G$-stacking class of $\mf{B}^{\min}_H$. We say $\mc{S}$ is an \textbf{antichain orbit}\index{antichain orbit} if all elements of $\mc{S}$ are pairwise incomparable in $\mf{B}^{\min}_H$, and $G$ acts transitively on $\mc{S}$. We say $\mc{S}$ is a \textbf{proper stacking class}\index{proper stacking class} if for all $\mf{a},\mf{b} \in \mc{S}$, there exists $g \in G$ such that $g.\mf{a} < \mf{b}$.
\end{defn}

\begin{lem}\label{lem:stackingclass_types}
Let $G$ be a topological group, let $H$ be a closed normal subgroup of $G$, and let $\mc{S}$ be a $G$-stacking class of $\mf{B}^{\min}_H$. Then exactly one of the following holds:
\begin{enumerate}[(1) ]
\item $\mc{S}$ is an antichain orbit;
\item $\mc{S}$ is a proper stacking class.
\end{enumerate}
\end{lem}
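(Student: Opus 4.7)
My plan is to prove mutual exclusivity by a direct contradiction and then prove exhaustiveness via a case analysis, reducing to the observation that whenever some pair in $\mc{S}$ is strictly comparable, all pairs can be made strictly comparable by the action. The key tool throughout is that the $G$-action on $\mf{B}^{\min}_H$ by $g.[K/L] = [gKg\inv/gLg\inv]$ preserves the partial order: since $\CC_H(g.\mf{a}) = g\CC_H(\mf{a})g\inv$ for $g \in G$ (recall $H \normal G$), we have $\mf{a} \le \mf{b} \iff g.\mf{a} \le g.\mf{b}$, and likewise for strict inequality.

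For mutual exclusivity, suppose $\mc{S}$ is both an antichain orbit and a proper stacking class. Pick any $\mf{a} \in \mc{S}$. The proper stacking condition, applied with $\mf{b} = \mf{a}$, yields $g \in G$ with $g.\mf{a} < \mf{a}$; but $g.\mf{a} \in \mc{S}$ and is distinct from $\mf{a}$, violating the antichain condition. So (1) and (2) cannot hold simultaneously.

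For exhaustiveness, suppose $\mc{S}$ is not an antichain orbit. Then either $G$ does not act transitively on $\mc{S}$, or some pair of elements of $\mc{S}$ is strictly comparable. In the first case, pick $\mf{a},\mf{b} \in \mc{S}$ with $\mf{b} \notin G.\mf{a}$; since $\mf{a} \sim_G \mf{b}$, there exists $g$ with $g.\mf{a} \le \mf{b}$, and since $g.\mf{a} \neq \mf{b}$ we get $g.\mf{a} < \mf{b}$. So in either case there exist $\mf{a}_0, \mf{b}_0 \in \mc{S}$ with $\mf{a}_0 < \mf{b}_0$.

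Now I must upgrade this single strict comparison to a strict comparison between every pair $\mf{c},\mf{d} \in \mc{S}$. Using $\mf{c} \sim_G \mf{a}_0$, choose $g_1 \in G$ with $g_1.\mf{c} \le \mf{a}_0$; using $\mf{b}_0 \sim_G \mf{d}$, choose $g_2 \in G$ with $g_2.\mf{b}_0 \le \mf{d}$. Then
\[
g_2g_1.\mf{c} \le g_2.\mf{a}_0 < g_2.\mf{b}_0 \le \mf{d},
\]
where the strict inequality uses the order-preserving property of the $G$-action applied to $\mf{a}_0 < \mf{b}_0$. Hence $g_2g_1.\mf{c} < \mf{d}$, so $\mc{S}$ is a proper stacking class. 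The main (and only slightly delicate) point is to notice that each of the two ways $\mc{S}$ can fail to be an antichain orbit produces a single strict comparison, from which every other strict comparison can be bootstrapped via the order-preserving $G$-action.
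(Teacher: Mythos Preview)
Your proof is correct and follows essentially the same approach as the paper. The only cosmetic difference is in the case split: the paper divides according to whether $\mc{S}$ is an antichain (showing antichain $\Rightarrow$ transitive, and not-antichain $\Rightarrow$ proper stacking), whereas you divide according to whether $\mc{S}$ is an antichain orbit and then handle the ``not transitive'' subcase separately to extract a strict comparison; the bootstrapping step is identical in both arguments.
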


\begin{proof}
It is clear that the two cases are mutually exclusive. 

Suppose that $\mc{S}$ is an antichain and let $\mf{a},\mf{b} \in \mc{S}$.  There exists $g \in G$ such that $g.\mf{a} \le \mf{b}$, and since $\mc{S}$ is an antichain, we have $g.\mf{a}= \mf{b}$.  Therefore, $G$ acts transitively on $\mc{S}$, so $\mc{S}$ is an antichain orbit.

Suppose that $\mc{S}$ is not an antichain; that is, there exist $\mf{a},\mf{b} \in \mc{S}$ with $\mf{a} < \mf{b}$.  Given $\mf{c},\mf{d} \in \mc{S}$, there exist $g_1,g_2 \in G$ such that $g_1.\mf{b} \le \mf{d}$ and $g_2.\mf{c} \le \mf{a}$.  We then have
\[
g_1g_2.\mf{c} \le g_1.\mf{a} < g_1.\mf{b} \le \mf{d},
\]
so there exists $g: = g_1g_2 \in G$ such that $g.\mf{c} < \mf{d}$.  Hence, $\mc{S}$ is a proper stacking class.
\end{proof}

We now show that the $G$-stacking relation determines the structure of $\{\mf{a}^G \mid \mf{a} \in \mf{B}^{\min}_H\}$ as a subset of $\mf{B}^{\min}_G$.

\begin{lem}\label{lem:chief:covering}
Let $G$ be a topological group, let $H$ be a closed normal subgroup of $G$ with $\mf{a} \in \mf{B}_H^{\min}$, and let $M/N$ be the lowermost representative of $\mf{a}^G$. For every $\mf{b} \in \mf{B}_H^{\min}$ covered by $M$ and every closed $L\normal H$ such that $L \nleq N$, there exists $g\in G$ such that $L$ covers $g.\mf{b}$. In particular, $M/N$ has no non-trivial abelian $H$-invariant subgroups.
\end{lem}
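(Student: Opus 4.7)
The plan is to reduce to the case where $L$ is $G$-normal and then exploit the $G$-chief property of $M/N$ together with the extension relation between $\mf{b}$ and $\mf{b}^G$. I read the statement as implicitly requiring $L \leq M$ (as suggested by the ``in particular'' clause, which concerns only $H$-invariant subgroups of $M/N$, and as needed to apply the chief property of $M/N$).

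First I would replace $L$ by its $G$-normal closure $\ngrp{L}_G$; since both $N$ and $M$ are $G$-normal, we still have $\ngrp{L}_G \leq M$ and $\ngrp{L}_G \nleq N$. Moreover, if $\ngrp{L}_G$ covers $\mf{b}$, some $gLg\inv$ fails to centralize $\mf{b}$, so $L$ covers $g\inv .\mf{b}$. Hence it suffices to prove: for closed $G$-normal $L \leq M$ with $L \nleq N$, the subgroup $L$ covers $\mf{b}$.

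Suppose for contradiction that $L \leq \CC_H(\mf{b})$. Since $L$ is $G$-normal, $L \leq \bigcap_{g \in G} g\CC_H(\mf{b})g\inv$. Using the extension criterion (Lemma~\ref{lem:extension:min_covered} together with Observation~\ref{obs:extensions}), one checks that this $G$-core equals $\CC_G(\mf{b}^G) \cap H$. Consequently $L \leq M \cap \CC_G(\mf{b}^G)$, which is $G$-normal in $M$. Since $M/N$ is a $G$-chief factor, the $G$-invariant closed subgroup $\overline{(M \cap \CC_G(\mf{b}^G))N}/N$ of $M/N$ is either trivial or everything. The first case gives $M \cap \CC_G(\mf{b}^G) \leq N$, whence $L \leq N$, a contradiction. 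In the second case, $(M \cap \CC_G(\mf{b}^G))N$ is dense in $M$; the continuous conjugation action $\rho: M \to \Aut(M_\mf{b}/N_\mf{b})$ (where $M_\mf{b}/N_\mf{b}$ is the lowermost representative of $\mf{b}^G$ from Proposition~\ref{prop:induced_block}) annihilates $M \cap \CC_G(\mf{b}^G)$, so $\rho(M) \subseteq \overline{\rho(N)}$. When $\mf{b}^G = \mf{a}^G$ (which is automatic when $\mf{b} = \mf{a}$), we have $\CC_G(\mf{b}^G) = \CC_G(\mf{a}^G)$ and hence $M \cap \CC_G(\mf{b}^G) = N$ outright, so only the first sub-case can arise. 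When $\mf{b}^G \neq \mf{a}^G$, I would leverage the uniqueness in Theorem~\ref{thm:unique_associate} applied to the series $\triv \le N \le M$ and the block $\mf{b}^G$ to locate $M_\mf{b}$ relative to $N$, and combine this with the chief property of $M_\mf{b}/N_\mf{b}$ to show that the dense scenario forces $\rho(M) = \triv$, contradicting that $M$ covers $\mf{b}^G$.

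The main obstacle is precisely the last step: ruling out the dense sub-case when $\mf{b}^G \neq \mf{a}^G$ requires a delicate interplay between the two distinct $G$-chief factors $M/N$ and $M_\mf{b}/N_\mf{b}$, controlling the ``outer'' part of $\rho(N)$ against the density approximation for $M$.

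For the ``in particular'' clause, apply the main statement with $\mf{b} = \mf{a}$ (which is covered by $M$ since $H_\mf{a} \leq M$). Given an $H$-invariant closed $L$ with $N < L \leq M$, the lemma supplies $g \in G$ with $gH_\mf{a}g\inv \leq L$. Since $gH_\mf{a}g\inv \cap N \leq g\CC_{H_\mf{a}}(\mf{a})g\inv$ (from the explicit form of $N$ in Proposition~\ref{prop:induced_block}), the subgroup $gH_\mf{a}g\inv N/N$ of $L/N$ surjects onto the non-abelian group $H_\mf{a}/\CC_{H_\mf{a}}(\mf{a})$. Hence $L/N$ cannot be abelian.
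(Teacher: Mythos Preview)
Your proposal has a genuine gap, and more importantly, you have overcomplicated an argument that becomes trivial once you look at the right object.

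You correctly reduce to the case where $L$ is replaced by its closed $G$-normal closure $\ngrp{L}_G \le M$ with $\ngrp{L}_G \nleq N$. But then you are done immediately: $M/N$ is the lowermost representative of $\mf{a}^G$, so $N$ is the \emph{unique maximal} proper closed $G$-invariant subgroup of $M$ (this is the content of Proposition~\ref{prop:induced_block}). Hence $\ngrp{L}_G = M$, and $M$ covers $\mf{b}$ by hypothesis. There is nothing further to prove; your entire discussion of $\mf{b}^G$, the dense sub-case, and the interaction of two distinct chief factors is unnecessary, and the obstacle you identify is illusory.

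The paper's proof is even more direct and does not pass through the normal closure at all. The key object is $\bigcap_{g\in G}\CC_M(g.\mf{b})$: it is closed, $G$-invariant, and \emph{proper} in $M$ because $M$ covers $\mf{b}$ (so $\CC_M(\mf{b}) < M$). By the same maximality of $N$, this intersection lies in $N$. Since $L \nleq N$, there is some $g$ with $L \nleq \CC_M(g.\mf{b})$; as $L \le M$ this gives $L \nleq \CC_H(g.\mf{b})$, i.e.\ $L$ covers $g.\mf{b}$. Your ``in particular'' argument is essentially the same as the paper's and is fine.
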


\begin{proof}
The existence of $M/N$ is given by Proposition~\ref{prop:induced_block}. Let $\mf{A}$ be the set of $\mf{b} \in \mf{B}_H$ such that $M$ covers $\mf{b}$ and take $\mf{b} \in \mf{A}$.  Since $M$ covers $\mf{b}$, we have $\CC_M(\mf{b}) < M$.  The group $\bigcap_{g \in G}\CC_M(g.\mf{b})$ is thus a proper $G$-invariant closed subgroup of $M$ and thereby is contained in $N$, since $N$ is the unique largest proper closed $G$-invariant subgroup of $M$.  Letting $L \normal H$ such that $L \nleq N$, there exists $g \in G$ such that $L \not\le \CC_M(g.\mf{b})$, hence $L$ covers $g.\mf{b}$.  

Considering the case $\mf{b} = \mf{a}$, we see that $N$ avoids $g.\mf{a}$ via Proposition~\ref{prop:induced_block}, so $\ol{LN}/N$ covers $g.\mf{a}$, ensuring that $\ol{LN}/N$ is non-abelian.  In particular, $M/N$ has no non-trivial abelian $H$-invariant subgroups.
\end{proof}

\begin{thm}\label{thm:induced_chief}
Let $G$ be a topological group with $H$ a closed normal subgroup of $G$ and let $\mf{a}, \mf{b} \in \mf{B}^{\min}_H$.  Then $\mf{a}^G \le \mf{b}^G$ in $\mf{B}^{\min}_G$ if and only if $\mf{a} \preceq_G \mf{b}$.  In particular,
\[
\{\mf{a}^G \mid \mf{a} \in \mf{B}^{\min}_H\} \simeq \mf{B}^{\min}_H/\sim_G
\]
as partially ordered sets.
\end{thm}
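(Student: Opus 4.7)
The plan is to reduce the order relation $\mf{a}^G \le \mf{b}^G$ in $\mf{B}^{\min}_G$ to an explicit covering condition on closed normal subgroups of $H$, using the formula for lowermost representatives supplied by Proposition~\ref{prop:induced_block}. Throughout, I will use that because $H \normal G$, the normal closure $\ngrp{H_{\mf{a}}}_G$ is contained in $H$, which is the key structural fact that lets me move back and forth between covering in $G$ and covering in $H$.

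First I will apply Corollary~\ref{cor:ordering:min_cover}(1) in $G$ to rewrite $\mf{a}^G \le \mf{b}^G$ as ``$G_{\mf{b}^G}$ covers $\mf{a}^G$''. Proposition~\ref{prop:induced_block} identifies $G_{\mf{b}^G} = \ngrp{H_{\mf{b}}}_G$, and since $H_{\mf{b}} \le H \normal G$ this subgroup lies inside $H$, so $G_{\mf{b}^G}\cap H = G_{\mf{b}^G}$. Observation~\ref{obs:extensions}, applied to the extension $\mf{a}^G$ of $\mf{a}$, then converts the condition into ``$G_{\mf{b}^G}$ covers $\mf{a}$'' as a closed normal subgroup of $H$.

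Next I will expand $G_{\mf{b}^G}$ as the closure of the subgroup generated by the $G$-conjugates $\{gH_{\mf{b}}g\inv \mid g \in G\}$. A quick verification (using that $G$ permutes the closed normal subgroups of $H$ covering $g.\mf{b}$) shows $gH_{\mf{b}}g\inv = H_{g.\mf{b}}$, the lowermost representative of $g.\mf{b}$ in $H$. Since ``covers $\mf{a}$'' means ``is not contained in the closed subgroup $\CC_H(\mf{a})$'', the generated closed subgroup $G_{\mf{b}^G}$ fails to lie in $\CC_H(\mf{a})$ exactly when at least one generator $H_{g.\mf{b}}$ does. A second invocation of Corollary~\ref{cor:ordering:min_cover}(1), this time inside $H$, turns ``$H_{g.\mf{b}}$ covers $\mf{a}$'' into $\mf{a} \le g.\mf{b}$; by $G$-equivariance of centralizers (and hence of the partial order on $\mf{B}^{\min}_H$), this is equivalent to $g\inv.\mf{a} \le \mf{b}$, i.e.\ $\mf{a} \preceq_G \mf{b}$. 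Assembling the chain of equivalences yields the main assertion.

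For the ``in particular'' clause, the equivalence just established shows that the map $\mf{a}\mapsto \mf{a}^G$ from $\mf{B}^{\min}_H$ onto $\{\mf{a}^G \mid \mf{a}\in \mf{B}^{\min}_H\}$ both preserves and reflects the ordering; passing to the quotient by the equivalence relation $\sim_G$ gives the claimed poset isomorphism. There is no single hard step here; the main thing to keep track of is that normality of $H$ in $G$ is what makes $\ngrp{H_{\mf{b}}}_G$ sit inside $H$, so that Observation~\ref{obs:extensions} applies without extra work, and that the ``$G$-equivariance'' step uses nothing more than $\CC_H(g.\mf{a}) = g\CC_H(\mf{a})g\inv$.
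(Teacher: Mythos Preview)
Your argument is correct and in fact a little cleaner than the paper's. Both proofs start the same way, using Corollary~\ref{cor:ordering:min_cover} and Proposition~\ref{prop:induced_block} to identify $G_{\mf{b}^G}=\ngrp{H_{\mf{b}}}_G\le H$, but they then diverge on the forward implication. The paper invokes the structural Lemma~\ref{lem:chief:covering} applied to the lowermost representative $M/N$ of $\mf{b}^G$: since $H_{\mf{b}}\le M$ and $H_{\mf{b}}\nleq N$, that lemma produces $g\in G$ with $H_{\mf{b}}$ covering $g.\mf{a}$, hence $g.\mf{a}\le\mf{b}$. You instead unwind $G_{\mf{b}^G}$ directly as the closed subgroup generated by the conjugates $H_{g.\mf{b}}$ and observe that containment in the closed subgroup $\CC_H(\mf{a})$ is detected on generators; this bypasses Lemma~\ref{lem:chief:covering} entirely and yields both directions at once as a single chain of equivalences. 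The paper's route has the side benefit that Lemma~\ref{lem:chief:covering} is needed independently for Proposition~\ref{prop:induced_chief:minimal}, so it is not wasted effort there; but for the present theorem your approach is more self-contained.
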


\begin{proof}
Fix $\mf{a},\mf{b} \in \mf{B}^{\min}_H$.  Suppose first that $\mf{a}^G \nleq \mf{b}^G$. By Proposition~\ref{prop:block:ordering}, there is a closed normal subgroup $K$ of $G$ that covers $\mf{b}^G$ but does not cover $\mf{a}^G$.  We deduce that $K \cap H$ covers $\mf{b}$, but it does not cover $\mf{a}$. Since $K \cap H$ is normal in $G$, the subgroup $K \cap H$ avoids $g.\mf{a}$ for all $g \in G$. We conclude there does not exist $g \in G$ such that $g.\mf{a} \le \mf{b}$, and thus, $\mf{a} \not\preceq_G \mf{b}$.

Now suppose that $\mf{a}^G \le \mf{b}^G$ and let $M/N$ be the lowermost representative of $\mf{b}^G$. The subgroup $M$ covers $\mf{a}^G$ by Proposition~\ref{prop:block:ordering}, so $M$ covers $\mf{a}$. In view of Proposition~\ref{prop:induced_block}, $H_{\mf{b}}\normal H$ is contained in $M$ and not contained in $N$. Lemma~\ref{lem:chief:covering} thus implies there exists $g \in G$ such that $H_{\mf{b}}$ covers $g.\mf{a}$.  We conclude via Corollary~\ref{cor:ordering:min_cover} that $g.\mf{a} \le \mf{b}$, so $\mf{a} \preceq_G \mf{b}$.

The conclusion about the structure of $\{\mf{a}^G \mid \mf{a} \in \mf{B}^{\min}_H\}$ as a partially ordered set is now clear.
\end{proof}

Every chief block $\mf{b} = \mf{a}^G$ of $G$ that is the extension of a minimally covered chief block $\mf{a}$ of $H$ is formed either from an antichain orbit or from a proper stacking class.  We can distinguish the two cases by considering the structure of the lowermost representative of $\mf{b}$.

\begin{prop}\label{prop:induced_chief:minimal}
Let $G$ be a topological group with $H$ a closed normal subgroup of $G$ and let $\mf{a} \in \mf{B}^{\min}_H$ with $M/N$ the lowermost representative of $\mf{a}^G$.  Then 
the following are equivalent:
\begin{enumerate}[(1) ]
\item The $G$-stacking class of $\mf{a}$ is an antichain orbit;
\item $M/N$ has a minimal closed $H$-invariant subgroup;
\item The set $\mc{M}$ of minimal closed $H$-invariant subgroups of $M/N$ is of the form $\mc{M} = \{(gKg\inv)/N \mid g \in G\}$ where $K/N$ is a representative of $\mf{a}$, and $\mc{M}$ is a quasi-direct factorization of $M/N$.
\end{enumerate}
\end{prop}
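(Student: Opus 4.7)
The plan is to prove the cyclic implications $(3) \Rightarrow (2) \Rightarrow (1) \Rightarrow (3)$. Throughout, the decisive structural fact is that $M/N$ has no non-trivial abelian $H$-invariant closed subgroup (Lemma~\ref{lem:chief:covering}); in particular, $M/N$ is centerless. The implication $(3) \Rightarrow (2)$ is immediate, since a quasi-direct factorization by definition exhibits minimal closed $H$-invariant subgroups.

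For $(2) \Rightarrow (1)$, let $\mc{M}$ denote the set of all minimal closed $H$-invariant subgroups of $M/N$. Distinct $K_1/N, K_2/N \in \mc{M}$ commute modulo $N$: the intersection $(K_1 \cap K_2)/N$ is an $H$-invariant proper subgroup of $K_1/N$, hence trivial by minimality, so $[K_1, K_2] \le K_1 \cap K_2 = N$. The group $G$ permutes $\mc{M}$ by conjugation (normalizing $H$, $M$, and $N$) and acts transitively on it: the closed $G$-invariant subgroup topologically generated by any $G$-orbit in $\mc{M}$ must equal $M/N$ by the chief factor hypothesis, and any element of $\mc{M}$ outside that orbit would commute with every member of the orbit, hence centralize $M/N$, contradicting centerlessness. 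Each $K/N \in \mc{M}$ is a non-abelian chief factor of $H$; a minimality argument dual to the above shows any closed $H$-normal subgroup covering $\mf{b}_K := [K/N]$ must contain $K$, placing $\mf{b}_K$ in $\mf{B}^{\min}_H$, and Lemma~\ref{lem:extension:min_covered} applied to the lowermost representative $M/N$ yields $\mf{b}_K^G = \mf{a}^G$. The assignment $K \mapsto \mf{b}_K$ is $G$-equivariant and injective (association between chief factors sharing the denominator $N$ forces equal numerators), with image a single $G$-orbit by transitivity; Theorem~\ref{thm:induced_chief} identifies this orbit with the entire $G$-stacking class of $\mf{a}$, and pairwise incomparability of distinct $\mf{b}_K$'s (their lowermost covers have intersection at most $N$) gives the antichain condition. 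The same $\mc{M}$ simultaneously produces condition $(3)$ upon selecting $K \in \mc{M}$ with $[K/N] = \mf{a}$, which exists because $\mf{a}$ lies in the $G$-orbit under consideration.

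For $(1) \Rightarrow (3)$, set $K := \overline{H_{\mf{a}} N}$; the crux is to show $K/N$ is a minimal closed $H$-invariant subgroup of $M/N$. Once established, rerunning the structural analysis of $(2) \Rightarrow (1)$ shows $\{gKg\inv /N : g \in G\}$ is the full set of minimal $H$-invariants and forms a quasi-direct factorization, while direct verification of the three association identities (using $N \le \CC_H(\mf{a})$ and $H_{\mf{a}} \cap \overline{\CC_{H_{\mf{a}}}(\mf{a}) N} = \CC_{H_{\mf{a}}}(\mf{a})$) confirms $[K/N] = \mf{a}$. To prove minimality, suppose for contradiction $L$ is closed $H$-invariant with $N < L < K$. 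If $L \cap H_{\mf{a}}$ covered $\mf{a}$, it would equal $H_{\mf{a}}$ by the lowermost property, so $L \ge H_{\mf{a}}$ and hence $L \ge K$, a contradiction; thus $L \cap H_{\mf{a}} \le \CC_{H_{\mf{a}}}(\mf{a})$, which forces $L \le \CC_H(\mf{a})$. Lemma~\ref{lem:chief:covering} produces $g \in G$ with $gH_{\mf{a}}g\inv \le L$, whence $gKg\inv \le L \subsetneq K$; iterating yields a strictly descending chain $K \supsetneq gKg\inv \supsetneq g^2Kg^{-2} \supsetneq \cdots$ of closed normal subgroups of $H$. The main obstacle is turning this chain into a contradiction with the antichain orbit hypothesis: I will apply Proposition~\ref{prop:unique_associate:minimal} to $H$, the block $\mf{a}$, and this descending chain, then analyze the location of the unique cut $\mc{D} = \{R \in \mc{C} : R \le \CC_H(\mf{a})\}$; combined with the fact that $H_{\mf{a}}$ is the lowermost $H$-normal cover of $\mf{a}$ (which constrains where the representative $A/B$ of $\mf{a}$ promised by the proposition can sit in the chain), this will force an inclusion $g^n H_{\mf{a}} g^{-n} \le H_{\mf{a}}$ for some $n \ge 1$. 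By Corollary~\ref{cor:ordering:min_cover}(2) this translates to $g^n.\mf{a} \le \mf{a}$, which contradicts the antichain property since $g^n.\mf{a} \ne \mf{a}$ (if equality held, $H_{\mf{a}} = g^n H_{\mf{a}} g^{-n} \le \CC_H(\mf{a})$, contradicting that $H_{\mf{a}}$ covers $\mf{a}$).
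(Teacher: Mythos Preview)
Your argument has two genuine gaps.

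\textbf{In $(2)\Rightarrow(1)$:} The claim that every closed $H$-normal subgroup $L$ covering $\mf{b}_K=[K/N]$ must contain $K$ is not justified by the vague ``minimality argument dual to the above'', and in fact is stronger than what holds.  From $[L,K]\le L\cap K$ and $[L,K]\not\le N$ one obtains only $\overline{(L\cap K)N}=K$; this does not force $K\le L$ unless $N\le L$.  Consequently the invocation of Lemma~\ref{lem:extension:min_covered} (which needs $\mf{b}_K\in\mf{B}_H^{\min}$) is unsupported.  The paper avoids this entirely: it first uses Lemma~\ref{lem:chief:covering} and Theorem~\ref{thm:unique_associate} to identify some $gKg^{-1}/N$ as a \emph{representative} of $\mf{a}$, whence $\mf{b}_K=g^{-1}.\mf{a}\in\mf{B}_H^{\min}$ follows a posteriori from $G$-invariance of $\mf{B}_H^{\min}$.

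\textbf{In $(1)\Rightarrow(3)$:} After correctly reaching $gKg^{-1}\le L<K$ with $L\le \CC_H(\mf{a})$, your plan to feed the descending chain $\{g^nKg^{-n}\}$ into Proposition~\ref{prop:unique_associate:minimal} does not work.  First, that proposition requires $H$ to be Polish, an assumption absent from the present statement.  Second, even granting Polish, the proposition yields a representative $A/B\in\mf{a}$ with $gKg^{-1}\le B<A\le K$; this gives $H_{\mf{a}}\le A$ and $gH_{\mf{a}}g^{-1}\le B$, but no containment $g^nH_{\mf{a}}g^{-n}\le H_{\mf{a}}$ follows, so the promised contradiction never materialises.

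The paper's route is both simpler and valid in the stated generality: one argues the contrapositive $\neg(2)\Rightarrow\neg(1)$ and obtains $g.\mf{a}<\mf{a}$ directly from the data you already have.  Indeed, $R\le K=\overline{H_{\mf{a}}N}$ covers $g.\mf{a}$ while $N\le\CC_H(g.\mf{a})$; hence $H_{\mf{a}}\not\le\CC_H(g.\mf{a})$ (else $K\le\CC_H(g.\mf{a})$), so $H_{\mf{a}}$ covers $g.\mf{a}$ and Corollary~\ref{cor:ordering:min_cover} gives $g.\mf{a}\le\mf{a}$.  Since $R$ covers $g.\mf{a}$ but not $\mf{a}$, we have $g.\mf{a}\ne\mf{a}$, so $g.\mf{a}<\mf{a}$ and the stacking class is not an antichain.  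No descending-chain machinery is needed.
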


\begin{proof}
Suppose that $(2)$ holds and let $K/N$ be a minimal closed $H$-invariant subgroup of $M/N$. Since $M/N$ is a chief factor of $G$, we see that $M/N$ is generated topologically by the set $\mc{S} := \{(gKg\inv)/N \mid g \in G\}$. The minimality of $K/N$ ensures any two distinct $G$-conjugates of $K/N$ have trivial intersection and hence commute; additionally, since $M/N$ is a non-abelian chief factor of $G$, we have $\Z(M/N) = \triv$.  Proposition~\ref{prop:quasiproduct:centralizers} thus implies that $\mc{S}$ is a quasi-direct factorization of $M/N$. It is now easy to see that $\mc{S}=\mc{M}$.
%
%


The factor $K/N$ is a representative of some chief block $\mf{c}$ of $H$. In view of Lemma~\ref{lem:chief:covering}, there is some $gKg^{-1}/N$ that covers $\mf{a}$. Theorem~\ref{thm:unique_associate} ensures we may insert $H$-invariant subgroups $N\leq B<A\leq gKg^{-1}$ so that $A/B$ is associated to $\mf{a}$. Since $gKg^{-1}/N$ is a chief factor of $H$, we deduce that $A/B=gKg^{-1}/N$, hence $g.\mf{c}=\mf{a}$. Claim $(3)$ is now established. For claim $(1)$, taking any $\mf{a}'\in \mf{B}_H^{\min}$ so that $\mf{a}^G$ is the extension of $\mf{a}'$, it follows that $\mf{a}'\in\{g.\mf{a} \mid g \in G\}$.  The set $\{g.\mf{a} \mid g \in G\}$ is thus the class of minimally covered blocks of $H$ that are extendable to $\mf{a}^G$, and Theorem~\ref{thm:induced_chief} implies this set is exactly the $G$-stacking class of $\mf{a}$. We thus deduce that $(2)$ implies both $(1)$ and $(3)$.

Conversely, suppose that $(2)$ does not hold and set $K := \ol{H_{\mf{a}}N}$.  The factor $K/N$ is non-trivial since $M/N$ covers $\mf{a}$, and it is not minimal as a closed $H$-invariant subgroup of $M/N$. There thus exists $N < R < K$ with $R$ closed and normal in $H$.  Appealing to Lemma~\ref{lem:chief:covering}, $R/N$ covers $g.\mf{a}$ for some $g\in G$, and the minimality of $H_{\mf{a}}$ ensures that $R/N$ does not cover $\mf{a}$. We deduce that $g.\mf{a} <\mf{a}$, proving that the $G$-stacking class of $\mf{a}$ is not an antichain. Hence,  $(1)$ implies $(2)$.

That $(3)$ implies $(2)$ is immediate, so the proof is complete.
\end{proof}

In locally compact Polish groups, there is a large collection of chief blocks which are both minimally covered and extendable from any \emph{open} subgroup; this is shown in \cite{RW_LC}. This result, however, depends on the structure of locally compact groups.  

\subsection{Chief block structure of chief factors}\label{ssec:chief_block_structure} A chief factor can be topologically simple, or more generally, it can be a quasi-direct product of copies of a simple group. We now argue there are only two further possibilities for the structure of a chief factor.

\begin{defn}\label{def:A-simple_types} 
Let $G$ be a topologically characteristically simple group. 
\begin{enumerate}[(1) ]
\item The group $G$ is of \defbold{weak type}\index{weak type} if $\mf{B}_G^{\min}=\emptyset$. 
\item The group $G$ is of \defbold{stacking type}\index{stacking type} if $\mf{B}_G^{\min}\neq \emptyset$ and for all $\mf{a}, \mf{b} \in \mf{B}_G^{\min}$, there exists $\psi \in \Aut(G)$ such that $\psi.\mf{a} < \mf{b}$.
\end{enumerate}
\end{defn}


\begin{thm}\label{thm:chief:block_structure}
Suppose that $G$ is an $A$-simple topological group for some $A \le \Aut(G)$. Then exactly one of the following holds:
\begin{enumerate}[(1) ]
\item The group $G$ is of weak type.
\item The group $G$ is of semisimple type, and $A$ acts transitively on $\mf{B}_G$.
\item The group $G$ is of stacking type, and for all $\mf{a}, \mf{b} \in \mf{B}_G^{\min}$, there exists $\psi \in A$ such that $\psi.\mf{a} < \mf{b}$. 
\end{enumerate}
\end{thm}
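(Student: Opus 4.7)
The plan is to establish mutual exclusivity and then split on whether $\mf{B}_G^{\min}$ is empty. For mutual exclusivity, weak type has $\mf{B}_G^{\min}=\emptyset$, while in semisimple type $\mf{B}_G=\mf{B}_G^{\min}$ is an antichain by Proposition~\ref{prop:semisimple_type:blocks}, and stacking type demands a strict ordering $\psi.\mf{a}<\mf{a}$ that an antichain forbids. If $\mf{B}_G^{\min}=\emptyset$, we are in case (1); henceforth assume $\mf{B}_G^{\min}\neq\emptyset$ and fix $\mf{a}\in\mf{B}_G^{\min}$. Since $\Z(G)$ is $A$-invariant and the existence of a non-abelian chief factor rules out $\Z(G)=G$, $A$-simplicity gives $\Z(G)=\triv$.

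The key reduction is to show $\mf{B}_G^{\min}$ consists of a single $A$-stacking class. I would set $C:=\bigcap_{\alpha\in A}\CC_G(\alpha.\mf{a})$; this is $A$-invariant, closed, and normal in $G$, and each $\CC_G(\alpha.\mf{a})$ is a proper subgroup (the chief factor is non-abelian), so $A$-simplicity forces $C=\triv$. Given any $\mf{b}\in\mf{B}_G^{\min}$, the non-trivial subgroup $G_{\mf{b}}$ must escape some $\CC_G(\alpha.\mf{a})$, hence covers $\alpha.\mf{a}$, so $\alpha.\mf{a}\le\mf{b}$ by Corollary~\ref{cor:ordering:min_cover}(1); symmetry yields $\mf{a}\sim_A\mf{b}$. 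The proof of Lemma~\ref{lem:stackingclass_types} is purely order-theoretic and transfers verbatim to $A$-actions, so this class is either an antichain $A$-orbit or a proper $A$-stacking class. The proper stacking case immediately gives condition (3) from $A\le\Aut(G)$, establishing stacking type.

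The main obstacle is the antichain case, where I must show $G$ is of semisimple type. The antichain gives $G_{\mf{c}}\le\CC_G(\mf{b})$ for $\mf{c}\neq\mf{b}$, and $A$-simplicity applied to the orbit of $G_{\mf{a}}$ forces $G=\cgrp{G_{\mf{b}}\mid\mf{b}\in\mf{B}_G^{\min}}$. Quotienting by $\CC_G(\mf{b})$ kills every $G_{\mf{c}}$ with $\mf{c}\neq\mf{b}$, so the quotient equals the closed socle (Proposition~\ref{prop:upper-rep}), hence is topologically simple. The diagonal $G\to\prod_{\mf{b}}G/\CC_G(\mf{b})$ is injective because $C=\triv$, and a direct comparison identifies $G_{\mf{b}}$ with $\bigcap_{\mf{c}\neq\mf{b}}\CC_G(\mf{c})$: both surject onto the simple quotient $G/\CC_G(\mf{b})$ via the projection and the latter does so injectively, forcing the inclusion $G_{\mf{b}}\subseteq\bigcap_{\mf{c}\neq\mf{b}}\CC_G(\mf{c})$ to be equality; in particular $\CC_{G_{\mf{b}}}(\mf{b})=\triv$. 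The open mapping theorem for Polish groups (implicit throughout this development) then upgrades the continuous bijection $G_{\mf{b}}\to G/\CC_G(\mf{b})$ to a topological isomorphism, so $G_{\mf{b}}$ is topologically simple non-abelian and hence a component of $G$. Since these components generate $G$, we obtain $G=E(G)$; transitivity of $A$ on $\mf{B}_G=\mf{B}_G^{\min}$ (equality by Proposition~\ref{prop:semisimple_type:blocks}) delivers condition (2).
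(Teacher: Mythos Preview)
Your reduction to a single $A$-stacking class is clean and correct, and the overall architecture matches the paper's: dichotomize via Lemma~\ref{lem:stackingclass_types}, then handle the antichain and proper-stacking cases separately. The paper does this by passing to the semidirect product $G\rtimes A$ and invoking Theorem~\ref{thm:induced_chief} and Proposition~\ref{prop:induced_chief:minimal}; your argument via $C=\bigcap_{\alpha}\CC_G(\alpha.\mf{a})=\triv$ and Corollary~\ref{cor:ordering:min_cover} is a pleasant direct alternative that avoids that machinery.

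There is, however, a genuine gap in your antichain case. You assert that $G_{\mf{b}}\to G/\CC_G(\mf{b})$ is a \emph{bijection} and then invoke the open mapping theorem. Neither step is justified: the map has dense image (since $G=\ol{G_{\mf{b}}\CC_G(\mf{b})}$) but there is no reason for $G_{\mf{b}}\CC_G(\mf{b})$ to be closed, so surjectivity fails in general; and the theorem is stated for arbitrary topological groups, not Polish ones, so open mapping is unavailable anyway. The same objection applies to your identification $G_{\mf{b}}=\bigcap_{\mf{c}\neq\mf{b}}\CC_G(\mf{c})$: both groups map densely but not visibly surjectively into $G/\CC_G(\mf{b})$, so the ``forcing equality'' step does not go through.

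The fix is short and stays within your framework. You have already correctly shown $\CC_{G_{\mf{b}}}(\mf{b})=G_{\mf{b}}\cap\CC_G(\mf{b})\le\bigcap_{\mf{c}}\CC_G(\mf{c})=\triv$. This means $G_{\mf{b}}/\triv$ itself represents $\mf{b}$, so $G_{\mf{b}}$ is a \emph{minimal closed normal subgroup} of $G$. Since $G$ is $A$-simple and non-abelian it is topologically characteristically simple, and Proposition~\ref{prop:semisimple:charsimple} immediately yields that $G$ is of semisimple type. (If you prefer to see the component directly: $\CC_G(\mf{b})=\CC_G(G_{\mf{b}}/\triv)$ centralizes $G_{\mf{b}}$, so every closed normal subgroup of $G_{\mf{b}}$ is normalized by $\ol{G_{\mf{b}}\CC_G(\mf{b})}=G$ and hence trivial or all of $G_{\mf{b}}$.) This is exactly how the paper finishes the antichain case, via Proposition~\ref{prop:induced_chief:minimal} feeding into Proposition~\ref{prop:semisimple:charsimple}.
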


\begin{proof}
The case that $\mf{B}^{\min}_G$ is empty is covered by $(1)$, so we assume that $\mf{B}^{\min}_G$ is non-empty. Giving $A$ the discrete topology, the group $G\rtimes A$ is a topological group under the product topology. Moreover, since $G$ is $A$-simple, the chief factor of $G \rtimes A$ covering $\mf{a}^G$ for any $\mf{a} \in \mf{B}^{\min}_G$ is $G/\triv$.  We deduce that $\mf{B}^{\min}_G$ forms a single $G \rtimes A$-stacking class via Theorem~\ref{thm:induced_chief}.

Lemma~\ref{lem:stackingclass_types} ensures that either $\mf{B}^{\min}_G$ is an antichain orbit or it is a proper stacking class.  If it is an antichain orbit, then $G$ has a minimal closed normal subgroup by Proposition~\ref{prop:induced_chief:minimal}, and hence $G$ is of semisimple type by Proposition~\ref{prop:semisimple:charsimple}.  Proposition~\ref{prop:semisimple_type:blocks} implies $\mf{B}_G = \mf{B}^{\min}_G$, and $(2)$ now follows. If instead $\mf{B}^{\min}_G$ is a proper stacking class, then $(3)$ follows.

The three possibilities are mutually exclusive: cases $(2)$ and $(3)$ are mutually exclusive by considering the ordering on $\mf{B}^{\min}_G$, and both cases imply that $\mf{B}^{\min}_G$ is non-empty.
\end{proof}

The following is an immediate consequence of Theorem~\ref{thm:chief:block_structure} and the relevant definitions.

\begin{cor}\label{cor:types_factors}If $G$ is a topological group and $K/L$ is a chief factor of $G$, then $K/L$ is either of weak type, semisimple type, or stacking type.
\end{cor}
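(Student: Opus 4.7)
The plan is to deduce Corollary~\ref{cor:types_factors} directly from Theorem~\ref{thm:chief:block_structure} by realizing the chief factor $K/L$ as an $A$-simple topological group in a canonical way. Specifically, the conjugation action of $G$ on $K/L$ factors through a homomorphism $\alpha : G \to \Aut(K/L)$, and I would set $A := \alpha(G) \leq \Aut(K/L)$.

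The first step is to verify $A$-simplicity. An $A$-invariant closed normal subgroup of $K/L$ is of the form $M/L$ with $L \leq M \leq K$, $M$ closed in $G$, $M \normal K$, and $gMg^{-1} = M$ for every $g \in G$; since $K \normal G$, combining normality in $K$ with $G$-invariance yields $M \normal G$. Conversely, any closed normal subgroup $M$ of $G$ with $L \leq M \leq K$ gives a closed $A$-invariant normal subgroup $M/L$ of $K/L$. Because $K/L$ is a chief factor of $G$, the only admissible $M$ are $L$ and $K$, so $K/L$ is $A$-simple.

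Next I would apply Theorem~\ref{thm:chief:block_structure} to the $A$-simple topological group $K/L$ with the above $A$. This yields exactly one of three mutually exclusive conclusions: $K/L$ is of weak type, $K/L$ is of semisimple type, or $\mf{B}^{\min}_{K/L} \neq \emptyset$ together with the stacking condition witnessed by elements of $A$. In the first two cases, the conclusion matches Definition~\ref{def:A-simple_types} verbatim, since weak and semisimple type are defined without reference to any particular group of acting automorphisms. In the third case, since $A \leq \Aut(K/L)$, any stacking relation witnessed by an element of $A$ is in particular witnessed by an element of $\Aut(K/L)$, so $K/L$ is of stacking type in the sense of Definition~\ref{def:A-simple_types}.

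The only potential obstacle is this last piece of bookkeeping: the trichotomy in Theorem~\ref{thm:chief:block_structure} is phrased in terms of $A$, while Definition~\ref{def:A-simple_types} quantifies over all of $\Aut(K/L)$. Because $A \leq \Aut(K/L)$, the implication goes in the convenient direction and no new machinery is needed; the entire corollary is therefore a straightforward specialization of Theorem~\ref{thm:chief:block_structure}.
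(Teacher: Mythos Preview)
Your proof is correct and is precisely the unpacking of what the paper calls ``an immediate consequence of Theorem~\ref{thm:chief:block_structure} and the relevant definitions.'' One small point you leave implicit: Definition~\ref{def:A-simple_types} is stated only for topologically characteristically simple groups, but this follows from your $A$-simplicity argument, since any closed characteristic subgroup of $K/L$ is in particular $A$-invariant (as $A \le \Aut(K/L)$) and hence equals $\{1\}$ or $K/L$.
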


We also note that for stacking to occur, the automorphism group must contain elements of infinite order.

\begin{cor} 
If $G$ is an $A$-simple topological group with $A$ a torsion group of continuous automorphisms of $G$, then $G$ is either of weak type or of semisimple type.
\end{cor}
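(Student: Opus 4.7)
The plan is to apply Theorem~\ref{thm:chief:block_structure}, which gives a trichotomy: $G$ is of weak type, of semisimple type, or of stacking type. Since the conclusion allows the first two possibilities, it suffices to derive a contradiction from $G$ being of stacking type under the hypothesis that $A$ is a torsion group.

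First I would record a basic equivariance observation: for any continuous automorphism $\psi$ of $G$ and any chief block $\mf{c} \in \mf{B}_G$, one has $\CC_G(\psi.\mf{c}) = \psi(\CC_G(\mf{c}))$. Consequently, for $\mf{a},\mf{b} \in \mf{B}_G$ and $\psi \in A$,
\[
\psi.\mf{a} \le \psi.\mf{b} \iff \psi(\CC_G(\mf{a})) \le \psi(\CC_G(\mf{b})) \iff \mf{a} \le \mf{b},
\]
so $A$ acts on $\mf{B}_G$ (and $\mf{B}_G^{\min}$) by poset automorphisms. In particular, $A$ acts on $\mf{B}_G^{\min}$ preserving the strict order $<$.

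Next, suppose for contradiction that $G$ is of stacking type. By Theorem~\ref{thm:chief:block_structure}(3), $\mf{B}_G^{\min}$ is non-empty and for all $\mf{a},\mf{b} \in \mf{B}_G^{\min}$ there exists $\psi \in A$ with $\psi.\mf{a} < \mf{b}$. Choose any $\mf{a} \in \mf{B}_G^{\min}$ and apply this with $\mf{b} = \mf{a}$ to obtain $\psi \in A$ with $\psi.\mf{a} < \mf{a}$. Since $A$ acts by order automorphisms, iterating gives
\[
\cdots < \psi^n.\mf{a} < \psi^{n-1}.\mf{a} < \cdots < \psi.\mf{a} < \mf{a}
\]
for all $n \ge 1$. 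But $\psi$ has finite order, say $k$, so $\psi^k = \mathrm{id}$ and hence $\mf{a} = \psi^k.\mf{a} < \mf{a}$, a contradiction. Therefore $G$ is not of stacking type, and by Theorem~\ref{thm:chief:block_structure} it must be of weak or semisimple type.

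The argument is essentially immediate once the trichotomy is in hand; there is no serious obstacle. The only subtle point to verify carefully is the equivariance of centralizers under $\Aut(G)$, which is what guarantees that the $A$-action respects $<$ on $\mf{B}_G^{\min}$ and thus makes the iteration argument work.
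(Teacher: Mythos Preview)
Your proof is correct and follows essentially the same approach as the paper: both invoke the trichotomy of Theorem~\ref{thm:chief:block_structure}, assume stacking type for contradiction, find $\psi\in A$ with $\psi.\mf{a}<\mf{a}$, and iterate using the finite order of $\psi$ to reach the absurdity $\mf{a}<\mf{a}$. Your version is slightly more explicit in justifying that $A$ acts by order automorphisms on $\mf{B}_G^{\min}$, which the paper uses implicitly in asserting $\psi^n.\mf{a}<\mf{a}$.
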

\begin{proof}
Suppose that $G$ is not of weak type and suppose for contradiction that $G$ is of stacking type. Taking $\mf{a}\in \mf{B}_G^{\min}$, we may find $\psi\in A$ so that $\psi.\mf{a}<\mf{a}$. It follows that $\psi^n.\mf{a}<\mf{a}$ for all $n\ge 1$, and since $\psi$ has finite order, this is absurd. The group $G$ is thus of semisimple type.
\end{proof}

In Polish groups, the three types are also association invariants.

\begin{prop}\label{prop:type_invariant}
Given a Polish group $G$ and $\mf{a} \in \mf{B}_G$, every representative of $\mf{a}$ is of the same type: weak, semisimple, or stacking.
\end{prop}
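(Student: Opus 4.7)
Given two representatives $K_1/L_1, K_2/L_2 \in \mf{a}$, by Lemma~\ref{lem:associated:common_compression} the factor $K/L := \ol{K_1K_2}/\ol{L_1L_2}$ is a common internal compression of $K_1/L_1$ and $K_2/L_2$, and Lemma~\ref{lem:associated:secondiso} then supplies $G$-equivariant normal compressions $\phi_i: K_i/L_i \to K/L$ between Polish groups. The plan is to reduce to the statement: if $\psi: N \to H$ is a normal compression between Polish groups that are both non-abelian chief factors of an ambient topological group, then $N$ and $H$ share the same type. Applying this to $(N,H) = (K_i/L_i, K/L)$ for $i = 1,2$ then proves the proposition.

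Before invoking the trichotomy, I would verify that each of $K_1/L_1$, $K_2/L_2$, and $K/L$ is topologically characteristically simple and topologically perfect, so that Definition~\ref{def:A-simple_types} and Corollary~\ref{cor:types_factors} apply to them intrinsically. Characteristic simplicity follows from $G$-simplicity: every closed characteristic subgroup of a chief factor of $G$ is automatically $G$-invariant (because $G$ acts by continuous automorphisms via conjugation), and hence must be trivial or the whole group. Topological perfectness follows by the same principle: $\ol{[K_i,K_i]L_i}/L_i$ is a closed $G$-invariant subgroup of $K_i/L_i$ which is non-trivial by non-abelianness, so it coincides with $K_i/L_i$.

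I would then handle the three types separately. For weak type, Proposition~\ref{prop:compression_minimally_covered} provides a bijection between $\mf{B}^{\min}_{N}$ and $\mf{B}^{\min}_{H}$, so one is empty precisely when the other is. For semisimple type, Proposition~\ref{prop:semisimple:compressions}$(1)$ transports semisimplicity from $N$ to $H$, while Proposition~\ref{prop:semisimple:compressions}$(2)$ transports it back, because topological perfectness of $N$ forces $\ol{[N,N]} = N$. Stacking type is then preserved by elimination, since Corollary~\ref{cor:types_factors} guarantees every chief factor of $G$ falls into exactly one of the three types.

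The main obstacle I anticipate is conceptual rather than computational: stacking type is defined in terms of $\Aut$-orbits inside $\mf{B}^{\min}$, and a normal compression $\psi: N \to H$ is not in general $\Aut$-equivariant, so a direct verification that stacking passes through $\psi$ would require a delicate comparison between $\Aut(N)$ and $\Aut(H)$ acting on their respective block posets. The trichotomy in Theorem~\ref{thm:chief:block_structure} allows us to sidestep this by handling stacking purely by elimination, so the entire weight of the argument rests on the preservation results for weak and semisimple type established in Propositions~\ref{prop:compression_minimally_covered} and~\ref{prop:semisimple:compressions}.
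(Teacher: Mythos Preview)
Your approach is essentially the same as the paper's: pass through a common target of normal compressions from the two representatives, then transfer each type across the compressions (weak via Proposition~\ref{prop:compression_minimally_covered}, semisimple directly, stacking by elimination). The paper routes through the uppermost representative $M/C := G^{\mf{a}}/\CC_G(\mf{a})$ rather than the common compression $K/L := \ol{K_1K_2}/\ol{L_1L_2}$ from Lemma~\ref{lem:associated:common_compression}, and handles the semisimple case order-theoretically (via Proposition~\ref{prop:semisimple_type:blocks}: semisimple $\Leftrightarrow$ $\mf{B}^{\min}$ is a nonempty antichain) rather than through Proposition~\ref{prop:semisimple:compressions}. Both routes are valid.

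There is one wrinkle in your framing. You announce the reduction as ``$\psi:N\to H$ a normal compression between Polish groups that are \emph{both} non-abelian chief factors'' and then verify that $K/L$ is characteristically simple. But Lemma~\ref{lem:associated:common_compression} only produces a normal \emph{factor} $K/L$, not a chief factor: writing $C:=\CC_G(\mf{a})$, one checks that every proper closed $G$-invariant subgroup of $K/L$ lies in $(K\cap C)/L = \Z(K/L)$, and there is no reason for $K\cap C$ to equal $L$ in general (closure and intersection need not commute in the passage from $K_1L_2\cap C = L_1L_2$ to $\ol{K_1L_2}\cap C$). So $K/L$ may have nontrivial centre and fail to be characteristically simple. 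This is why the paper uses the uppermost representative, which is a genuine chief factor by Proposition~\ref{prop:upper-rep}.

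Fortunately your actual argument does not need the type of $K/L$ to be defined: you only use $K/L$ as a conduit, comparing $\mf{B}^{\min}_{K_i/L_i}$ with $\mf{B}^{\min}_{K/L}$ and transferring semisimplicity through $K/L$ via Proposition~\ref{prop:semisimple:compressions} (which applies to arbitrary Polish groups, and $K/L$ is topologically perfect since it contains a dense image of the perfect group $K_1/L_1$). The elimination step for stacking only needs the trichotomy for $K_1/L_1$ and $K_2/L_2$, which are bona fide chief factors. So drop the claim that $K/L$ is characteristically simple, rephrase the reduction accordingly, and the proof stands.
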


\begin{proof}
Let $M/C$ be the uppermost representative of $\mf{a}$ and let $K_1/L_1$ and $K_2/L_2$ be two representatives of $\mf{a}$. Proposition~\ref{prop:upper-rep} supplies normal compressions $K_1/L_1\rightarrow M/C$ and $K_2/L_2\rightarrow M/C$. Theorem~\ref{thm:compression:chief} gives block space isomorphisms $\phi_i:(X_{M/C},\leq)\rightarrow (X_{K_i/L_i},\leq)$ with inverse morphism $\chi_i$ for $i\in \{1,2\}$.  The composition of induced maps $\wt{\phi_2}\circ\wt{\chi_1}:\mf{B}_{K_1/L_1}\rightarrow \mf{B}_{K_2/L_2}$ is an isomorphism of partially ordered sets.  Applying Proposition~\ref{prop:compression_minimally_covered} twice, we see that this map restricts to an isomorphism of partially ordered sets $\wt{\phi_2}\circ\wt{\chi_1}:\mf{B}^{\min}_{K_1/L_1}\rightarrow \mf{B}^{\min}_{K_2/L_2}$, where $\mf{B}^{\min}_{K_1/L_1}$ and $\mf{B}^{\min}_{K_2/L_2}$ have the induced partial order.

The case of weak type is now immediate:
\[
K_1/L_1 \text{ is of weak type } \Leftrightarrow \mf{B}^{\min}_{K_1/L_1} = \emptyset \Leftrightarrow \mf{B}^{\min}_{K_2/L_2} = \emptyset \Leftrightarrow K_2/L_2 \text{ is of weak type}.
\]
If $K_1/L_1$ is of semisimple type, then Proposition~\ref{prop:semisimple_type:blocks} ensures that $\mf{B}^{\min}_{K_1/L_1}$ is an antichain, hence $\mf{B}^{\min}_{K_2/L_2}$ is also an antichain. Since $K_2/L_2$ is either of semisimple or stacking type, the factor $K_2/L_2$ must be of semisimple type.  Reversing the roles of $K_1/L_1$ and $K_2/L_2$ gives the converse implication. We thus deduce that $K_1/L_1$ is of semisimple type if and only if $K_2/L_2$ is of semisimple type. The proposition now follows.
\end{proof}

With Proposition~\ref{prop:type_invariant} in hand, the following definition is sensible: 
\begin{defn} For $G$ a Polish group, a chief block $\mf{a}\in \mf{B}_G$ is of \textbf{weak/\allowbreak semisimple/\allowbreak stacking type} if some (equivalently, all) representative(s) are of weak/\allowbreak semisimple/\allowbreak stacking type.
\end{defn}

Given a minimal closed normal subgroup $M$ of a Polish group $G$, one might hope at this point for a more detailed description of how $M$ is built out of chief factors, or at least of the poset $\mf{B}^{\min}_M$ in the case of stacking type.  See \S\ref{ex:stacking} for an indication of why such a description is likely to be difficult to obtain for general Polish groups.

To conclude the section, let us briefly observe how the type of a chief block can change when it is extended from a normal subgroup.

\begin{prop}
Let $G$ be a Polish group with $H$ a closed normal subgroup of $G$ and $\mf{a} \in \mf{B}^{\min}_H$.  If $\mf{a}^G$ is of semisimple type, then $\mf{a}$ is of the same type.
\end{prop}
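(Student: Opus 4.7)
The plan is to pass to the lowermost representative $M/N$ of $\mf{a}^G$ supplied by Proposition~\ref{prop:induced_block}, where $M = \ngrp{H_{\mf{a}}}_G$ and, crucially, $M \le H$ because $H_{\mf{a}} \le H$ and $H \normal G$. Consequently, every closed $H$-invariant subgroup of $M/N$ is automatically normal in $M/N$. Since $M/N$ is a non-abelian chief factor of $G$ it is centerless (its center is a proper $G$-invariant closed normal subgroup), and the hypothesis that $\mf{a}^G$ is of semisimple type makes $M/N$ a topological group of strict semisimple type; by Theorem~\ref{thmintro:min_normal:simple}(2) it is a quasi-direct product of its topologically simple components.

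The heart of the argument is to exhibit a representative of some $G$-translate of $\mf{a}$ that is manifestly semisimple. The group $H$ acts by conjugation on the set $\mc{C}$ of components of $M/N$. Pick any $H$-orbit $\mc{O} \subseteq \mc{C}$ and set $L/N := \cgrp{\mc{O}}$. Then $L/N$ is a closed $H$-invariant subgroup of $M/N$, and I claim it is minimal among such subgroups. Indeed, let $K/N$ be a non-trivial closed $H$-invariant subgroup of $L/N$; since $M \le H$, $K/N$ is normal in $M/N$. By Proposition~\ref{prop:components}(1) $K/N$ either contains or centralizes each component of $M/N$, and centralizing every component would place $K/N$ in the trivial center of $M/N$. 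Hence $K/N$ contains a component $P$ of $M/N$; since $P \le L/N$, Observation~\ref{obs:component}(4) together with Proposition~\ref{prop:components}(3) identifies $P$ as an element of $\mc{O}$. Then $H$-invariance forces $K/N \supseteq \mc{O}$, so $K/N = L/N$, giving minimality.

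Now Proposition~\ref{prop:induced_chief:minimal}, via condition (2), tells us that every minimal closed $H$-invariant subgroup of $M/N$ is $(gKg\inv)/N$ for some $g \in G$ and some representative $K/N$ of $\mf{a}$; thus $L/N$ is a representative of $g.\mf{a}$ for some $g \in G$. But $L/N$ is topologically generated by elements of $\mc{O}$, each of which is a component of $L/N$ by Observation~\ref{obs:component}(4), so $L/N$ is of semisimple type. By the definition of the type of a chief block (together with Proposition~\ref{prop:type_invariant} for invariance across representatives), $g.\mf{a}$ is of semisimple type. Finally, conjugation by $g$ is a topological automorphism of $H$ sending representatives of $\mf{a}$ to isomorphic representatives of $g.\mf{a}$, so these two blocks share their type; hence $\mf{a}$ is of semisimple type.

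The main obstacle is the minimality claim in the second paragraph: one must leverage both the quasi-direct product structure of the strict semisimple group $M/N$ and the fact that its center vanishes, in order to argue that any non-trivial $H$-invariant subgroup of $\cgrp{\mc{O}}$ absorbs the entire $H$-orbit $\mc{O}$. Once this is in hand, the remainder is routine bookkeeping with Propositions~\ref{prop:induced_block}, \ref{prop:induced_chief:minimal} and \ref{prop:type_invariant}.
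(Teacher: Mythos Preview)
Your proof is correct, and it takes a genuinely different route from the paper's.

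The paper works with $K := H_{\mf{a}}$ directly: it considers the closed normal subgroup $\ol{KN}/N$ of $M/N$, uses Theorem~\ref{thm:norm_sgrps}(1) together with the fact that $\ol{KN}/N$ is topologically perfect to show $\ol{KN}/N$ is of semisimple type, and then invokes the normal compression machinery (Proposition~\ref{prop:semisimple:compressions}) to pull semisimplicity back along the compression $K/(K\cap N) \rightarrow \ol{KN}/N$; finally it passes to the quotient $K/\CC_K(\mf{a})$, the lowermost representative of $\mf{a}$.

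Your argument instead exploits the equivalence in Proposition~\ref{prop:induced_chief:minimal}: you manufacture a minimal closed $H$-invariant subgroup of $M/N$ by hand as the closed subgroup generated by an $H$-orbit of components, which is visibly of semisimple type, and then read off that it represents some translate $g.\mf{a}$. This bypasses the compression result Proposition~\ref{prop:semisimple:compressions} entirely, at the cost of passing through a $G$-translate and invoking conjugation-invariance of type at the end. Both arguments lean on Theorem~\ref{thm:norm_sgrps}/Proposition~\ref{prop:components} to control normal subgroups of a semisimple group; yours is arguably more self-contained within the chief-block formalism of Section~\ref{sec:chief_blocks}, while the paper's stays closer to the representative $H_{\mf{a}}$ of $\mf{a}$ itself.
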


\begin{proof}
Let $M/N$ be the lowermost representative of $\mf{a}^G$ and set $K := H_\mf{a}$.  That $K$ is the least normal subgroup covering $\mf{a}$ ensures that $K$ is topologically perfect. Additionally, we have a normal compression $K/K\cap N\rightarrow\ol{KN}/N$.

By Theorem~\ref{thm:norm_sgrps}, there is a subgroup $L/N$ of $\ol{KN}/N$ that is topologically generated by components of $M/N$ such that $\ol{KN}/L$ is abelian. As $\ol{KN}/N$ is topologically perfect, $\ol{KN}=L$, and thus, $\ol{KN}/N$ is of semisimple type. Proposition~\ref{prop:semisimple:compressions} then ensures $K/K\cap N$ is of semisimple type. Since semisimplicity is preserved under quotients, $K/C_K(\mf{a})$ is also of semisimple type, hence $\mf{a}$ is of semsimple type.
\end{proof}

\section{Examples}\label{sec:ex}
\addtocontents{toc}{\protect\setcounter{tocdepth}{1}}
\subsection{The association relation}\label{ex:kleinfour} Take $G$ to be the Klein four-group, $C_2 \times C_2$.  The group $G$ has five subgroups: the trivial group, $G$ itself, and three subgroups $A_1,A_2,A_3$ of order $2$.  There are three distinct chief series for $G$, namely the series of the form
\[
1 < A_i < G
\]
for $1 \le i \le 3$.  This gives rise to a total of six distinct chief factors of $G$: three `lower' chief factors $A_i/1$, and three `upper' chief factors $G/A_i$.

\begin{claim}
Let $\{K/L,M/N\}$ be a pair of distinct chief factors of $G$.  Then the following are equivalent:
\begin{enumerate}[(1) ]
\item $K/L$ is associated to $M/N$;
\item There are distinct $i,j\in \{1,2,3\}$ so that
\[
\{K/L,M/N\} = \{A_i/\triv,G/A_j\}.
\]
\end{enumerate}
\end{claim}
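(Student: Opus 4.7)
The plan is to verify the equivalence by a direct finite case analysis. Since $G = C_2 \times C_2$ carries the discrete topology, every subgroup is closed and products of subgroups need no closure, so the three equations defining association reduce to simple equalities of subgroups. The six chief factors split into three \emph{lower} factors $A_i/\triv$ and three \emph{upper} factors $G/A_j$, yielding four types of unordered pairs to examine: lower/lower, upper/upper, mixed with matching indices, and mixed with distinct indices. Throughout I will use two elementary facts about the Klein four-group: for distinct $i,j \in \{1,2,3\}$ one has $A_i \cap A_j = \triv$ and $A_iA_j = G$.

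For two distinct lower factors $A_i/\triv$ and $A_j/\triv$, the first association equation $K_1L_2 = K_2L_1$ becomes $A_i = A_j$, which fails. For two distinct upper factors $G/A_i$ and $G/A_j$, the first equation holds ($G = G$), but $L_1L_2 = A_iA_j = G$, so $K_1 \cap L_1L_2 = G \cap G = G \neq A_i = L_1$, violating the second equation. Hence no same-type pair is associated.

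For a mixed pair consisting of $A_i/\triv$ and $G/A_j$, I would distinguish the two subcases. If $i = j$, then $K_1L_2 = A_i \cdot A_i = A_i$ while $K_2L_1 = G \cdot \triv = G$, so the first equation fails. If $i \neq j$, I would verify all three equations in turn: $K_1L_2 = A_iA_j = G = G \cdot \triv = K_2L_1$; then $L_1L_2 = \triv \cdot A_j = A_j$, giving $K_1 \cap L_1L_2 = A_i \cap A_j = \triv = L_1$ and $K_2 \cap L_1L_2 = G \cap A_j = A_j = L_2$. Thus the mixed pairs with distinct indices are precisely the associated ones, completing both directions of the equivalence.

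No real obstacle arises here: the argument is an exhaustive check over finitely many pairs, and the only ingredients are the two structural facts $A_i \cap A_j = \triv$ and $A_iA_j = G$. The example is really meant to illustrate that association is not transitive in the abelian setting, since for distinct $i,j,k$ the factors $A_i/\triv$ and $G/A_k$ are both associated to $G/A_j$ but not to each other.
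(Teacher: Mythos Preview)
Your proof is correct and follows essentially the same approach as the paper: both arguments reduce to a finite check using the facts $A_iA_j = G$ and $A_i \cap A_j = \triv$ for distinct $i,j$. The only cosmetic difference is that for the direction $(1)\Rightarrow(2)$ the paper argues deductively from the association equations (first showing $L \neq N$, then $LN \neq G$, forcing one of $L,N$ to be trivial, etc.) rather than enumerating the four pair-types as you do, but the mathematical content is the same.
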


\begin{proof}
Suppose that $\{K/L,M/N\} = \{A_i/\triv,G/A_j\}$ for $i,j$ distinct. We see that $A_iA_j = G\triv$, $A_i \cap \triv A_j = \triv$, and $A_j \cap \triv A_j = A_j$.  Therefore, $K/L$ and $M/N$ are associated, verifying $(1)$.

Conversely, suppose that $K/L$ and $M/N$ are associated. If $L = N$, then $K = KN = ML = M$, contradicting the assumption that $K/L$ and $M/N$ are distinct; hence, $L \neq N$.  We additionally have $K \cap (LN) < K$, so $LN \neq G$.  One of $L$ and $N$ must therefore be trivial; without loss of generality, $L= \triv$.

The subgroup $N$ is not trivial and not equal $G$, so $N = A_j$ for some $j$.  Since $M > N$, we deduce that $M = G$. On the other hand, the factor $K/L$ is chief, hence we have $|K|=2$ and $K = A_i$ for some $i$.  The assumption that $KN = ML$ ensures $i$ and $j$ are distinct. We now conclude $(2)$.
\end{proof}

The graph of associations between chief factors of $G$ is a hexagon, so association of chief factors is not a transitive relation. Indeed, it is possible to have a sequence $F_0,F_1,F_2,F_3$ of distinct chief factors such that $F_i$ is associated to $F_{i+1}$ for $0 \le i \le 2$, and $F_3$ lies in the same chief series as $F_0$ as follows:

\[
F_0 := A_1/\triv, \; F_1 := G/A_2, \; F_2 := A_3/\triv, \; F_3 := G/A_1.
\]
This is the shortest possible sequence of this nature, as shown by Lemma~\ref{chief:associated:ordered}.

\subsection{Non-Archimedean Polish groups as normal factors of a chief factor}\label{ex:stacking}

For the purposes of this subsection, $0 \in \Nb$.

Let $\Tw$ be the regular tree in which every vertex has $\aleph_0$ neighbors; we consider $\Tw$ to be a metric space under the usual path metric. Write $V\Tw$ for the set of vertices of $\Tw$.  Choose an infinite geodesic ray $\xi:=(v_0,v_1,\dots)$ in $\Tw$; \textit{i.e.} a sequence of distinct vertices $v_i$ such that $d(v_i,v_{i+1})=1$ for all $i \in \Nb$.  The \defbold{end} $[\xi]$ defined by $\xi$ is the equivalence class of $\xi$ among infinite geodesic rays, where the rays $\xi = (v_0,v_1,\dots)$ and $\xi' = (v'_0,v'_1,\dots)$ are equivalent if there exist $i,j \in \Nb$ such that $v_{i+n} = v'_{j+n}$ for all $n \in \Nb$.  Note that for each vertex $w$ of $\Tw$, there is exactly one infinite geodesic ray in $[\xi]$ with initial vertex $w$.  Automorphisms of the tree then act on the infinite geodesic rays, and consequently on ends, via $g.(v_0,v_1,\dots) = (g(v_0),g(v_1),\dots)$.

The end $[\xi]$ defines an orientation of the tree: given an edge $e = (u,w)$ there is a geodesic ray $(w_0,w_1,w_2,\dots) \in [\xi]$ such that $\{u,w\} = \{w_0,w_1\}$, but either all such rays have $u=w_0$ and $w=w_1$, in which case we say $e$ points towards $[\xi]$, or else all such rays have $w=w_0$ and $u=w_1$, in which case we say $e$ points away from $[\xi]$.  Conversely, distinct ends of $\Tw$ have distinct sets of edges pointing towards them.

Given the end $[\xi]$ we can also define a \defbold{Busemann function of $[\xi]$}, which is a function $f$ from $V\Tw$ to $\Zb$ with the following property: if $(u,w)$ points towards $[\xi]$, then $f(w) - f(u) = 1$.  Write $B_{[\xi]}$ for the set of Busemann functions of $[\xi]$.  One sees that if two Busemann functions $f,f'$ are such that $f(u)=f'(u)$ for some vertex $u$, then $f=f'$; write $f_0$ for the Busemann function such that $f_0(v_0) = 0$.  Conversely if $f$ is a Busemann function, then so is $w \mapsto f(w) - n$ for all $n \in \Zb$; so in fact $B_{[\xi]} = \{f_n \mid n \in \Zb\}$ where $f_n: w \mapsto f_0(w) - n$.

For $n \in \Zb$, define the horoball $X_n:= \{w \in V\Tw \mid f_0(w) \ge n\}$ and the horosphere $Y_n = X_n \setminus X_{n+1}$; see Figure~$1$.
  
\begin{figure}[h]\label{fig:X_n}
    \centering
    \includegraphics[width=0.8\textwidth]{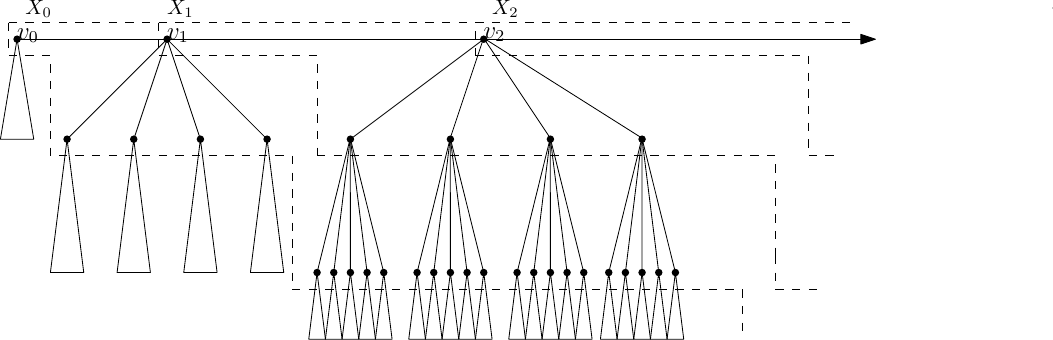}
    \caption{The horoballs $X_n$}
\end{figure}

Set
\[
G := \{g \in \Aut(\Tw) \mid [g.\xi] = [\xi]\}.
\]
We see that $G$ is exactly the stabilizer of the binary relation on $V\Tw$ consisting of the edges $(u,w)$ pointing towards $\xi$.  As such, $G$ is a closed subgroup of $\Sym(V\Tw)$; it is therefore Polish by Lemma~\ref{lem:increasing_Polish}.

We now define a family of subgroups of $G$, all of which will have a non-abelian chief factor.

Choose a function $c: E \rightarrow \Nb$, where $E$ is the set of edges pointing away from $\xi$, with the following properties:
\begin{enumerate}[(1) ]
\item we have $c((v_{i+1},v_i)) = 0$ for all $i \in \Nb$;
\item for all $w \in V\Tw$, and writing $E_w$ for the edges in $E$ with origin $w$, then $c$ restricts to a bijection $c_w$ from $E_w$ to $\Nb$.
\end{enumerate}
Given $g \in G$, there is then an associated permutation of $\Nb$ given by the composition $c_{g(w)} \circ g \circ c\inv_w$.

Let $L$ be a closed transitive subgroup of $\Sym(\Nb)$.  We define $G(L)$ to be the set of $g \in G$ such that $c_{g(w)} \circ g \circ c\inv_w \in L$ for all $w \in V\Tw$.  It is straightforward to check that $G(L)$ is a subgroup of $G$.  Since $L$ is closed in $\Sym(\Nb)$, it follows that $G(L)$ can be prescribed by its orbits on the set of finite tuples of vertices, and hence $G(L)$ is closed in $G$.

There is a natural action of $G$ on $B_{[\xi]}$ given by $g.f(w) = f(g\inv(w))$.  Let $P$ be the kernel of this action and let $P(L) = G(L) \cap P$.  Observe that if $g \in G$ fixes a vertex of $\Tw$, then $g \in P$; that is, $P$ contains all vertex stabilizers of $G$.  In particular, $P$ has nonempty interior and hence is an open subgroup of $G$, so $P(L)$ is an open subgroup of $G(L)$.  The fact that $L$ is transitive ensures that $P(L)$ acts transitively on $Y_n$ for all $n \in \Zb$.

Let $t$ be the automorphism of $\Tw$ such that $c_{t(w)} \circ t \circ c\inv_w = 1$ for all $w \in V\Tw$ and $t(v_i) = v_{i+1}$ for all $i \in \Nb$; here we rely on property (1) of $c$ to ensure the existence of such an automorphism.  Then $t \in G(L)$ for any $L \le \Sym(\Nb)$.  There is an injective homomorphism $\delta: G/P \rightarrow \Zb$ given by the equation $g.f_0 = f_{\delta(g)}$.  In particular, $t.f_0(v_0) = -1$, so $\delta(t) = 1$.  Thus $\delta$ is an isomorphism and we have $G = P \rtimes \grp{t}$, and similarly $G(L) = P(L) \rtimes \grp{t}$.

Our next aim is to describe the closed normal subgroups of $P(L)$.  Unsurprisingly, these depend on the closed normal subgroups of $L$, but some general features can be established.

Define $P_n(L)$ to be the subgroup of $P(L)$ fixing all vertices in $X_n$.  Then for all $n \in \Zb$, it is easy to see that: $P_n(L)$ is a closed normal subgroup of $P(L)$;  $P_n(L) < P_{n+1}(L)$; and $tP_n(L)t^{-1} = P_{n+1}(L)$.  Thus we have a chain
\[
\dots {} < P_{-2}(L) < P_{-1}(L) < P_0(L) < P_1(L) < P_2(L) < {} \dots {}
\]
of closed normal subgroups of $P$ that is shifted by the action of $\grp{t}$.

Consider an element $g \in P(L)$.  We know that $g.\xi \in [\xi]$, so there is some $i,j \in \Nb$ such that $g(v_{i+n}) = v_{j+n}$ for all $n \ge 0$; the $\grp{g}$-invariance of the Busemann functions ensures $i=j$.  In other words, $g.v_n = v_n$ for all $n \ge i$.  One then sees that for $n \ge i$, there is a unique element $g_n \in P_n(L)$ that has the same action as $g$ on the subtree $T_n$ consisting of those vertices $w$ such that $d(w,v_n) < d(w,v_{n+1})$, and fixes all vertices outside of $T_n$.  Since $\bigcup_{n \ge i}T_i = \Tw$, it follows that $g_n \rightarrow g$ as $n \rightarrow +\infty$.  Thus
\[
P(L) = \ol{\bigcup_{n \in \Zb}P_n(L)}.
\]
On the other hand, since $\bigcup_{n \in \Zb} X_n = V\Tw$, we see that
\[
\triv = \bigcap_{n \in \Zb}P_n(L).
\]

For all $n \in \Zb$ there is also a subgroup $Q_n(L)$ of $P(L)$ that consists of those $g \in P(L)$ such that $c_{g(w)} \circ g \circ c\inv_w = 1$ for all $w \in V\Tw \setminus X_{n+1}$.  It is not hard to see that $Q_n(L)$ is closed and that for all $g \in P(L)$, there is a unique $g' \in Q_n(L)$ such that $g'.w = g.w$ for all $w \in X_n$; thus $P(L)$ is a semidirect product $P_n(L) \rtimes Q_n(L)$.

The stabilizer in $Q_n(L)$ of $w \in Y_n$ fixes pointwise the subgraph $\pi\inv_n(w)$, where $\pi\inv_n$ is the closest point projection from $\Tw$ to $X_n$.  As a consequence, the quotient $P(L)/P_{n-1}(L)$ of $P(L)$ takes the form of an unrestricted wreath product $L \; \mathrm{Wr}_{Y_n} \; Q_n(L)$, where the base group of the wreath product, $\prod_{Y_n}L$, is the normal factor $P_n(L)/P_{n-1}(L)$.

We will not attempt a complete classification of the closed $P(L)$-invariant subgroups of $P_n(L)/P_{n-1}(L)$, as it is complicated to describe the structure of the abelian $P(L)$-invariant factors.  There are however two easy observations to be made here.
\begin{enumerate}[(1) ]
\item For any closed normal subgroup $M$ of $L$, we have a corresponding closed $P(L)$-invariant subgroup $\prod_{Y_n}M$ of $P_n(L)/P_{n-1}(L)$.  In particular, for every closed normal factor $M/N$ of $L$, there is therefore a closed normal factor $(\prod_{Y_n}M)/(\prod_{Y_n}N)$ of $P(L)$ that is isomorphic to a direct product of $\aleph_0$ copies of $M/N$.  So depending on $L$, the group $P(L)$ can have many different isomorphism types of closed normal factors.
\item Let $R$ be a closed normal subgroup of $P(L)$ such that $P_{n-1}(L) \le R \le P_n(L)$; consider $R/P_{n-1}(L)$ as a subgroup of $\prod_{Y_n}L$, and let $M$ be the closure of the projection of $R$ onto one of the copies of $L$.  Then $M$ is a closed normal subgroup of $L$, and a standard commutator argument shows that
\[
\prod_{Y_n}\ol{[M,M]} \le R/P_{n-1}(L) \le \prod_{Y_n} M.
\]
So (1) accounts for all closed normal subgroups of $P(L)$ between $P_{n-1}(L)$ and $P(L)$ up to an abelian factor.  In particular: if $L$ is non-abelian and topologically simple (for example, $L = \Sym(\Nb)$), then $P_n(L)/P_{n-1}(L)$ is a chief factor of $P(L)$.
\end{enumerate}

Another way to see a wreath product in the structure of $P(L)$ is as follows.  The $P$-translates of $VT_0$ form a partition of $V\Tw \setminus X_1$, and that any choice of automorphisms of the rooted trees $\{gT_0 \mid g \in P\}$ fixing the root, one for each distinct subgraph of $\Tw$ in the set, can be combined to specify a unique element of $P_0$.  Thus $P_0$ is the direct product of the $P$-conjugates of $H(L)$, where $H(L)$ is the closed subgroup consisting of all elements of $P$ that fix every vertex in $V\Tw \setminus VT_0$ (that is, $H$ fixes all vertices $w$ such that $d(w,v_0) > d(w,v_1)$).  We can then write $P(L)$ as
\[
P(L) = H(L) \; \mathrm{Wr}_{Y_0} \; Q_0(L)
\]
where the base group is $P_0(L)$, or indeed
\[
P(L) = H_n(L)  \; \mathrm{Wr}_{Y_n} \; Q_n(L)
\]
for any $n \in \Zb$, where $H_n(L) = t^nH(L)t^{-n}$ and the base group is now $P_n(L)$.

We can now say something about closed normal subgroups of $P(L)$ in general, not just those between $P_{n-1}(L)$ and $P_n(L)$.  First we appeal to a general fact about wreath products.

\begin{lem}\label{lem:wreath_commutator}
Let $A$ and $B$ be topological groups such that $B$ is equipped with a transitive action on an infinite set $X$.  Form the unrestricted wreath product
\[
C := A \: \mathrm{Wr}_X \: B = \prod_{X} A \rtimes B,
\]
where the base group $\prod_{X} A$ has the product topology.  Then $\prod_{X} A \le \ol{[C,C]}$.
\end{lem}

\begin{proof}
(Our thanks to Yves Cornulier \cite{YCor} for this argument.)  

It is easy to see that $\prod_X \ol{[A,A]} \le \ol{[C,C]}$, so we may assume that $A$ is abelian.

Consider the elements of $\prod_X A$ of the form $\delta_x(a)\delta_y(a)\inv$ for $x,y \in X$ and $a \in A$, where $\delta_x(a)$ is the function from $X$ to $A$ taking the value $a$ at $x$ and $1$ elsewhere.  We see that every such element is a commutator, namely
\[
\delta_x(a)\delta_y(a)\inv = \delta_{b.y}(a)\delta_y(a)\inv = b\delta_y(a)b\inv\delta_y(a)\inv,
\]
where $b \in B$ is such that $b.y = x$.  Thus the subgroup $M$ generated by the elements of the form $\delta_x(a)\delta_y(a)\inv$ is contained in $[C,C]$.  Moreover, we see that $M$ consists exactly of the elements $(a_x)_{x \in X}$ of the direct sum $\bigoplus_X A$ such that $\prod_{x \in X} a_x = 1$.  Since $X$ is infinite, we see that for all $(a_x)_{x \in X}$ and any finite subset $Y$ of $X$, there is $(a'_x)_{x \in X} \in M$ such that $a'_x = a_x$ for all $x \in Y$.  Thus $M$ is dense in $\prod_X A$, so $\prod_X A \le \ol{[C,C]}$.
\end{proof}

\begin{claim}\label{claim:normal}
Let $N$ be a proper non-trivial closed normal subgroup of $P(L)$.  Then there is a maximal $n \in \Zb$ such that $N \nleq P_n(L)$.  Moreover, for this $n$ we have
\[
P_{n-1}(L) \le \ol{[P_n(L),P_n(L)]} \le N \le P_{n+1}(L).
\]
\end{claim}

\begin{proof}
There must be some $n \in \Zb$ such that $N \nleq P_{n}(L)$, since $\bigcap_{n \in \Zb}P_n(L) = \triv$.  We claim that if $N \nleq P_{n}(L)$, then $N \ge P_{n-1}(L)$.  By the symmetry in the argument, it now suffices to consider the case $n=0$, that is, $N \nleq P_0(L)$.  In particular, there is some vertex of $X_0$ that is not fixed by $N$; given the uniqueness of rays in $[\xi]$ starting at a given vertex, it follows that there is a vertex in $Y_0$ that is not fixed by $N$.  Since $P$ acts transitively on $Y_0$ and $N$ is normal in $P$, in fact there must be $a \in N$ such that $av_0 \neq v_0$, and hence $aT_0 \cap T_0 = \emptyset$.  In particular, taking $g,h \in H(L)$, then $ah\inv a\inv$ centralizes $H(L)$ and hence $[g,[a,h]] = [g,h]$.  In particular, $[g,h] \in N$ for all $g,h \in H(L)$; since $N$ is closed, we have $\ol{[H_0(L),H_0(L)]} \le N$.  Consider next the structure of $H_0(L)$: it is itself a wreath product
\[
H_0(L) = H_{-1}(L) \; \mathrm{Wr}_{\Nb} \; L.
\]
By Lemma~\ref{lem:wreath_commutator}, it follows that $H_{-1}(L) \le \ol{[H_0(L),H_0(L)]}$.  Notice that $P_m(L)$ is the direct product of the $P(L)$-conjugates of $H_m(L)$ for all $m \in \Zb$; thus
\[
P_{-1}(L) \le \ol{[P_0(L),P_0(L)]} \le N.
\]

To finish the argument: we have shown that if $N \nleq P_n(L)$, then $N \ge P_{n-1}(L)$.  In particular, if $N \nleq P_n(L)$ for all $n \in \Zb$, then $N \ge P_{n-1}(L)$ for all $n \in \Zb$; since the subgroups $P_n(L)$ have dense union in $P(L)$, this would contradict the hypothesis that $N$ is a proper closed subgroup of $P(L)$.  Thus there is a maximal $n \in \Zb$ such that $N \nleq P_n(L)$, and for this $n$ we have
\[
P_{n-1}(L) \le \ol{[P_n(L),P_n(L)]} \le N \le P_{n+1}(L)
\]
as claimed.
\end{proof}

In particular, Lemma~\ref{lem:wreath_commutator} ensures that if $L$ is topologically perfect, then so are $H_n(L)$ and $P_n(L)$.  In the case that $L$ is non-abelian and topologically simple, we have already seen that $P_n(L)/P_{n-1}(L)$ is a chief factor of $P(L)$.  We thus have the following corollary.

\begin{cor}\label{cor:normal}
Suppose that $L$ is topologically perfect.  Then for all $n \in \Zb$, every closed normal subgroup $N$ of $P(L)$ is comparable to $P_n(L)$ in the inclusion order, that is,
\[
N \nleq P_n(L) \Rightarrow N \ge P_n(L).
\]
If in fact $L$ is topologically simple, then the closed normal subgroups of $P(L)$ are totally ordered as follows:
\[
\triv < \dots < P_{-2}(L) < P_{-1}(L) < P_0(L) < P_1(L) < P_2(L) < \dots < P(L).
\]
\end{cor}

With such a description of the closed normal subgroups of $P(L)$, it is now easy to see that $P(L)$ is a chief factor of $G(L)$.  We also obtain a sufficient condition for $P(L)$ to be of stacking type.

\begin{claim}\
\begin{enumerate}[(1) ]
\item The group $P(L)$ is the minimal non-trivial closed normal subgroup of $G(L)$.  In particular, $P(L)/\triv$ is the lowermost representative of a minimally covered chief block of $G(L)$.
\item Suppose that $L$ is topologically perfect and that $\mf{B}^{\min}_L \neq \emptyset$ (for example, $L = \Sym(\Nb)$).  Then $P(L)$ is a chief factor of $G(L)$ of stacking type.
\end{enumerate}
\end{claim}

\begin{proof}
Let $N$ be a non-trivial closed normal subgroup of $G(L)$; we aim to show $P(L) \le N$.  We see that if $N \nleq P(L)$, then $N$ does not centralize $P(L)$, and hence $N \cap P(L) \ge [N,P(L)] > \triv$, so $N \cap P(L)$ is a non-trivial closed normal subgroup of $G(L)$; thus we may assume $N \le P(L)$.  Then by Claim~\ref{claim:normal}, we have $P_n(L) \le N$ for some $n \in \Zb$.  By conjugating by $t$ we see that $P_n(L) \le N$ for all $n \in \Zb$.  Since $P(L) = \ol{\bigcup_{n \in \Zb}P_n(L)}$, we conclude that $N = P(L)$, proving (1).

For (2), suppose that $L$ is topologically perfect and that there exists a minimally covered chief block $\mf{a}$ of $L$; let $M/N$ be the lowermost representative of $\mf{a}$.  Then $P(L)$ has a corresponding closed normal factor $(\prod_{Y_1}M)/(\prod_{Y_1}N)$, regarded as a subgroup of $P_1(L)/P_0(L)$.  Let $M^*$ be the preimage of $\prod_{Y_1}M$ in $P(L)$ and let $N^*$ be the preimage of $\prod_{Y_1}N$ in $P(L)$; we claim that $M^*/N^*$ is the lowermost representative of a minimally covered chief block of $P(L)$.  To prove this, it is enough to show, given a closed $P(L)$-invariant subgroup $R$ of $M^*$ such that $R \nleq N^*$, then $R = M^*$.

We see that $R \nleq P_0(L)$; since $L$ is topologically perfect, Corollary~\ref{cor:normal} implies that $R \ge P_0(L)$, so we can consider $R/P_0(L)$ as a closed $P(L)$-invariant subgroup of $P_1(L)/P_0(L) \cong \prod_{Y_1}L$.  Since $R \nleq N^*$ but $R \le M^*$, the projection $R'$ of $R/P_0(L)$ onto one of the copies of $L$ satisfies $R' \nleq N$, while $\ol{R'}$ is a closed $L$-invariant subgroup of $M$.  Since $M/N$ is the lowermost representative of $\mf{a}$, the only possibility is that $\ol{R'} = M$, from which it follows, as observed earlier with closed normal subgroups of wreath products, that
\[
\prod_{Y_1} \ol{[M,M]} \le R/P_0(L).
\]
Now $M$ is topologically perfect (since $\ol{[M,M]}$ covers $M/N$, while $M$ is already minimal among closed normal subgroups of $L$ covering $M/N$), so in fact
\[
M^*/P_0(L) = \prod_{Y_1} M = \prod_{Y_1} \ol{[M,M]} \le R/P_0(L),
\]
and we conclude that $R = M^*$.  This argument proves that $[M^*/N^*] \in \mf{B}^{\min}_{P(L)}$.  We have already seen that $P(L)$ is a chief factor of $G(L)$, and the fact that $\mf{B}^{\min}_{P(L)} \neq \emptyset$ ensures that $P(L)$ is of stacking or semisimple type.  The closed normal subgroup structure of $P(L)$ makes it clear that $P(L)$ is not of semisimple type, so $P(L)$ is of stacking type.
\end{proof}

\begin{rmk}
We defined the function $c$ from $E$ to $\Nb$, and took $L$ to be a subgroup of $\Sym(\Nb)$, but in practice ``$\Nb$'' here is just a generic example of a countably infinite set with a distinguished element $0$; the same construction will work where $L$ is any closed transitive permutation group on a countably infinite set.  In particular, the construction of $G(L)$ can easily be iterated: $G(L)$ acts faithfully and transitively on $V\Tw$.  In turn, the topologically perfect Polish group $L' = G(L) \; \mathrm{Wr}_{\Nb} \; \Sym(\Nb)$ is a closed transitive permutation group on the countably infinite set $V\Tw \times \Nb$, and $L'$ has a minimal closed normal subgroup $\prod_{\Nb} P(L)$; in particular, $\prod_{\Nb} P(L)$ is a minimally covered chief factor of $L'$.  One can then build the group $G(L')$ with stacking type chief factor $P(L')$, and so on.

As for the group $L \le \Sym(\Nb)$ we start with, it is probably hopeless to try to describe the possibilities for $\mf{B}^{\min}_L$ as a poset, if all we know is that $L$ is transitive, closed and topologically perfect.  The construction of $G(L)$ shows that if $M/N$ is a chief factor of stacking type, there is no detailed structure theory of the poset $\mf{B}^{\min}_{M/N}$ in general.  Perhaps $\mf{B}^{\min}_{M/N}$ can be described as a ``stack'' of copies of $\mf{B}^{\min}_L$ for some other Polish group $L$, but that does not imply any sort of homogeneity of the poset $\mf{B}^{\min}_L$.
\end{rmk}

\begin{ack}
	We would like to thank Fran\c{c}ois Le Ma\^{i}tre for contributing to this work and for his many helpful remarks. Portions of this work were also developed during a stay of both authors at the \textit{Mathematisches Forschungsinstitut Oberwolfach}; we thank the MFO for its hospitality.  We also thank the anonymous referee for their diligent reading of the article and suggestions for improvement.
\end{ack}

\appendix

\section{Examples of normal compressions}

In this appendix, we provide various examples of normal compressions $\psi:G\rightarrow H$ for $H$ and $G$ non-locally compact Polish groups. In these examples, $\psi$ will be the inclusion map. 

\subsection{Basic examples}

The most basic example of a normal compression is probably $\Qb\injects \Rb$, where $\Qb$ is equipped with the discrete topology. Another example is provided by the inclusion $\LL^1(X,\mu,\Rb)\injects \LL^0(X,\mu,\Rb)$, where $\LL^0(X,\mu,\Rb)$ is the group of measurable functions on a standard probability space $(X,\mu)$ equipped with the topology of convergence in measure, and $\LL^1(X,\mu,\Rb)$ is the subgroup of integrable functions equipped with the $\LL^1$ topology. 
Let us also note the following non-example of normal compression: $\LL^\infty(X,\mu,\Rb)$ is not a Polishable subgroup of $\LL^0(X,\mu,\Rb)$  (see \cite[Lemma 9.3.3]{MR2455198}), but it is still dense and normal in $\LL^0(X,\mu,\Rb)$. 

Moving away from abelian examples, where the $H$-action by conjugacy on $G$ is trivial, we note an interesting topologically simple example: the Polish group $\mathfrak S_\infty$ of all permutations of the set $\Nb$ of integers. Its topology is the one induced by the product topology on $\Nb^{\Nb}$ seeing $\Nb$ as a discrete space.  The group $\mathfrak S_{\infty}$ contains the countable group $\mathfrak S_{(\infty)}$ of permutations with finite support as a dense normal subgroup, so the inclusion $\mathfrak S_{(\infty)}\injects \mathfrak S_\infty$ provides a first non-abelian example of a normal compression. We point out additionally that $\mathfrak S_{(\infty)}$ is \textit{not} simple, but the commutator subgroup is simple. This shows Theorem~\ref{thm:compression_simple} is sharp.

\subsection{Unitary groups}
For $\mathcal H$ a separable Hilbert space, the group $\mathcal U(\mathcal H)$ of unitaries of $\mathcal H$ is Polish for the strong topology, defined to be the weakest topology making the maps $u\in\mathcal U(\mathcal H)\mapsto u(\xi)$ continuous for each $\xi\in\mathcal H$. A bounded operator $x$ on $\mathcal H$ is called \textbf{compact} if $x(B)$ is compact for the norm on $\mathcal H$ where $B$ denotes the unit ball in $\mathcal H$. It is easy to see that the compact operators form a norm-closed ideal in $\mathcal B(\mathcal H)$, which is denoted by $\mathcal K(\mathcal H)$.(The ideal of compact operators is the norm closure of the ideal of operators with finite rank.)

Since the circle group $\mathbb S^1\cdot\mathrm{id}_{\mathcal H}$ is a compact subspace of $\mathcal B(\mathcal H)$, its sum with the ideal $\mathcal K(\mathcal H)$ is also norm closed. In particular, the unitary group
$$\mathcal U_K(\mathcal H):=\mathcal U(\mathcal H)\cap (\mathbb S^1\cdot\mathrm{id}_\mathcal H+\mathcal K(\mathcal H))$$
of compact perturbations of the circle group is closed in the complete metric space $(\mathcal U(\mathcal H),\norm\cdot)$. The operator norm thus induces a complete metric on $\mathcal U_K(\mathcal H)$. 

\begin{lem}$(\mathcal K(\mathcal H),\norm\cdot)$ is separable.
\end{lem}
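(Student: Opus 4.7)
The plan is to exploit the standard fact that $\mathcal K(\mathcal H)$ is the operator-norm closure of the ideal $\mathcal F(\mathcal H)$ of finite-rank operators, and then exhibit an explicit countable dense subset of $\mathcal F(\mathcal H)$.

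First, I would fix an orthonormal basis $(e_n)_{n\in\Nb}$ of $\mathcal H$, which exists since $\mathcal H$ is separable. For each $n$, let $P_n$ be the orthogonal projection onto $\mathrm{span}(e_0,\dots,e_{n-1})$. For any compact operator $x\in\mathcal K(\mathcal H)$, the standard argument using compactness of $x(B)$ (where $B$ is the unit ball of $\mathcal H$) shows that $\norm{P_nxP_n-x}\to 0$; thus every compact operator is a norm limit of finite-rank operators.

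Next, I would describe the countable dense set. Consider the collection
\[
\mathcal D:=\left\{\sum_{i,j=0}^{n-1}a_{ij}\,\langle\cdot,e_j\rangle e_i\ \middle|\ n\in\Nb,\ a_{ij}\in\Qb+i\Qb\right\}.
\]
This set is countable. Given $\varepsilon>0$ and $x\in\mathcal K(\mathcal H)$, first pick $n$ large enough that $\norm{P_nxP_n-x}<\varepsilon/2$, then approximate each matrix entry $\langle xe_j,e_i\rangle$ for $0\le i,j<n$ by an element of $\Qb+i\Qb$ to within $\varepsilon/(2n)$; the resulting element of $\mathcal D$ is within $\varepsilon$ of $x$ in operator norm (using that the operator norm of an $n\times n$ matrix is bounded by its Hilbert--Schmidt norm or more crudely by $n$ times the max of its entries).

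The main obstacle is essentially only bookkeeping: making sure the estimate on the matrix entries translates into an operator-norm estimate. This is routine and does not rely on anything beyond the elementary inequality $\norm{\sum_{i,j}a_{ij}\langle\cdot,e_j\rangle e_i}\le\bigl(\sum_{i,j}|a_{ij}|^2\bigr)^{1/2}$. No genuinely new idea is required.
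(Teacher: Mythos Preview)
Your proof is correct and follows essentially the same strategy as the paper: exhibit a countable dense set of finite-rank operators. The paper's version is slightly different in its choice of dense set and its justification. It fixes a countable dense subset $(\xi_i)$ of $\mathcal H$ (rather than an orthonormal basis) and takes finite sums of the rank-one operators $\langle \xi_i,\cdot\rangle \xi_j$, appealing to the singular value decomposition of compact operators to see that these are dense. Your route via $P_n x P_n \to x$ and rational matrix entries is arguably more elementary, since it avoids invoking SVD and gives explicit estimates; the paper's route is terser but leans on a bigger structural theorem. Both are standard and either would be acceptable here.
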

\begin{proof}
Let $(\xi_i)_{i\in\Nb}$ be a dense subset of $\mathcal H$. Then it is an easy corollary of the singular value decomposition theorem for compact operators that the countable set of finite sums of elements of the form $\la\xi_i,\cdot\ra \xi_j$ is dense in $\mathcal K(\mathcal H)$. We deduce that $\mathcal{K}(\mathcal{H})$ is separable.
\end{proof}
\noindent The group $\mathcal U_K(\mathcal H)$ is thus Polish when equipped with the norm topology.

We now argue $\mathcal{U}_K(\mathcal H)$ is dense in $\mathcal{U}(\mathcal H)$. Let us see the finite dimensional unitary groups $\mathcal U(n)$ as subgroups of $\mathcal U(\mathcal H)$ via the embeddings 
\[
u\in\mathcal U(n)\mapsto\left(\begin{smallmatrix} u&0\\ 0& 1 \end{smallmatrix}\right).
\] 
Every $\mathcal U(n)$ is then a subgroup of $\mathcal U_K(\mathcal H)$, and since their union is dense in $\mathcal U(\mathcal H)$, the subgroup $\mathcal U_K(\mathcal H)$ is dense in $\mathcal U(\mathcal H)$. We conclude that $\mathcal U_K(\mathcal H)\rightarrow \mathcal U(\mathcal H)$ is a normal compression.\\ 

Here are two further examples; for details see \cite{Ando:2011lr}. One can consider the smaller ideal $\mathcal S_p(\mathcal H)$ of Schatten  operators of class $p$, which is Polish for the associated Schatten $p$-norm. The group $\mathcal U_p(\mathcal H)$ of unitaries $u$ such that $1-u\in \mathcal S_p(\mathcal H)$ is Polish for the natural $p$-norm and is dense in $\mathcal U(\mathcal H)$, since it contains $\mathcal U(n)$ for every $n\in\Nb$. Thus, $\mathcal U_p(\mathcal H)\rightarrow \mathcal U(\mathcal H)$ is a normal compression.

For the second example, let $(M,T)$ be a type II$_\infty$ separable factor. The ideal $\mathcal I:=\{x\in M: T(x^*x)<+\infty\}$ is Polish for the topology induced by the $2$-norm defined by $\norm{x}_2:=\sqrt{T(x^*x)}$. The unitary group $\mathcal U_{I}(M):=\mathcal U(M)\cap\{1-x: x\in \mathcal I\}$ is thus a Polish group, and its inclusion in $\mathcal U(M)$ is normal. That $\mathcal{U}_{I}(M)$ is also dense follows  from a result of M. Broise \cite{MR0223900} stating that symmetries generate $\mathcal U(M)$ together with the fact that every symmetry onto a projection of infinite trace can be approximated in the strong operator topology by symmetries onto projections of finite trace.

\subsection{Full groups}

Let $(X,\mu)$ be a standard probability space. A Borel bijection $T$ of $(X,\mu)$ is called \textbf{non-singular} if the pushforward measure $T_*\mu$ is equivalent to $\mu$; that is, if for all Borel $A\subseteq X$, we have $\mu(A)=0$ if and only if $\mu(T\inv(A))=0$. Denote by $\Aut^*(X,\mu)$ the group of non-singular Borel bijections of $(X,\mu)$ where two such bijections are identified if they coincide up to measure zero. 

The group $\Aut^*(X,\mu)$ can be endowed with a metrizable topology induced by the uniform metric $d_u$ given as follows: for all $S,T\in\Aut^*(X,\mu)$,
$$d_u(S,T):=\mu(\{x\in X: S(x)\neq T(x)\}).$$

A key fact about $d_u$ is that the associated compatible ambidextrous metric $\tilde d_u$ on $\Aut^*(X,\mu)$ defined by 
$$\tilde d_u(S,T):=d_u(S,T)+d_u(S\inv,T\inv)$$
is complete. The uniform topology is however not separable. The action of the circle group on itself by translation allows us to see the circle group as a discrete subgroup of $\Aut^*(X,\mu)$ for the uniform topology.

\begin{rmk}The group $\Aut^*(X,\mu)$ nevertheless admits a natural Polish group topology called the weak topology, see \textit{e.g.} \cite{danilenko2011ergodic}. 
\end{rmk}

Nevertheless, there are interesting separable $d_u$-closed subgroups of $\Aut^*(X,\mu)$. For a countable Borel equivalence relation $\mathcal R$ on a standard probability space $(X,\mu)$, the full group $[\mathcal R]$ consists of all $T\in\Aut^*(X,\mu)$ such that for almost all $x\in X$, one has $(x,T(x))\in\mathcal R$. The group $[\mathcal R]$ is closed for the uniform topology. Indeed, if $T$ does not belong to $[\mathcal R]$, let  $\epsilon:=\mu(\{x\in X: (x,T(x))\not\in\mathcal R\})>0$. For $S$ so that $\tilde{d}_u(S,T)<\frac{\epsilon}{2}$, the map $S$ will coincide with $T$ on a positive measure subset of $\{x\in X: (x,T(x))\not\in\mathcal R\}$, hence $S$ does not belong to $[\mathcal R]$. The group $[\mathcal R]$ is thus closed in the uniform topology, and since $\tilde d_u$ is complete, we conclude that $([\mathcal R],d_u)$ is a complete metric space. 

We now show that group $([\mathcal R],d_u)$ is also separable. First define a measure $M$ on the Borel equivalence relation $\mathcal R$ by integrating the counting measure on fibers: for all Borel $A\subseteq \mathcal R$, we put
$$M(A):=\int_X\abs{A_x}d\mu(x),$$
where $A_x:=\{y\in X: (x,y)\in A\}$. By the Lusin-Novikov theorem, \cite[(18.10)]{K95}, $\mathcal R$ can be written as the union of graphs of functions $X\to X$. Since each of these graphs has measure $1$, the measure $M$ is $\sigma$-finite. 

Define the measure algebra of $(\mathcal R, M)$ to be the algebra of finite measure Borel subsets of $\mathcal R$ with two such sets being identified if they coincide up to measure zero. Since $(\mathcal R,M)$ is a standard $\sigma$-finite space, its measure algebra is separable when equipped with the metric $d_M(A,B):=M(A\bigtriangleup B)$. Further, the map which associates to $T\in[\mathcal R]$ its graph seen as an element of the measure algebra of $(\mathcal R,M)$ multiplies distances by two. It now follows that $[\mathcal R]$ is separable. 

Full groups arise naturally as follows: let a countable group $\Gamma$ act in a non-singular manner on $(X,\mu)$ and consider the countable Borel equivalence relation 
\[
\mathcal R_\Gamma:=\{(x,\gamma\cdot x): x\in X\text{ and } \gamma\in\Gamma\}.
\]
Since the action is non-singular, $\Gamma$ is a subgroup of $[\mathcal R_\Gamma]$. Assuming the action is also ergodic, meaning that every Borel $\Gamma$-invariant subset of $X$ has measure either zero or one, one has the following trichotomy:
\begin{enumerate}[$\bullet$ ]
\item (type II$_1$) $\Gamma$ preserves a  Borel probability measure $\nu$ equivalent to $\mu$, and so all the elements of $[\mathcal R_\Gamma]$ also preserve $\nu$. 
\item (type II$_\infty$) $\Gamma$ preserves a $\sigma$-finite infinite measure $\nu$ equivalent to $\mu$, and so all the elements of $[\mathcal R_{\Gamma}]$ also preserve $\nu$.
\item (type III) $\Gamma$ preserves no $\sigma$-finite measure equivalent to $\mu$.
\end{enumerate}

In the type II$_1$ and the type III case, a result of Eigen \cite{MR654590} states that the full group $[\mathcal R_\Gamma]$ is simple. However, in the type II$_\infty$ case, letting $\nu$ be the $\sigma$-finite measure which is preserved, the group $[\mathcal R_\Gamma]_f$ of elements of $[\mathcal R_\Gamma]$ whose support has finite $\nu$-measure is a Borel normal subgroup of $[\mathcal R_\Gamma]$. One can then equip $[\mathcal R_\Gamma]_f$ with the metric $d_\nu$ defined by $d_\nu(S,T):=\nu(\{x\in X: S(x)\neq T(x)\})$. The inclusion $[\mathcal R_\Gamma]_f\injects [\mathcal R_\Gamma]$ is a normal compression as shown by the following proposition.

\begin{prop}
Let $\Gamma$ be a countable group acting on the $\sigma$-finite infinite measure space $(X,\nu)$ in a measure-preserving manner. Then $([\mathcal R_\Gamma]_f,d_\nu)$ is a Polish dense subgroup of $([\mathcal R_\Gamma],d_u)$. 
\end{prop}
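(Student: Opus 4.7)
The plan is to verify, in order, that $[\mathcal R_\Gamma]_f$ is a bi-invariantly metrized subgroup, that $d_\nu$ is complete and separable, that the inclusion into $([\mathcal R_\Gamma], d_u)$ is continuous, and finally that the image is dense; the last item is the main obstacle.

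First I would note that every $T \in [\mathcal R_\Gamma]$ preserves $\nu$: by the Lusin--Novikov theorem, $X$ partitions into Borel pieces on each of which $T$ agrees with some $\gamma \in \Gamma$, and $\Gamma$ preserves $\nu$. Hence if $S,T \in [\mathcal R_\Gamma]_f$ then $\supp(ST) \subseteq \supp(S) \cup \supp(T)$ is $\nu$-finite and $\nu(\supp(T^{-1})) = \nu(T(\supp(T))) = \nu(\supp(T))$, so $[\mathcal R_\Gamma]_f$ is a subgroup. The same $\nu$-preservation gives that $d_\nu$ is bi-invariant and that inversion is a $d_\nu$-isometry, so $d_\nu$ defines a group topology. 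Continuity of the inclusion into $([\mathcal R_\Gamma], d_u)$ reduces to the assertion that $\nu(A_n) \to 0$ implies $\mu(A_n) \to 0$, which holds by absolute continuity of the integral since $d\mu/d\nu \in \LL^1(\nu)$.

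For completeness, I extract from a $d_\nu$-Cauchy sequence a subsequence $(T_{n_k})$ with $d_\nu(T_{n_k}, T_{n_{k+1}}) < 2^{-k}$; by bi-invariance $(T_{n_k}^{-1})$ is also $2^{-k}$-Cauchy. Borel--Cantelli makes both sequences pointwise eventually constant off a $\nu$-null set, and the common limit $T$ is a $\nu$-preserving Borel bijection with $(x, T(x)) \in \mathcal R_\Gamma$ a.e. The telescoping estimate $\nu(\supp(T_{n_k}) \bigtriangleup \supp(T_{n_{k+1}})) \le d_\nu(T_{n_k}, T_{n_{k+1}})$ keeps $\nu(\supp(T_{n_k}))$ uniformly bounded, so Fatou yields $\nu(\supp(T)) < \infty$. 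For separability I equip $\mathcal R_\Gamma$ with the $\sigma$-finite measure $M_\nu(A) := \int_X |A_x|\, d\nu(x)$, which is $\sigma$-finite by Lusin--Novikov combined with $\sigma$-finiteness of $\nu$; the map $T \mapsto \Graph(T) \setminus \Delta$ embeds $[\mathcal R_\Gamma]_f$ bi-Lipschitzly into the separable finite-$M_\nu$-measure part of the measure algebra of $(\mathcal R_\Gamma, M_\nu)$.

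The density argument is the main obstacle. Given $T \in [\mathcal R_\Gamma]$, fix $B_n \nearrow X$ with $\nu(B_n) < \infty$ and set $A_n := B_n \cap T^{-1}(B_n)$, so that $T$ restricts to a bijection $A_n \to T(A_n) \subseteq B_n$. Because $T$ preserves $\nu$, the disjoint sets $U_n := T(A_n) \setminus A_n$ and $V_n := A_n \setminus T(A_n)$ inside $B_n$ have equal $\nu$-measure. By the classical Hopf theorem for ergodic type $\mathrm{II}_\infty$ equivalence relations, there exists a Borel bijection $\sigma_n : U_n \to V_n$ with $(x,\sigma_n(x)) \in \mathcal R_\Gamma$ a.e.; let $\tau_n$ be the involution that acts as $\sigma_n$ on $U_n$, as $\sigma_n^{-1}$ on $V_n$, and as the identity elsewhere, so $\tau_n \in [\mathcal R_\Gamma]_f$ with support in $B_n$. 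Define $T_n$ to equal $T$ on $A_n$ and $\tau_n$ on $X \setminus A_n$. A direct check shows $T_n(A_n) = T(A_n)$ and $T_n(X \setminus A_n) = X \setminus T(A_n)$, so $T_n$ is a bijection of $X$ in $[\mathcal R_\Gamma]_f$ with support in $B_n$. Finally,
\[
d_u(T_n, T) \le \mu(X \setminus A_n) \le \mu(X \setminus B_n) + \mu(T^{-1}(X \setminus B_n)) \to 0,
\]
by continuity from above of the finite measure $\mu$. The crux of the argument is the construction of $\tau_n$: it must repair the bijection property that is broken by truncating $T$ to $A_n$, while staying inside $\mathcal R_\Gamma$, and this is exactly where the ergodicity hypothesis underlying the type $\mathrm{II}_\infty$ setting enters.
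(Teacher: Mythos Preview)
Your completeness and separability arguments are essentially identical to the paper's: Borel--Cantelli on a fast Cauchy subsequence for the former, and the embedding into the measure algebra of $(\mathcal R_\Gamma, M_\nu)$ for the latter. The paper omits your preliminary checks (subgroup, bi-invariance, continuity of the inclusion), but these are routine and correctly handled.

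The density argument, however, is genuinely different. The paper does not truncate $T$ and repair the resulting partial bijection via Hopf; instead it invokes Rohlin's lemma to reduce to the case where $T$ is \emph{periodic}, and then observes that a periodic $T$ admits an exhausting sequence of $T$-invariant sets $X_n$ of finite $\nu$-measure, so that $T_n := T$ on $X_n$ and identity off $X_n$ already lies in $[\mathcal R_\Gamma]_f$ with no repair needed. Your route is more direct and avoids the detour through periodic approximation, but it costs you the ergodicity hypothesis: Hopf's equidecomposability theorem for sets of equal finite $\nu$-measure genuinely needs ergodicity, whereas Rohlin's lemma and the periodic approximation do not. Since the proposition as stated does not assume ergodicity (it is only ambient in the surrounding discussion), the paper's argument proves a slightly stronger statement. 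In the ergodic type $\mathrm{II}_\infty$ context you explicitly flag, your construction is correct --- the verification that $T_n$ is a bijection with $T_n(A_n) = T(A_n)$ and $T_n(X \setminus A_n) = X \setminus T(A_n)$ goes through as you describe.
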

\begin{proof}
We first show that $[\mathcal R_\Gamma]_f$ is dense. By Rohlin's lemma (see \cite[Theorem 7.7]{MR0435350}), we only need to show that every periodic element of $[\mathcal R_\Gamma]$ can be approximated by elements of $[\mathcal R_\Gamma]_f$. For every such element $T$, there exists an increasing exhausting sequence of finite measure $T$-invariant sets $X_n$ such that $\bigcup_{n\in\Nb} X_n=X$, which yields the result by considering $T_n$ defined by 
\begin{equation*}T_n(x):=\left\{\begin{array}{cc}T(x) & \text{ if }x\in X_n \\x & \text{otherwise}.\end{array}\right.\end{equation*}

Let us now show that $[\mathcal R_\Gamma]_f$ is Polish. To this end, we first prove that the metric $d_\nu$ is complete. Let $(T_n)_{n\in \Nb}$ be a $d_\nu$-Cauchy sequence of elements of $[\mathcal R_\Gamma]_f$. Up to taking a subsequence, we may assume that for all $n\in\Nb$, $d_{\nu}(T_n,T_{n+1})<\frac 1{2^n}$. Since $\sum_{n\in\Nb}\frac 1{2^n}<+\infty$, the Borel--Cantelli lemma  implies that for almost all $x\in X$, there is $N(x)\in\Nb$ such that for all $n\ge N(x)$ we have $T_n(x)=T_{N(x)}(x)$. For all such $x$, we set $T(x):=T_{N(x)}(x)$, and it is easily checked that $T\in [\mathcal R_\Gamma]_f$ and that $d_\nu(T_n,T)\to 0$. 

The proof of the separability of $d_\nu$ follows the same lines as that for $([\mathcal R_\Gamma],d_u)$. We equip $\mathcal R_\Gamma$ with a $\sigma$-finite measure $M_\nu$ defined by $M_\nu(A):=\int_X\abs{A_x}d\nu(x)$. The group $[\mathcal R_\Gamma]_f$ then naturally embeds in to the separable measure algebra of $(\mathcal R_\Gamma,M_\nu)$, and thus,  it is separable. We conclude that $[\mathcal R_\Gamma]_f$ is Polish. 
\end{proof}
	
Our last example was explored in depth by Kechris in his monograph \cite{MR2583950}; we direct the reader there for details. Let $\mathcal R$ be a type II$_1$ ergodic equivalence relation on $(X,\mu)$ so that the preserved measure is actually $\mu$. Form the metric $d_u$ on $[\mathcal R]$ as discussed above and let $N(\mathcal R)$ be the automorphism group of $[\mathcal R]$.

By Dye's reconstruction theorem, every automorphism of the full group $[\mathcal R]$ is the conjugation by some $T\in\Aut(X,\mu)$. In particular, every automorphism of $[\mathcal R]$ is an isometry, so $N(\mathcal R)$ is a subgroup of the Polish group of isometries of $([\mathcal R], d_u)$. Since being a group automorphism is a closed condition, we see that $N(\mathcal R)$ is a closed subgroup of $\mathrm{Isom}([\mathcal R])$, hence it is a Polish group. However, the fact that $[\mathcal R]\normal N(\mathcal R)$ is a dense normal inclusion depends very much on $\mathcal R$. The following two phenomena can occur:

\begin{enumerate}[(a) ] 
\item The group $[\mathcal R]$ is a closed subgroup of $N(\mathcal R)$. In this case, $N(\mathcal R)/[\mathcal R]$ is countable, and $[\mathcal R]$ is the connected component of the identity in $N(\mathcal R)$. It can even happen that $[\mathcal R]=N(\mathcal R)$.

\item The group $[\mathcal R]$ is not closed in $N(\mathcal R)$. If $\mathcal R$ is also hyperfinite, then $[\mathcal R]$ is dense in $N(\mathcal R)$, and we have an example of a normal compression $[\mathcal R]\rightarrow N(\mathcal R)$. We remark that we do not know any other equivalence relations for which this occurs.
\end{enumerate}



\bibliographystyle{bibgen}
\bibliography{biblio}

\def\cprime{$'$} \def\cprime{$'$}
\begin{thebibliography}{10}
\newcommand{\enquote}[1]{`#1'}

\bibitem{Ando:2011lr}
{\sc H.~Ando} and {\sc Y.~Matsuzawa}, \enquote{On {P}olish groups of finite
  type}, {\em Publ. Res. Inst. Math. Sci.\/}, 48 (2012)(2):389--408.

\bibitem{B_top1_89}
{\sc N.~Bourbaki}, {\em General topology. {C}hapters 1--4\/}, Elements of
  Mathematics (Berlin),  (Springer-Verlag, Berlin1989), translated from the
  French, Reprint of the 1966 edition.

\bibitem{B_top2_89}
{\sc N.~Bourbaki}, {\em General topology. {C}hapters 5--10\/}, Elements of
  Mathematics (Berlin),  (Springer-Verlag, Berlin1989), translated from the
  French, Reprint of the 1966 edition.

\bibitem{Bo98}
{\sc N.~Bourbaki}, {\em Lie groups and {L}ie algebras. {C}hapters 1--3\/},
  Elements of Mathematics (Berlin),  (Springer-Verlag, Berlin1998), translated
  from the French, Reprint of the 1989 English translation.

\bibitem{MR0223900}
{\sc M.~Broise}, \enquote{Commutateurs dans le groupe unitaire d'un facteur},
  {\em J. Math. Pures Appl. (9)\/}, 46 (1967):299--312.

\bibitem{CM11}
{\sc P.-E. Caprace} and {\sc N.~Monod}, \enquote{Decomposing locally compact
  groups into simple pieces}, {\em Math. Proc. Cambridge Philos. Soc.\/}, 150
  (2011)(1):97--128.

\bibitem{YCor}
{\sc Y.~Cornulier}, \enquote{Commutators in an unrestricted infinite wreath
  product}, MathOverflow, https://mathoverflow.net/q/362784 (version:
  2020-06-11).

\bibitem{danilenko2011ergodic}
{\sc A.~I. Danilenko} and {\sc C.~E. Silva}, \enquote{Ergodic theory :
  Nonsingular transformations}, \enquote{Mathematics of Complexity and
  Dynamical Systems}, pages 329--356,  (Springer2011).

\bibitem{D12}
{\sc L.~Ding}, \enquote{On surjectively universal {P}olish groups}, {\em Adv.
  Math.\/}, 231 (2012)(5):2557--2572.

\bibitem{MR654590}
{\sc S.~J. Eigen}, \enquote{On the simplicity of the full group of ergodic
  transformations}, {\em Israel J. Math.\/}, 40 (1981)(3-4):345--349 (1982).

\bibitem{FS06}
{\sc I.~Farah} and {\sc S.~Solecki}, \enquote{Borel subgroups of {P}olish
  groups}, {\em Adv. Math.\/}, 199 (2006)(2):499--541.

\bibitem{MR0435350}
{\sc N.~A. Friedman}, {\em Introduction to ergodic theory\/},  (Van Nostrand
  Reinhold Co., New York-Toronto, Ont.-London1970), van Nostrand Reinhold
  Mathematical Studies, No. 29.

\bibitem{MR2455198}
{\sc S.~Gao}, {\em Invariant descriptive set theory\/}, volume 293 of {\em Pure
  and Applied Mathematics (Boca Raton)\/},  (CRC Press, Boca Raton, FL2009).

\bibitem{Is09}
{\sc M.~Isaacs}, {\em Algebra, a Graduate Course\/},  (Brooks/Cole2009), 2nd
  edition.

\bibitem{K95}
{\sc A.~S. Kechris}, {\em Classical descriptive set theory\/}, volume 156 of
  {\em Graduate Texts in Mathematics\/},  (Springer-Verlag, New York1995).

\bibitem{MR2583950}
{\sc A.~S. Kechris}, {\em Global aspects of ergodic group actions\/}, volume
  160 of {\em Mathematical Surveys and Monographs\/},  (American Mathematical
  Society, Providence, RI2010).

\bibitem{La13}
{\sc S.~Lang}, {\em Algebraic number theory\/},  (Springer2013), 2nd edition.

\bibitem{RW_LC}
{\sc C.~D. Reid} and {\sc P.~R. Wesolek}, \enquote{Dense normal subgroups and
  chief factors in locally compact groups}, {\em Proc. LMS\/}, 116
  (2018)(4):760--812.

\bibitem{RW_EC}
{\sc C.~D. Reid} and {\sc P.~R. Wesolek}, \enquote{The essentially chief series
  of a compactly generated locally compact group}, {\em Math. Ann.\/}, 370
  (2018)(1--2):841--861.

\end{thebibliography}

\end{document}